\newcommand{\rmv}[1]{}
\newcommand{\add}[1]{{#1}}
\newcommand{\hor}{\add{\operatorname*{hor}}}
\newcommand{\ver}{ \add{\operatorname*{vert}}}
\newcommand{\checker}{\add{\operatorname*{check}}}
\newcommand{\err}{\add{\operatorname{Err}}}
\newcommand{\an}[1]{\add{\Lambda_{#1}}}
\newcommand{\bn}[1]{\add{\Pi_{#1}}}
\providecommand{\U}[1]{\protect\rule{.1in}{.1in}}
\newtheorem{theorem}{Theorem}
\theoremstyle{plain}
\newtheorem{acknowledgement}[theorem]{Acknowledgement}
\newtheorem{corollary}[theorem]{Corollary}
\newtheorem{definition}[theorem]{Definition}
\newtheorem{lemma}[theorem]{Lemma}
\newtheorem{notation}[theorem]{Notation}
\newtheorem{proposition}[theorem]{Proposition}
\newtheorem{remark}[theorem]{Remark}
\numberwithin{equation}{section}
\begin{document}
\title{Stability of weighted norm inequalities}
\author[M. Alexis]{Michel Alexis}
\address{Department of Mathematics \& Statistics, McMaster University, 1280 Main Street
West, Hamilton, Ontario, Canada L8S 4K1}
\email{alexism@mcmaster.ca}
\author[J. L. Luna-Garcia]{Jose Luis Luna-Garcia}
\address{Department of Mathematics \& Statistics, McMaster University, 1280 Main Street
West, Hamilton, Ontario, Canada L8S 4K1}
\email{lunagaj@mcmaster.ca}
\author[E.T. Sawyer]{Eric T. Sawyer}
\address{Department of Mathematics \& Statistics, McMaster University, 1280 Main Street
West, Hamilton, Ontario, Canada L8S 4K1 }
\email{Sawyer@mcmaster.ca}
\author[I. Uriarte-Tuero]{Ignacio Uriarte-Tuero}
\address{Department of Mathematics, University of Toronto\\
Room 6290, 40 St. George Street, Toronto, Ontario, Canada M5S 2E4\\
(Adjunct appointment)\\
Department of Mathematics\\
619 Red Cedar Rd., room C212\\
Michigan State University\\
East Lansing, MI 48824 USA}
\email{ignacio.uriartetuero@utoronto.ca}
\thanks{E. Sawyer is partially supported by a grant from the National Research Council
of Canada}
\thanks{I. Uriarte-Tuero has been partially supported by grant MTM2015-65792-P
(MINECO, Spain), and is partially supported by a grant from the National
Research Council of Canada}

\begin{abstract}
We show that while individual Riesz transforms are two weight norm
\textbf{stable} under biLipschitz change of variables on $A_{\infty}$ weights,
they are two weight norm \textbf{unstable} under even rotational change of
variables on doubling weights. More precisely, we show that individual Riesz
transforms are unstable under a set of rotations having full measure, which
includes rotations arbitrarily close to the identity. This provides an
operator theoretic distinction between $A_{\infty}$ weights and doubling weights.

More generally, all iterated Riesz transforms of odd order are rotationally
unstable on pairs of doubling weights, thus demonstrating the need for
characterizations of iterated Riesz transform inequalities using testing
conditions as in \add{\cite{NTV4,LaSaShUr3,SaShUr7, SaShUr10, AlSaUr, LaWi}}, as opposed to the typically stable 'bump' conditions.

\end{abstract}
\maketitle
\tableofcontents

\section{Introduction}

We begin by describing two stability theorems for operator norms given three
decades apart, that motivate the main results of this paper.

\subsection{Previous stability results}

Thirty-five years ago, Johnson and Neugebauer \cite[Theorem 2.10 (a), see also
the preceding Remark 1]{JoNe} characterized the smooth homeomorphisms
$\Phi:\mathbb{R}^{n}\rightarrow\mathbb{R}^{n}$ that preserve Muckenhoupt's
$A_{p}\left(  \mathbb{R}^{n}\right)  $ condition \add{for} a weight $w$ under
pushforward by $\Phi$, as precisely those quasiconformal maps $\Phi$ having
their Jacobian $J=\left\vert \det D\Phi\right\vert $ in the intersection
$\bigcap_{r>1}A_{r}\left(  \mathbb{R}^{n}\right)  $ of the $A_{r}$ classes
over $r>1$. A variant of the one-dimensional case of this beautiful
characterization, see \cite[Theorem 2.7 with $\alpha=1$]{JoNe}, can be
reformulated in terms of \emph{stability} of the `Muckenhoupt' one weight norm
inequality for the Hilbert transform under homeomorphisms of the real line.

\begin{theorem}
Suppose that $w:\mathbb{R\rightarrow}\left[  0,\infty\right)  $ is a
nonnegative weight on the real line $\mathbb{R}$, that $\varphi
:\mathbb{R\rightarrow R}$ is an increasing homeomorphism with $\varphi$ and
$\varphi^{-1}$ absolutely continuous, and that $H$ is the Hilbert transform,
$Hf\left(  x\right)  =\operatorname{p.v.}\int_{-\infty}^{\infty}\frac{f\left(
y\right)  }{y-x}$.\newline For $1<p<\infty$, denote by $\mathfrak{N}%
_{H;p}\left[  w\right]  $ the operator norm of the map $H:L^{p}\left(
w\right)  \mathbb{\rightarrow}L^{p}\left(  w\right)  $, i.e. the best constant
$C$ in the inequality
\[
\int_{\mathbb{R}}\left\vert Hf\left(  x\right)  \right\vert ^{p}w\left(
x\right)  dx\leq C^{p}\int_{\mathbb{R}}\left\vert f\left(  x\right)
\right\vert ^{p}w\left(  x\right)  dx.
\]
\newline Then there is a positive constant $C_{1}$ such that
\[
\mathfrak{N}_{H;p}\left[  \left(  w\circ\varphi\right)  \varphi^{\prime
}\right]  \leq C_{1}\mathfrak{N}_{H,p}\left[  w\right]  ,\ \ \ \ \ \text{for
all weights }w,
\]
\emph{if and only if} $\varphi^{\prime}\in\bigcap_{r>1}A_{r}\left(
\mathbb{R}\right)  $.
\end{theorem}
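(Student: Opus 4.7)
The plan is to pair Muckenhoupt's theorem (equating the Hilbert transform norm with an $A_p$-condition) with Johnson--Neugebauer's characterization of $A_p$-preserving homeomorphisms, already quoted in the paragraph preceding the theorem.

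\emph{First}, I would apply Muckenhoupt's theorem in its quantitative form: $\mathfrak{N}_{H;p}[w]$ is finite iff $w \in A_p(\mathbb{R})$, and is comparable to an increasing function of the characteristic $[w]_{A_p}$ alone. This reduces the operator-norm stability inequality to a quantitative $A_p$-preservation statement---the existence of an increasing function $G = G(C_1,p)$ such that
\[
[(w\circ\varphi)\varphi']_{A_p(\mathbb{R})} \le G\!\left([w]_{A_p(\mathbb{R})}\right)
\]
for every weight $w$. Norms are vacuously handled where $w \notin A_p$.

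\emph{Second}, I would observe that in the operator-theoretic convention of \cite{JoNe}, the assignment $w \mapsto (w\circ\varphi)\varphi'$ is exactly the pushforward of the weight $w$ by the homeomorphism $\varphi$ (it is the unique weight on the $x$-side making the change of variables $y = \varphi(x)$ an $L^p$-isometry for $f \mapsto f\circ\varphi$), so the relevant Jacobian is $\varphi'$ itself. The one-dimensional specialization of Theorem 2.10 (a) of \cite{JoNe}---formulated there as Theorem 2.7 with $\alpha = 1$---then asserts that this pushforward quantitatively preserves $A_p$ if and only if $\varphi' \in \bigcap_{r>1} A_r(\mathbb{R})$. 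Concatenated with the first step, this gives both implications.

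\emph{The main obstacle} lies in the forward direction of the Johnson--Neugebauer step. Testing stability on $w \equiv 1$ immediately yields $\varphi' \in A_p$ at the \emph{single} exponent $p$, but promoting this to $\varphi' \in A_r$ for \emph{every} $r > 1$ is far from automatic; it requires feeding the quantitative $A_p$-preservation a rich family of test weights (typically shifted power weights $|y-a|^{\gamma}$ as $\gamma$ ranges over $(-1, p-1)$) and exploiting the self-improvement / open-endedness of $A_p$ to extract $A_{1+\delta}$-membership of $\varphi'$ for arbitrarily small $\delta > 0$. The reverse direction is more standard: the reverse H\"older inequality afforded by $\varphi' \in A_{1+\delta}$ for every $\delta > 0$ allows one to estimate the averages of $(w\circ\varphi)\varphi'$ and its $(1-p')$-th power directly in terms of the corresponding averages of $w$, yielding the required quantitative bound on $[(w\circ\varphi)\varphi']_{A_p}$.
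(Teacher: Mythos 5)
You should first note that the paper itself offers no proof of this theorem: it is presented as a reformulation of Johnson--Neugebauer \cite[Theorem 2.7 with $\alpha=1$, Theorem 2.10(a)]{JoNe}, with the Hunt--Muckenhoupt--Wheeden theorem supplying the dictionary between the Hilbert transform norm and the $A_{p}$ condition. So your outline --- quantitative boundedness of $H$ on $L^{p}(w)$ in terms of $[w]_{A_{p}}$, combined with the Johnson--Neugebauer characterization of maps preserving $A_{p}$ --- is exactly the intended route, and your forward implication (test on $w\equiv1$ to get $\varphi'\in A_{p}$, then use the quantitative preservation statement, or simply quote \cite{JoNe}, to upgrade to $\bigcap_{r>1}A_{r}$) is sound.

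The genuine gap is in your claimed reduction, which is not an equivalence in the direction you need for the converse. From the norm inequality with a uniform constant $C_{1}$ you do obtain an increasing $G$ with $[(w\circ\varphi)\varphi']_{A_{p}}\le G([w]_{A_{p}})$, because $\mathfrak{N}_{H;p}[w]$ is squeezed between two increasing functions of $[w]_{A_{p}}$, say $\psi_{1}([w]_{A_{p}})\le\mathfrak{N}_{H;p}[w]\le\psi_{2}([w]_{A_{p}})$ with $\psi_{1}\nearrow\infty$. But in the reverse direction --- the implication you must prove when assuming $\varphi'\in\bigcap_{r>1}A_{r}$ --- a bound by an arbitrary increasing $G$ does \emph{not} yield a single constant $C_{1}$ with $\mathfrak{N}_{H;p}[(w\circ\varphi)\varphi']\le C_{1}\mathfrak{N}_{H;p}[w]$ uniformly in $w$: the operator norm is not a function of the characteristic (the sharp upper bound is $[w]_{A_{p}}^{\max(1,1/(p-1))}$ while the lower bound valid for \emph{every} weight is a strictly smaller power), so $\psi_{2}(G(t))/\psi_{1}(t)$ is unbounded in $t$ even when $G$ is linear. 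Worse, the reverse-H\"older argument you sketch does not give a linear $G$: to estimate $\frac{1}{|I|}\int_{I}\bigl((w\circ\varphi)\varphi'\bigr)^{1-p'}=\frac{1}{|I|}\int_{\varphi(I)}w^{1-p'}\bigl((\varphi^{-1})'\bigr)^{p'}$ you must apply H\"older with an exponent $s$ governed by the reverse-H\"older exponent of $w^{1-p'}$, which degenerates as $[w]_{A_{p}}\to\infty$, forcing $s'\uparrow\infty$; the hypothesis $\varphi'\in\bigcap_{r>1}A_{r}$ does give $(\varphi^{-1})'$ a reverse-H\"older inequality of every order, but with constants depending on that order, hence on $[w]_{A_{p}}$ in an uncontrolled way. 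As written, your argument therefore proves stability only in the weaker form $\mathfrak{N}_{H;p}[(w\circ\varphi)\varphi']\le\Phi\bigl(\mathfrak{N}_{H;p}[w]\bigr)$ for some increasing $\Phi$ depending on $\varphi$ and $p$ (which is the function-type notion of stability the paper uses elsewhere), not the literal statement with a uniform multiplicative constant $C_{1}$. To close this you need either a preservation estimate compatible with the sharp norm--characteristic bounds, or a direct norm-level transference argument, e.g.\ analyzing the conjugated operator $g\mapsto\bigl(H(g\circ\varphi)\bigr)\circ\varphi^{-1}$ on $L^{p}(w)$, rather than passing through $[w]_{A_{p}}$ alone.
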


More recently, Tolsa \cite[see abstract]{Tol} characterized the
`Ahlfors-David' one weight inequality for the Cauchy transform, equivalently
the $1$-fractional vector Riesz transform $\mathbf{R}^{1,2}$ in the plane
$\mathbb{R}^{2}$ (defined in (\ref{def Riesz}) below), in the case $p=2$, namely
\begin{equation}
\int_{\mathbb{R}^{2}}\left\vert \mathbf{R}^{1,2}\left(  f\mu\right)  \left(
x\right)  \right\vert ^{2}d\mu\left(  x\right)  \leq\mathfrak{N}%
_{\mathbf{R}^{1,2};2}^{2}\left(  \mu\right)  \int_{\mathbb{R}^{2}}\left\vert
f\left(  x\right)  \right\vert ^{2}d\mu\left(  x\right)  , \label{Tolsa inequ}%
\end{equation}
in terms of a growth condition and Menger curvature.\ As a consequence, Tolsa
obtained stability of the operator norm $\mathfrak{N}_{\mathbf{R}%
^{1,2}\mathbf{;}2}\left(  \mu\right)  $ under biLipschitz pushforwards of the
measure $\mu$. Even more recently, in papers by D\k{a}browski and Tolsa \cite{DaTo} and
Tolsa \cite{Tol2}, this result was extended to higher dimensions, and as a consequence
they obtained stability of the operator norm $\mathfrak{N}_{\mathbf{R}%
^{1,n};2}\left(  \mu\right)  $ of the $1$-fractional vector Riesz transform
$\mathbf{R}^{1,n}$ under biLipschitz pushforwards of the measure $\mu$ in
$\mathbb{R}^{n}$ \cite[see the comment at the top of page 6]{DaTo},
\cite{Tol2}. As an important application of norm stability, they obtain the
stability of removable sets for Lipschitz harmonic functions under biLipschitz
mappings \cite[see Corollary 1.6 and the discussion surrounding it]{Tol2}.

Here we define the $\alpha$-fractional vector Riesz transform in
$\mathbb{R}^{n}$ by
\begin{equation}
\mathbf{R}^{\alpha,n}f\left(  x\right)  \equiv c_{\alpha,n}\operatorname{p.v.}%
\int_{\mathbb{R}^{n}}\frac{x-y}{\left\vert x-y\right\vert ^{n+1-\alpha}%
}f\left(  y\right)  dy,\ \ \ \ \ x\in\mathbb{R}^{n},0\leq\alpha<n.
\label{def Riesz}%
\end{equation}
Let $\mathbf{R}_{j}^{\alpha,n}=\left(  R_{1}^{\alpha,n},...,R_{2}^{\alpha
,n}\right)  $, where we refer to the components $R_{j}^{\alpha,n}$ as
individual $\alpha$-fractional Riesz transforms in $\mathbb{R}^{n}$. \add{We are primarily concerned with the 
classical case $\alpha=0$ in this paper,} and so we will usually drop the superscript $\alpha$ and
write $\mathbf{R}=\left(  R_{1},...,R_{n}\right)  $ when the dimension
$n$ is understood, and refer to the components $R_{j}$ as Riesz transforms. 

The main problem we consider in this paper is the extent to which the above
theorems hold in the setting of \emph{two weight} norm inequalities, and to
include more general operators in higher dimensions. The complexities inherent
in dealing with two weight norm inequalities - mainly that they are no longer
characterized simply by $A_{p}$-like conditions or more generally by
conditions of `positive nature', but require testing conditions of `singular
nature' as well - suggests that we should limit ourselves to consideration of
biLipschitz maps. Indeed, this much smaller class of maps is much more
amenable to current two weight techniques, and allows for a rich theory where
stability holds in certain `nice' situations, while failing in small
perturbations of these `nice' situations. We also show in the appendix that
any reasonable group of transformations under which the two weight $A_{2}$
condition is stable \add{is} contained in the group of biLipschitz transformations.

Our analysis will be mainly restricted to the case $p=2$ and iterated Riesz
transforms of odd order in $\mathbb{R}^{n}$, where we show that stability of
the two weight norm inequality is sensitive to the distinction between
doubling and $A_{\infty}$ weights, even when the biLipschitz maps are
restricted to rotations of $\mathbb{R}^{n}$.

\subsection{Description of results}

The two weight norm inequality for an operator $T$ with a pair $\left(
\sigma,\omega\right)  $ of positive locally finite Borel measures on
$\mathbb{R}^{n}$ and exponents $1<p\leq q<\infty$ is informally,%
\begin{equation}
\left(  \int_{\mathbb{R}^{n}}\left\vert T\left(  f\sigma\right)  \right\vert
^{q}d\omega\right)  ^{\frac{1}{q}}\leq\mathfrak{N}_{T}\left(  \int
_{\mathbb{R}^{n}}\left\vert f\right\vert ^{p}d\sigma\right)  ^{\frac{1}{p}%
},\ \ \ \ \ f\in L^{p}\left(  \sigma\right)  . \label{two weight norm}%
\end{equation}
See \add{Definition \ref{def bounded} for a formal definition of the two weight norm inequality.}  In the case $p=q=2$, we first
establish a distinction between weighted norm inequalities for positive
operators $T$ in (\ref{two weight norm}), such as the maximal function and
fractional integrals, on the one hand; and singular integral operators $T$ in
(\ref{two weight norm}), such as the individual Riesz transforms and iterated
Riesz transforms, on the other hand. Namely, that the former are two weight
norm stable under biLipschitz change of variables for arbitrary locally finite
positive Borel measures,\emph{\ }while the latter are not in general, even on
pairs of doubling measures.

Our main result, Theorem \ref{stab}, shows that while individual Riesz
transforms are two weight norm \textbf{stable} under biLipschitz change of
variables on pairs of $A_{\infty}$ weights, they are two weight norm
\textbf{unstable} under even a rotational change of variables on doubling
weights. This provides an operator theoretic distinction between $A_{\infty}$
weights and doubling weights\footnote{In 1974, C. Fefferman and B. Muckenhoupt
\cite{FeMu} constructed an example of a doubling weight that was not
$A_{\infty}$ using a self similar construction, on which many subsequent
results have been based.}.

We also show that all iterated Riesz transforms of odd order are rotationally
unstable on pairs of doubling weights, thus demonstrating the need for
characterizations of iterated Riesz transform inequalities using unstable
conditions, such as the testing conditions in \cite{NTV4, LaSaShUr3, SaShUr7, SaShUr10, AlSaUr, LaWi}, as opposed to the
typically stable `bump' conditions\add{, see Section \ref{subsection:sparse_bumps}}.

\subsection{BiLipschitz and rotational stability}
\add{In this subsection, we precisely define stability.}
\begin{definition}
\label{def:transformation_classes}Let $\Phi:\mathbb{R}^{n}\rightarrow
\mathbb{R}^{n}$ be continuous and invertible.

\begin{enumerate}
\item $\Phi$ is \emph{biLipschitz} if%
\[
\left\Vert \Phi\right\Vert _{\operatorname*{biLip}}\equiv\sup_{x,y\in
\mathbb{R}^{n}}\frac{\left\vert \Phi\left(  x\right)  -\Phi\left(  y\right)
\right\vert }{\left\vert x-y\right\vert }+\sup_{x,y\in\mathbb{R}^{n}}%
\frac{\left\vert \Phi^{-1}\left(  x\right)  -\Phi^{-1}\left(  y\right)
\right\vert }{\left\vert x-y\right\vert }<\infty.
\]

\item $\Phi$ is a rotation if $\Phi$ is linear and $\Phi\Phi^{\ast}=I$ and
$\det\Phi=1$.
\end{enumerate}
\end{definition}

Let $\mathcal{X}$ be a group of continuous invertible maps on $\mathbb{R}^{n}%
$, such as the group of biLipschitz or rotation transformations, which we
denote by $\mathcal{X}_{\operatorname*{biLip}}$ and $\mathcal{X}%
_{\operatorname*{rot}}$ respectively\footnote{See Lemma \ref{group} in the
appendix for a justification of considering subgroups of biLipschitz
transformations.}. Denote by $\mathcal{M}$ the space of positive Borel
measures on $\mathbb{R}^{n}$,\ and by $\Phi_{\ast}\mu$ the pushforward of
$\mu\in\mathcal{M}$ by a continuous map $\Phi:\mathbb{R}^{n}\rightarrow
\mathbb{R}^{n}$, i.e.\ $\Phi_{\ast}\mu(B)\equiv\mu(\Phi^{-1}(B))$. We say that
a subclass $\mathcal{S}\subset\mathcal{M}$ of positive Borel measures is
\emph{$\mathcal{X}$-invariant} if $\Phi_{\ast}\mu\in\mathcal{S}$ for all
$\mu\in\mathcal{S}$ and $\Phi\in\mathcal{X}$. Of course $\mathcal{M}$ itself
is $\mathcal{X}$-invariant for the group $\mathcal{X}_{\operatorname*{cont}%
\operatorname*{inv}}$ of all continuous invertible maps, but less trivial
examples of \emph{biLipschitz} invariant classes include,%
\begin{align}
\mathcal{S}_{A_{p}}  &  \equiv\left\{  \mu\in\mathcal{M}:d\mu\left(  x\right)
=u\left(  x\right)  dx\text{ with }u\in A_{p}\right\}  ,\ \ \ \ \ \text{for
}1\leq p<\infty,\label{classes}\\
\mathcal{S}_{A_{\infty}}  &  \equiv\left\{  \mu\in\mathcal{M}:d\mu\left(
x\right)  =u\left(  x\right)  dx\text{ with }u\in A_{\infty}\right\} \add{,} 
\nonumber\\
\mathcal{S}_{\operatorname*{doub}}  &  \equiv\left\{  \mu\in\mathcal{M}%
:\mu\text{ is a doubling measure}\right\}  ,\nonumber\\
\mathcal{S}_{\operatorname*{ADs}}  &  \equiv\left\{  \mu\in\mathcal{M}%
:\mu\text{ is Ahlfors-David regular of degree }s\right\}  ,\nonumber\\
\mathcal{S}_{\operatorname*{lfpB}}  &  \equiv\left\{  \mu\in\mathcal{M}%
:\mu\text{ is a locally finite positive Borel measure}\right\}  .\nonumber
\end{align}
To each of the above classes $\mathcal{S}$ we can associate a functional
$\left\Vert \mu\right\Vert _{\mathcal{S}}$ for which $\mathcal{S}%
\equiv\left\{  \mu\in\mathcal{M}:\left\Vert \mu\right\Vert _{\mathcal{S}%
}<\infty\right\}  $. For example we take
\begin{equation}
\left\Vert \mu\right\Vert _{\mathcal{S}_{A_{\infty}}}=\left[  \mu\right]
_{A_{\infty}}\equiv\sup_{Q}\left(  \frac{1}{\left\vert Q\right\vert }\int
_{Q}\mu\right)  \exp\left(  \frac{1}{\left\vert Q\right\vert }\int_{Q}\ln
\frac{1}{\mu}\right)  \,, \label{A infinity char}%
\end{equation}
and $\left\Vert \mu\right\Vert _{\mathcal{S}_{\operatorname*{doub}}%
}=C_{\operatorname*{doub}}\left(  \mu\right)  $ as in Definition
\ref{doubling def}. In the case that $\mathcal{S}=\mathcal{S}%
_{\operatorname{lfpB}}$, there is no `natural' choice of $\Vert\cdot
\Vert_{\mathcal{S}}$ that measures the `size' of the measure $\mu$ and so
instead we\add{, may, for instance,} define
\[
\left\Vert \mu\right\Vert _{\mathcal{S}_{\operatorname*{lfpB}}}=%
\begin{cases}
1 & \text{ if }\mu\in\mathcal{S}_{\operatorname*{lfpB}}\\
\infty & \text{ otherwise }%
\end{cases}
\,.
\]
We also define
\[
\left\Vert \Phi\right\Vert _{\mathcal{X}}=\left\{
\begin{array}
[c]{ccc}%
\left\Vert \Phi\right\Vert _{\operatorname*{biLip}} & \text{ if } &
\mathcal{X}=\mathcal{X}_{\operatorname*{biLip}}\\
1 & \text{ if } & \mathcal{X}=\mathcal{X}_{\operatorname*{rot}}\text{ and
}\Phi\in\mathcal{X}_{\operatorname*{rot}}\\
\infty & \text{ if } & \mathcal{X}=\mathcal{X}_{\operatorname*{rot}}\text{ and
}\Phi\not \in \mathcal{X}_{\operatorname*{rot}}%
\end{array}
\right.  .
\]
Here is the main stability definition for a function $\mathcal{F}$ on measure
pairs, a group $\mathcal{X}\in\left\{  \mathcal{X}_{\operatorname*{biLip}%
},\mathcal{X}_{\operatorname*{rot}}\right\}  $ and an $\mathcal{X}$-invariant
class $\mathcal{S}$ (or to be precise, for $(\mathcal{S},\Vert\cdot
\Vert_{\mathcal{S}}$)).

\begin{definition}\label{defn:stability}
Let $\mathcal{X}\in\left\{  \mathcal{X}_{\operatorname*{biLip}},\mathcal{X}%
_{\operatorname*{rot}}\right\}  $, $\mathcal{S}\subset\mathcal{M}$ be
$\mathcal{X}$-invariant, and let $\mathcal{F}:\mathcal{S}\times\mathcal{S}%
\rightarrow\left[  0,\infty\right]  $ be a nonnegative extended real-valued
function on the product set $\mathcal{S}\times\mathcal{S}$. We say that the
function $\mathcal{F}$ is \emph{$\mathcal{X}$-stable on }$\mathcal{S}$ if
there is a function $\mathcal{G}:[0,\infty)^{4}\rightarrow\lbrack0,\infty)$
which maps bounded subsets of $[0,\infty)^{4}$ to bounded subsets of
$[0,\infty)$, such that%
\begin{align}
\mathcal{F}\left(  \Phi_{\ast}\sigma,\Phi_{\ast}\omega\right)   &
\leq\mathcal{G}\left(  \left\Vert \Phi\right\Vert _{\mathcal{X}}%
,\mathcal{F}\left(  \sigma,\omega\right)  ,\left\Vert \sigma\right\Vert
_{\mathcal{S}},\left\Vert \omega\right\Vert _{\mathcal{S}}\right)
,\label{FG}\\
\text{for all }\sigma,\omega &  \in\mathcal{S}\text{ such that }%
\mathcal{F}(\sigma,\omega)<\infty\text{ and all }\Phi\in\mathcal{X}.\nonumber
\end{align}

\end{definition}

Note that to check \add{that} $\mathcal{G}$ maps bounded sets to bounded sets, it
suffices to show for instance that $\mathcal{G}$ is continuous. Typically, we
will take $\mathcal{F}$ to be an operator norm on weighted spaces, in which case we say an operator $T$ is (un)stable on a class of measures $\mathcal{S}$ if its two weight operator norm is (un)stable on $\mathcal{S}$. One
may also take $\mathcal{F}$ to be a common \add{two weight} bump condition. 

A simple example of a biLipschitz stable function on the class $\mathcal{S}%
_{\operatorname{lfpB}}$ is the classical two weight $A_{2}$ characteristic for
a pair of measures, namely
\[
\mathcal{F}\left(  \sigma,\omega\right)  =A_{2}\left(  \sigma,\omega\right)
=\sup_{\text{cubes }Q\text{ in }\mathbb{R}^{n}}\frac{\left\vert Q\right\vert
_{\sigma}}{\left\vert Q\right\vert }\frac{\left\vert Q\right\vert _{\omega}%
}{\left\vert Q\right\vert }.
\]
Indeed,
\[
\frac{\left\vert Q\right\vert _{\Phi_{\ast}\sigma}}{\left\vert Q\right\vert
}\frac{\left\vert Q\right\vert _{\Phi_{\ast}\omega}}{\left\vert Q\right\vert
}=\frac{\left\vert \Phi^{-1}Q\right\vert _{\sigma}}{\left\vert Q\right\vert
}\frac{\left\vert \Phi^{-1}Q\right\vert _{\omega}}{\left\vert Q\right\vert
}\approx\frac{\left\vert \Phi^{-1}Q\right\vert _{\sigma}}{\left\vert \Phi
^{-1}Q\right\vert }\frac{\left\vert \Phi^{-1}Q\right\vert _{\omega}%
}{\left\vert \Phi^{-1}Q\right\vert },
\]
since $\Phi^{-1}$ is biLipschitz, and now observe that there is a cube $P$
such that $P\subset\Phi^{-1}Q\subset\rho P$ for some $\rho>1$ by
quasiconformality of $\Phi$ \cite[Lemma 3.4.5]{AsIwMa}, where $\rho$ depends
only on \add{$\left \| \Phi\right \| _{\operatorname*{biLip}}$}. Thus we have%
\[
\frac{\left\vert Q\right\vert _{\Phi_{\ast}\sigma}}{\left\vert Q\right\vert
}\frac{\left\vert Q\right\vert _{\Phi_{\ast}\omega}}{\left\vert Q\right\vert
}\lesssim\frac{\left\vert \rho P\right\vert _{\sigma}}{\left\vert \rho
P\right\vert }\frac{\left\vert \rho P\right\vert _{\omega}}{\left\vert \rho
P\right\vert }\leq A_{2}\left(  \sigma,\omega\right)  ,
\]
and by taking supremums over cubes gives
\begin{equation}
A_{2}\left(  \Phi_{\ast}\sigma,\Phi_{\ast}\omega\right)  \leq\mathcal{G}%
\left(  \left\Vert \Phi\right\Vert _{\operatorname*{biLip}},A_{2}\left(
\sigma,\omega\right)  ,\left\Vert \sigma\right\Vert _{\mathcal{S}%
_{\operatorname{lfpB}}},\left\Vert \omega\right\Vert _{\mathcal{S}%
_{\operatorname{lfpB}}}\right)  =\mathcal{G}\left(  \left\Vert \Phi\right\Vert
_{\operatorname*{biLip}},A_{2}\left(  \sigma,\omega\right)  ,1,1\right)
\label{A2 stab}%
\end{equation}
for $\mathcal{G}(w,x,y,z)=cw^{4n}x$, where $c>0$ is independent of $\Phi$,
$\sigma$ and $\omega$.

The reader can also check that all of the usual `Orlicz bump' conditions
\begin{align*}
&  \sup_{Q\text{ a ball}}\left\Vert u^{\frac{1}{p}}\right\Vert _{A,Q}%
\left\Vert v^{-\frac{1}{p}}\right\Vert _{B,Q}<\infty,\\
\text{where }  &  \left\Vert f\right\Vert _{A,Q}\equiv\inf\left\{
\lambda>0:\frac{1}{\left\vert B\right\vert }\int_{B}A\left(  \frac{\left\vert
f\left(  x\right)  \right\vert }{\lambda}\right)  dx\right\}  ,
\end{align*}
on a pair of absolutely continuous measures $\sigma\left(  x\right)  dx$ and
$\omega\left(  x\right)  dx$ on $\mathbb{R}^{n}$ as in the conjecture of
Cruz-Uribe and Perez \cite{CrPe} (proved by Lerner - see \cite{Ler}), are
biLipschitz stable on any biLipschitz invariant subclass $\mathcal{S}$, e.g., Neugebauer's bump condition,%
\[
A_{2,r}\left(  \sigma,\omega\right)  =\sup_{\text{cubes }Q\text{ in
}\mathbb{R}^{n}}\left(  \frac{1}{\left\vert Q\right\vert }\int_{Q}%
\sigma\left(  x\right)  ^{r}dx\right)  ^{\frac{1}{r}}\left(  \frac
{1}{\left\vert Q\right\vert }\int_{Q}\omega\left(  x\right)  ^{r}dx\right)
^{\frac{1}{r}},
\]
where $1<r<\infty$. \add{See Appendix \ref{subsection:sparse_bumps}.}

More recently, additional variants of bump condition, such as entropy bumps
and separated bumps, have arisen in work of Treil and Volberg \cite{TrVo2}, Lacey and Spencer \cite{LaSp}
to mention just a few. The sufficiency of these bump conditions for two weight
singular integral inequalities all go through the boundedness of sparse
operators - see Lerner \cite{Ler} for a proof of the optimal result to date,
and a history of this fascinating subject. \add{In Appendix \ref{subsection:sparse_bumps},} we show that no
such bump conditions can characterize the two weight norm inequality for an
iterated Riesz transform $T$ of odd order even when the measures are doubling
(or for any Calder\'{o}n-Zygmund operator $T$ that is biLipschitz unstable on
doubling measures).

We mention in passing that the following form of the two weight $A_{p}$
condition on the real line,%
\[
\widetilde{A}_{p}\left(  v,w\right)  \equiv\sup_{I\text{ an interval}}\left(
\frac{1}{\left\vert I\right\vert }\int_{I}w\right)  \left(  \frac
{1}{\left\vert I\right\vert }\int_{I}\frac{1}{v^{p^{\prime}-1}}\right)
^{p-1},
\]
has been proved stable under an increasing homeomorphic change of variable
$\varphi$ (with both $\varphi$ and $\varphi^{-1}$ absolutely continuous) if
and only if $\varphi^{\prime}\in A_{1}\left(  \mathbb{R}\right)  $, see
\cite[Corollary 4.4]{JoNe}, but this condition is no longer equivalent to
boundedness of the Hilbert transform for two weights, and moreover, the definitions of
stability of $\widetilde{A}_{2}\left(  v,w\right)  $ and $A_{2}\left(  \sigma,\omega\right)  $ considered above are a priori different since
composition and pushforward don't commute, e.g. when $p=2$, $\Phi_{\ast}%
v\neq\left(  \Phi_{\ast}v^{-1}\right)  ^{-1}$ in general.

\subsubsection{Main results}

Our main result below on both \emph{stability} and \emph{instability} involves
Riesz transforms and doubling measures, as well as Stein elliptic
Calder\'{o}n-Zygmund operators. Recall that if $K$ is a Calder\'{o}n-Zygmund
kernel, i.e. satisfies%
\begin{align}
\left\vert K\left(  x,y\right)  \right\vert  &  \leq C_{\operatorname*{CZ}%
}\left\vert x-y\right\vert ^{-n},\label{size and smoothness}\\
\left\vert \nabla_{x}K\left(  x,y\right)  \right\vert +\left\vert \nabla
_{y}K\left(  x,y\right)  \right\vert  &  \lesssim C_{\operatorname*{CZ}%
}\left\vert x-y\right\vert ^{-n-1},\nonumber
\end{align}
and if $T$ is a bounded linear operator on unweighted $L^{2}\left(
\mathbb{R}^{n}\right)  $, we say that $T$ is \emph{associated with} the kernel
$K$ if%
\[
Tf\left(  x\right)  =\int K\left(  x,y\right)  f\left(  y\right)
dy,\ \ \ \ \ \text{for all }x\in\mathbb{R}^{n}\smallsetminus
\operatorname*{supp}f,
\]
and we refer to such operators as \emph{Calder\'{o}n-Zygmund operators}. Note
in particular that a Calder\'{o}n-Zygmund operator $T$ is bounded on
unweighted $L^{2}\left(  \mathbb{R}^{n}\right)  $ by definition. Following
\cite[(39) on page 210]{Ste2}, we say that a Calder\'{o}n-Zygmund operator $T$
is \emph{elliptic in the sense of Stein} if there is a unit vector
$\mathbf{u}_{0}\in\mathbb{R}^{n}$ and a constant $c>0$ such that%
\[
\left\vert K\left(  x,x+t\mathbf{u}_{0}\right)  \right\vert \geq c\left\vert
t\right\vert ^{-n},\ \ \ \ \ \text{for all }t\in\mathbb{R},
\]
where $K\left(  x,y\right)  $ is the kernel of $T$.

\add{Note that a function $\mathcal{F}$ being $\mathcal{X}$-stable means the estimate (\ref{FG}) holds across \emph{all} measure pairs and \emph{all} functions in the class $\mathcal{X}$, while to show a function $\mathcal{F}$ is \emph{not} $\mathcal{X}$-stable, it suffices to construct \emph{ a sequence} of measure pairs and \emph{ a sequence } of functions in $\mathcal{X}$ for which the arguments of $\mathcal{G}$ in (\ref{FG}) remain bounded, but $\mathcal{F}$ diverges to $\infty$, i.e., (\ref{FG}) fails for any choice of $\mathcal{G}$. For this last point, in this paper we will always prove instability via this last strategy. In this paper, we consider norms as in (\ref{two weight norm}) for $p=q=2$.}

\begin{theorem}
\label{stab}The two weight operator norms for individual Riesz transforms
$R_{j}$, and more generally any Stein elliptic Calder\'{o}n-Zygmund operator,
are biLipschitz stable on $\mathcal{S}_{A_{\infty}}$. The individual Riesz
transforms, as well as iterated Riesz transforms of odd order, are not even
\emph{rotationally} stable on $\mathcal{S}_{\operatorname*{doub}}$, and even
when the measures \add{are restricted to }have doubling constants $C_{\operatorname*{doub}}$
arbitrarily close to $2^{n}$.
\end{theorem}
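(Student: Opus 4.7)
The plan handles the two halves of the theorem by different mechanisms: a $T1$-type characterization for the stability half on $\mathcal{S}_{A_{\infty}}$, and an explicit self-similar construction for the instability half on $\mathcal{S}_{\operatorname*{doub}}$.

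\textbf{Stability.} I would invoke a $T1$-type characterization along the lines of \cite{AlSaUr} expressing $\mathfrak{N}_{T}(\sigma,\omega)$, for a Stein elliptic Calder\'{o}n-Zygmund operator $T$ on a pair $(\sigma,\omega)\in\mathcal{S}_{A_{\infty}}\times\mathcal{S}_{A_{\infty}}$ and up to constants depending on $[\sigma]_{A_{\infty}}$ and $[\omega]_{A_{\infty}}$, in terms of $A_{2}(\sigma,\omega)^{1/2}$ together with cube-testing quantities $\mathfrak{T}_{T}(\sigma,\omega)$ and $\mathfrak{T}_{T^{\ast}}(\omega,\sigma)$. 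It then suffices to show each of these ingredients is biLipschitz stable on $\mathcal{S}_{A_{\infty}}$. The $A_{\infty}$ characteristic (\ref{A infinity char}) is classically biLipschitz stable via its reverse-H\"{o}lder formulation combined with the quasiconformality sandwich $P\subset\Phi^{-1}Q\subset\rho P$ of \cite[Lemma 3.4.5]{AsIwMa}; biLipschitz stability of $A_{2}$ is already recorded in (\ref{A2 stab}). For the testing conditions, change of variables rewrites $\int_{Q}|T(\mathbf{1}_{Q}\Phi_{\ast}\sigma)|^{2}\,d\Phi_{\ast}\omega$ as the integral over $\Phi^{-1}Q$ of $|T_{\Phi}(\mathbf{1}_{\Phi^{-1}Q}\sigma)|^{2}$ against $\omega$, where $T_{\Phi}$ is the conjugated operator with kernel $K_{\Phi}(x,y)=K(\Phi(x),\Phi(y))$. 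The quasiconformality sandwich plus doubling (automatic from $A_{\infty}$) lets us replace $\Phi^{-1}Q$ by a genuine cube up to constants depending on $\Vert\Phi\Vert_{\operatorname*{biLip}}$ and the $A_{\infty}$ data, and since $K_{\Phi}$ still obeys the CZ bounds (\ref{size and smoothness}) with constants controlled by $\Vert\Phi\Vert_{\operatorname*{biLip}}$, the testing integral for $T_{\Phi}$ is dominated by the corresponding testing integral for $T$ plus an $A_{2}$-type remainder absorbable into the characterization. A subtlety here is that $T_{\Phi}$ need \emph{not} be Stein elliptic, since biLipschitz distortion rotates the distinguished direction $\mathbf{u}_{0}$ point-by-point; however, Stein ellipticity is only used to produce testing constants from the norm, and only upper CZ bounds on $K_{\Phi}$ are needed in the direction we actually propagate, namely from $(\sigma,\omega)$ to $(\Phi_{\ast}\sigma,\Phi_{\ast}\omega)$.

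\textbf{Instability.} I would construct, for each $\epsilon>0$, doubling measures $\sigma,\omega$ on $\mathbb{R}^{n}$ with $C_{\operatorname*{doub}}(\sigma),C_{\operatorname*{doub}}(\omega)<2^{n}+\epsilon$ and a rotation $\Phi$ such that $\mathfrak{N}_{T}(\sigma,\omega)<\infty$ while $\mathfrak{N}_{T}(\Phi_{\ast}\sigma,\Phi_{\ast}\omega)=\infty$, where $T$ is either an individual Riesz transform or a prescribed iterated Riesz transform of odd order. The construction uses a Fefferman--Muckenhoupt-type self-similar doubling-but-not-$A_{\infty}$ weight on the real line whose doubling constant is tuned arbitrarily close to $2$, lifted to $\mathbb{R}^{n}$ along one coordinate axis so that the resulting $n$-dimensional measures are doubling with constant arbitrarily close to $2^{n}$. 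The axis alignment is exploited through the odd parity of the kernel of $T$ under reflection in the distinguished axis: on axis-symmetric $\sigma,\omega$ the principal contribution to the testing functional cancels, and what survives is controlled by lower-order interactions that can be bounded directly. After any generic rotation, the axis symmetry is destroyed, and by slicing along fibers parallel to the rotated axis the two-weight inequality reduces to a one-variable Hilbert-transform inequality against the non-$A_{\infty}$ weight on the line, which fails by the classical Fefferman--Muckenhoupt mechanism of \cite{FeMu}.

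The step I expect to be hardest is the quantitative verification of boundedness on the \emph{unrotated} pair in the instability half. Reflection-symmetry cancellation gives only a formal identity, and one must carefully estimate the remainder terms coming from the fractal self-similar structure of $\sigma,\omega$. I anticipate this will be accomplished by verifying the testing conditions of \cite{AlSaUr} directly on the support of $\sigma,\omega$, with the axis symmetry killing the principal terms and doubling absorbing the remainders; the $A_{2}$ contribution stays finite throughout because the two-weight $A_{2}$ constant does not require either weight to be $A_{\infty}$ individually.
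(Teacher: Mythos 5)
Both halves of your outline have genuine gaps. In the stability half, the decisive step is your claim that the testing integral for the conjugated operator $T_{\Phi}$ (over the distorted sets $\Phi^{-1}Q$) is ``dominated by the corresponding testing integral for $T$ plus an $A_{2}$-type remainder.'' No such comparison holds in general: $T_{\Phi}$ and $T$ share only the size and smoothness bounds (\ref{size and smoothness}), and testing constants are extremely sensitive to the particular kernel --- indeed the instability half of the theorem is precisely the statement that conjugating by a mere rotation can send testing constants from finite to infinite on a fixed doubling pair, so any argument transferring testing from $T$ to $T_{\Phi}$ must use the $A_{\infty}$ hypothesis in an essential quantitative way, which your sketch never does. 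The paper's route avoids testing altogether: by Neugebauer's insertion lemma one finds $W\in A_{2}$ with $c_{1}\omega\leq W\leq c_{2}\sigma^{-1}$, and the Coifman--Fefferman weighted estimate then gives $\mathfrak{N}_{T}(\sigma,\omega)\lesssim\sqrt{A_{2}(\sigma,\omega)}$ up to powers of $[\sigma]_{A_{\infty}},[\omega]_{A_{\infty}}$ (Lemma \ref{Neugebauer_A2_iff}), with Stein ellipticity used only for the converse; stability then follows from the elementary biLipschitz stability of $A_{2}$ and of the $A_{\infty}$ characteristics, as in (\ref{A2 stab}). Your approach could only be repaired by proving that for $A_{\infty}$ pairs the testing constants are comparable to $\sqrt{A_{2}}$ --- which is essentially that same lemma, not a consequence of a change of variables in the testing integral.

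In the instability half, neither of your two key mechanisms is substantiated, and I do not believe either survives scrutiny. First, ``axis-symmetry kills the principal terms'' is only a formal parity remark; an odd kernel applied to a reflection-symmetric measure does not produce the quantitative control of $\int_{Q}|T(\mathbf{1}_{Q}\sigma)|^{2}d\omega$ needed for the testing conditions of \cite{AlSaUr}, and your sketch gives no estimate of the ``remainders from the fractal structure.'' Second, after rotation the two-weight inequality for a Riesz transform on $\mathbb{R}^{n}$ does not ``reduce by slicing along fibers'' to a one-dimensional Hilbert transform inequality: the kernel couples the variables, and establishing any such reduction is exactly the hard analytic content that the paper handles with Lemma \ref{rep} and the weak/strong convergence results of Section \ref{section:action_of_Riesz}, which require highly oscillatory alternating functions $s_{k}^{Q,\operatorname{horizontal}}$ and careful limits, not a lifted Fefferman--Muckenhoupt weight. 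Moreover a doubling-but-not-$A_{\infty}$ weight does not by itself defeat a \emph{two-weight} inequality; you must exhibit a pair with finite two-weight $A_{2}$ for which one rotation of the operator satisfies the testing conditions (hence, via a $T1$ theorem, the norm inequality) while another rotation fails them, and you must explain why odd order is essential --- the paper does this through the Nazarov--Volberg Bellman construction, the supervisor/transplantation map, the strong vanishing of $R_{2}s_{k}^{Q,\operatorname{horizontal}}$, the $T1$ theorem of \cite{AlSaUr} to pass from testing to norm bounds, and the identity $R_{1}^{2}+\cdots+R_{n}^{2}=-I$ together with a rotation lemma for homogeneous polynomials to treat general odd iterated Riesz transforms. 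Your proposal contains no substitute for any of these steps.
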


In fact, we can prove the following stronger rotational instability for
iterated Riesz transforms of odd order, which in particular shows that
instability can hold for rotations arbitrarily close to the identity.

\begin{theorem}
\label{prop:perturbations_rotation} Iterated Riesz transforms of odd order are
unstable on $\mathcal{S}_{\operatorname*{doub}}$ under a set of rotations
having full measure.
\end{theorem}

In contrast to the instability assertions in these theorems, most positive
operators, such as maximal functions and fractional integral operators, are
easily seen to be biLipschitz stable on $\mathcal{S}_{A_{p}}$, $\mathcal{S}%
_{A_{\infty}}$, $\mathcal{S}_{\operatorname*{doub}}$ and $\mathcal{S}%
_{\operatorname*{lfpB}}$.

For example, if $T=I_{\alpha}$ is the fractional integral of order
$0<\alpha<n$, and if $\Phi:\mathbb{R}^{n}\rightarrow\mathbb{R}^{n}$ is
biLipschitz, then%
\begin{align*}
\left\Vert T_{\Phi_{\ast}\sigma}f\right\Vert _{L^{2}\left(  \Phi_{\ast}%
\omega\right)  }^{2}  &  =\int_{\mathbb{R}^{n}}\left\vert \int_{\mathbb{R}%
^{n}}\left\vert x-y\right\vert ^{\alpha-n}f\left(  y\right)  d\Phi_{\ast
}\sigma\left(  y\right)  \right\vert ^{2}d\Phi_{\ast}\omega\left(  x\right) \\
&  =\int_{\mathbb{R}^{n}}\left\vert \int_{\mathbb{R}^{n}}\left\vert \Phi
^{-1}x-\Phi^{-1}y\right\vert ^{\alpha-n}f\left(  \Phi^{-1}y\right)
d\sigma\left(  y\right)  \right\vert ^{2}d\omega\left(  x\right) \\
&  \approx\int_{\mathbb{R}^{n}}\left\vert \int_{\mathbb{R}^{n}}\left\vert
x-y\right\vert ^{\alpha-n}\left(  f\circ\Phi^{-1}\right)  \left(  y\right)
d\sigma\left(  y\right)  \right\vert ^{2}d\omega\left(  x\right)  =\left\Vert
T_{\sigma}\left(  f\circ\Phi^{-1}\right)  \right\Vert _{L^{2}\left(
\omega\right)  }^{2}%
\end{align*}
and%
\[
\left\Vert f\right\Vert _{L^{2}\left(  \Phi_{\ast}\sigma\right)  } ^\add{2}%
=\int_{\mathbb{R}^{n}}\left\vert f\left(  y\right)  \right\vert ^{2}%
d\Phi_{\ast}\sigma\left(  y\right)  =\int_{\mathbb{R}^{n}}\left\vert f\left(
\Phi^{-1}y\right)  \right\vert ^{2}d\sigma\left(  y\right)  =\left\Vert
f\circ\Phi^{-1}\right\Vert _{L^{2}\left(  \sigma\right)  }^{2}\ .
\]
A similar proof holds for the case when $T$ is a fractional maximal operator
of order $0\leq\alpha<n$.

\subsection{History of stability\label{history subsection}}

The class of Calder\'{o}n-Zygmund kernels $K\left(  x,y\right)  $ satisfying
(\ref{size and smoothness}) has long been known to be invariant under
biLipschitz change of variable $x=\Phi\left(  u\right)  $. For example, if
$K_{\Phi}\left(  u,v\right)  =K\left(  \Phi\left(  u\right)  ,\Phi\left(
v\right)  \right)  $, then the chain rule gives
\[
\left\vert \nabla_{u}K_{\Phi}\left(  u,v\right)  \right\vert =\left\vert
D\Phi\left(  u\right)  \left(  \nabla_{x}K\right)  \left(  u,v\right)
\right\vert \lesssim\left\Vert D\Phi\right\Vert _{\infty}C_{\operatorname*{CZ}%
}\left\vert u-v\right\vert ^{-n-1}\leq\left\Vert \Phi\right\Vert
_{\operatorname*{biLip}}C_{\operatorname*{CZ}}\left\vert u-v\right\vert
^{-n-1}.
\]
It follows that if a Calder\'{o}n-Zygmund operator $T$ associated with such a
kernel $K$ satisfies the two weight norm inequality (\ref{two weight norm}),
then the pullback $T_{\Phi}$ with kernel $K_{\Phi}$ is also a
Calder\'{o}n-Zygmund operator (by a simple change of variables using that the
Jacobian of $\Phi$ is bounded between two positive constants), and satisfies
the inequality (\ref{two weight norm}) with the pair of measures $\left(
\sigma,\omega\right)  $ replaced by the pair of pushforwards $\left(
\Phi_{\ast}\sigma,\Phi_{\ast}\omega\right)  $. This raises the question of
when $T$ itself satisfies (\ref{two weight norm}) with the pair of
pushforwards $\left(  \Phi_{\ast}\sigma,\Phi_{\ast}\omega\right)  $ when
$\Phi$ is biLipschitz. Roughly speaking, our results show that the answer is
\textbf{yes} if the measures $\sigma,\omega$ are $A_{\infty}$ weights, but
\textbf{no} in general if the measures $\sigma,\omega$ are just doubling.

In \cite{LaSaUr}, it was mentioned that the two weight norm inequality for the
Hilbert transform is \textquotedblleft unstable,\textquotedblright\ in the
sense that for $\omega$ equal to the Cantor measure, and $\sigma$ an
appropriate choice of weighted point masses in each removed middle third, the
norm of the operator could go from finite to infinite with just arbitrarily
small perturbations of the locations of the point masses, while the
$\mathcal{A}_{2}$ condition remained in force. In the appendix, we use this
example to show that the Hilbert transform is two weight norm \emph{unstable}
under biLipschitz pushforwards of arbitrary measure pairs, and this
instability extends to Riesz transforms in higher dimensions in a
straightforward way. Thus the Riesz transforms in higher dimensions are
biLipschitz \emph{unstable} on arbitrary weight pairs, something which already
shows that the more familiar bump-type conditions, e.g. \cite[Theorem 3]{Neu},
cannot characterize the two-weight problem for Riesz transforms alone.

On the other hand, we show below that Riesz transforms are biLipschitz stable
under pairs of $A_{\infty}$ weights. So on one hand, for pairs of arbitrary
measures we have instability, and on the other hand for pairs of $A_{\infty}$
weights, we have stability. This begs the question, what side-conditions on
the weights in our weight pairs will give stability/instability for Riesz
transforms? Now it is trivial that $A_{\infty}$ weights are doubling weights,
but it wasn't until the famous construction of Fefferman and Muckenhoupt in
\cite{FeMu} that one knew the two classes were in fact different. Because of
this, doubling is often considered to be the next more general condition on a
weight than $A_{\infty}$.

The main result of this paper is that individual Riesz transforms are
biLipschitz - and even \emph{rotationally} - unstable for pairs of doubling
weights. This provides an operator-theoretic means of distinguishing
$A_{\infty}$ weights from doubling weights, sharpening the result of Fefferman
and Muckenhoupt, by showing that stability differentiates the two classes.

\subsubsection{Our methods and their history}

In 1976, Muckenhoupt and Wheeden showed in \cite{MuWh} that the two-weight
norm inequality for the maximal function $M$ implies the one-tailed
$\mathcal{A}_{2}$ condition, and conjectured that it was sufficient. Then in
1982, the third author disproved that conjecture in \cite{Saw1} by starting
with a pair of simple radially decreasing weights $V,U$ constructed by
Muckenhoupt in \cite{Muc}, that were essentially constant on dyadic intervals
$I_{k}=[2^{-k-1},2^{-k}]$ and failed the two weight inequality for $M$. Then
the weights were \emph{disarranged} into weights $v,u$, i.e. dilates and
translates of the weights restricted to the dyadic intervals $I_{k}$ were
essentially redistributed onto the unit interval $\left[  0,1\right]  $
according to a self-similar \textquotedblleft
transplantation\textquotedblright\ rule. The resulting weights satisfied the
one-tailed $\mathcal{A}_{2}$ condition on $\left[  0,1\right]  $ but failed
the two-weight norm inequality for $M$.\footnote{The reader can easily check
that for a discretized version of these weights, the dyadic square function
defined in Section \ref{section:dyadic_prelims} also has infinite two-weight
norm.} However, such weights were not doubling, as follows from calculations in \cite{Saw1}. This significant obstacle remained until the pioneering work of
Nazarov \cite{Naz} and \cite{NaVo}, to which we now turn.

Some years later, Treil and Volberg showed in \cite{TrVo} that the two-weight
norm inequality for the Hilbert transform $H$ implies the two-tailed
$\mathcal{A}_{2}$ condition, and Sarason conjectured the two-tailed condition was also sufficient
\cite[s. 7.9]{HaNi}. Shortly after that, Nazarov disproved the conjecture in
\cite{Naz} (which we were unable to locate till very recently, using the
references in \cite{KaTr}), even using \emph{doubling} weights, in a beautiful
proof involving the Bellman technique and a brilliant supervisor, or
remodeling, argument - see also \cite{NaVo} for the details. This use of
doubling weights here turns out to be crucial for our purposes. More
specifically, Nazarov's method consisted of first using the Bellman technique
in a delicate argument to construct a weight pair $\left(  V,U\right)  $ on
$\mathbb{T}$ that failed to satisfy the two weight inequality for the discrete
Hilbert transform, but satisfied both dyadic doubling, with constant
arbitrarily close to that of Lebesgue, and dyadic $A_{2}$. Then he
transplanted highly oscillating functions according to a certain self-similar
`supervisor' rule having roots in \cite{Bou}, that resulted in a pair of
weights $\left(  v,u\right)  $ on $\mathbb{T}$ that satisfied the two-tailed
$\mathcal{A}_{2}$ condition, with doubling constant arbitrarily close to that
of Lebesgue measure, and for which the testing condition was increasingly
unbounded. Nazarov's argument requires the clever use of highly oscillatory
functions in order to deal with the singularity of the Hilbert transform, and
the use of holomorphic function theory to prove weak convergence results for
these increasingly oscillatory functions.

Very recently, it has come to our attention that Kakaroumpas and Treil
extended Nazarov's results to $p\neq2$ using a non-Bellman and `remodeling'
construction \cite{KaTr}. More precisely, Kakaroumpas and Treil first began
with a pair of discretized weights with the $A_{p}$ condition under control, a
bilinear form involving the Haar shift having increasingly large norm, but
doubling constant just as large. They then apply an iterative
\emph{disarrangement} of these weights to then construct new weights for which
the $A_{p}$ condition and the norm of the bilinear form remain essentially
unchanged, but the dyadic doubling constant of the weights is much closer to
that of Lebesgue measure. This clever disarrangement is \add{one of the} innovative idea\add{s}
which replaces Nazarov's Bellman construction, and provides weights for which
one can compute explicit quantities. It is possible that our Riesz transform
results can be proved using the Haar shift scheme of Kakaroumpas and Treil in
place of the square function scheme of Nazarov, but we have not checked the details.

Note that the rotational stability problem is only significant in 
dimension \add{two or higher,} since in one dimension the only rotation is reflection about the
origin, and that preserves the Hilbert transform. Our proof of rotational
instability in higher dimensions begins by using the Bellman construction in
\cite{NaVo}, and is then inspired by Nazarov and Volberg's supervisor argument
with highly oscillatory functions. In particular, we extend Nazarov's
supervisor/remodelling construction to higher dimensions, which we call
\textquotedblleft transplantation\textquotedblright,\ and which makes explicit how $v,u$ are constructed by \emph{transplanting\ averages} of $V,U$. 

We also need to extend Nazarov's weak convergence results to higher
dimensions, where holomorphic function theory is no longer available. This
requires \add{the} new arguments \add{in Section \ref{section:action_of_Riesz}}, comprising much of the technical difficulty of the
present paper. We must also prove\ that testing conditions hold at all scales
for one of the Riesz transforms, something not considered in \cite{NaVo}.
Finally, in the Appendix, we provide proofs of those portions of the supervisor argument required for our theorem that not detailed in
\cite{NaVo}; one may also consult \cite{KaTr} for additional arguments.

\begin{remark}
In our construction, we show that a given iterated Riesz transform $T_{0}$ of
order $N=2m+1$ fails one of the testing conditions, while all other iterated
Riesz transforms $T$ of order $N=2m+1$ satisfy both testing conditions. Thus
at this point, we have doubling measures satisfying the $A_{2}$ condition with doubling constant arbitrarily close to that of Lebesgue measure and
both testing conditions for $T$. We now need to conclude that $T$ is two
weight bounded. Since the doubling constants can be taken arbitrarily close to that of Lebesgue measure, then the
$A_{2}$ condition implies the classical energy condition \cite{Gri}, and so
\add{one can apply the T1 theorem of \cite{SaShUr10}; see Theorem \ref{thm:starting_T1_thm} below for a more precise statement and proof.}
\end{remark}

\subsection{Proof of Stability}

We present here a simple proof of stability in Theorem \ref{stab}, using a few
classical facts on weights from \cite{Neu} and \cite{CoFe}. The case of
$A_{\infty}$ weights in Lemma \ref{Neugebauer_A2_iff} below is folklore from
decades ago, but seems to have first been recorded in Hyt\"{o}nen and Lacey
\cite{HyLa}, where they also prove a sharp dependence on the characteristics
using much deeper tools. We begin with the following lemma of Neugebauer.

\begin{lemma}
[{\cite[Theorem 3]{Neu}}]Let $\left(  u,v\right)  $ be a pair of nonnegative
functions. Then there exists $W\in A_{p}$ with $c_{1}u\leq W\leq c_{2}v$ if
and only if there is $r>1$ such that%
\[
\sup_{Q}\left(  \frac{1}{\left\vert Q\right\vert }\int_{Q}u^{r}\right)
\left(  \frac{1}{\left\vert Q\right\vert }\int_{Q}v^{r\left(  1-p^{\prime
}\right)  }\right)  ^{p-1}<\infty.
\]

\end{lemma}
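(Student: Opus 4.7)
The proof naturally splits into two directions.

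\emph{Forward direction.} Suppose $w\in A_{p}$ with $c_{1}u\le w\le c_{2}v$. I would apply the self-improvement property of $A_{p}$ weights: both $w$ and $w^{1-p'}$ lie in $A_{\infty}$ and so satisfy a reverse H\"older inequality, producing an exponent $r>1$ (depending only on $p$, $n$, and $[w]_{A_{p}}$) for which
\[
\left(\tfrac{1}{|Q|}\int_{Q}w^{r}\right)^{1/r}\lesssim \tfrac{1}{|Q|}\int_{Q}w,\qquad \left(\tfrac{1}{|Q|}\int_{Q}w^{r(1-p')}\right)^{1/r}\lesssim \tfrac{1}{|Q|}\int_{Q}w^{1-p'}.
\]
Multiplying these two estimates and invoking $[w]_{A_{p}}<\infty$ yields an ``$r$-improved $A_{p}$ condition'' for $w$. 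The sandwich $c_{1}u\le w\le c_{2}v$ then gives $u^{r}\le c_{1}^{-r}w^{r}$ and, since $1-p'<0$, also $v^{r(1-p')}\le c_{2}^{r(p'-1)}w^{r(1-p')}$, so plugging these pointwise bounds into the $r$-improved inequality produces the condition of the lemma for $(u,v)$.

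\emph{Reverse direction: construction.} Given the condition for some $r>1$, I would build a candidate $w$ from two classical ingredients. The first is the Coifman--Rochberg theorem, which says $(Mg)^{\delta}\in A_{1}$ whenever $0<\delta<1$ and $Mg<\infty$ a.e. The second is Jones's factorization, which writes every $A_{p}$ weight as $w_{0}\cdot w_{1}^{1-p}$ with $w_{0},w_{1}\in A_{1}$. Since $r>1$, the exponent $\delta=1/r$ lies in $(0,1)$, which suggests the ansatz
\[
w:=\bigl(Mu^{r}\bigr)^{1/r}\cdot\bigl(Mv^{r(1-p')}\bigr)^{(1-p)/r}.
\]
Coifman--Rochberg places each of $(Mu^{r})^{1/r}$ and $(Mv^{r(1-p')})^{1/r}$ in $A_{1}$ with constants depending on $n$ and $r$, so Jones factorization immediately gives $w\in A_{p}$ with $A_{p}$-constant controlled in terms of $n$, $p$, $r$, and the hypothesis constant.

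\emph{Reverse direction: the sandwich estimates (main obstacle).} The remaining and most delicate task is to verify $c_{1}u\le w\le c_{2}v$ with constants depending only on $p$, $r$, and the hypothesis constant $K$. Lebesgue differentiation supplies one side of each inequality: $u\le (Mu^{r})^{1/r}$ a.e., and using $1-p'<0$ also $(Mv^{r(1-p')})^{(p-1)/r}\le v^{-1}$ a.e. These one-sided controls must then be coupled with the hypothesis
\[
\left(\tfrac{1}{|Q|}\int_{Q}u^{r}\right)^{1/r}\left(\tfrac{1}{|Q|}\int_{Q}v^{r(1-p')}\right)^{(p-1)/r}\le K^{1/r},
\]
evaluated at cubes that nearly realise the respective maximal functions. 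The heart of the difficulty is that the cube almost attaining $Mu^{r}(x)$ may differ from the cube almost attaining $Mv^{r(1-p')}(x)$, whereas the hypothesis only controls their product on a \emph{single common} cube. Overcoming this mismatch --- by selecting a common scale and absorbing the resulting discrepancy into the sandwich constants --- is where the bulk of the work lies, and is the key step enabling the biLipschitz stability applications of the lemma made in the rest of the paper.
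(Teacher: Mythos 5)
Your forward direction is fine (reverse H\"older for $w$ and $w^{1-p'}$ plus the pointwise sandwich is exactly how that implication goes), but the reverse direction has a genuine gap, and in fact the specific ansatz $w=(Mu^{r})^{1/r}\,(Mv^{r(1-p')})^{(1-p)/r}$ cannot work: the sandwich $c_{1}u\le w\le c_{2}v$, which you explicitly defer, is false for this $w$, and even the preliminary claims fail because the hypothesis does not guarantee $Mu^{r}<\infty$ or $Mv^{r(1-p')}<\infty$ a.e. (Coifman--Rochberg needs this). Concretely, take $n=1$, $p=2$. (i) If $u=v=|x|^{a}$ with $0<ar<1$, the hypothesis is just $|x|^{ar}\in A_{2}$, hence holds, yet $M(u^{r})=M(|x|^{ar})\equiv\infty$ (averages over large intervals blow up), so your $w\equiv\infty$ and the upper bound fails, while the lemma's conclusion holds trivially with $w=|x|^{a}$. (ii) If $u=\mathbf{1}_{[0,1]}$ and $v=1$ except $v=1/d_{k}$ on $[d_{k},d_{k}+1]$ with $d_{k}=4^{k}$, the bump hypothesis holds with an absolute constant for every $r\in(1,2]$, but $M(v^{-r})\equiv\infty$ because the dips contribute averages of size about $d_{k}^{r-1}$ over intervals of length $d_{k}$; hence $w\equiv 0$ and $c_{1}u\le w$ fails for every $c_{1}>0$. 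The defect is structural, and it is exactly the mismatch you flag but do not resolve: the hypothesis controls the two averages only jointly on a single common cube, whereas taking the two maximal functions separately decouples them, so distant behavior of one weight can destroy the pointwise comparison no matter how you "choose a common scale."

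For the record, the paper does not reprove this lemma; it quotes Neugebauer's Theorem 3, and its appendix on characteristic estimates tracks the constants in Neugebauer's argument, whose structure is the correct repair of your idea. One first shows that the $r$-bumped condition yields two-weight maximal function bounds in both dual directions (Neugebauer's Lemma 2: a weak-type $L^{1+\epsilon}$ estimate coming from the bumped condition, interpolated against the trivial $L^{\infty}$ bound), and then runs a Rubio de Francia--type iteration with those bounds to manufacture auxiliary functions $w_{1},w_{2}$ adapted to the pair $(u,v)$, so that $W=(Mw_{1})^{\eta}(Mw_{2})^{-\eta}$ for suitable $\eta\in(0,1)$ is sandwiched between $c_{1}u$ and $c_{2}v$ \emph{by construction of the iteration}, while Coifman--Rochberg and the easy direction of factorization give $W\in A_{p}$ with $[W]_{A_{p}}$ controlled by the hypothesis constant. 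So your mechanism for $A_{p}$ membership matches the real proof, but the auxiliary functions must come from the iteration built on the two-weight $M$ bounds, not from the maximal functions of the raw data $u^{r}$ and $v^{r(1-p')}$, and the sandwich cannot be obtained from Lebesgue differentiation alone.
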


Recall that a weight $w$ is a weak $A_{\infty}$ weight, written $w\in
\operatorname*{weak}A_{\infty}$, if any of the following equivalent conditions
hold for all cubes $Q$ and subsets $E$ (see e.g.\cite{Saw}):%
\begin{align}
\exists R &  <\infty\text{ and }\phi\left(  t\right)  \nearrow\text{ with
}\lim_{t\searrow0}\phi\left(  t\right)  =0\text{ such that }\frac{\left\vert
E\right\vert _{w}}{\left\vert RQ\right\vert _{w}}\leq\phi\left(
\frac{\left\vert E\right\vert }{\left\vert Q\right\vert }\right)
,\label{weak A}\\
\forall R &  >1,\exists C,\varepsilon>0\text{ such that }\frac{\left\vert
E\right\vert _{w}}{\left\vert RQ\right\vert _{w}}\leq C\left(  \frac
{\left\vert E\right\vert }{\left\vert Q\right\vert }\right)  ^{\varepsilon
},\nonumber\\
\exists r &  >1\text{ such that }\left(  \int_{Q}w^{r}\right)  ^{\frac{1}{r}%
}\leq\frac{1}{\left\vert 2Q\right\vert }\int_{2Q}w.\nonumber
\end{align}

\begin{lemma}
[{\cite[see 1.2 Theorem]{HyLa}}]\label{Neugebauer_A2_iff} Suppose that $T$ is
a sufficiently regular\footnote{see 6.13 on page 221 of \cite{Ste2} for
definitions, and for the nature of the `sufficiently regular' assumption.}
Calder\'{o}n-Zygmund operator, and that both $\omega$ and $\sigma$ are
$\operatorname*{weak}A_{\infty}$ weights. Then $T$ satisfies the two weight
norm inequality%
\[
\left\Vert T_{\sigma}f\right\Vert _{L^{2}\left(  \omega\right)  }^{2}\leq
C\left\Vert f\right\Vert _{L^{2}\left(  \sigma\right)  }^{2},
\]
if $A_{2}\left(  \sigma,\omega\right)  <\infty$.
\end{lemma}

\begin{proof}
Since $\sigma$ and $\omega$ each satisfy a weak reverse H\"{o}lder condition
(the third line in (\ref{weak A})) for some $r>1$, we have
\[
A_{2,r}\left(  \sigma,\omega\right)  \equiv\sup_{Q}\left(  \frac{1}{\left\vert
Q\right\vert }\int_{Q}\omega^{r}\right)  ^{\frac{1}{r}}\left(  \frac
{1}{\left\vert Q\right\vert }\int_{Q}\sigma^{r}\right)  ^{\frac{1}{r}}%
\lesssim\sup_{Q}\left(  \frac{1}{\left\vert 2Q\right\vert }\int_{2Q}%
\omega\right)  \left(  \frac{1}{\left\vert 2Q\right\vert }\int_{2Q}%
\sigma\right)  =A_{2}\left(  \sigma,\omega\right)  .
\]
Now we apply Neugebauer's lemma with $p=2$ to the weight pair $\left(
u,v\right)  =\left(  \omega,\sigma^{-1}\right)  $ to obtain that there exists
$W\in A_{2}$ with $c_{1}\omega\left(  x\right)  \leq W\left(  x\right)  \leq
c_{2}\sigma\left(  x\right)  ^{-1}$. Then the extension of the weighted
inequality of Coifman and Fefferman \cite{CoFe} for Calder\'{o}n-Zygmund
operators given in \cite[6.13 on page 221]{Ste2} shows that%
\begin{align*}
&  \left\Vert T_{\sigma} f\right\Vert _{L^{2}\left(  \omega\right)  }^{2}\leq c_1^{-1} \left\Vert
T_{\sigma} f\right\Vert _{L^{2}\left(  W\right)  }^{2}\leq C c_1 ^{-1} \left\Vert f \sigma \right\Vert
_{L^{2}\left(  W\right)  }^{2}\leq C c_{1}^{-1} c_2 \left\Vert f \sigma \right\Vert _{L^{2}\left(
\sigma^{-1}\right)  }^{2},\\
&  \ \ \ \ \ \ \ \ \ \ \ \ \ \ \text{i.e. }\left\Vert T_{\sigma}f\right\Vert
_{L^{2}\left(  \omega\right)  }^{2}\leq C c_1^{-1} c_2  \left\Vert f\right\Vert
_{L^{2}\left(  \sigma\right)  }^{2}\ ,
\end{align*}
for all Calder\'{o}n-Zygmund operators $T$.
\end{proof}

\begin{remark}
Define a measure pair $\left(  \sigma,\omega\right)  $ to be \emph{universal}
(for boundedness of smooth Stein-elliptic Calder\'{o}n-Zygmund operators) if a smooth Stein-elliptic Calder\'{o}n-Zygmund operator $T$ is bounded from $L^{2}\left(  \sigma\right)
$ to $L^{2}\left(  \omega\right)  $ if and only if all such operators are so
bounded. Lemma \ref{Neugebauer_A2_iff} above shows that pairs of $A_{\infty}$
weights are universal, and Theorem \ref{stab} above shows that not all pairs
of doubling measures are universal.
\end{remark}

\begin{proof}
[Proof of stability in Theorem \ref{stab}] Suppose the norm inequality $\left\Vert T_{\sigma}f\right\Vert _{L^{2}\left(
\omega\right)  }^{2}\leq\mathfrak{N}_{T}\left(  \sigma,\omega\right)
^{2}\left\Vert f\right\Vert _{L^{2}\left(  \sigma\right)  }^{2}$ holds for a
Calder\'{o}n-Zygmund operator $T$ associated with a kernel $K$, and a pair of
$A_{\infty}$ weights $\left(  \sigma,\omega\right)  $. Since (\ref{A2 stab})
implies the biLipschitz stability of $A_{2}\left(  \sigma,\omega\right)  $,
and since the $A_{\infty}$-characteristics $[\sigma]_{A_{\infty}}$ and
$[\omega]_{A_{\infty}}$ are easily seen to be biLipschitz stable as well (in
fact they are stable under the more general class of quasiconformal change of
variables \cite[Theorem 2]{Uch}), we conclude that the norm inequality also
holds for the Calder\'{o}n-Zygmund operator $T_{\Phi}$ with kernel%
\[
K_{\Phi}\left(  x,y\right)  \equiv K\left(  \Phi\left(  x\right)  ,\Phi\left(
y\right)  \right)  .
\]
As mentioned at the beginning of Subsection \ref{history subsection},
$T_{\Phi}$ is a Calder\'{o}n-Zygmund operator whenever $T$ is, i.e. satisfies
\add{(\ref{size and smoothness})} and is bounded on unweighted $L^{2}\left(
\mathbb{R}^{n}\right)  $. Thus we conclude from Lemma \ref{Neugebauer_A2_iff}
that $T$ is bounded on the weight pair $\left(  \Phi_{\ast}\sigma,\Phi_{\ast
}\omega\right)  $.

We can also be more precise in our proof of stability, since \cite[Theorem
1.2]{HyLa} implies that the function
\[
\mathcal{G}(w,x,y,z)\equiv Cw^{\alpha_{\mathcal{X}}}x\left(  y^{\beta
_{\mathcal{X}}}+z^{\beta_{\mathcal{X}}}\right)  \,
\]
satisfies (\ref{FG}) for the functional $\mathcal{F}=\mathfrak{N}_{T}\left(
\sigma,\omega\right)  $, where $\alpha_{\mathcal{X}}$ and $\beta_{\mathcal{X}%
}$ are appropriately chosen \add{exponents}.
\end{proof}

\begin{remark}
Let $T$ be a strongly elliptic vector of Calder\'{o}n-Zygmund operators as in
\cite[see Theorem 2.6.]{SaShUr7}. Then two weight boundedness of $T$ implies
the two weight $A_{2}$ condition \cite[Lemma 4.1]{SaShUr7}. Thus if $\sigma$
and $\omega$ are $\operatorname*{weak}A_{\infty}$ weights, then Lemma
\ref{Neugebauer_A2_iff} shows that the two weight norm inequality for $T$
holds if and only if the $A_{2}$ condition holds. It follows that $T$ is
biLipschitz stable on
\[
\mathcal{S}_{\operatorname*{weak}A_{\infty}}\equiv\left\{  \mu\in
\mathcal{M}:d\mu\left(  x\right)  =u\left(  x\right)  dx\text{ with }%
u\in\operatorname*{weak}A_{\infty}\right\}  .
\]
We do not know if all Stein elliptic Calder\'{o}n-Zygmund operators are
biLipschitz stable on $\mathcal{S}_{\operatorname*{weak}A_{\infty}}$.
\end{remark}

The proof of instability in Theorem \ref{stab} is much more complicated.
\begin{itemize}
    \item In Section \ref{section:dyadic_prelims}, we show there exist dyadically doubling weights $U,V$ on $[0,1]^n$ which fail a square function testing condition.
    \item In Section \ref{section:sprvsr_trans}, we describe Nazarov's ``supervisor'' disarrangement of the weights $U,V$ into doubling weights $u,v$ on $[0,1]^n$, and we see how the weights $u,v$ are a linear combination of the oscillatory functions $s_k ^{\hor, P}$.
    \item In Section \ref{section:action_of_Riesz}, we study how the Riesz transforms interact with these oscillatory functions.
    \item Then in Section \ref{section:full_proof}, we show that the norm inequality for $R_1$ fails on the weights $(v,u)$ by showing the testing condition on $[0,1]$ is at least as large as the square function testing condition for $(V,U)$, while the dyadic testing conditions for $R_2$ holds for the weight pair $(u,v)$. We then extend $u,v$ to all of $\mathbb{R}^n$, and using that $u,v$ are doubling with doubling constant close to that of Lebesgue measure, we get dyadic testing for $R_2$ implies the norm inequality for $R_2$.
    \item In Section
\ref{section:iterated_Riesz}, we then extend our results to show that
individual iterated Riesz transforms of odd order are rotationally unstable.
\end{itemize}   
 
\subsection{Open Problems}

The question of stability of operator norms for singular integrals on weighted
spaces is in general wide open. Here are two instances that might be more accessible.

\begin{enumerate}
\item Only iterated Riesz transforms of \emph{odd} order are treated in
Theorem \ref{stab}. Are Riesz transforms of even order, such as the real and
imaginary parts of the Beurling transform, stable under rotations, or more
generally biLipschitz pushforwards?

\item While the individual Riesz transforms $R_{j}$ are unstable under
rotations of $\mathbb{R}^{n}$, the vector Riesz transform $\mathbf{R}=\left(
R_{1},R_{2},...,R_{n}\right)  $ is clearly rotationally stable since it is
invariant under rotations. In fact, as mentioned at the beginning of the
paper, D\k{a}browski and Tolsa \cite[see the top of page 6]{DaTo}, \cite{Tol2}
have demonstrated biLipschitz stability in the Ahlfors-David one weight
setting for the $1$-fractional vector Riesz transform $\mathbf{R}^{1,n}$. This
motivates the question of whether or not the vector Riesz transform
$\mathbf{R}$ of fractional order $0$ is biLipschitz stable on $\mathcal{S}%
_{\operatorname*{doub}}$ in the two weight setting.
\end{enumerate}

\begin{acknowledgement}
We thank D. Cruz-Uribe, K. Moen and X. Tolsa for valuable comments. \add{We also thank two referees for helpful feedback, which helped make the final version of this paper much more accessible.}
\end{acknowledgement}

\section{Preliminaries: grids, doubling, telescoping identities and dyadic
testing}

\label{section:dyadic_prelims}

We begin by introducing some notation, Haar bases and the telescoping
identity. Then we recall the beautiful Bellman construction used in
\cite{NaVo} to obtain the dyadic weights $V,U$.

\subsection{Notation for grids and cubes}

\add{Given a cube $J$,} let $\mathcal{D}(J)$ denote the collection of dyadic subcubes of $J$, and
for each $m\geq0$ let $\mathcal{D}_{m}(J)$ denote the dyadic subcubes $I$ of
$J$ satisfying $\ell(I)=2^{-m}\ell(J)$. Let $\mathcal{P}(J)$ denote the
collection of subcubes of $J$ with sides parallel to the coordinate axes, and
$\mathcal{P}^{0}\equiv\mathcal{P}([0,1]^{n})$. Unless otherwise specified, any
cube mentioned in this paper is assumed to be axis-parallel, and we denote the collection of such cubes in $\mathbb{R}^n$ by $\mathcal{P}^n$. We also define
$\mathcal{D}^{0}\equiv\mathcal{D}([0,1]^{n})$.

Given a cube $I\subset\mathbb{R}^{n}$, we will use the notational convention
\[
I=I_{1}\times I_{2}\times...\times I_{n}\,.
\]

Given a cube $I \subset\mathbb{R}^{n}$, we let $\mathfrak{C}^{(k)} (I)$ denote
the $k$th generation dyadic grandchildren of $I$, and $\mathfrak{C} (I)
\equiv\mathfrak{C}^{(1)} (I)$. And given a dyadic grid $\mathcal{D}$ and a cube $I$ in the grid, we let
$\pi_{\mathcal{D}} I$ denote the parent of $I$ in $\mathcal{D}$. The same
notation extends to arbitrary grids $\mathcal{K}$, like in Section
\ref{section:sprvsr_trans}, where $\pi_{\mathcal{K}} I$ denote the
$\mathcal{K}$-parent of $I$.

In dimension $1$, given an interval $I \subset\mathbb{R}$, let $I_{-}$ denote the left half and
$I_{+}$ denote the right half; for convenience, given a cube $I \subset
\mathbb{R}^{n}$, we also let $I_{\pm} \equiv\left(  I_{1} \right)  _{\pm}\times I_{2}
\times\ldots\times I_{n}$.

It will also be useful to keep track of the location of the children of $I$ in higher dimensions. In  $\mathbb{R}^{n}$, let $\Theta$ denote the $2^{n}$ locations a dyadic child
cube can be \add{in} relative to its parent. For instance, when $n=2$ we can take
$\Theta\equiv\left\{  \operatorname{NW},\operatorname{NE},\operatorname{SW}%
,\operatorname{SE}\right\}  $ the set of four locations of a dyadic square $Q$
within its $\mathcal{D}$-parent $\pi_{\mathcal{D}}Q$, where $\operatorname{NW}%
$ stands for Northwest, $\operatorname{NE}$ denotes Northeast, etc\dots Given
a cube $I$ and $\theta\in\Theta$, we adopt the notation that $I_{\theta}$
denotes the dyadic child of $I$ at location $\theta$.

As usual we let $\left\vert J\right\vert _{\mu}\equiv\int_{J}d\mu$ for any
positive Borel measure in $\mathbb{R}^{n}$\add{. If $\mu$ is not specified in the subscript, then $|J|$ denotes the Lebesgue measure of $J$. Also define the expectation} $E_{J}\mu\equiv\frac{1}%
{|J|}\int\limits_{J}d\mu$. Given a locally integrable function $U$ in
$\mathbb{R}^{n}$, we often abbreviate the absolutely continuous measure
$U\left(  x\right)  dx$ by $U$ as well. \add{We call $U$ a \emph{weight} if $0<U\left(  x\right)
<\infty$ for all $x\in\mathbb{R}^{n}$.}

\subsection{Doubling}

We say that two distinct cubes $Q_{1}$ and $Q_{2}$ in $\mathbb{R}^{n}$ are
\emph{adjacent} if there exists a cube $Q$ for which $Q_{1}$ and $Q_{2}$ are
dyadic children of $Q$.

\begin{definition}
\label{doubling def}Recall a measure $\mu$ on $\mathbb{R}^{n}$ is
\emph{doubling} if there exists a constant $C$ such that
\[
\mu(2Q)\leq C\mu(Q)\,\text{ for all cubes }Q\,.
\]
The smallest such $C$ is called the doubling constant for $\mu$, denoted
$C_{\operatorname*{doub}}$. Equivalently, if $\mu$ is a doubling measure, then there exists $\lambda\geq1$
such that for any two dyadic children $I$ and $J$ of an arbitrary cube
$K$
\[
\frac{E_I\mu}{E_J\mu}\in(\lambda^{-1},\lambda)\,.
\]
The smallest such $\lambda$, denoted $\lambda_{\operatorname*{adj}}$ is
referred to as the \emph{doubling ratio} or \emph{adjacency constant }of $\mu
$.

One may also consider the \emph{dyadic} adjacency constant $\lambda
_{\operatorname*{adj}}^{\operatorname*{dyad}}$ for a measure $\mu$, which is
defined as above except that we that we additionally restrict $I,J$ to belong to a fixed dyadic grid $\mathcal{D}$, the last of which will be clear from context.

Given $\tau\in(0,1)$, we say a doubling measure $\mu$ is $\tau$-flat if its
adjacency constant $\lambda$ satisfies $\lambda,\lambda^{-1}\in(1-\tau
,1+\tau)$. One can make a similar definition in the dyadic setting.
\end{definition}

\add{For a doubling measure $\mu$ on $\mathbb{R}^{n}$, the closer doubling ratio of $\mu$ is to $1$, then the closer $C_{\operatorname{doub}}$ is to $2^n$: more precisely, for every $\epsilon > 0$, there exists a $\delta >0$ such that for all doubling measures $\mu$ on $\mathbb{R}^n$, if $|\lambda_{\operatorname{adj}} (\mu) - 1| < \delta$, then $| C_{\operatorname{doub}}  - 2^n | < \epsilon$. }

One can make similar definitions replacing $\mathbb{R}^{n}$ by an open subset,
and modifying the definitions accordingly.

\subsection{Telescoping identity }

\subsubsection{\add{Working in the plane}}
We begin by discussing the telescoping identity in the plane where matters can
easily be made more explicit. For each square $Q$ in the plane define the
$1$-dimensional projection $\mathbb{E}_{Q}$ by%
\[
\mathbb{E}_{Q}f\equiv\left(  E_{Q}f\right)  \mathbf{1}_{Q}%
\]
where $E_{Q}f\equiv\frac{1}{\left\vert Q\right\vert }\int_{Q}f$ is the average
of $f$ on $Q$. Denote the four dyadic children of a square $Q$ in the plane by
$Q_{\operatorname{NW}},Q_{\operatorname{NE}},Q_{\operatorname{SW}%
},Q_{\operatorname{SE}}$ where $\operatorname{NW}$ stands for the northwest
child, etc\add{\dots} Then define an orthonormal Haar basis $\left\{  h_{Q}%
^{\hor},h_{Q}^{\ver},h_{Q}%
^{\checker}\right\}  $ associated with $Q$ by
\begin{align*}
\sqrt{\left\vert Q\right\vert }h_{Q}^{\hor}  &
\equiv+\mathbf{1}_{Q_{\operatorname{NW}}}-\mathbf{1}_{Q_{\operatorname{NE}}%
}+\mathbf{1}_{Q_{\operatorname{SW}}}-\mathbf{1}_{Q_{\operatorname{SE}}}\ , \quad \sqrt{\left\vert Q\right\vert }h_{Q}^{\ver}  
\equiv-\mathbf{1}_{Q_{\operatorname{NW}}}-\mathbf{1}_{Q_{\operatorname{NE}}%
}+\mathbf{1}_{Q_{\operatorname{SW}}}+\mathbf{1}_{Q_{\operatorname{SE}}}\ ,\\
\sqrt{\left\vert Q\right\vert }h_{Q}^{\checker}  &
\equiv+\mathbf{1}_{Q_{\operatorname{NW}}}-\mathbf{1}_{Q_{\operatorname{NE}}%
}-\mathbf{1}_{Q_{\operatorname{SW}}}+\mathbf{1}_{Q_{\operatorname{SE}}}\ ,
\end{align*}
where we associate the three matrices $\left[
\begin{array}
[c]{cc}%
+ & -\\
+ & -
\end{array}
\right]  ,\left[
\begin{array}
[c]{cc}%
- & -\\
+ & +
\end{array}
\right]  ,\left[
\begin{array}
[c]{cc}%
+ & -\\
- & +
\end{array}
\right]  $ with $h_{Q}^{\hor},h_{Q}%
^{\ver},h_{Q}^{\checker}$, which change sign \emph{horizontally}, \emph{vertically} and in a \emph{checkerboard} pattern, respectively. Thus we also refer to these three matrices as the horizontal matrix, vertical
matrix and checkerboard matrix. Let $\bigtriangleup_{Q}$ denote
Haar projection onto the $3$-dimensional space of functions that are constant
on children of $Q$, and that also have mean zero. Then we have the linear
algebra formula,%
\begin{align}
\bigtriangleup_{Q}f  =   \left\langle f,h_{Q}^{\hor}\right\rangle
h_{Q}^{\hor}+\left\langle f,h_{Q}%
^{\ver}\right\rangle h_{Q}^{\ver%
}+\left\langle f,h_{Q}^{\checker}\right\rangle
h_{Q}^{\checker} =\bigtriangleup_{Q}^{\hor}f+\bigtriangleup
_{Q}^{\ver}f+\bigtriangleup_{Q}%
^{\checker}f, \label{eq:Haar_decomp_pattern}
\end{align}
where $\bigtriangleup_{Q}^{\hor}f$ is the rank one
projection $\left\langle f,h_{Q}^{\hor}\right\rangle
h_{Q}^{\hor}$, etc\dots

Now given two cubes $P$ and $Q$ in $\mathcal{D} (P)$ with $Q\subsetneqq P$,
define%
\[
\left(  Q,P\right]  \equiv\left\{  I\in\mathcal{D} (P) :Q\subsetneqq I\subset
P\right\}
\]
to be the tower of cubes from $Q$ to $P$ that includes $P$ but not $Q$.
Similarly define the towers $\left(  Q,P\right)$, $\left[  Q,P\right]$, $\left[
Q,P\right)  $. Then, for $\left(  Q,P\right]  $, we have the \add{well-known} telescoping
identity,%
\begin{align*}
&  \left(  \mathbb{E}_{Q}f-\mathbb{E}_{P}f\right)  \mathbf{1}_{Q}=\left(
\sum_{I\in\left(  Q,P\right]  }\bigtriangleup_{I}f\right)  \mathbf{1}_{Q}\\
&  =\left(  \sum_{I\in\left(  Q,P\right]  }\left\langle f,h_{I}%
^{\hor}\right\rangle h_{I}^{\hor%
}\right)  \mathbf{1}_{Q}+\left(  \sum_{I\in\left(  Q,P\right]  }\left\langle
f,h_{I}^{\ver}\right\rangle h_{I}^{\ver%
}\right)  \mathbf{1}_{Q} +\left(  \sum_{I\in\left(  Q,P\right]  }\left\langle f,h_{I}%
^{\checker}\right\rangle h_{I}^{\checker%
}\right)  \mathbf{1}_{Q}\\
&  =\left(  \sum_{I\in\left(  Q,P\right]  }\bigtriangleup_{I}%
^{\hor}f\right)  \mathbf{1}_{Q}+\left(  \sum_{I\in\left(
Q,P\right]  }\bigtriangleup_{I}^{\ver}f\right)
\mathbf{1}_{Q}+\left(  \sum_{I\in\left(  Q,P\right]  }\bigtriangleup
_{I}^{\checker}f\right)  \mathbf{1}_{Q}\ .
\end{align*}

\subsubsection{\add{In higher dimension}}
Turning now to dimension $n$, we note that a similar telescoping identity
holds in $\mathbb{R}^{n}$. In particular, given a cube $Q\subset\mathbb{R}%
^{n}$, if we let $\bigtriangleup_{Q}$ denote the Haar projection onto the
space of functions constant on the dyadic children of $Q$ with mean $0$, then
\[
\bigtriangleup_{Q}f=\sum\limits_{j=1}^{d(n)}\langle f,h_{Q}^{j}\rangle
h_{Q}^{j}\equiv\sum\limits_{j=1}^{d(n)}\bigtriangleup_{Q}^{j}f\,,
\]
where $\{h_{Q}^{j}\}_{j=1}^{d(n)}$ is a choice of $L^{2}(Q)$ orthonormal basis
for the range of $\bigtriangleup_{Q}$, and $d(n)=2^{n}-1$ is the dimension of
this space. One of course has an analogue to the telescoping identity above.
In our applications for $n\geq2$, we will be interested in the case that
$h_{Q}^{1}=h_{Q}^{\hor}$, where for $Q=Q_{1}\times
\ldots\times Q_{n}$ we define \add{the horizontal Haar wavelet}
\[
\sqrt{|Q|}h_{Q}^{\hor}(x)\equiv%
\begin{cases}
1 & \text{ if }x\in Q_{-}\\
-1 & \text{ if }x\in Q_{+} \\
0 & \text{ otherwise}
\end{cases}
\,.
\]
We will not care about the choice of $h_{Q}^{2},h_{Q}^{3},\ldots,h_{Q}^{d(n)}$
for each cube $Q$, although we could simply take the orthogonal Haar basis  $\{h_{Q}^{j}\}$ to be the `standard' Haar basis $\left\{  g_{1}%
\otimes...\otimes g_{n}\right\}  $ consisting of all product functions
$g_{1}\left(  x_{1}\right)  \times...\times g_{n}\left(  x_{n}\right)  $ in
which $g_{j}$ is either the Haar function $h_{j}$ on $Q_{j}$, or the
normalized indicator $\frac{1}{\sqrt{\left\vert Q_{j}\right\vert }}%
\mathbf{1}_{Q_{j}}$, and where the constant function on $Q$ is discarded; 
note that 
\begin{equation}\label{eq:Haar_horizontal_defn_dimn}
\frac{1}{\sqrt{\left | Q \right |}} s_{1}^{Q,\hor}=h_{1}\otimes\frac{1}%
{\sqrt{\left\vert Q_{2}\right\vert }}\mathbf{1}_{Q_{2}} \otimes ...\otimes\frac
{1}{\sqrt{\left\vert Q_{n}\right\vert }}\mathbf{1}_{Q_{n}}
\end{equation}
\subsection{\add{Horizontal d}yadic testing}

Given weights $V,U$ on a cube $J$ define
\[
\mathfrak{\gamma}^{\hor}\left(  V,U;J\right)  \equiv\frac
{1}{\left\vert J\right\vert }\sum\limits_{I\in\mathcal{D}(J)}\left\Vert
\bigtriangleup_{I}^{\hor}V\right\Vert _{L^{2} \left ( \mathbb{R}^n \right ) }^{2}%
E_{I}U=\frac{1}{\left\vert J\right\vert }\sum\limits_{I\in\mathcal{D}%
(J)}|\langle V,h_{I}^{\hor}\rangle|^{2}E_{I}U\,.
\]
If $\mathcal{D}$ is the dyadic grid, define the dyadic horizontal testing
constant
\[
\mathfrak{T}^{\hor}\left(  V,U\right)  \equiv\sup
_{J\in\mathcal{D}}\frac{\gamma^{\hor}\left(  V,U;J\right)
}{E_{J}V}.
\]

\begin{remark}
$\mathfrak{T}^{\hor}\left(  V,U\right)  $ is the $L^{2}%
(V)\rightarrow L^{2}(U)$ testing condition\ for the `localized' horizontal
dyadic square function%
\[
S_{J}^{\hor}f\left(  x\right)  \equiv\sqrt{\sum_{\substack{I\in
\mathcal{D}(J):\ \\x\in I}}\frac{\left\Vert \bigtriangleup_{I}%
^{\hor}f\right\Vert ^{2} _{L^2 \left ( \mathbb{R}^n \right ) }}{\left\vert I\right\vert }}=\sqrt
{\sum_{I\in\mathcal{D}(J)}\left\Vert \bigtriangleup_{I}^{\hor%
}f\right\Vert ^{2} _{L^2 \left ( \mathbb{R}^n \right ) }\frac{\mathbf{1}_{I}(x)}{\left\vert I\right\vert }}.
\]
Indeed, we compute%
\begin{align*}
\int_{J}\left\vert S_{J}^{\hor}\left(  \mathbf{1}_{J}V\right)
\left(  x\right)  \right\vert ^{2}U\left(  x\right)  dx  &  =\int_{J}%
\sum_{I\in\mathcal{D}(J)}\left\Vert \bigtriangleup_{I}^{\hor%
}\left(  \mathbf{1}_{J}V\right)  \right\Vert ^{2} _{L^2 \left (\mathbb{R}^n \right ) } U\left(  x\right)
\frac{\mathbf{1}_{I}(x)}{\left\vert I\right\vert }dx
\end{align*}
\[
=\sum_{I\in\mathcal{D}(J)}\left\Vert \bigtriangleup
_{I}^{\hor}\left(  \mathbf{1}_{J}V\right)  \right\Vert
^{2} _{L^2 \left (\mathbb{R}^n \right ) } E_{I}U = \left | J \right | \gamma^{\hor} \left ( V, U , J \right ) \, ,
\]
and so the square of the dyadic testing condition for the localized
horizontal square function is%
\[
\sup_{J\in\mathcal{D}}\frac{\int_{J}\left\vert S_{J}^{\hor%
}\left(  \mathbf{1}_{J}V\right)  \left(  x\right)  \right\vert ^{2}U\left(
x\right)  dx}{\int_{J}V\left(  x\right)  dx}=\sup_{J\in\mathcal{D}}%
\frac{\mathfrak{\gamma}^{\hor}\left(  V,U,J\right)  }{E_{J}V}.
\]
\end{remark}

\subsection{The Bellman construction of the dyadic weights}

\begin{definition}
Given weights $V,U$ on a cube $J$ in $\mathbb{R}^{d}$, we define the dyadic
$A_{2}$ constant relative to $J$ by%
\[
A_{2}^{\operatorname*{dyadic}}(V,U;J)\equiv\sup_{I\in\mathcal{D}(J)}\left(
E_{I}U\right)  \left(  E_{I}V\right)  \,.
\]

\end{definition}

Following the Bellman construction used in \cite{NaVo} gives the following key
result.\footnote{A simpler Bellman proof is provided in \cite{Naz}; one can
also likely obtain the key result by using the disarrangement argument of
\cite{KaTr}.}

\begin{theorem}
\label{Bellman Haar shift} Given a cube $J$ in $\mathbb{R}^{n}$ and arbitrary
constants $\Gamma>0$, $\tau\in(0,1)$, there exist $\tau$-flat weights $V,U$ on
$J$, with $V,U$ constant on all cubes $I\in\mathcal{D}_{m}(J)$ for some $m>0$,
such that
\[
A_{2}^{\operatorname*{dyadic}}(V,U;J)\leq1 \, , \qquad\gamma
^{\hor}\left(  V,U;J\right)  >\Gamma\left(  E_{J}V\right)  \,
.
\]
\add{Furthermore, $U$ and $V$ are in the linear span of the finite set 
\[
 \left \{ \mathbf{1}_J \right \} \bigcup \{h_I ^{\hor} \}_{I \in \mathcal{D}(J), \ell \left (I \right ) \geq 2^{-(m-1)} \ell \left ( J \right )} \, .
\]} In particular when $n=2$, the last conclusion implies
\begin{equation}\label{eq:vanishing_Haar}
\bigtriangleup_{I}^{\ver }U=\bigtriangleup_{I}%
^{\checker}U=0 \, , \quad\bigtriangleup_{I}^{\ver
}V=\bigtriangleup_{I}^{\checker}V=0 \, , \quad I \in
\mathcal{D}(J) \, .
\end{equation}
\end{theorem}

\begin{proof}
The dimension $n=1$ case follows from Nazarov's Bellman argument in
\cite{Naz}. \footnote{See also \cite[Section 3]{NaVo} for a stronger conclusion not used
here, but which requires more difficult Hessian
computations, and also requires an argument to show that their set of
admissible weight pairs $\mathcal{F}_{x}$ is nonempty, the details of which can be found in, e.g., an earlier preprint of this article \add{\cite[Lemma
12]{AlLuSaUr}.}}

\add{For dimension $n\geq 2$, we show matters reduce to the $n=1$ case. We show this for dimension $n=2$, and a similar argument shows the same for dimension $n\geq 3$.} Let $J = J_{1} \times J_{2}$ be a square. So
given parameters $\Gamma$ and $\tau$, suppose our 1-dimensional Theorem gives
us weights $(V_{0},U_{0})$ defined on $J_{1}$. Then define $U$ by
$U(x_{1},x_{2})\equiv\mathbf{1}_{J_{2}}(x_{2})U_{0}(x_{1})$, and similarly for
$V$. Then note that
\[
E_{I}U=E_{I_{1}}U_{0}\,,\quad E_{I}V=E_{I_{1}}V_{0}\,,\quad\text{ for }%
I\in\mathcal{D}(J)\,.
\]
Since $U_{0},V_{0}$ are $\tau$-flat and $A_{2}^{\operatorname{dyadic}}%
(V_{0},U_{0};J_{1})\leq1$, then the above equation shows the same must be true
of $V,U$ on $J$.

Then $2$-dimensional testing is given by
\begin{align*}
\gamma^{\hor}(V,U;J)  &  \approx\sum\limits_{I\in
\mathcal{D}(J)}\frac{|I|}{|J|}(E_{I_{\operatorname{NW}}}%
V+E_{I_{\operatorname{SW}}}V-E_{I_{\operatorname{NE}}}%
V-E_{I_{\operatorname{SE}}}V)^{2}E_{I}U\\
&  = \sum\limits_{k=0}^{\infty}\sum\limits_{I\in\mathcal{D}_{k}(J)}%
2^{-2k}(E_{I_{\operatorname{NW}}}V+E_{I_{\operatorname{SW}}}%
V-E_{I_{\operatorname{NE}}}V-E_{I_{\operatorname{SE}}}V)^{2}E_{I}U\\
&  \approx\sum\limits_{k=0}^{\infty}\sum\limits_{K\in\mathcal{D}_{k}(J_{1}%
)}\sum\limits_{\substack{I\in\mathcal{D}_{k}(J):\\I_{1}=K}}2^{-2k}(E_{K_{-}%
}V_{0}-E_{K_{+}}V_{0})^{2}E_{K}U_{0}\\
&  =\sum\limits_{k=0}^{\infty}\sum\limits_{K\in\mathcal{D}_{k}(J_{1})}%
2^{-k}(E_{K_{-}}V_{0}-E_{K_{+}}V_{0})^{2}E_{K}U_{0}\\
&  =\sum\limits_{k=0}^{\infty}\sum\limits_{K\in\mathcal{D}_{k}(J_{1})}%
\frac{|K|}{|J|}(E_{K_{-}}V_{0}-E_{K_{+}}V_{0})^{2}E_{K}U_{0}\\
&  \approx\gamma^{\hor}(V_{0},U_{0};J_{1})\,,
\end{align*}
which is at least $\Gamma\left(  E_{J_{1}}V_{0}\right)  =\Gamma\left(
E_{J}V\right)  $, which yields the first conclusions after relabeling $\Gamma$.

\add{To see the claim about the span, since $U, V$ are constant on squares in $\mathcal{D}_m (J)$, then $U,V$ are bounded and so are $L^2 (J)$ functions. But the space of $L^2 (J)$ functions which are constant on elements of $\mathcal{D}_m (J)$ has orthonormal basis 
\[
\left \{ \frac{1}{\sqrt{\left | J \right |}} \mathbf{1}_J \right \} \bigcup \left \{ h_I ^{\hor} , h_I ^{\ver}, h_I ^{\checker}\right \}_{I \in \mathcal{D} \left ( J \right ) : \ell (I ) \geq 2^{-(m-1)} \ell (J)} \, .
\] Thus to show the claim about the span, it suffices to show 
\[
\langle h_{I}, U \rangle = \langle h_{I}, V \rangle = 0
\]
for any function $h_I$ that is orthogonal to $h_I ^{\hor}$, has mean $0$, is supported on $I$, and is constant on the dyadic children of $I$. Let $h_I$ be such a function.} Since $h_{I}$ is
piecewise constant on the dyadic children of $I$, we may expand $
\langle U,h_{I}\rangle$ as
\[
\int\limits_{I}U(x)h_{I}(x)dx =E_{I_{\operatorname{NW}}}U\int\limits_{I_{\operatorname{NW}}}%
h_{I}(x)dx+E_{I_{\operatorname{SW}}}U\int\limits_{I_{\operatorname{SW}}}%
h_{I}(x)dx  +E_{I_{\operatorname{NE}}}U\int\limits_{I_{\operatorname{NE}}}%
h_{I}(x)dx+E_{I_{\operatorname{SE}}}U\int\limits_{I_{\operatorname{SE}}}%
h_{I}(x)dx\,.
\]
Substituting averages of $U$ for averages of $U_{0}$, taking $a\equiv
E_{\left(  I_{1}\right)  _{-}}U_{0}$ and $b\equiv E_{\left(  I_{1}\right)
_{+}}U_{0}$ for convenience, we get that this equals
\[
a\int\limits_{I_{-}}h_{I}(x)dx+b\int\limits_{I_{+}}h_{I}(x)dx=\frac{a+b}%
{2}\int\limits_{I}h_{I}(x)dx+\frac{b-a}{2}\left(  \int\limits_{I_{+}}%
h_{I}(x)dx-\int\limits_{I_{-}}h_{I}(x)dx\right)  \,.
\]
Since $h_{I}$ has mean $0$, the first integral on the right
vanishes. Since $\langle h_{I},h_{I}^{\hor}\rangle=0$,
then the last term vanishes too, and thus $\langle U,h_{I}\rangle=0$.
Similarly for $V$.
\end{proof}

We will now adapt the supervisor argument of Nazarov to construct a pair of
doubling weights $\left(  v,u\right)  $, first on a cube in $\mathbb{R}^{n}$
and eventually on the whole space $\mathbb{R}^{n}$, satisfying $A_{2}\left(
v,u\right)  \leq1$ and such that the first Riesz transform $R_{1}$ has
operator norm $\mathfrak{N}_{R_{1}}\left(  v,u\right)  >\Gamma$, while the
other Riesz transforms $R_{j}$, $j\geq2$, have operator norm $\mathfrak{N}%
_{R_{j}}\left(  v,u\right)  \leq1$. Thus the individual Riesz transform
$R_{1}$ is not stable under rotations of doubling weights in the plane. We
will view the supervisor map more simply as a transplantation map, that
readily exploits telescoping properties of projections. \add{To make such conclusions about the norm inequalities, we will compute a testing condition, and if $V$ and $U$ are $\tau$-flat for $\tau$ sufficiently small, then
the classical pivotal condition holds \cite{Gri}, and so we can apply the $T1$
theorem in \cite{SaShUr10} in order to deduce $\mathfrak{N}_{R_{2}}\left(
v,u\right)  \leq1$ from the testing conditions. See Theorem \ref{thm:starting_T1_thm} below for more details.} 

\section{The supervisor and transplantation map}

\label{section:sprvsr_trans}

We again begin our discussion in the plane where matters are more easily
pictured. We will construct our weight pair $\left(  v,u\right)  $ on a square
$Q^{0}\subset\mathbb{R}^{2}$ from the dyadic weight pair $\left(  V,U\right)
$ by adapting the supervisor argument of Nazarov \cite{NaVo} as
follows\footnote{A simpler form of `disarranging' a weight was used in
\cite{Saw1} to provide a counterexample to the conjecture of Muckenhoupt and
Wheeden \cite[page 281]{MuWh} that a one-tailed $A_{p}$ condition was
sufficient for the norm inequality for $M$, but the weights were not
doubling.}. Let $\left\{  k_{t}\right\}  _{t=1}^{\infty}$ be an increasing
sequence of positive integers to be fixed later, and let $\mathcal{D}^{0}$
denote the collection of dyadic subsquares of $Q^{0}$. We denote by
$\mathcal{K}_{t} = \mathcal{K}_{t} (Q^{0})$ the collection of dyadic
subsquares $Q$ of $Q^{0}$ in $\mathcal{D}^{0}$ whose side lengths satisfy
$\ell\left(  Q\right)  =2^{-k_{1}-...-k_{t}}\ell\left(  Q^{0}\right)  $, and
then define
\[
\mathcal{K} = \mathcal{K} ( Q_{0}) = \bigcup\limits_{t \in\mathbb{N}}
\mathcal{K}_{t} (Q_{0}) \,
\]
a subgrid of the dyadic grid $\mathcal{D}^{0}$. Recall we have $\Theta
\equiv\left\{  \operatorname{NW},\operatorname{NE},\operatorname{SW}%
,\operatorname{SE}\right\}  $ the set of four locations of a dyadic square $Q$
within its $\mathcal{D}$-parent $\pi_{\mathcal{D}}Q$.

\subsection{The informal description of the construction}

Here is an informal description of the transplantation argument, that we will
give precisely later on. Given a nonnegative integrable function $U\in
L^{1}\left(  Q^{0}\right)  $, and $t\in\mathbb{N}$, we will define
$u_{t}\left(  x\right)  $ to be a step function on $Q^{0}$ that is constant on
each square in the $t^{th}$ level $\mathcal{K}_{t}$ of $\mathcal{K}$, and
where the constants are among the expected values of $U$ on the squares in the
$t^{th}$ level $\mathcal{D}_{t}^{0}$ of $\mathcal{D}^{0}$, but `scattered'
according to the following plan.

To each square $Q$ in $\mathcal{K}_{t}$, there is associated a unique
descending `$\mathcal{K}$-tower' $\mathbf{T}=\left(  T_{1},...,T_{t}\right)
\in\mathcal{K}^{t}=\mathcal{K}\times...\times\mathcal{K}$ with $T_{t}=Q$,
where the square $T_{\ell}$ is the unique square in $\mathcal{K}_{\ell}$
containing $Q$. To each component square $T_{\ell}$ of $\mathbf{T}$ there is
associated a unique $\theta_{\ell}\in\Theta$, which describes the location of $T_{\ell
}$ within its $\mathcal{D}$-parent $\pi_{\mathcal{D}}T_{\ell}$. We then define $\mathcal{S}\left(  Q\right)  $ to be the square $L$ in $\mathcal{D}_{t} ^0$ which is obtained from $Q^0$ via the following algorithm.
\begin{enumerate}
    \item Set $L=Q^0$.
    \item For $\ell=1, \ldots, t$, replace $L$ by its dyadic child with location $\theta_{\ell}$ within $L$.
    \item Output $L$.
\end{enumerate}   In the terminology of Nazarov \cite{NaVo},
$\mathcal{S}\left(  Q\right)  $ is the \emph{supervisor} of $Q$. We then
`transplant' the expected value $E_{\mathcal{S}\left(  Q\right)  }U$ of $U$ on
the supervisor to the cube $Q$ in $\mathcal{K}_{t}$ that is being supervised.
For example, when $k_{\ell}=1$ for all $\ell$, this construction yields the
identity
\[
u_{t}=\mathbb{E}_{t}U\equiv\sum_{Q\in\mathcal{D}:\ell\left(  Q\right)
=2^{-t}\ell\left(  Q^{0}\right)  }\left(  E_{Q}U\right)  \mathbf{1}_{Q}\ ,
\]
and when the $k_{\ell}^{\prime}$s are bigger than $1$, the values $\frac
{1}{\left\vert Q\right\vert }\int_{Q}U$ are `scattered' throughout $Q^{0}$.
Now we give precise details of the `scattering' construction.

\subsection{The supervisor map}
We define a map
\[
\mathcal{S}: \mathcal{K}_t \to \mathcal{D}_t ^0 \, ,
\]
for every $t \geq 0$. Given a cube $K \in \mathcal{K}_t$, $\mathcal{S} \left ( K \right )$ is called the \emph{supervisor} of $K$.
We define it as follows. Let $K \in \mathcal{K}_t$. If $t=0$, then $K = Q^0$ and so we define $\mathcal{S}(K)$ to be $Q^0$.

If $t \geq 1$, then define $\theta_{\ell} \in \Theta$, $1 \leq \ell \leq t$, to be the unique location for which the  $\mathcal{K}$-parent
\[
 P_{\ell}\equiv \pi_{\mathcal{K}} ^{(t-\ell)} K
\]
satisfies
\[
\left ( \pi_{\mathcal{D}} P_{\ell} \right )_{\theta_{\ell}} = P_{\ell} \, .
\]
Then define 
\[
\mathcal{S} \left (K \right ) \equiv  \left (\ldots \left ( \left ( Q ^0 \right )_{\theta_1} \right )_{\theta_2} \ldots \right )_{\theta_t} \, ,
\]
using the notation introduced at the beginning of Section
\ref{section:dyadic_prelims}.

Note that the supervisor map $\mathcal{S}$ is many-to-one, indeed
$Q \in \mathcal{D}_t ^0$ has $C_{t, k_{1}, \ldots, k_{t}}$
preimages under $\mathcal{S}$. Furthermore we note that $\mathcal{S}
(\pi_{\mathcal{K}} Q) = \pi_{\mathcal{D}} \mathcal{S} (Q)$, i.e., $\pi$ and
$\mathcal{S}$ commute.
\subsection{The formal construction in the plane}

Let $U\in L^{1}\left(  Q^{0}\right)  $ be a nonnegative integrable function,
and let $t\in\mathbb{N}$. We construct $u_t$ by `transplanting' the expected value $E_{\mathcal{S}\left(  Q\right)  }U$
of $U$ on the supervisor $\mathcal{S}\left(  Q\right) \in \mathcal{D}_t ^0 $ to the cube $Q \in \mathcal{K}_{t}$ that is being supervised. Here are the precise formulas
written out using the parent grid $\mathcal{P}$, where for convenience we will
use superscripts to track the level of a square in the grid $\mathcal{D}$:
\begin{align*}
u_{0}\left(  x\right)   &  =\left(  E_{Q^{0}}U\right)  \mathbf{1}_{Q^{0}%
}\left(  x\right)  ,
\end{align*}
and for $t\geq1$,%
\begin{align*}
u_{t}\left(  x\right)   &  = \sum\limits_{Q \in \mathcal{K}_t }  \left(
E_{\mathcal{S} \left ( Q \right )}U\right)  \mathbf{1}_{Q}\left(  x\right)  .
\end{align*}
The weights $u_{t}$ are nonnegative on $Q^{0}$ since
$u_{t}$ is constant on each square $Q$ in $\mathcal{K}_{t}$, and the value of
this constant is the expectation $E_{\mathcal{S}\left(  Q\right)  }U$ of $U$
on the supervisor $\mathcal{S}\left(  Q\right)  $, which is of course nonnegative.  We
also note the following useful fact: $|u_{t}|$ is bounded by a constant
independent of the choice of $\{k_{t}\}_{t\geq0}$, namely $\Vert u_{t}%
\Vert_{L^{\infty}}\leq\Vert U\Vert_{L^{\infty}}$ since the only values $u_{t}$
can take on are precisely the expectations of $U$ over supervising cubes $Q$.

 Recall the Haar projection $\bigtriangleup_{Q}$
associated with $Q$ satisfies
\begin{align}
\label{Haar_split}\bigtriangleup_{Q} f \equiv\left(  \sum_{Q^{\prime}%
\in\mathfrak{C} \left ( Q \right ) }\mathbb{E}_{Q^{\prime} } f \right)
-\mathbb{E}_{Q}f = \left(  \sum_{Q^{\prime}\in\mathfrak{C}\left(  Q\right)  } \left(  E_{Q^{\prime}}f \right)  \mathbf{1}_{Q^{\prime}%
}\right)  - \left(  E_{Q}f \right)  \mathbf{1}_{Q} .
\end{align}

Given cubes $Q, P$, let $\phi_{P \to Q}$ denote the unique translation and
dilation that takes $P$ to $Q$, and define
\[
h_{Q}^{\hor}\left[  P\right]  (x) \equiv h_{Q}%
^{\hor} \left(  \phi_{P \to Q} (x) \right)  \, .
\]
Note that this function does \emph{not} have $L^{2} (P)$ norm equal to $1$. We
can also make the same definition for $h_{Q}^{\ver}\left[
P\right]  , h_{Q}^{\checker}\left[  P\right]  $. Finally,
define
\begin{align*}
\bigtriangleup_{Q}\left[  P\right]  f (x)    \equiv\left(  \bigtriangleup_{Q}
f \right)  (\phi_{P \to Q} (x)) &= \langle f, h_{Q} ^{\hor} \rangle h_{Q}
^{\hor}[P] (x) +\langle f, h_{Q}
^{\ver} \rangle h_{Q} ^{\ver}[P] (x) + \langle f, h_{Q} ^{\checker} \rangle h_{Q}
^{\checker}[P] (x)\,\\
&  \equiv\bigtriangleup_{Q} ^{\hor}\left[  P\right]  f
(x)+ \bigtriangleup_{Q} ^{\ver}\left[  P\right]  f (x)+
\bigtriangleup_{Q} ^{\checker}\left[  P\right]  f (x).
\end{align*}
Then using (\ref{Haar_split}) for $t \geq1$, the first order differences of
the weights $u_{t}$ are given by%
\[
u_{t+1}\left(  x\right)  -u_{t}\left(  x\right)  =\sum_{Q \in \mathcal{K}_t }\left\{  \left(  \sum_{P \in \mathfrak{C}^{(k_{t+1}-1)} \left ( Q \right )} \sum\limits_{Q' \in \mathfrak{C} \left ( P \right )}\left(  E_{\mathcal{S} \left (Q '  \right ) }U\right)  \mathbf{1}_{Q'}\left(  x\right)  \right)  -\left(  E_{\mathcal{S}\left ( Q \right )} U\right)  \mathbf{1}_{Q}\left(  x\right)  \right\} 
\]
\[= \sum_{Q \in \mathcal{K}_t }\left\{    \sum_{P \in \mathfrak{C}^{(k_{t+1}-1)} \left ( Q \right )} \sum\limits_{Q' \in \mathfrak{C} \left ( P \right )}\left(  E_{\mathcal{S} \left (Q '  \right ) }U      -  E_{\mathcal{S}\left ( Q \right )} U\right)  \mathbf{1}_{Q'}\left(  x\right)  \right\}  = \sum_{Q \in \mathcal{K}_t }\left\{    \sum_{P \in \mathfrak{C}^{(k_{t+1}-1)} \left ( Q \right )} \bigtriangleup_{\mathcal{S}\left ( Q\right )} [P] U \left(  x\right)  \right\} \, .
\]

Let $\mathcal{B}$ denote a set indexing our choice of Haar basis: since we are
working in dimension $2$, we take
\[
\mathcal{B} \equiv\{ \hor, \ver,
\checker \} \, .
\]
For a square $Q$ and an integer $M\in\mathbb{N}$, we define three alternating
functions, one for each $\operatorname{pattern} \in\mathcal{B}$:%
\begin{align}\label{eq:pattern_s_defn}
s_{M}^{Q,\operatorname{pattern}}\left(  x\right)   &  =\sum_{Q^{\prime}%
\in\mathfrak{C}^{\left(  M-1\right)  }\left(  Q\right)  }\sqrt{\left\vert
Q^{\prime}\right\vert }h_{Q^{\prime}}^{\operatorname{pattern}} \, ,
\qquad\operatorname{pattern} \in\mathcal{B} \, .
\end{align}
Note that each of these three alternating functions is a constant $\pm1$ on
grandchildren $P^{\prime}\in\mathfrak{C}^{\left(  M\right)  }\left(  Q\right)
$ of depth $M$, and when restricted to a grandchild $Q^{\prime}\in
\mathfrak{C}^{\left(  M-1\right)  }\left(  Q\right)  $, each alternating
function $s_{k}^{Q,\hor}$, $s_{k}^{Q,\ver}$ and $s_{k}^{Q,\checker}$ has the arrangement of $\pm1$ given respectively by $\left[
\begin{array}
[c]{cc}%
+ & -\\
+ & -
\end{array}
\right]  ,\left[
\begin{array}
[c]{cc}%
- & -\\
+ & +
\end{array}
\right]  ,\left[
\begin{array}
[c]{cc}%
+ & -\\
- & +
\end{array}
\right] $. For instance, $s_{k}^{Q,\hor}$ is the
function on $Q$ consisting of $\pm1$ arranged in the following fashion:%
\[
s_{k}^{Q,\hor}\sim\text{the }2^{k}\times2^{k}\text{
matrix }\left[
\begin{array}
[c]{ccccccc}%
+ & - & + & - & \cdots & + & -\\
+ & - & + & - & \cdots & + & -\\
+ & - & + & - & \cdots & + & -\\
+ & - & + & - & \cdots & + & -\\
\vdots & \vdots & \vdots & \vdots &  & \vdots & \vdots\\
+ & - & + & - & \cdots & + & -\\
+ & - & + & - & \cdots & + & -
\end{array}
\right]  \, ,
\]
and similarly
\[
s_{k}^{Q,\ver}\sim\left[
\begin{array}
[c]{ccccccc}%
- & - & - & - & \cdots & - & -\\
+ & + & + & + & \cdots & + & +\\
- & - & - & - & \cdots & - & -\\
+ & + & + & + & \cdots & + & +\\
\vdots & \vdots & \vdots & \vdots &  & \vdots & \vdots\\
- & - & - & - & \cdots & - & -\\
+ & + & + & + & \cdots & + & +
\end{array}
\right]  \text{ and }s_{k}^{Q,\checker}\sim\left[
\begin{array}
[c]{ccccccc}%
+ & - & + & - & \cdots & + & -\\
- & + & - & + & \cdots & - & +\\
+ & - & + & - & \cdots & + & -\\
- & + & - & + & \cdots & - & +\\
\vdots & \vdots & \vdots & \vdots &  & \vdots & \vdots\\
+ & - & + & - & \cdots & + & -\\
- & + & - & + & \cdots & - & +
\end{array}
\right]  .
\]

\begin{remark}
Notice that the matrix for $s_{k}^{Q,\hor}$ is given by
transplanting $2^{2k-2}$ copies of the $2\times2$ matrix $\left[
\begin{array}
[c]{cc}%
+ & -\\
+ & -
\end{array}
\right]  $, which corresponds to the tensor product of a 1-dimensional Haar
function with matrix $\left[
\begin{array}
[c]{cc}%
+ & -
\end{array}
\right]  $, and an indicator function with matrix $\left[
\begin{array}
[c]{c}%
+\\
+
\end{array}
\right]  $. 
\end{remark}

We now write the projections $\bigtriangleup_{Q}U$ as a sum of the horizontal,
vertical and checkerboard components as in (\ref{eq:Haar_decomp_pattern})
to obtain for $t \geq1$,%
\[
u_{t+1} \left ( x \right ) - u_t \left (x \right ) = \sum\limits_{\operatorname{pattern} \in \mathcal{B}} \sum_{Q \in \mathcal{K}_t}\left\{    \sum_{P \in \mathfrak{C}^{(k_{t+1}-1)} \left ( Q \right )} \bigtriangleup_{\mathcal{S}\left ( Q\right )} ^{\operatorname{pattern}} [P] U \left(  x\right)  \right\} 
\]
\[
= \sum\limits_{\operatorname{pattern} \in \mathcal{B}} \sum_{Q \in \mathcal{K}_t }\left\{    \sum_{P \in \mathfrak{C}^{(k_{t+1}-1)} \left ( Q \right )} \left \langle U, h_{\mathcal{S} \left ( Q \right )} ^{\operatorname{pattern}}  \right \rangle h_{\mathcal{S} \left ( Q \right )} ^{\operatorname{pattern}}   \left [ P \right ] \left (x \right )\right\} \,
\]
\begin{equation} \label{eq:differnce_ut}
= \sum\limits_{\operatorname{pattern} \in \mathcal{B}} \sum_{Q \in \mathcal{K}_t} \left \langle U, h_{\mathcal{S} \left ( Q \right )} ^{\operatorname{pattern}}  \right \rangle \frac{1}{\sqrt{\left | \mathcal{S} \left ( Q \right ) \right |}} s_{k_{t+1}} ^{Q, \operatorname{pattern}} 
\end{equation}

\subsection{The construction in dimension $n=1$}
In dimension $n=1$, we can do the above transplantation construction, with the simple substitution 
\[
\mathcal{B} \equiv \left \{\hor  \right \} \, .
\]
Then the transplantation construction reduces to the
`supervisor and alternating function' construction in Nazarov and Volberg
\cite{NaVo}. Since there is only one choice of pattern $\mathcal{B}$ in 1 dimension, or alternatively only one choice of Haar wavelet basis in 1 dimension, $\{\pm
h^{Q,\hor}\}$, we will use the simplified notation
\begin{equation}\label{eq:defn_sk_1d}
s_{k}^{Q}\equiv s_{k}^{Q,\hor}  
\end{equation}
in dimension $1$, where $s_{k}^{Q,\hor}$ is defined as in  (\ref{eq:pattern_s_defn}).
\subsection{The construction in higher dimensions} Turning now to general dimension $n$, we may define
\[
s_{k}^{Q,\hor}(x)=s_{k}^{Q_{1}}(x_{1})\mathbf{1}%
_{Q_{2}\times\ldots\times Q_{n}}(x_{2},\ldots,x_{n})\,,
\]
where $s_k ^Q$ is the $1$-dimensional alternating function as in (\ref{eq:defn_sk_1d}). Again, the horizontal direction indicates the direction of sign change. All of the
calculations above extend to dimension $n$ using $s_{1}%
^{Q,\hor}$ as part of an otherwise arbitrarily chosen
basis of Haar functions for the cube $Q=Q_{1}\times...\times Q_{n}$. Again, we could consider the `standard' Haar basis $\left\{  g_{1}%
\otimes...\otimes g_{n}\right\}  $ consisting of all product functions
$g_{1}\left(  x_{1}\right)  \times...\times g_{n}\left(  x_{n}\right)  $ in
which $g_{j}$ is either the Haar function $h_{j}$ on $Q_{j}$, or the
normalized indicator $\frac{1}{\sqrt{\left\vert Q_{j}\right\vert }}%
\mathbf{1}_{Q_{j}}$, and where the constant function on $Q$ is discarded; we recall the definition of the horizontal Haar wavelets \eqref{eq:Haar_horizontal_defn_dimn}.

\section{Weak Convergence Properties of the Riesz
transforms\label{section:action_of_Riesz}}

We let $H$ denote the Hilbert transform on $\mathbb{R}$, i.e.,
\[
Hf(x)\equiv\frac{1}{\pi}\operatorname{p.v.}\int_{\mathbb{R}}\frac{f(x-y)}{y}dy,
\]
and we let $R_{j}$ denote the $j$th individual Riesz transform on
$\mathbb{R}^{n}$, i.e.,
\begin{equation}
R_{j}f(x)\equiv c_{n}\operatorname{p.v.}\int_{\mathbb{R}^{n}}\frac{y_{j}%
}{\left\vert y\right\vert ^{n+1}}f(x-y)dy,\text{ \ \ \ \ where }c_{n}%
=\frac{\Gamma\left(  \frac{n+1}{2}\right)  }{\pi^{\frac{n+1}{2}}}. \label{c_n}%
\end{equation}
Note that with these choices of constants, the symbols of the operators $H$
and $R_{j}$ are $-i\operatorname{sgn}\xi$ and $-i\frac{\xi_{j}}{\left\vert
\xi\right\vert }$ respectively. In what follows, all singular integrals are
understood to be taken in the sense of principal values, even when we do not
explicitly write $\operatorname{p.v.}$ in front of the integral. If we apply the
Riesz transform $R_{j}$ in the plane to the difference $u_{t+1}-u_{t}$ in (\ref{eq:differnce_ut}) we obtain%
\begin{align*}
R_{j}\left(  u_{t+1}-u_{t}\right)   &  = \sum\limits_{\operatorname{pattern}
\in\mathcal{B}} \sum_{Q\in\mathcal{K}_{t}}\left\langle U,h_{\mathcal{S}\left(
Q\right)  }^{\operatorname{pattern}}\right\rangle \frac{1}{\sqrt{\left\vert
\mathcal{S}\left(  Q\right)  \right\vert }}R_{j}s_{k_{t+1}}%
^{Q,\operatorname{pattern}}%
\end{align*}
and in particular, if $\bigtriangleup_{P}^{\ver}U$,
$\bigtriangleup_{P}^{\ver}V$, $\bigtriangleup
_{P}^{\checker}U$ and $\bigtriangleup_{P}%
^{\checker}V$ vanish for all $P$, then we have both%
\begin{align}
R_{j}\left(  u_{t+1}-u_{t}\right)   &  =\sum_{Q\in\mathcal{K}_{t}}\left\langle
U,h_{\mathcal{S}\left(  Q\right)  }^{\hor}\right\rangle
\frac{1}{\sqrt{\left\vert \mathcal{S}\left(  Q\right)  \right\vert }}%
R_{j}s_{k_{t+1}}^{Q,\hor}, \label{eq:Riesz_diff_u}\\
R_{j}\left(  v_{t+1}-v_{t}\right)   &  =\sum_{Q\in\mathcal{K}_{t}}\left\langle
V,h_{\mathcal{S}\left(  Q\right)  }^{\hor}\right\rangle
\frac{1}{\sqrt{\left\vert \mathcal{S}\left(  Q\right)  \right\vert }}%
R_{j}s_{k_{t+1}}^{Q,\hor} \label{eq:Riesz_diff_v}.
\end{align}
In Section \ref{section:full_proof}, we will wish to establish three key testing estimates: for an arbitrarily large
$\Gamma$,

\begin{enumerate}
\item $\sup_{Q}\frac{1}{\left\vert Q\right\vert _{v}}\int_{Q}\left\vert
R_{1}\mathbf{1}_{Q}v\right\vert ^{2}u\geq\Gamma$,

\item $\sup_{Q}\frac{1}{\left\vert Q\right\vert _{v}}\int_{Q}\left\vert
R_{2}\mathbf{1}_{Q}v\right\vert ^{2}u\leq1$,

\item $\sup_{Q}\frac{1}{\left\vert Q\right\vert _{u}}\int_{Q}\left\vert
R_{2}\mathbf{1}_{Q}u\right\vert ^{2}v\leq1$.
\end{enumerate}

As in \cite{NaVo}, the first estimate is accomplished by inductively choosing the rapidly
increasing sequence $\left\{  k_{t}\right\}  _{t=1}^{m}$ of positive integers
so that at each stage of the construction labelled by $t$, the discrepancy
\[
\int\left\vert R_{1}\left(  v_{t+1}\right)  \right\vert ^{2}u_{t+1}%
-\int\left\vert R_{1}\left(  v_{t}\right)  \right\vert ^{2}u_{t}
\]
looks like
$\sum_{\ell\left(  I\right)  =2^{t}}\left\Vert \bigtriangleup_{I}%
^{\hor}V\right\Vert ^{2} E_{I}U$, whose sum over $t$
exceeds $\Gamma$. As suggested by (\ref{eq:Riesz_diff_u}) and (\ref{eq:Riesz_diff_v}), it turns out one must then understand the convergence properties of 
\[
R_j s_{k_{t+1}} ^{Q, \hor}
\]
for $j=1,2$, which we do in this section. For $j=2$, we show this converges to $0$ strongly from an application of the alternating series test to exploit the cancellation in $s_k ^{Q, \hor}$. But for $j=1$, the issue is more subtle. In dimension $1$, Nazarov proved $H s_k ^{I} \to 0$ weakly, and other subtle weak convergence properties using holomorphic function theory on the unit disk.
To extend these considerations to higher dimensions, we have not managed to escape the need for holomorphic function theory, so instead we reduce the study of $ R_1 s_{k_{t+1}} ^{Q, \hor}$ to $H s_{k_{t+1}} ^{Q}$ using the alternating series test to exploit cancellation in the $s_k ^{Q, \hor}$ functions, from which point we can then use Nazarov's techniques. But a
considerable amount of preparation is needed to prove these convergence properties: we begin with a
discussion of the notion of weak convergence, which we use in connection with
the alternating functions introduced in Section \ref{section:sprvsr_trans}.

\add{Given $1<p <\infty$, recall $f_{i}\rightarrow0$ weakly in $L^{p}\left(
\mathbb{R}^{n}\right)  $ if for all functions $b\in L^{p^{\prime}}\left(  \mathbb{R}^{n}\right)  $, we have
\begin{equation}
\lim_{i\rightarrow\infty}\int_{\mathbb{R}^{n}}f_{i}\left(  x\right)  b\left(
x\right)  dx=0 \, . \label{weak limit}
\end{equation}
Bounded operators on $L^p (\mathbb{R}^n)$ send weakly convergent sequences to weakly convergent sequences. If $\left\{  f_{i}\right\}  $ is uniformly
bounded in $L^{p}\left(  \mathbb{R}^{n}\right)  $ and $X$ is a dense subset of  $L^{p^{\prime}}\left(
\mathbb{R}^{n}\right)  $, then $f_i \to 0$ weakly if and only if  (\ref{weak limit}) holds
for all $b \in X$. We will apply this last result when $X$ equals $L^{\infty}\left(  \mathbb{R}^{n}\right)  \cap
L^{p^{\prime}}\left(  \mathbb{R}^{n}\right)  $, or when $X$ is the space of compactly supported functions on $\mathbb{R}^{n}$ which are constant on dyadic cubes of fixed size.}

We now turn to some lemmas in dimension $n=1$ that we will use for
establishing the three key testing estimates listed above.

\subsection{Weak convergence properties of the Hilbert transform}

In Nazarov's supervisor argument in \cite{NaVo}, the weak limits appearing in
Lemma \ref{Nazarov_weak_convergence} below, for the alternating functions
$s_{k}^{I}$, were proved using holomorphic function theory. \add{While the results of this subsection already appear in \cite{Naz, NaVo}, to keep this paper self-contained, we provide the proofs here along with details omitted in previous articles.} 

If $f \in L^{p}(\mathbb{R)}$, then for every $z\in\mathbb{R}_{+}^{2}$
\ define \add{the} Poisson extension $\mathbf{P} f(z)$ of $f$ by
\begin{align}
&  \mathbb{P}f_{k}\left(  z\right)  =\mathbb{P}f_{k}\left(  x+iy\right)
\equiv\int_{\mathbb{R}}f_{k}\left(  t\right)  P_{x+iy}\left(  t\right)  dt, \label{eq:defn_Poisson_extension}\\
&  \text{where }P_{x+iy}\left(  t\right)  \equiv\frac{y}{\left(  x-t\right)
^{2}+y^{2}}\text{ is the Poisson kernel.} \nonumber
\end{align}
A key observation in \cite{Naz, NaVo} was the following lemma.

\begin{lemma}
\label{Nazarov_H2}Given $p\in(1,\infty)$, let
$\left\{  f_{k}\right\}  _{k}$ in $L^{p}(\mathbb{R)}$ be a bounded sequence. Then $f_{k}%
\rightarrow0$ weakly in $L^{p}\left(  \mathbb{R}\right)  $ if and only if
$\lim_{k\rightarrow\infty}\mathbb{P}f_{k}(z)=0$ for all $z\in\mathbb{R}%
_{+}^{2}$.
\end{lemma}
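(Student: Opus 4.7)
The forward implication is essentially immediate from the definition of weak convergence. For every $(x,y) \in \mathbb{R}_+^2$, the translated Poisson kernel $t \mapsto P_y(x-t) = \frac{1}{\pi}\frac{y}{(x-t)^2+y^2}$ is bounded and decays like $|t|^{-2}$ at infinity, so it lies in $L^{p'}(\mathbb{R})$ for every $p' \in [1,\infty]$. Hence if $f_k \to 0$ weakly in $L^p$, then by definition $\mathbb{P}f_k(x,y) = \int_{\mathbb{R}} P_y(x-t) f_k(t)\,dt \to 0$ for every $(x,y)$.

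For the converse, the plan is to exploit reflexivity of $L^p$. Fix any subsequence $\{f_{k_j}\}$; by the uniform $L^p$ bound and Banach--Alaoglu, it has a further subsequence (still denoted $\{f_{k_j}\}$) converging weakly in $L^p$ to some $f \in L^p$. Applying the forward implication to this subsequence gives $\mathbb{P}f_{k_j}(z) \to \mathbb{P}f(z)$ pointwise on $\mathbb{R}_+^2$, while our hypothesis gives $\mathbb{P}f_{k_j}(z) \to 0$ pointwise. Thus $\mathbb{P}f \equiv 0$ on the upper half plane. The classical boundary convergence theorem $\mathbb{P}f(\cdot,y) \to f$ in $L^p$ (and almost everywhere) as $y \to 0^+$, valid for $f \in L^p$ with $1<p<\infty$, then forces $f = 0$ a.e. Since every subsequence of $\{f_k\}$ has a further subsequence converging weakly to $0$, the standard subsequence principle yields $f_k \to 0$ weakly in $L^p$.

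The main obstacle, such as it is, is only the injectivity/boundary-behavior input for the Poisson extension; the classical $L^p$ boundary convergence theorem supplies it immediately, so the argument is essentially a soft functional-analytic packaging. If one prefers to avoid Banach--Alaoglu, an equivalent route is to show that $\operatorname{span}\{P_y(x-\cdot) : (x,y) \in \mathbb{R}_+^2\}$ is dense in $L^{p'}(\mathbb{R})$ by Hahn--Banach (its annihilator in $L^p$ consists of functions with identically vanishing Poisson extension, hence is trivial by the same boundary convergence theorem), and then approximate an arbitrary test function $b \in L^{p'}$ by finite linear combinations from this span, controlling the error by the uniform $L^p$ bound on $\{f_k\}$. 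Either way the heart of the matter is the uniqueness of the Poisson extension on $L^p$, which is what replaces Nazarov's use of $H^2$ theory and allows the same conclusion to be drawn for general $p \in (1,\infty)$.
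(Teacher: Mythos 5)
Your proof is correct, and it takes a genuinely different route from the paper's. The paper argues the converse by showing that finite linear combinations of translated Poisson kernels are dense in the dual space $L^{p^{\prime}}\left( \mathbb{R}\right)$ (sketching the density by an explicit Riemann-sum approximation of $P_{r}\ast f$ on the circle), and then invokes the uniform $L^{p}$ bound together with the standard dense-subclass criterion for weak convergence. You instead run a soft compactness argument: reflexivity of $L^{p}$ for $1<p<\infty$ gives a weakly convergent subsequence, the forward implication identifies the Poisson extension of any weak limit as identically zero, the approximate-identity boundary convergence $\mathbb{P}f\left( \cdot,y\right) \rightarrow f$ in $L^{p}$ forces the limit to vanish, and the subsequence principle upgrades this to weak convergence of the full sequence. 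The two arguments rest on dual formulations of the same fact -- injectivity of the Poisson extension on $L^{p}$ versus density of the span of Poisson kernels in $L^{p^{\prime}}$ -- and indeed your alternative Hahn--Banach route makes this duality explicit, recovering the paper's density claim abstractly rather than by construction. What the paper's approach buys is a self-contained, constructive verification of the density that plugs directly into the dense-subclass remark made just after the definition of weak convergence; what yours buys is brevity and the replacement of any explicit approximation by two classical black boxes (weak sequential compactness in reflexive spaces and $L^{p}$ boundary convergence of the Poisson integral), at the mild cost of using reflexivity, which is harmless here since $p\in\left( 1,\infty\right)$ throughout.
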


\begin{proof}
If $f_{k}\rightarrow0$ weakly in $L^{p}\left(  \mathbb{R}\right)  $, then it is immediate that $\lim_{k\rightarrow\infty}\mathbb{P}%
f_{k}(z)=0$ for all $z\in\mathbb{R}_{+}^{2}$.

If on the other hand $\lim_{k\rightarrow\infty}\mathbb{P}f_{k}(z)=0$ for all $z\in\mathbb{R}%
_{+}^{2}$, then because finite linear combinations of Poisson kernels are
dense\footnote{\emph{Hint}: Consider the unit circle $\mathbb{T} = [0,2 \pi)$.
Let $f\in C\left(  \mathbb{T} \right)  $ and $\varepsilon>0$. For $r<1$
sufficiently close to $1$, and for $n$ sufficiently large depending on $r$, we
have%
\[
\left\vert P_{r}\ast f\left(  x\right)  -\sum_{k=0}^{n-1}\left(  \int_{2
\pi\frac{k}{n}}^{2 \pi\frac{k+1}{n}}f\right)  P_{r}\left(  x-\frac{2\pi k}%
{n}\right)  \right\vert \leq\varepsilon.
\]
} in the dual space $L^{p^{\prime}}\left(  \mathbb{R}\right)  $, and the norms
$\left\Vert f_{k}\right\Vert _{L^{p}\left(  \mathbb{R}\right)  }$ are
uniformly bounded in $L^{p}\left(  \mathbb{R}\right)  $, we get $f_{k}%
\rightarrow0$ weakly in $L^{p}\left(  \mathbb{R}\right)  $.
\end{proof}

\add{In what follows, given $1 \leq p <\infty$, let $H^p (\mathbb{C}^+)$ denotes the functions $f$ on $\mathbb{R}$ which are the nontangential boundary values of an analytic function  on the upper-half plane
\[
\mathbb{C}^+ := \{ (x,y) \in \mathbb{R}^2 ~:~ y > 0\} \, ,
\]
which we call $f$, 
such that 
\[
\sup\limits_{y > 0 } \left (\int\limits_{-\infty} ^{\infty} |f(x+i y) |^p dx \right )^{\frac 1 p} < \infty \, . 
\]
If $1<p<\infty$, and if $f \in L^p (\mathbb{R})$ is real-valued, then
\[
f + i H f \in H^p (\mathbb{C} ^+) \, .
\]
}

\begin{lemma}
[{\cite[Section 4]{NaVo}}]\label{Nazarov_weak_convergence} Suppose
$p\in(1,\infty)$. With $s_{k}^{I}$ as above, we have
\[
s_{k}^{I}\rightarrow0\text{\ },\ Hs_{k}^{I}\rightarrow0\text{\ },\ s_{k}%
^{I}Hs_{k}^{I}\rightarrow0\,,\ s_{k}^{I}\left(  Hs_{k}^{I}\right)
^{2}\rightarrow0\,,\ \left(  Hs_{k}^{I}\right)  ^{2}\rightarrow\mathbf{1}%
_{I}\ ,
\]
weakly in $L^{p}\left(  \mathbb{R}\right)  $ as $k\rightarrow\infty$. More
generally for nonnegative $a,b$ not both zero, there exist positive constants
$c_{a,b}$, with $c_{0,2}=1$, such that%
\[
\left(  s_{k}^{I}\right)  ^{a}\left(  Hs_{k}^{I}\right)  ^{b}\rightarrow
\left\{
\begin{array}
[c]{ccc}%
0 & if & a\text{ or }b\text{ is odd}\\
c_{a,b}\mathbf{1}_{I} & if & a\text{ and }b\text{ are even}%
\end{array}
\right.  \text{ weakly in }L^{p}\left(  \mathbb{R}\right)  \text{ as
}k\rightarrow\infty.
\]

\end{lemma}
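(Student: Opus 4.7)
The plan is to apply Lemma \ref{Nazarov_H2}, which reduces weak convergence in $L^p$ to pointwise convergence of Poisson extensions once a uniform $L^p$ bound is in place. For the linear statements this works directly; for the nonlinear products $(s_k^I)^a (H s_k^I)^b$ the enabling structural fact is that $s_k^I + i H s_k^I$ is the non-tangential boundary trace of a holomorphic function on $\mathbb{R}^2_+$, so its powers retain $L^p$ boundary data and admit a Poisson representation.

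First I would handle $s_k^I \to 0$ and $H s_k^I \to 0$ weakly. The sequence $\{s_k^I\}$ is uniformly bounded in $L^\infty$ (supported on $I$ with values $\pm 1$), hence in every $L^p$; $L^p$-boundedness of $H$ then gives uniform $L^p$ control of $\{H s_k^I\}$. Pointwise vanishing of $\mathbb{P} s_k^I(z)$ for fixed $z \in \mathbb{R}^2_+$ follows from cancellation: $s_k^I$ has vanishing mean on every dyadic subinterval of $I$ of length $2^{-k+1}|I|$, so Taylor expansion of the smooth Poisson kernel $P_z$ on such intervals gives $\mathbb{P} s_k^I(z) = O(2^{-k})$. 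Weak convergence of $H s_k^I$ then follows by duality: $\int (H s_k^I)\, b = -\int s_k^I (H b) \to 0$ whenever $b \in L^{p'}$.

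For the products I would set $F_k(z) = \mathbb{P} s_k^I(z) + i\,\mathbb{P} H s_k^I(z)$, a holomorphic function on the upper half-plane whose non-tangential boundary trace is $s_k^I + i H s_k^I$. Since $s_k^I \in L^\infty$ and $\{H s_k^I\}$ is uniformly bounded in every $L^r$, $1<r<\infty$, $F_k^n$ has uniform $L^{r/n}$ boundary trace (by picking $r>n$) and is recovered as its Poisson extension,
\[
F_k^n(z) \;=\; \mathbb{P}\bigl[(s_k^I + i H s_k^I)^n\bigr](z), \qquad z \in \mathbb{R}^2_+.
\]
From $F_k(z) \to 0$ pointwise we get $F_k^n(z) \to 0$, and Lemma \ref{Nazarov_H2} applied to the real and imaginary parts yields $(s_k^I + i H s_k^I)^n \to 0$ weakly in $L^{r/n}$ for each $n$.

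The final step is to extract the individual monomials from the binomial expansion
\[
(s_k^I + i H s_k^I)^n \;=\; \sum_{b=0}^{n} \binom{n}{b} i^{b} (s_k^I)^{n-b} (H s_k^I)^b,
\]
using the pointwise identities $(s_k^I)^a = \mathbf{1}_I$ for even $a\geq 2$ and $(s_k^I)^a = s_k^I$ for odd $a\geq 1$, and the fact that multiplication by the bounded function $\mathbf{1}_I$ preserves weak convergence. Stepping through $n=1,2,3,\dots$ and separating real and imaginary parts produces at each stage a linear relation with exactly one new unknown weak limit (induction on $b$): this recovers $(H s_k^I)^b \to 0$ weakly for $b$ odd, and $(H s_k^I)^b \to c_b \mathbf{1}_I$ weakly for $b$ even with $c_b>0$ determined recursively, the initial constant $c_2 = 1$ being forced by the real part of the $n=2$ expansion; the limits $s_k^I (H s_k^I)^b \to 0$ follow from the same equations. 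The step I expect to be the main obstacle is the rigorous identification $F_k^n(z) = \mathbb{P}[(s_k^I + i H s_k^I)^n](z)$: one must invoke Hardy-space theory on the upper half-plane (non-tangential traces, Poisson representation for $H^p$ with $p>1$) and maintain uniform $L^{r/n}$ bounds across the recursion so that Lemma \ref{Nazarov_H2} may be applied at each power.
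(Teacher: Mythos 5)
Your proposal is correct and follows essentially the same route as the paper: form $f_k^I = s_k^I + iHs_k^I \in H^p(\mathbb{R})$, use holomorphy so that the Poisson extension of its powers is the power of the Poisson extension, apply Lemma \ref{Nazarov_H2}, and read off the individual monomials from real and imaginary parts using $(s_k^I)^2 = \mathbf{1}_I$. The only differences are cosmetic: you establish $s_k^I \rightarrow 0$ weakly via Poisson-kernel cancellation rather than density of step functions, and you spell out the general-power induction that the paper leaves to the reader.
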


\begin{proof}
Since $\lim_{k\rightarrow\infty}\int_{\mathbb{R}}s_{k}^{I}\left(  t\right)
g\left(  t\right)  dt=0$ for all dyadic step functions $g$ on $\mathbb{R}$, and
since finite linear combinations of dyadic step functions are dense in $L^{p}\left(
\mathbb{R}\right)  $, we conclude that $s_{k}^{I}\rightarrow0$ weakly in
$L^{p}\left(  \mathbb{R}\right)  $. Since $H$ is bounded on $L^{p}\left(
\mathbb{R}\right)  $, we also have $Hs_{k}^{I}\rightarrow0$ weakly in
$L^{p}\left(  \mathbb{R}\right)  $. Let $f_{k}^{I}\equiv s_{k}^{I}+iHs_{k}%
^{I}\in H^{p}\left(  \mathbb{C}^+\right)  $. By an application of Lemma
\ref{Nazarov_H2} using $f_{k}^{I}\rightarrow0$ weakly in $L^{p}\left(
\mathbb{R}\right)  $, followed by the fact that $\left(  \mathbb{P}f_{k}%
^{I}\right)  ^{2}$ is holomorphic and must be the Poisson extension of $\left ( f_k ^I \right )^2$ since they share the same boundary values, and then finally writing $\left(  f_{k}%
^{I}\right)  ^{2}$ in terms of its real and imaginary parts, we get
\[
0=\left[  \lim_{k\rightarrow\infty}\mathbb{P}f_{k}^{I}\left(  z\right)
\right]  ^{2}=\lim_{k\rightarrow\infty}\mathbb{P}\left[  \left(  f_{k}%
^{I}\right)  ^{2}\right]  \left(  z\right)  =\lim_{k\rightarrow\infty
}\mathbb{P}\left[  \left(  s_{k}^{I}\right)  ^{2}-\left(  Hs_{k}^{I}\right)
^{2}+i2s_{k}^{I}Hs_{k}^{I}\right]  \left(  z\right)
\]
for all $z\in\mathbb{C}^{+}$. By Lemma \ref{Nazarov_H2} again,%
\begin{align*}
s_{k}^{I}Hs_{k}^{I}  &  \rightarrow0\text{ weakly in }L^{p}\left(
\mathbb{R}\right)  ,\\
\mathbf{1}_{I}-\left(  Hs_{k}^{I}\right)  ^{2}  &  =\left(  s_{k}^{I}\right)
^{2}-\left(  Hs_{k}^{I}\right)  ^{2}\rightarrow0\text{ weakly in }L^{p}\left(
\mathbb{R}\right)  ,
\end{align*}
since $\left(  s_{k}^{I}\right)  ^{2}=\mathbf{1}_{I}$. Similarly, we see that
the real part of $\left(  f_{k}^{I}\right)  ^{3}$ $\rightarrow0$ weakly in
$L^{p}\left(  \mathbb{R}\right)  $, i.e.%
\[
\left(  s_{k}^{I}\right)  ^{3}-3\left(  s_{k}^{I}\right)  \left(  Hs_{k}%
^{I}\right)  ^{2}\rightarrow0\text{ weakly in }L^{p}\left(  \mathbb{R}\right)
,
\]
which gives $s_{k}^{I}\left(  Hs_{k}^{I}\right)  ^{2}\rightarrow0$ weakly in
$L^{p}\left(  \mathbb{R}\right)  $ since $\left(  s_{k}^{I}\right)
^{2}=\mathbf{1}_{I}$ and $s_{k}^{I}\rightarrow0$ weakly in $L^{p}\left(
\mathbb{R}\right)  $.

The more general statement involving powers $a$ and $b$ follows similar
arguments.
\end{proof}

To carry out Nazarov's supervisor argument in \cite{NaVo}, one also needs to
understand the weak convergence of mixed terms $s_{k}^{I} \left(  Hs_{k}^{J}
\right)  \left(  Hs_{k}^{K} \right)  $, where $I,J,K$ are dyadic intervals of
same side length. We will often make use of the trivial observation that if
$I_{1},I_{2},...,I_{N}$ are pairwise disjoint sets, and functions
$a_{k}^{I_{j}}$ are supported on $I_{j}$, then $\sum_{j=1}^{N}a_{k}^{I_{j}%
}\rightarrow0$ weakly in $L^{p}\left(  \mathbb{R}\right)$ as $k \to \infty$ if and only if
$a_{k}^{I_{j}}\rightarrow0$ weakly in $L^{p}\left(  \mathbb{R}\right)  $ for
each $j=1,2,...,N$.

\begin{lemma}
\label{lem:Nazarov_convergence_mixed} Suppose $p\in(1,\infty)$. Let $I,J,K$ be
dyadic intervals all of equal sidelength. Then
\begin{align}\label{eq:1_multi_weak}
&  s_{k}^{I}\left(  Hs_{k}^{J}\right)  \rightarrow0\,\text{weakly in }%
L^{p}\left(  \mathbb{R}\right)  \text{ as }k\rightarrow\infty,\\
&  \left(  Hs_{k}^{I}\right)  \left(  Hs_{k}^{J}\right)  \rightarrow
0\,\text{weakly in }L^{p}\left(  \mathbb{R}\right)  \text{ as }k\rightarrow
\infty\text{ if }I\neq J, \label{eq:2_multi_weak}\\
&  s_{k}^{I}\left(  Hs_{k}^{J}\right)  \left(  Hs_{k}^{K}\right)
\rightarrow0\,\text{weakly in }L^{p}\left(  \mathbb{R}\right)  \text{ as
}k\rightarrow\infty \label{eq:3_multi_weak} \, .
\end{align}
\end{lemma}

\begin{proof}
Let us first show (\ref{eq:1_multi_weak}). If $I=J$, this follows by Lemma
\ref{Nazarov_weak_convergence}, so assume $I\neq J$. Write $f_{k}^{I}\equiv
s_{k}^{I}+iHs_{k}^{I}$, and similarly for $J$. Since $f_{k}^{I}f_{k}^{J}\in
H^{p}\left(  \mathbb{C}^+\right)  $ (because $H$ is bounded on, e.g.,  $L^{2p}\left(
\mathbb{R}\right)  $), the method of proof of Lemma
\ref{Nazarov_weak_convergence} combined with Lemma \ref{Nazarov_H2} implies
that the real and imaginary parts of $f_{k}^{I}f_{k}^{J}$\ go to $0$ weakly in
$L^{p}\left(  \mathbb{R}\right)  $. In particular since $s_{k}^{I}s_{k}^{J}=0$
because of their disjoint support, we get
\begin{align}
-\left(  Hs_{k}^{I}\right)  \left(  Hs_{k}^{J}\right)  \rightarrow
0\,\text{weakly in }L^{p}\left(  \mathbb{R}\right)  \, , \qquad s_{k}%
^{I}Hs_{k}^{J}+s_{k}^{J}Hs_{k}^{I} \rightarrow0\,\text{weakly in }L^{p}\left(
\mathbb{R}\right)  \label{eq:real_imaginary_multi_weak_init}.
\end{align}
Then (\ref{eq:1_multi_weak}) follows from the right identity in (\ref{eq:real_imaginary_multi_weak_init}): 
since $I,J$ are disjoint, it follows that $s_{k}^{I}Hs_{k}^{J}\rightarrow0$
weakly in $L^{p}\left(  \mathbb{R}\right)  $ and $s_{k}^{J}Hs_{k}%
^{I}\rightarrow0$ weakly in $L^{p}\left(  \mathbb{R}\right)  $. 

As for (\ref{eq:2_multi_weak}), it follows immediately from the identity on the left of (\ref{eq:real_imaginary_multi_weak_init}).

Now let us show (\ref{eq:3_multi_weak}).
Define $f_{k}^{I},f_{k}^{J},f_{k}^{K}$ as above. We will expand $f_{k}%
^{I}f_{k}^{J}f_{k}^{K}$ into its real and imaginary parts, which by Lemma
\ref{Nazarov_H2} go to $0$ weakly in $L^{p}\left(  \mathbb{R}\right)  $. We
will consider various cases involving the dyadic intervals $I,J,K$.

\textbf{Case 1: }$I=J=K$\textbf{.} Then $s_{k}^{I}\left(  Hs_{k}^{J}\right)
\left(  Hs_{k}^{K}\right)  =s_{k}^{I}\left(  Hs_{k}^{I}\right)  ^{2}%
\rightarrow0$ weakly in $L^{p}\left(  \mathbb{R}\right)  $ by Lemma
\ref{Nazarov_weak_convergence}.

\textbf{Case 2: }$I\neq J=K$. Then using that $|s_{k}^{I}|^{2}=\mathbf{1}_{I}%
$, and similarly for $J$, we compute the real part
\[
\operatorname{Re}\left(  f_{k}^{I}f_{k}^{J}f_{k}^{K}\right)
=\operatorname{Re}\left(  f_{k}^{I}\left(  f_{k}^{J}\right)  ^{2}\right)
=\operatorname{Re}\left(  \left(  s_{k}^{I}+iHs_{k}^{I}\right)  \left(
s_{k}^{J}+iHs_{k}^{J}\right)  ^{2}\right)  =-2\left(  Hs_{k}^{I}\right)
s_{k}^{J}\left(  Hs_{k}^{J}\right)  -s_{k}^{I}\left(  Hs_{k}^{J}\right)
^{2}\,.
\]
Since the real part is the sum of two functions with disjoint support, by
Lemma \ref{Nazarov_H2}, $s_{k}^{I}\left(  Hs_{k}^{J}\right)  ^{2}\rightarrow0$
weakly in $L^{p}\left(  \mathbb{R}\right)  $.

\textbf{Case 3: }$I=J\neq K$ or $I=K\neq J$. Assume without loss of generality
that $I=J\neq K$. Using that $s_{k}^{J}s_{k}^{K}=0$ because they have disjoint
supports, we get $f_{k}^{I}f_{k}^{J}f_{k}^{K}$ has real part
\[
-2s_{k}^{J}\left(  Hs_{k}^{J}\right)  \left(  Hs_{k}^{K}\right)  -s_{k}%
^{K}\left(  Hs_{k}^{J}\right)  ^{2}\rightarrow0\,\text{weakly in }L^{p}\left(
\mathbb{R}\right)
\]
by Lemma \ref{Nazarov_H2}. But the two terms have disjoint support $J$ and
$K$, so each goes to $0\,$weakly in $L^{p}\left(  \mathbb{R}\right)  $.

\textbf{Case 4: }$I,J,K$ are pairwise disjoint. We compute the real part of
$f_{k}^{I}f_{k}^{J}f_{k}^{K}$ equals
\[
-s_{k}^{I}\left(  Hs_{k}^{J}\right)  \left(  Hs_{k}^{K}\right)  -\left(
Hs_{k}^{I}\right)  s_{k}^{J}\left(  Hs_{k}^{K}\right)  -s_{k}^{K}\left(
Hs_{k}^{I}\right)  \left(  Hs_{k}^{J}\right)  \rightarrow0\,\text{weakly in
}L^{p}\left(  \mathbb{R}\right)  \,,
\]
by Lemma \ref{Nazarov_H2}. Since the three terms have pairwise disjoint
support, then each individual term goes to $0\,$weakly in $L^{p}\left(
\mathbb{R}\right)  $.
\end{proof}

\subsection{From Hilbert to Riesz}

In analogy with $\left(  Hs_{k}^{I}\right)  ^{2}\rightarrow\mathbf{1}_{I}$
weakly in $L^{2} \left ( \mathbb{R}^n \right )$, we want to show that $\left(  R_{1}s_{k}%
^{P,\hor}\right)  ^{2}\rightarrow c\mathbf{1}_{P}$ weakly
in $L^{2} \left ( \mathbb{R}^n \right )$ for some positive constant $c$, and also that $R_{2}s_{k}%
^{P,\hor}\longrightarrow0$ strongly in $L^{2}$, even
$L^{p}$, as $k\rightarrow\infty$. Using real variable techniques, we will
calculate matters in such a way that our claim for $R_{1}$ reduces to that of
the Hilbert transform $H$, where the holomorphic methods used by Nazarov are
available, while the claim for $R_{2}$ does not need reduction to $H$.

The following notation will also be useful.

\begin{notation}
Given a sequence $\left\{  f_{k}\right\}  _{k=1}^{\infty}$ of functions in
$L^{2}\left(  \mathbb{R}^{n}\right)  $, we write
\[
f_{k}=o_{k\rightarrow\infty}^{\operatorname*{weakly}}\left(  1\right)
\]
if
\[
\lim_{k\rightarrow\infty}\int_{\mathbb{R}^{n}}f_{k}\left(  t\right)  g\left(
t\right)  dt=0\text{ for all }g\in L^{2}\left(  \mathbb{R}^{n}\right)  ,
\]
and we write $f_{k}=o_{k\rightarrow\infty}^{\operatorname*{strongly}}\left(
1\right)  $ if
\[
\lim_{k\rightarrow\infty}\int_{\mathbb{R}^{n}}\left\vert f_{k}\left(
t\right)  \right\vert ^{p}dt=0\text{ for all }p\in(1,\infty).
\]

\end{notation}

We first need an elementary consequence of the alternating series test.

\begin{lemma}
\label{Alt_series_test}If $b$ is a bounded function on $[0,1]$ and there
exists a partition 
\[
\left\{  z_{0}\equiv0<z_{1}<...<z_{N-1}<z_{N}%
\equiv1\right\}  
\]
such that $b$ is monotone, and of one sign, on each
subinterval $(z_{j},z_{j+1})$, then%
\[
\left\vert \int b(x)s_{k}^{[0,1]}(x)dx\right\vert \leq CN2^{-k}\left\Vert
b\right\Vert _{\infty}.
\]

\end{lemma}

\begin{proof}
If $b$ is monotone on $[0,1]$, and if say $b(0)>b(1) \geq0$, then
\begin{align}
\label{alt_series_estimate}\left\vert \int b(x)s_{k}^{[0,1]}(x)dx\right\vert
=\left\vert \sum_{j=1}^{2^{k}}\left(  -1\right)  ^{j}\int_{\frac{j-1}{2^{k}}%
}^{\frac{j}{2^{k}}}b(x)dx\right\vert \leq\int_{0}^{\frac{1}{2^{k}}}\left\vert
b(x)\right\vert dx\leq2^{-k}\left\Vert b\right\Vert _{\infty},
\end{align}
by the alternating series test. More generally, we can apply this argument to
the subinterval $\left[  z_{m-1},z_{m}\right]  $ if the endpoints lie in
$\left\{  j2^{-k}\right\}  _{j=0}^{2^{k}}$, the points of change in sign of
$s_{k}^{[0,1]}$. In the general case, note that if we denote by $\frac
{j_{m-1}}{2^{k}}$ (or $\frac{j_{m}}{2^{k}}$) the leftmost (or rightmost) point
of the form $\frac{j}{2^{k}}$ in $[z_{m-1}, z_{m}]$, then the integrals in
each one of the intervals $[z_{m-1}, \frac{j_{m-1}}{2^{k}}]$ , $[\frac
{j_{m-1}}{2^{k}}, \frac{j_{m}}{2^{k}}]$, and $[\frac{j_{m}}{2^{k}}, z_{m}]$
all satisfy the same bound as (\ref{alt_series_estimate}).
\end{proof}

We will use Lemma \ref{Alt_series_test} to prove the following results, which
encompass the technical details for the estimates in this section. In particular, 
Lemmas \ref{Alt_Series_strong_convergence} and \ref{Alt_Series_strong_convergence_old} below, while technical, will allow for cleaner proofs of the main results Lemma \ref{rep} and Lemma \ref{reduction} of this section.  The reader should keep in mind Lemmas \ref{Alt_Series_strong_convergence} and \ref{Alt_Series_strong_convergence_old} essentially follow from an application of the alternating series test Lemma \ref{Alt_series_test}. 
We first
need to establish some notation.

\begin{definition}
A function $g$ on $\left[  a,b\right]  $ is $M$\emph{-piecewise
monotone} if there is a partition 
\[
\left\{  a=t_{1}<t_{2}<...<t_{M}=b\right\}
 \]
 such that $g$ is monotone and of one sign on each subinterval $\left(
t_{k},t_{k+1}\right)  $, $1\leq k<M$.
\end{definition}

\begin{notation}
For $x\in\mathbb{R}^{n}$ and $P=P_{1}\times...\times P_{n}$ a cube in
$\mathbb{R}^{n}$, we write%
\begin{align}
x  &  =(x_{1},...,x_{n})=(x_{1},x^{\prime})=(x_{1},x_{2},x^{\prime\prime
})=(\widehat{x},x_{n})=(x_{1},\widetilde{x},x_{n}), \label{notation_variables}%
\\
P  &  =P_{1}\times...\times P_{n}=P_{1}\times P^{\prime}=P_{1}\times
P_{2}\times P^{\prime\prime}=\widehat{P}\times P_{n}=P_{1}\times\widetilde
{P}\times P_{n}.\nonumber
\end{align}

\end{notation}

\begin{definition}
The common definition of the $\delta$-halo of a cube $P$ is given by%
\[
H_{\delta}^{P}\equiv\left\{  x\in\mathbb{R}^{n}:\operatorname*{dist}%
(x_{,}\partial P)<\delta\ell\left(  P\right)  \right\}  .
\]
Given a cube $Q\supset P$ we define the $Q$-extended halo of $P$ by%
\[
H_{\delta}^{P;Q}\equiv\left\{  x\in Q:\operatorname*{dist}(x_{j,}\partial
P_{j})<\delta\ell\left(  P\right)  \text{ for some }1\leq j\leq n\right\}  .
\]

\end{definition}

We also write $s_{k}$ in place of $s_{k}^{\left[  -1,1\right]  }$.

\begin{lemma}
\label{Alt_Series_strong_convergence}Let $p\in(1,\infty)$ and $M\geq1$. Let
$P=P_{1}\times P^{\prime}$ be a subcube of a cube $Q=Q_{1}\times Q^{\prime
}\subset\mathbb{R\times R}^{n-1}$. Furthermore suppose that \
\[
F:Q\times P_{1}\times\widetilde{P}\mathbb{\rightarrow R}%
\]
satisfies the following three properties:
\begin{align}
& \label{3 assump}\\
&  \left(  \mathbf{i}\right)  ~~y_{1}\rightarrow F\left(  x,y_{1}%
,\widetilde{y}\right)  \text{ is }M\text{-piecewise monotone}\ \text{for each
}\left(  x,\widetilde{y}\right)  \in\left(  Q\setminus H_{\delta}%
^{P;Q}\right)  \times\widetilde{P}\text{ }\text{ for all }0<\delta<\frac{1}%
{2},\nonumber\\
&  \left(  \mathbf{ii}\right)  ~~\sup_{\left(  x,y_{1},\widetilde{y}\right)
\in\left(  Q\setminus H_{\delta}^{P;Q}\right)  \times P_{1}\times\widetilde
{P}}\left\vert F\left(  x,y_{1},\widetilde{y}\right)  \right\vert \leq
C_{\delta}<\infty\text{ for all }0<\delta<\frac{1}{2},\nonumber\\
&  \left(  \mathbf{iii}\right)  ~~\mathbf{1}_{H_{\delta}^{P;Q}}\left(
x\right)  \int_{\widetilde{P}}\int_{P_{1}}\left\vert F\left(  x,y_{1}%
,\widetilde{y}\right)  \right\vert dy_{1}d\widetilde{y}\rightarrow0\text{
strongly in }L^{p}\left(  Q\right)  \text{ as }\delta\rightarrow0.\nonumber
\end{align}
Then
\[
\int_{\widetilde{P}}\int_{P_{1}}F\left(  x,y_{1},\widetilde{y}\right)
s_{k}\left(  y_{1}\right)  dy_{1}d\widetilde{y}\rightarrow0\text{ strongly in
}L^{p}\left(  Q\right)  \text{ as }k\rightarrow\infty.
\]

\end{lemma}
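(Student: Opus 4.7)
The plan is a standard halo/non-halo decomposition combined with the one-dimensional alternating sum estimate of Lemma \ref{Alt_series_test}. Write
$$G_k(x)\equiv\int_{\widetilde P}\int_{P_1}F(x,y_1,\widetilde y)\,s_k(y_1)\,dy_1\,d\widetilde y,$$
and for each fixed $\delta\in(0,1/2)$ split $\|G_k\|_{L^p(Q)}\le \|\mathbf 1_{Q\setminus H_\delta^{P;Q}}G_k\|_{L^p(Q)}+\|\mathbf 1_{H_\delta^{P;Q}}G_k\|_{L^p(Q)}$. The strategy is to show the first term is small once $k$ is large (for each fixed $\delta$), and the second term is small once $\delta$ is small (uniformly in $k$), so a routine $\varepsilon/2$ argument completes the proof.

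For the non-halo piece, assumption (i) tells us that for every $(x,\widetilde y)\in(Q\setminus H_\delta^{P;Q})\times\widetilde P$ the function $y_1\mapsto F(x,y_1,\widetilde y)$ is $M$-piecewise monotone on $P_1$, and assumption (ii) bounds it uniformly by $C_\delta$. Applying a translated and dilated version of Lemma \ref{Alt_series_test} (whose estimate depends only on the number of pieces and the sup norm) gives
$$\left|\int_{P_1}F(x,y_1,\widetilde y)\,s_k(y_1)\,dy_1\right|\le C\,M\,2^{-k}C_\delta\,|P_1|,$$
uniformly in such $(x,\widetilde y)$. Integrating over $\widetilde y\in\widetilde P$ and taking the $L^p$ norm over $Q\setminus H_\delta^{P;Q}$ yields
$$\bigl\|\mathbf 1_{Q\setminus H_\delta^{P;Q}}G_k\bigr\|_{L^p(Q)}\le C\,M\,2^{-k}\,C_\delta\,|P|\,|Q|^{1/p},$$
which, for each fixed $\delta$, tends to $0$ as $k\to\infty$.

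For the halo piece, we estimate trivially $|G_k(x)|\le \int_{\widetilde P}\int_{P_1}|F(x,y_1,\widetilde y)|\,dy_1\,d\widetilde y$, so
$$\bigl\|\mathbf 1_{H_\delta^{P;Q}}G_k\bigr\|_{L^p(Q)}\le \Bigl\|\,\mathbf 1_{H_\delta^{P;Q}}(x)\int_{\widetilde P}\int_{P_1}|F(x,y_1,\widetilde y)|\,dy_1\,d\widetilde y\,\Bigr\|_{L^p(Q)}.$$
By assumption (iii) the right-hand side tends to $0$ as $\delta\to 0$, and crucially this bound is independent of $k$. Given $\varepsilon>0$, first choose $\delta$ so small that the halo term is $<\varepsilon/2$, then choose $k$ so large that the non-halo term is $<\varepsilon/2$; this shows $\|G_k\|_{L^p(Q)}\to 0$.

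The only real subtlety is the order of quantifiers: one must first absorb the boundary singularities via $\delta$ before sending $k\to\infty$, since the bound $C_\delta$ from (ii) blows up as $\delta\to 0$. The decoupling of the two limits, guaranteed by the uniformity of (iii) in $k$ and the uniformity of (i)--(ii) in $x$ once $\delta$ is fixed, is precisely what makes the standard two-parameter argument go through.
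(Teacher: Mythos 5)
Your proof is correct and follows essentially the same route as the paper: the halo/non-halo splitting, the one-dimensional alternating series test (Lemma \ref{Alt_series_test}) with assumptions (i)--(ii) giving the uniform $CMC_{\delta}2^{-k}$ bound off the halo, and assumption (iii) handling the halo, finished by the standard two-parameter ($\delta$ first, then $k$) limiting argument. Your explicit treatment of the order of quantifiers merely spells out what the paper leaves implicit.
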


\begin{proof}
Write%
\[
\int_{\widetilde{P}}\int_{P_{1}}F\left(  x,y_{1},\widetilde{y}\right)
s_{k}\left(  y_{1}\right)  dy_{1}d\widetilde{y}=\left\{  \mathbf{1}%
_{H_{\delta}^{P;Q}}\left(  x\right)  +\mathbf{1}_{Q\setminus H_{\delta}^{P;Q}%
}\left(  x\right)  \right\}  \int_{\widetilde{P}}\int_{P_{1}}F\left(
x,y_{1},\widetilde{y}\right)  s_{k}\left(  y_{1}\right)  dy_{1}d\widetilde
{y}.
\]
For the first term use%
\[
\mathbf{1}_{H_{\delta}^{P;Q}}\left(  x\right)  \left\vert \int_{\widetilde{P}%
}\int_{P_{1}}F\left(  x,y_{1},\widetilde{y}\right)  s_{k}\left(  y_{1}\right)
dy_{1}d\widetilde{y}\right\vert \leq\mathbf{1}_{H_{\delta}^{P;Q}}\left(
x\right)  \int_{\widetilde{P}}\int_{P_{1}}\left\vert F\left(  x,y_{1}%
,\widetilde{y}\right)  \right\vert dy_{1}d\widetilde{y}%
\]
and the third assumption in (\ref{3 assump}).

For the second term we will use the alternating series test Lemma
\ref{Alt_series_test} adapted to the interval $P_{1}$ on the integral
$\int_{P_{1}}F\left(  x,y_{1},\widetilde{y}\right)  s_{k}\left(  y_{1}\right)
dy_{1}$ together with the first and second assumptions in (\ref{3 assump}).
Indeed, by the first assumption and Lemma \ref{Alt_series_test}\ we have that
for $\left(  x,\widetilde{y}\right)  \in\left(  Q\setminus H_{\delta}%
^{P;Q}\right)  \times\widetilde{P}$ , there exists a partition $\left\{
t_{0},t_{1},...,t_{M}\right\}  $ of $P_{1}$ \emph{depending on} $\left(
x,\widetilde{y}\right)  $, but with $M$ \emph{independent} of $\left(
x,\widetilde{y}\right)  $, such that
\begin{align*}
&  \left\vert \int_{P_{1}}F\left(  x,y_{1},\widetilde{y}\right)  s_{k}\left(
y_{1}\right)  dy_{1}\right\vert \leq\sum_{j=0}^{M-1}\left\vert \int_{t_{j}%
}^{t_{j+1}}F\left(  x,y_{1},\widetilde{y}\right)  s_{k}\left(  y_{1}\right)
dy_{1}\right\vert \leq CMC_{\delta}2^{-k},
\end{align*}
where the final inequality follows from the second assumption. Thus away from
the halo we have uniform convergence to zero, and altogether we obtain the
desired conclusion.
\end{proof}

We will also need a version of the previous lemma in which some of the $y$
variables have been integrated out.

\begin{lemma}
\label{Alt_Series_strong_convergence_old}Let $p\in(1,\infty)$ and $M\geq1$.
	Let $P \equiv [-1,1]^n$, which we will sometimes write as $P_{1}\times P^{\prime} $, and assume $P$ is subcube of a cube $Q=Q_{1}\times
Q^{\prime}\subset\mathbb{R\times R}^{n-1}$. Furthermore suppose that \
\[
F:Q\times P_{1}\mathbb{\rightarrow R}%
\]
can be written as
\[
F\left(  x,y_{1}\right)  =\int_{\left[  -1,1\right]  ^{n-2}}F_{y^{\prime
\prime}}\left(  x,y_{1}\right)  dy^{\prime\prime},
\]
where for each fixed $x$, the function $y_{1}\rightarrow F_{y^{\prime\prime}%
}\left(  x,y_{1}\right)  $ doesn't change sign, and where the following three
properties hold:
\begin{align}
& \label{3 assump'}\\
&  \left(  \mathbf{i}\right)  ~~y_{1}\rightarrow F_{y^{\prime\prime}}\left(
x,y_{1}\right)  \text{ is }M\text{-piecewise monotone}\ \text{for each }x\in
Q,y^{\prime\prime}\in\left[  -1,1\right]  ^{n-2}\nonumber\\
&  \left(  \mathbf{ii}\right)  ~~\sup_{\left(  x,y_{1}\right)  \in\left(
Q\setminus H_{\delta}^{P;Q}\right)  \times P_{1}}\left\vert F\left(
x,y_{1}\right)  \right\vert \leq C_{\delta}<\infty\text{ for all }%
0<\delta<\frac{1}{2},\nonumber\\
&  \left(  \mathbf{iii}\right)  ~~\mathbf{1}_{H_{\delta}^{P;Q}}\left(
x\right)  \int_{P_{1}}\left\vert F\left(  x,y_{1}\right)  \right\vert
dy_{1}\rightarrow0\text{ strongly in }L^{p}\left(  Q\right)  \text{ as }%
\delta\rightarrow0.\nonumber
\end{align}
Then
\[
\int_{P_{1}}F\left(  x,y_{1}\right)  s_{k}\left(  y_{1}\right)  dy_{1}%
\rightarrow0\text{ strongly in }L^{p}\left(  Q\right)  \text{ as }%
k\rightarrow\infty.
\]

\end{lemma}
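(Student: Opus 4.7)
The plan is to mimic the proof of Lemma~\ref{Alt_Series_strong_convergence} verbatim, since the statement is the same with the trivial variable $\widetilde{y}$ removed. First, split the integral according to whether $x$ lies in the $Q$-extended halo $H_\delta^{P;Q}$ of $P$:
\[
\int_{P_1} F(x,y_1) s_k(y_1)\, dy_1 = \mathbf{1}_{H_\delta^{P;Q}}(x)\int_{P_1} F(x,y_1) s_k(y_1)\, dy_1 + \mathbf{1}_{Q\setminus H_\delta^{P;Q}}(x)\int_{P_1} F(x,y_1) s_k(y_1)\, dy_1.
\]

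For the halo piece, since $|s_k|\leq 1$, I would estimate it pointwise by $\mathbf{1}_{H_\delta^{P;Q}}(x)\int_{P_1}|F(x,y_1)|\,dy_1$; hypothesis $(\mathbf{iii})$ then makes its $L^p(Q)$-norm tend to $0$ as $\delta\to 0$, uniformly in $k$. For the non-halo piece, fix $x\in Q\setminus H_\delta^{P;Q}$. By hypothesis $(\mathbf{i})$, $y_1\mapsto F(x,y_1)$ is $M$-piecewise monotone on $P_1$, and by $(\mathbf{ii})$ it is bounded by $C_\delta$ there. Applying Lemma~\ref{Alt_series_test} on $P_1$ (after rescaling $P_1$ to $[0,1]$, which only changes the estimate by a factor $|P_1|$) gives
\[
\left| \int_{P_1} F(x,y_1) s_k(y_1)\,dy_1 \right| \leq C M C_\delta |P_1|\, 2^{-k},
\]
with the same constant $C$ and the same $M$ for every $x\in Q\setminus H_\delta^{P;Q}$.

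Now I would conclude with the standard two-parameter $\varepsilon/2$ argument. Given $\varepsilon>0$, first use $(\mathbf{iii})$ to choose $\delta>0$ so that the halo piece has $L^p(Q)$ norm less than $\varepsilon/2$; then, with $\delta$ fixed, the non-halo piece is bounded pointwise on the bounded cube $Q$ by $CMC_\delta|P_1|2^{-k}$, so its $L^p(Q)$ norm is at most $CMC_\delta|P_1|2^{-k}|Q|^{1/p}$, which is less than $\varepsilon/2$ for $k$ sufficiently large. Adding the two estimates gives strong $L^p(Q)$ convergence to $0$.

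The step I expect to be essentially routine is everything above, because the architecture is identical to Lemma~\ref{Alt_Series_strong_convergence}; the only small subtlety to watch is that Lemma~\ref{Alt_series_test} is stated on $[0,1]$, so one must rescale $s_k$ to $P_1$ and keep track of the factor $|P_1|$, and that the order of the two limits (first $\delta\to 0$, then $k\to\infty$) is forced because $C_\delta$ may blow up as $\delta\to 0$.
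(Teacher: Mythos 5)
Your proposal is correct and follows essentially the same route as the paper's own proof: the same halo/non-halo decomposition, the bound of the halo piece by hypothesis (\textbf{iii}) uniformly in $k$ via $|s_k|\leq 1$, and the alternating series test Lemma \ref{Alt_series_test} together with (\textbf{i}) and (\textbf{ii}) giving the uniform $CMC_\delta 2^{-k}$ bound off the halo, followed by the standard choice of $\delta$ first and then $k$. The rescaling remark about transferring Lemma \ref{Alt_series_test} from $[0,1]$ to $P_1$ is exactly what the paper means by applying the test ``adapted to the interval $P_1$,'' so there is nothing to add.
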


\begin{proof}
This short proof is virtually identical to that of the previous lemma, but we
include it for convenience. Write%
\[
\int_{P_{1}}F\left(  x,y_{1}\right)  s_{k}\left(  y_{1}\right)  dy_{1}%
=\left\{  \mathbf{1}_{H_{\delta}^{P;Q}}\left(  x\right)  +\mathbf{1}%
_{Q\setminus H_{\delta}^{P;Q}}\left(  x\right)  \right\}  \int_{P_{1}}F\left(
x,y_{1}\right)  s_{k}\left(  y_{1}\right)  dy_{1}.
\]
For the first term use%
\[
\mathbf{1}_{H_{\delta}^{P;Q}}\left(  x\right)  \left\vert \int_{P_{1}}F\left(
x,y_{1}\right)  s_{k}\left(  y_{1}\right)  dy_{1}\right\vert \leq
\mathbf{1}_{H_{\delta}^{P;Q}}\left(  x\right)  \int_{P_{1}}\left\vert F\left(
x,y_{1}\right)  \right\vert dy_{1}%
\]
and the third assumption in (\ref{3 assump'}).

For the second term we will use the alternating series test on the integral
$\int_{P_{1}}F_{y^{\prime\prime}}\left(  x,y_{1}\right)  s_{k}\left(
y_{1}\right)  dy_{1}$ together with the first and second assumptions in
(\ref{3 assump'}). Indeed, by the first assumption there exists a partition
$\left\{  t_{0},t_{1},...,t_{M}\right\}  $ of $P_{1}$ \emph{depending on} $x$
and $y^{\prime\prime}$, but with $M$ \emph{independent} of $x$ and
$y^{\prime\prime}$, and then from Lemma \ref{Alt_series_test}\ we have for
$x\in Q\setminus H_{\delta}^{P;Q}$ that%
\begin{align*}
&  \left\vert \int_{P_{1}}F\left(  x,y_{1}\right)  s_{k}\left(  y_{1}\right)
dy_{1}\right\vert =\left\vert \int_{\left[  -1,1\right]  ^{n-2}}\left\{
\int_{P_{1}}F_{y^{\prime\prime}}\left(  x,y_{1}\right)  s_{k}\left(
y_{1}\right)  dy_{1}\right\}  dy^{\prime\prime}\right\vert \\
&  \leq\int_{\left[  -1,1\right]  ^{n-2}}\left\vert \int_{P_{1}}%
F_{y^{\prime\prime}}\left(  x,y_{1}\right)  s_{k}\left(  y_{1}\right)
dy_{1}\right\vert dy^{\prime\prime}\leq\int_{\left[  -1,1\right]  ^{n-2}}%
\sum_{j=1}^{M-1}\left\vert \int_{t_{j}}^{t_{j+1}}F_{y^{\prime\prime}}\left(
x,y_{1}\right)  s_{k}\left(  y_{1}\right)  dy_{1}\right\vert dy^{\prime\prime
}\\
&  \leq\int_{\left[  -1,1\right]  ^{n-2}}\sum_{j=1}^{M-1}\left\{  \int_{t_{j}%
}^{t_{j}+2^{1-k}}+\int_{t_{j+1}-2^{1-k}}^{t_{j+1}}\right\}  \left\vert
F_{y^{\prime\prime}}\left(  x,y_{1}\right)  \right\vert dy_{1}dy^{\prime
\prime}\\
&  \leq\sum_{j=1}^{M-1}\left\{  \int_{t_{j}}^{t_{j}+2^{1-k}}+\int
_{t_{j+1}-2^{1-k}}^{t_{j+1}}\right\}  \left\vert F\left(  x,y_{1}\right)
\right\vert dy_{1}\leq CMC_{\delta}2^{-k},
\end{align*}
where the penultimate inequality follow since $F_{y^{\prime\prime}}$ doesn't
change sign, and the final inequality follows from the second assumption.
\end{proof}

Here is our main reduction of the action of Riesz transforms on $s_{k}%
^{P,\hor}\left(  x\right)  $ to that of the Hilbert
transform $H$ on $s_{k}^{P_{1}}\left(  x_{1}\right)  $.

\begin{lemma}
\label{rep}Given $n\geq1$, a cube $P \subset\mathbb{R}^{n}$ and $p\in
(1,\infty)$, we have for $x=\left(  x_{1},x^{\prime}\right)  \in\mathbb{R}%
^{1}\times\mathbb{R}^{n-1}$,%
\[
R_{1}s_{k}^{P,\hor}(x)=\bn{n}Hs_{k}^{P_{1}}(x_{1}%
)\mathbf{1}_{P^{\prime}}(x^{\prime})+\err_{k}^{P}(x)\,,
\]
where
\[
\bn{n}=c_{n}\an{n}\an{n-1}...\an{1},\ \ \ c_{n}\text{ is as in (\ref{c_n}%
)},\ \ \ \an{n}\equiv\int_{\mathbb{R}}\frac{1}{\left(  1+z^{2}\right)
^{\frac{n+1}{2}}}dz>0,
\]
and the error $\err_{k}^{P}$ tends to $0$ strongly in $L^{p}(\mathbb{R}^{n})$, i.e.
\[
\lim_{k\rightarrow\infty}\left\Vert \err_{k}^{P}\right\Vert _{L^{p}%
(\mathbb{R}^{n})}=0.
\]

\end{lemma}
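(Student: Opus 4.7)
The plan is to use Fubini to integrate out the $n-1$ transverse variables first, recognize the resulting one-dimensional kernel as a constant multiple of the Cauchy kernel, and absorb the remaining tail into the error $E_k^P$ via the oscillation machinery of Lemma \ref{Alt_Series_strong_convergence_old}. Writing $P=P_1\times P'$ and using $s_k^{P,\operatorname{horizontal}}(y)=s_k^{P_1}(y_1)\mathbf{1}_{P'}(y')$, Fubini gives
\[
R_1 s_k^{P,\operatorname{horizontal}}(x)=c_n\operatorname{pv}\int_{P_1}s_k^{P_1}(y_1)\,F(x,y_1)\,dy_1,\qquad F(x,y_1)\equiv\int_{P'}\frac{x_1-y_1}{|x-y|^{n+1}}\,dy'.
\]
The key computation, obtained by applying the substitutions $y_j=x_j+|x_1-y_1|z_j$ one coordinate at a time (each producing a factor $A_j=\int_{\mathbb{R}}(1+z^2)^{-(j+1)/2}dz$), is
\[
\int_{\mathbb{R}^{n-1}}\frac{x_1-y_1}{\bigl((x_1-y_1)^2+|x'-y'|^2\bigr)^{(n+1)/2}}\,dy'=\frac{A_2\cdots A_n}{x_1-y_1}.
\]

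For $x'\in P'$, I decompose $F(x,y_1)=\frac{A_2\cdots A_n}{x_1-y_1}-G(x,y_1)$ with $G(x,y_1)\equiv\int_{\mathbb{R}^{n-1}\setminus P'}\frac{x_1-y_1}{|x-y|^{n+1}}dy'$. Combined with the Hilbert transform identity $\operatorname{pv}\int_{P_1}\frac{s_k^{P_1}(y_1)}{x_1-y_1}dy_1=\pi H s_k^{P_1}(x_1)=A_1 H s_k^{P_1}(x_1)$, the main term yields exactly $B_n H s_k^{P_1}(x_1)\mathbf{1}_{P'}(x')$ with $B_n=c_n A_1 A_2\cdots A_n$. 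The residual error is
\[
E_k^P(x)=-c_n\mathbf{1}_{P'}(x')\int_{P_1}G(x,y_1)s_k^{P_1}(y_1)dy_1+c_n\mathbf{1}_{(P')^c}(x')\int_{P_1}F(x,y_1)s_k^{P_1}(y_1)dy_1.
\]
To prove $E_k^P\to 0$ strongly in $L^p(\mathbb{R}^n)$, I split $\mathbb{R}^n=Q\cup Q^c$ with $Q$ a large cube containing $P$. On $Q^c$, the mean-zero property of $s_k^{P_1}$ together with a one-term Taylor expansion of the Riesz kernel in $y_1$ gives the pointwise bound $|E_k^P(x)|\lesssim \ell(P)\,|x-x_P|^{-n-1}$ uniformly in $k$, so $\|E_k^P\|_{L^p(Q^c)}$ can be made arbitrarily small by enlarging $Q$, uniformly in $k$. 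On $Q$, I apply Lemma \ref{Alt_Series_strong_convergence_old} (after a rescaling identifying $s_k^{P_1}$ with $s_k$) to $G(x,y_1)$ on $Q\cap\{x'\in P'\}$ and to $F(x,y_1)$ on $Q\cap\{x'\in(P')^c\}$.

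The main obstacle is verifying the $M$-piecewise monotonicity hypothesis of Lemma \ref{Alt_Series_strong_convergence_old} for $y_1\mapsto G(x,y_1)$ (and the analogous statement for $F$) with $M$ independent of $x\in Q$. Setting $u=x_1-y_1$ and $w'=x'-y'$, the integrand $\phi(u,w')=u(u^2+|w'|^2)^{-(n+1)/2}$ has $u$-derivative $(|w'|^2-nu^2)(u^2+|w'|^2)^{-(n+3)/2}$, which for each fixed $w'$ changes sign exactly once in $|u|$. By analyzing how the sign-change radii $|u|=|w'|/\sqrt{n}$ sweep across the domain $\mathbb{R}^{n-1}\setminus P'$ (respectively $P'$) as $u$ varies, and using that $\partial_u G$ is real-analytic in $u$ away from the singularity $x\in\partial P$, one obtains a uniform bound $M=M(n)$ on the number of sign changes of $\partial_u G$, hence on the number of monotone pieces of $y_1\mapsto G(x,y_1)$. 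The boundedness and halo-smallness hypotheses of Lemma \ref{Alt_Series_strong_convergence_old} follow readily from the explicit size estimates $|F(x,y_1)|,|G(x,y_1)|\lesssim \operatorname{dist}(x',\partial P')^{-1}\wedge 1$ together with Fubini.
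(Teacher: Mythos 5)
Your identification of the main term is correct and genuinely more direct than the paper's route: the scaling identity $\int_{\mathbb{R}^{n-1}}\frac{x_{1}-y_{1}}{\left(  (x_{1}-y_{1})^{2}+|x^{\prime}-y^{\prime}|^{2}\right)  ^{\frac{n+1}{2}}}dy^{\prime}=\frac{A_{2}\cdots A_{n}}{x_{1}-y_{1}}$, together with $A_{1}=\pi$ and the convention for $H$, does produce $B_{n}Hs_{k}^{P_{1}}(x_{1})\mathbf{1}_{P^{\prime}}(x^{\prime})$ in one stroke, whereas the paper peels off a single variable and inducts on the dimension. But the paper accepts the clumsier induction for a reason: after integrating out only $y_{n}$, its error kernels have the explicit closed form $\frac{x_{1}-y_{1}}{|\widehat{x}-\widehat{y}|^{n}}\int_{|u_{n}|}^{\infty}(1+z^{2})^{-\frac{n+1}{2}}dz$, and the $M$-piecewise monotonicity hypothesis (\textbf{i}) of Lemma \ref{Alt_Series_strong_convergence} is then verified by a trigonometric substitution that reduces everything to counting zeros and poles of rational functions of $\tan\theta$ and $\sec\theta$ whose degrees depend only on $n$; this is the technical heart of the lemma.

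That step is exactly where your proposal has a genuine gap. Your kernel $G(x,y_{1})=\int_{\mathbb{R}^{n-1}\setminus P^{\prime}}\frac{x_{1}-y_{1}}{|x-y|^{n+1}}dy^{\prime}$ (and likewise $F$ for $x^{\prime}\notin P^{\prime}$) is an integral over a box complement with no closed form, and the argument you sketch for hypothesis (\textbf{i}) of Lemma \ref{Alt_Series_strong_convergence_old} does not deliver what that lemma needs: an integral of slices each of which changes monotonicity once in $u=x_{1}-y_{1}$ need not be piecewise monotone with a bounded number of pieces, and real-analyticity of $\partial_{u}G$ gives finitely many sign changes for each \emph{fixed} $x$ but no bound $M$ \emph{uniform} in $x\in Q\setminus H_{\delta}^{P;Q}$ — and uniformity is precisely what the proof of Lemma \ref{Alt_Series_strong_convergence_old} uses (its bound is $CMC_{\delta}2^{-k}$). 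Since the number of monotone pieces could a priori degenerate as $x^{\prime}$ approaches $\partial P^{\prime}$, this needs a quantitative argument of the paper's type (or a genuine variation-diminishing property of the family $r\mapsto\frac{r^{2}-nu^{2}}{(u^{2}+r^{2})^{(n+3)/2}}$), and as written it is the missing core of the proof rather than a routine verification. A secondary, fixable point: the size bound $|G(x,y_{1})|\lesssim\operatorname{dist}(x^{\prime},\partial P^{\prime})^{-1}$ is too crude for hypothesis (\textbf{iii}), because $\operatorname{dist}(x^{\prime},\partial P^{\prime})^{-1}$ is not $p$-integrable across the halo; you need the refined bound $|G(x,y_{1})|\lesssim\min\left\{  |x_{1}-y_{1}|^{-1},\operatorname{dist}(x^{\prime},\partial P^{\prime})^{-1}\right\}  $, which after integration in $y_{1}$ yields the logarithmic bound $\ln\frac{1}{\operatorname{dist}(x^{\prime},\partial P^{\prime})}$ that the paper itself relies on when verifying (\textbf{iii}).
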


\begin{proof}
We prove the lemma by induction on the dimension $n\geq1$. Since $\bn{1}=1$,
the case $n=1$ is a tautology (with the understanding that $R_{1} = H$ on
$\mathbb{R}$, note that the constants in front of the integrals match) and so
we now suppose that $n\geq2$, and assume the conclusion of the lemma holds
with $n-1$ in place of $n$.

Let $\varepsilon>0$. For every $M>1$,we have
\[
R_{1}s_{k}^{P,\hor}(x)=\mathbf{1}_{MP}(x)R_{1}%
s_{k}^{P,\hor}(x)+\mathbf{1}_{\mathbb{R}^{n}\setminus
MP}(x)R_{1}s_{k}^{P,\hor}(x).
\]
We note that the second term $\mathbf{1}_{\mathbb{R}^{n}\setminus MP}%
(x)R_{1}s_{k}^{P,\hor}(x)$ goes to $0$ strongly in
$L^{p}(\mathbb{R}^{n})$ as $M\rightarrow\infty$, since%
\[
\int_{\mathbb{R}^{n}\setminus MP}\left\vert R_{1}s_{k}%
^{P,\hor}(x)\right\vert ^{p}dx\leq C\int_{\mathbb{R}%
^{n}\setminus MP}\left(  \int_{P}\frac{1}{\left\vert x-y\right\vert ^{n}%
}dy\right)  ^{p}dx\leq C\int_{\mathbb{R}^{n}\setminus MP}\left(
\frac{\left\vert P\right\vert }{\left\vert \operatorname*{dist}\left(
x,P\right)  \right\vert ^{n}}\right)  ^{p}dx,
\]
which goes to $0$ as $M\rightarrow\infty$ uniformly in $k$; in particular
choose $M$ such that $\int_{\mathbb{R}^{n}\setminus MP}\left\vert R_{1}%
s_{k}^{P,\hor}(x)\right\vert ^{p}dx<\frac{\varepsilon}%
{2}$ for all $k \geq0$. With $Q=MP$, it will suffice to show that
$\lim_{k\rightarrow\infty}\left\Vert \err_{k}^{P}\right\Vert _{L^{p}(Q)}%
<\frac{\varepsilon}{2}$ for $k$ sufficiently large, where $\err_{k}^{P}$ is
implicitly defined as in the statement of the lemma.

Without loss of generality we suppose that $P=\left[  -1,1\right]  ^{n}$.
Recalling that $\widehat{x}=\left(  x_{1},...,x_{n-1}\right)  $, $\widehat
{y}=\left(  y_{1},...,y_{n-1}\right)  $, we write%
\begin{align*}
R_{1}s_{k}^{P,\hor}\left(  x\right)   &  =c_{n}\int
_{-1}^{1}\int_{\left[  -1,1\right]  ^{n-1}}\frac{\left(  x_{1}-y_{1}\right)
s_{k}^{\left[  -1,1\right]  }\left(  y_{1}\right)  }{\left[  \left(
x_{1}-y_{1}\right)  ^{2}+\left\vert x^{\prime}-y^{\prime}\right\vert
^{2}\right]  ^{\frac{n+1}{2}}}dy_{1}...dy_{n-1}dy_{n}\\
&  \equiv\int_{\left[  -1,1\right]  ^{n-1}}\Psi\left(  \widehat{x}%
,x_{n},\widehat{y}\right)  s_{k}^{\left[  -1,1\right]  }\left(  y_{1}\right)
d\widehat{y},
\end{align*}
where, by the change of variables $z = \frac{x_{n} - y_{n}}{|\widehat{x} -
\widehat{y}|}$, we have
\begin{align*}
\Psi\left(  \widehat{x},x_{n},\widehat{y}\right)   &  =c_{n}\int_{-1}^{1}%
\frac{x_{1}-y_{1}}{\left[  \left\vert \widehat{x}-\widehat{y}\right\vert
^{2}+\left\vert x_{n}-y_{n}\right\vert ^{2}\right]  ^{\frac{n+1}{2}}}%
dy_{n}=\frac{c_{n}}{c_{n-1}}K_{1}^{[n-1]}\left(  \widehat{x}-\widehat
{y}\right)  \int_{\frac{x_{n}-1}{\left\vert \widehat{x}-\widehat{y}\right\vert
}}^{\frac{x_{n}+1}{\left\vert \widehat{x}-\widehat{y}\right\vert }}\frac
{1}{\left(  1+z^{2}\right)  ^{\frac{n+1}{2}}}dz \, ,
\end{align*}
and where $K_{1} ^{[m]}$ is the kernel of the first individual Riesz transform
$R_{1}^{[m]}$ in $m$ dimensions. Note
\[
\Phi^{n-1}\left(  \widehat{x},x_{n},\widehat{y}\right)  \equiv\int
_{\frac{x_{n}-1}{\left\vert \widehat{x}-\widehat{y}\right\vert }}^{\frac
{x_{n}+1}{\left\vert \widehat{x}-\widehat{y}\right\vert }}\frac{1}{\left(
1+z^{2}\right)  ^{\frac{n+1}{2}}}dz
\]
is a bounded function of $\left(  \widehat{x},x_{n},\widehat{y}\right)  $
with
\[
\left\Vert \Phi^{n-1}\right\Vert _{\infty}\leq\int_{\mathbb{R}}\frac
{1}{\left(  1+z^{2}\right)  ^{\frac{n+1}{2}}}dz= \an{n}>0.
\]

With $l_{n}(x,\widehat{y})\equiv\frac{x_{n}-1}{\left\vert \widehat{x}%
-\widehat{y}\right\vert }$ and $u_{n}(x,\widehat{y})\equiv\frac{x_{n}%
+1}{\left\vert \widehat{x}-\widehat{y}\right\vert }$ we may further decompose
$\Phi^{n-1}\left(  \widehat{x},x_{n},\widehat{y}\right)  $ as
\begin{align*}
&  \left\{  \int_{l_{n}}^{0}+\int_{0}^{u_{n}}\right\}  \frac{1}{\left(
1+z^{2}\right)  ^{\frac{n+1}{2}}}dz= \left\{  -\operatorname{sgn}(x_{n}%
-1)\int_{0}^{|l_{n}|}+\operatorname{sgn}(x_{n}+1)\int_{0}^{|u_{n}|}\right\}
\frac{1}{\left(  1+z^{2}\right)  ^{\frac{n+1}{2}}}dz\\
=  &  \an{n} \mathbf{1}_{P_{n}}(x_{n})+\left\{  -\operatorname{sgn}%
(x_{n}-1)\left(  \int_{0}^{|l_{n}|}\frac{1}{\left(  1+z^{2}\right)
^{\frac{n+1}{2}}}dz-\frac{\an{n}}{2}\right)  +\operatorname{sgn}(x_{n}%
+1)\left(  \int_{0}^{|u_{n}|}\frac{1}{\left(  1+z^{2}\right)  ^{\frac{n+1}{2}%
}}dz-\frac{\an{n}}{2}\right)  \right\} \\
\equiv &  \an{n}\mathbf{1}_{P_{n}}(x_{n}) - \operatorname{sgn}(x_{n}%
-1)L_{n}^{1}(x,\widehat{y})+ \operatorname{sgn}(x_{n}+1)L_{n}^{2}%
(x,\widehat{y})\,.
\end{align*}

Relating the above computations to $R_{1}^{\left[  n-1\right]  }$ and
$R_{1}^{\left[  n\right]  }$, we obtain
\begin{align*}
&  R_{1}^{\left[  n\right]  }s_{k}^{P,\hor}(x)
=\frac{c_{n}}{c_{n-1}}\an{n}R_{1}^{\left[  n-1\right]  }\left(  s_{k}%
^{[-1,1]}\otimes\mathbf{1}_{P_{2}\times...\times P_{n-1}}\right)  (\widehat
{x})\mathbf{1}_{P_{n}}(x_{n})\\
&  -\frac{c_{n}}{c_{n-1}} \operatorname{sgn}(x_{n}-1)\int_{\left[
-1,1\right]  ^{n-1}}\frac{x_{1}-y_{1}}{\left\vert \widehat{x}-\widehat
{y}\right\vert ^{n}}L_{n}^{1}(x,\widehat{y})s_{k}^{[-1,1]}(y_{1})d\widehat
{y}+\frac{c_{n}}{c_{n-1}} \operatorname{sgn}(x_{n}+1)\int_{\left[
-1,1\right]  ^{n-1}}\frac{x_{1}-y_{1}}{\left\vert \widehat{x}-\widehat
{y}\right\vert ^{n}}L_{n}^{2}(x,\widehat{y})s_{k}^{[-1,1]}(y_{1})d\widehat
{y}\\
&  \equiv\frac{c_{n}}{c_{n-1}}\an{n}R_{1}^{\left[  n-1\right]  }\left(
s_{k}^{[-1,1]}\otimes\mathbf{1}_{\widetilde{P}}\right)  (\widehat
{x})\mathbf{1}_{P_{n}}(x_{n})+\err_{k}^{1}(x)+\err_{k}^{2}(x)\,.
\end{align*}

We now apply our induction hypothesis to the term $R_{1}^{\left[  n-1\right]
}\left(  s_{k}^{[-1,1]}\otimes\mathbf{1}_{\widetilde{P}}\right)  (\widehat
{x})$ to obtain%
\begin{align*}
\frac{c_{n}}{c_{n-1}}\an{n}R_{1}^{\left[  n-1\right]  }\left(  s_{k}%
^{[-1,1]}\otimes\mathbf{1}_{\widetilde{P}}\right)  (\widehat{x})\mathbf{1}%
_{P_{n}}(x_{n}) = \bn{n} Hs_{k}^{P_{1}}(x_{1})\mathbf{1}_{\widetilde{P}}%
(x_{2},...,x_{n})+\frac{c_{n}}{c_{n-1}}\an{n}\err_{k}^{\widehat{P}}(\widehat
{x})\mathbf{1}_{P_{n}}(x_{n})
\end{align*}
where $\err_{k}^{\widehat{P}}(\widehat{x})\mathbf{1}_{P_{n}}(x_{n})$ tends to $0$
strongly in $L^{p}\left(  Q\right)  $ by the induction hypothesis.

So it remains only to show that both $\err_{k}^{1}(x)$ and $\err_{k}^{2}(x)$ go to
$0$ strongly in $L^{p}(Q)$, and by symmetry it suffices to consider just
$\err_{k}^{2}(x)$. We have%
\[
L_{n}^{2}(x,\widehat{y})=\int_{0}^{|u_{n}|}\frac{1}{\left(  1+z^{2}\right)
^{\frac{n+1}{2}}}dz-\frac{\an{n}}{2}=-\int_{|u_{n}|}^{\infty}\frac{1}{\left(
1+z^{2}\right)  ^{\frac{n+1}{2}}}dz\,,
\]
where we recall that $|u_{n}|=\frac{\left\vert x_{n}+1\right\vert }{\left\vert
\widehat{x}-\widehat{y}\right\vert }$.

We now see that it suffices to verify (\textbf{i}), (\textbf{ii}) and
(\textbf{iii}) of Lemma \ref{Alt_Series_strong_convergence} for the cube $Q$
and the function%
\begin{align}
&  F(x,y_{1},\widetilde{y})=\frac{x_{1}-y_{1}}{\left\vert \widehat{x}%
-\widehat{y}\right\vert ^{n}}\int_{\frac{\left\vert x_{n}+1\right\vert
}{\left\vert \widehat{x}-\widehat{y}\right\vert }}^{\infty}\frac{1}{\left(
1+z^{2}\right)  ^{\frac{n+1}{2}}}dz . \label{F}%
\end{align}

We first turn to verifying property (\textbf{i}), and since we only require
upper bounds at this point, we will not keep track of absolute constants. The case $n=2$
turns out to be rather special and easily handled so we dispose of that case
first. We have when $n=2$ that
\[
F(x,y_{1})=\frac{1}{x_{1}-y_{1}}\int
\limits_{|u_{2}(x,y_{1})|}^{\infty}\frac{1}{\left(  1+z^{2}\right)  ^{\frac
{3}{2}}}dz\,.
\]
where $|u_{2}(x,y_{1})|=\frac{\left\vert x_{2}+1\right\vert }{\left\vert
x_{1}-y_{1}\right\vert }$. For any fixed $x$, $|u_{2}(x,y_{1})|$ is monotone
as a function of $|x_{1}-y_{1}|$. We now claim that the function $F(x,y_{1})$
is $M$-piecewise monotone for $M=7$ as a function of $y_{1}$. Since $F(x,
y_{1})$ only changes sign once, to see this it suffices to show that with
$s=|u_{2}(x,y_{1})|$ the function%
\[
H_{\beta}\left(  s\right)  \equiv s\int_{s}^{\infty}\left(  1+t^{2}\right)
^{-\beta}dt,\ \ \ \ \ \text{for }s\in\left(  -\infty,\infty\right)  \, ,
\qquad\beta> \frac{1}{2} \, ,
\]
has $3$ changes in monotonicity on $\left(  -\infty,\infty\right)  $. We
compute
\begin{align*}
H_{\beta}^{\prime\prime}\left(  s\right)  =2\left\{  \left(  \beta-1\right)
s^{2}-1\right\}  \left(  1+s^{2}\right)  ^{-\beta-1}%
\end{align*}
has at most $2$ zeroes in $\left(  -\infty,\infty\right)  $, hence $H_{\beta
}^{\prime}\left(  s\right)  $ has at most $3$ zeroes, which proves our claim.

Now we turn to the more complicated case $n\geq3$. Let $t=x-y$. Then we may
write
\begin{align*}
&  F(x,y_{1},\widetilde{y})=\frac{t_{1}}{\left(  t_{1}^{2}+\left\vert
\widetilde{t}\right\vert ^{2}\right)  ^{\frac{n}{2}}}V_{n}\left(
\frac{\left\vert x_{n}+1\right\vert }{\left(  t_{1}^{2}+\left\vert
\widetilde{t}\right\vert ^{2}\right)  ^{\frac{1}{2}}}\right)  \,,
\end{align*}
where $V_{n}\left(  w\right)  \equiv\int_{w}^{\infty}\frac{1}{\left(
1+z^{2}\right)  ^{\frac{n+1}{2}}}dz$. Note that the antiderivative%
\begin{align}
&  \int\frac{1}{\left(  1+z^{2}\right)  ^{\frac{n+1}{2}}}dz=\int\frac
{1}{\left(  1+\tan^{2}\theta\right)  ^{\frac{n+1}{2}}}d\tan\theta
\label{Theta}\\
&  =\int\frac{\sec^{2}\theta}{\left(  \sec^{2}\theta\right)  ^{\frac{n+1}{2}}%
}d\theta=\int\cos^{n-1}\theta\ d\theta=C_{n}\theta+R_{n}\left(  z,\sqrt
{1+z^{2}}\right)  ,\ \ \ \ \ z=\tan\theta,\nonumber
\end{align}
where $R_{n}$ is a rational function of $z = \tan\theta$ and $\sqrt{1+z^{2}} =
\sec\theta$, and $C_{n}=0$ when $n$ is even. Indeed, one can use the well
known recursion%
\begin{align*}
\int\cos^{m}\theta\ d\theta &  =\frac{1}{m}\cos^{m-1}\theta\ \sin\theta
+\frac{m-1}{m}\int\cos^{m-2}\theta\ d\theta=\frac{1}{m}\frac{1}{\sec^{m}%
\theta}\ \tan\theta+\frac{m-1}{m}\int\cos^{m-2}\theta\ d\theta.
\end{align*}

Then setting
\[
z=\tan\theta=\frac{\left\vert x_{n}+1\right\vert }{\left(  t_{1}%
^{2}+\left\vert \widetilde{t}\right\vert ^{2}\right)  ^{\frac{1}{2}}} \, ,
\qquad E_{0}\equiv\left(  \frac{x_{n}+1}{\left\vert \widetilde{t}\right\vert
}\right)  ^{2}\,,\qquad E_{1}\equiv\frac{\left\vert \widetilde{t}\right\vert
}{|x_{n}+1|^{n}}\, ,
\]
and using (\ref{Theta}), we may write (\ref{F}) as
\begin{align*}
F\left(  x,y_{1},\widetilde{y}\right)   &  =\frac{t_{1}}{\left(  t_{1}%
^{2}+\left\vert \widetilde{t}\right\vert ^{2}\right)  ^{\frac{n}{2}}}\left\{
R_{n}\left(  z,\sqrt{1+z^{2}}\right)  +C_{n}\theta+C\right\} \\
&  =E_{1}\tan^{n-1}\theta\sqrt{E_{0}-\tan^{2}\theta}\left\{  R_{n}\left(
z,\sqrt{1+z^{2}}\right)  +C_{n}\theta+C\right\}  \equiv D_{x,\widetilde{t}%
}\left(  \theta\right)  \, .
\end{align*}

At this point, we employ the convention that $R_{n},T_{n},U_{n}$ are rational
functions which may change line to line, or instance to instance, but their
degree will be bounded a constant depending only on the dimension $n$, where
the degree is the sum of degrees of the numerator and denominator. Similarly,
we will take $M$ to be an integer which may change line to line or instance to
instance, but will only depend on the dimension $n$. We also recall the fact
that the function $R_{n}\left(  z,\sqrt{1+z^{2}}\right)  $ can equal $0$ or
$\infty$ at most $M$ times: indeed, $R_{n}$ is a rational function of $z$ and
$\sqrt{1+z^{2}}$, which is in turn a nontrivial rational function of
$\sin\theta$ and $\cos\theta$, with degree depending only on $n$. Thus the
number of zeros or poles it possesses is at most a constant depending only the
degree, i.e. a constant which only depends $n$.

Now fix $x$ and $\widetilde{y}$, or equivalently $x$ and $\widetilde{t}$, and
let us only consider when $t_{1}=x_{1}-y_{1}>0$, as the case $t_{1}<0$ will be
similar. Then since $t_{1}\mapsto\theta(t_{1})$ is a decreasing injective map
from $\mathbb{R}_{+}\rightarrow(0,\frac{\pi}{2})$, then $y_{1}\mapsto
F(x,y_{1},\widetilde{y})$ is $M$-piecewise monotone on $\{y_{1} \in\mathbb{R}
~:~y_{1}<x_{1}\}$ if $\theta\mapsto D_{x,\widetilde{t}}\left(  \theta\right)
$ is $M$-piecewise monotone on $(0,\frac{\pi}{2})$. Since $t_{1}>0$, then $F$
is positive and so is $D_{x,\widetilde{t}}$ when $\theta> 0$, since both
functions possess the same sign. Since $u\mapsto u^{2}$ is increasing for
$u>0$, then $D_{x,\widetilde{t}}\left(  \theta\right)  $ is $M$-piecewise
monotone if and only if $D_{x,\widetilde{t}}\left(  \theta\right)  ^{2}$ is
$M$-piecewise monotone, which we will now show below.

In the reasoning that follows, we assume all rational functions we consider
below are non-constant; in the case one of them is constant or even
identically $0$, the proof of $M$-piecewise monotonicity is even simpler than
the proof below, the details of which we leave to the reader. We have
\begin{align*}
D_{x,\widetilde{t}}\left(  \theta\right)  ^{2}  &  =E_{1}^{2}\left[
E_{0}-\tan^{2}\theta\right]  \left[  R_{n}\left(  z,\sqrt{1+z^{2}}\right)
\tan^{n-1}\theta+\left(  C_{n}\theta+C\right)  \tan^{n-1}\theta\right]  ^{2}\\
&  =R_{n}\left(  z,\sqrt{1+z^{2}}\right)  \theta^{2}+T_{n}\left(
z,\sqrt{1+z^{2}}\right)  \theta+U_{n}\left(  z,\sqrt{1+z^{2}}\right)  .
\end{align*}
To check $D_{x,\widetilde{t}}\left(  \theta\right)  ^{2}$ is $M$ monotone, it
suffices to show $D_{x,\widetilde{t}}\left(  \theta\right)  ^{2}$ has at most
$M$ critical points. For this we compute
\begin{align*}
\frac{d}{d\theta}D_{x,\widetilde{t}}\left(  \theta\right)  ^{2}  &
=R_{n}\left(  z,\sqrt{1+z^{2}}\right)  \theta^{2}+T_{n}\left(  z,\sqrt
{1+z^{2}}\right)  \theta+U_{n}\left(  z,\sqrt{1+z^{2}}\right) \\
&  =R_{n}\left(  z,\sqrt{1+z^{2}}\right)  \left\{  \theta^{2}+T_{n}\left(
z,\sqrt{1+z^{2}}\right)  \theta+U_{n}\left(  z,\sqrt{1+z^{2}}\right)
\right\}
\end{align*}
which equals $0$ or $\infty$ if
\[
R_{n}\left(  z,\sqrt{1+z^{2}}\right)  =0\text{ or }\infty\,,\text{ or }%
\theta^{2}+\theta R_{n}\left(  z,\sqrt{1+z^{2}}\right)  +T_{n}\left(
z,\sqrt{1+z^{2}}\right)  =0\text{ or }\infty\,.
\]
The first equality can clearly only hold for at most $M$ values of $\theta$.
To show the function
\[
\theta^{2}+\theta R_{n}\left(  z,\sqrt{1+z^{2}}\right)  +T_{n}\left(
z,\sqrt{1+z^{2}}\right)
\]
can equal $0$ or $\infty$ at most $M$ times, it suffices to show that this
function also has at most $M$ critical points.

Its derivative is of the form
\[
R_{n}\left(  z,\sqrt{1+z^{2}}\right)  \left(  \theta+T_{n}\left(
z,\sqrt{1+z^{2}}\right)  \right)  \,,
\]
which we claim equals $0$ or $\infty$ at most $M$ times. Indeed, $R_{n}$
equals $0$ or $\infty$ at most $M$ times, and the function
\[
\theta+T_{n}\left(  z,\sqrt{1+z^{2}}\right)
\]
equals $0$ or $\infty$ at most $M$ times because its derivative is given by
\[
1+T_{n}\left(  z,\sqrt{1+z^{2}}\right)  ,\,
\]
which in turn equals $0$ or $\infty$ at most $M$ times.

Thus $y_{1} \mapsto F(x,y_{1},\widetilde{y})$ is $M$-piecewise monotone for
some $M$ depending only on $n$, and not on the additional parameters $x$ and
$y_{2},...,y_{n}$. This completes the verification of property (i) in Lemma
\ref{Alt_Series_strong_convergence}.

\textbf{(ii)} For any $x\in Q$ we have from (\ref{F}) and $\left\vert
u_{n}\right\vert =\frac{\left\vert 1+x_{n}\right\vert }{\left\vert \widehat
{x}-\widehat{y}\right\vert }$ that\
\[
\left\vert F\left(  x,y_{1},\widetilde{y}\right)  \right\vert \leq
\frac{\left\vert x_{1}-y_{1}\right\vert }{\left\vert \widehat{x}-\widehat
{y}\right\vert ^{n}}\int_{|u_{n}|}^{\infty}\frac{1}{\left(  1+z^{2}\right)
^{\frac{n+1}{2}}}dz=\frac{\left\vert x_{1}-y_{1}\right\vert }{\left\vert
1+x_{n}\right\vert ^{n}}\left\vert u_{n}\right\vert ^{n}\int_{|u_{n}|}%
^{\infty}\frac{1}{\left(  1+z^{2}\right)  ^{\frac{n+1}{2}}}dz
\]

We claim that $\left\vert u_{n}\right\vert ^{n}\int_{|u_{n}|}^{\infty}\frac
{1}{\left(  1+z^{2}\right)  ^{\frac{n+1}{2}}}dz\leq C_{n}$. Indeed, when
$\left\vert u_{n}\right\vert \leq1$, this follows from integrability of the
integrand, and when $\left\vert u_{n}\right\vert \geq1$ this follows from a
direct computation using the fact that $\left(  1+z^{2}\right)  ^{\frac
{n+1}{2}}\approx z^{n+1}$. Thus $\left\vert F\left(  x,y_{1},\widetilde
{y}\right)  \right\vert \leq C_{n}\frac{\left\vert x_{1}-y_{1}\right\vert
}{\left\vert 1+x_{n}\right\vert ^{n}}\leq C_{n,Q,\delta}$ when $y\in P$, $x\in
Q\setminus H_{\delta}^{P;Q}$.

\textbf{(iii)} To show $\mathbf{1}_{H_{\delta}^{P;Q}}(x)\int_{-1}^{1}%
\int_{[-1,1]^{n-2}}\left\vert F\left(  x,y_{1},\widetilde{y}\right)
\right\vert d\widetilde{y}dy_{1}\rightarrow0$ strongly in $L^{p}%
(\mathbb{R}^{n})$ as $\delta\rightarrow0$, we split%
\begin{align*}
\mathbf{1}_{H_{\delta}^{P;Q}}(x)\int_{-1}^{1}\int_{[-1,1]^{n-2}}\left\vert
F\left(  x,y_{1},\widetilde{y}\right)  \right\vert d\widetilde{y}dy_{1}  &
\leq\mathbf{1}_{H_{\delta}^{P;Q}}(x)\int_{\left\{  \widehat{y}\in
\lbrack-1,1]^{n-1}:\left\vert \widehat{x}-\widehat{y}\right\vert >\left\vert
1+x_{n}\right\vert \right\}  }\left\vert F\left(  x,y_{1},\widetilde
{y}\right)  \right\vert d\widehat{y}\\
&  +\mathbf{1}_{H_{\delta}^{P;Q}}(x)\int_{\left\{  \widehat{y}\in
\lbrack-1,1]^{n-1}:\left\vert \widehat{x}-\widehat{y}\right\vert
\leq\left\vert 1+x_{n}\right\vert \right\}  }\left\vert F\left(
x,y_{1},\widetilde{y}\right)  \right\vert d\widehat{y}.
\end{align*}

To bound the first term, we use the estimate%
\begin{align*}
\left\vert F\left(  x,y_{1},\widetilde{y}\right)  \right\vert \leq
\frac{\left\vert x_{1}-y_{1}\right\vert }{\left\vert \widehat{x}-\widehat
{y}\right\vert ^{n}}\int_{|u_{n}|}^{\infty}\frac{1}{\left(  1+z^{2}\right)
^{\frac{n+1}{2}}}dz\leq\frac{1}{\left\vert \widehat{x}-\widehat{y}\right\vert
^{n-1}}\int_{|u_{n}|}^{\infty}\frac{1}{\left(  1+z^{2}\right)  ^{\frac{n+1}%
{2}}}dz\leq C_{n}\frac{1}{\left\vert \widehat{x}-\widehat{y}\right\vert
^{n-1}},
\end{align*}
and polar coordinates to get%
\begin{align*}
&  \int_{\left\{  \widehat{y}\in\lbrack-1,1]^{n-1}:\left\vert \widehat
{x}-\widehat{y}\right\vert >\left\vert 1+x_{n}\right\vert \right\}
}\left\vert F\left(  x,y_{1},\widetilde{y}\right)  \right\vert d\widehat
{y}\leq C_{n}\int_{\left\{  \widehat{y}\in\lbrack-1,1]^{n-1}:\left\vert
\widehat{x}-\widehat{y}\right\vert >\left\vert 1+x_{n}\right\vert \right\}
}\frac{1}{\left\vert \widehat{x}-\widehat{y}\right\vert ^{n-1}}d\widehat{y}\\
&  \leq C_{n}\int_{\mathbb{S}^{n-2}}\int_{\left\vert 1+x_{n}\right\vert
}^{c_{Q}}\frac{1}{r}drd\theta\leq C_{n}\ln\frac{1}{\operatorname*{dist}%
(x_{n},\partial P_{n})},
\end{align*}
where we have used the fact that $\left\vert \widehat{x}-\widehat
{y}\right\vert \leq c_{Q}$. Thus
\[
\mathbf{1}_{H_{\delta}^{P;Q}}(x)\int_{\left\{  \widehat{y}\in\lbrack
-1,1]^{n-1}:\left\vert \widehat{x}-\widehat{y}\right\vert >\left\vert
1+x_{n}\right\vert \right\}  }\left\vert F\left(  x,y_{1},\widetilde
{y}\right)  \right\vert d\widehat{y}\rightarrow0
\]
strongly in $L^{p}(Q)$ as $\delta\rightarrow0$.

As for the second term, for $\left\vert u_{n}\right\vert \geq1$ we estimate%
\begin{align*}
&  \left\vert F\left(  x,y_{1},\widetilde{y}\right)  \right\vert \leq
\frac{\left\vert x_{1}-y_{1}\right\vert }{\left\vert \widehat{x}-\widehat
{y}\right\vert ^{n}}\int_{|u_{n}|}^{\infty}\frac{1}{\left(  1+z^{2}\right)
^{\frac{n+1}{2}}}dz \leq\frac{1}{\left\vert \widehat{x}-\widehat{y}\right\vert
^{n-1}}\int_{|u_{n}|}^{\infty}\frac{1}{\left(  1+z^{2}\right)  ^{\frac{n+1}%
{2}}}dz\\
&  =\frac{\left\vert u_{n}\right\vert ^{n-1}}{\left\vert 1+x_{n}\right\vert
^{n-1}}\int_{|u_{n}|}^{\infty}\frac{1}{\left(  1+z^{2}\right)  ^{\frac{n+1}%
{2}}}dz\leq\frac{C_{n}}{\left\vert 1+x_{n}\right\vert ^{n-1}},
\end{align*}
and so%
\[
\int_{\left\{  \widehat{y}\in\lbrack-1,1]^{n-1}:\left\vert \widehat
{x}-\widehat{y}\right\vert \leq\left\vert 1+x_{n}\right\vert \right\}
}\left\vert F\left(  x,y_{1},\widetilde{y}\right)  \right\vert d\widehat
{y}\leq C_{n}\int_{\left\{  \widehat{y}\in\lbrack-1,1]^{n-1}:\left\vert
\widehat{x}-\widehat{y}\right\vert \leq\left\vert 1+x_{n}\right\vert \right\}
}\frac{1}{\left\vert 1+x_{n}\right\vert ^{n-1}}d\widehat{y}\leq C_{n}.
\]

Thus
\[
\mathbf{1}_{H_{\delta}^{P;Q}}(x)\int_{\left\{  \widehat{y}\in\lbrack
-1,1]^{n-1}:\left\vert \widehat{x}-\widehat{y}\right\vert \leq\left\vert
1+x_{n}\right\vert \right\}  }\left\vert F\left(  x,y_{1},\widetilde
{y}\right)  \right\vert d\widehat{y}\rightarrow0
\]
strongly in $L^{p}(Q)$ as $\delta\rightarrow0$.
\end{proof}

The next lemma is an extension of the one-dimensional lemma of Nazarov in
\cite{NaVo}.

\begin{lemma}
\label{reduction}Suppose $p\in(1,\infty)$. Let $a$ and $b$ be nonnegative
integers, not both zero. Given a cube $P=P_{1}\times P_{2}\times...\times
P_{n}\subset\mathbb{R}^{n}$, we have

\begin{enumerate}
\item $\lim_{k\rightarrow\infty}\int_{\mathbb{R}^{n}}\left(  s_{k}%
^{P,\hor}\left(  x\right)  \right)  ^{a}\left(
R_{1}s_{k}^{P,\hor}\left(  x\right)  \right)
^{b}f\left(  x\right)  dx=0$ for all functions $f\in L^{p}(\mathbb{R}^{n})$
when $a$ or $b$ is odd.

\item $\lim_{k\rightarrow\infty}\int_{\mathbb{R}^{n}}\left(  s_{k}%
^{P,\hor}\left(  x\right)  \right)  ^{a}\left(
R_{1}s_{k}^{P,\hor}\left(  x\right)  \right)
^{b}f\left(  x\right)  dx=C_{a,b}\int_{P}f\left(  x\right)  dx$ for all
functions $f\in L^{p}(\mathbb{R}^{n})$ when both $a$ and $b$ are even, and
$C_{a,b}>0$ and $C_{0,2}=\bn{n}^{2}$.

\item $R_{j}s_{k}^{P,\hor}\left(  x\right)  $ tends to
$0$ strongly in $L^{p}\left(  \mathbb{R}^{n}\right)  $ as $k\rightarrow\infty$
for all $p\in(1,\infty)$, for all $2 \leq j \leq n$.
\end{enumerate}
\end{lemma}

\begin{remark}
A careful reading of the proofs of Lemma
\ref{Alt_Series_strong_convergence_old} and part (3) above show that for all
$k\geq1$ and $M>1$, we have the pointwise inequality%
\[
\left\vert R_{2}s_{k}^{P,\hor}\left(  x\right)
\right\vert \leq C\ln\frac{1}{\operatorname*{dist}(x_{2},\partial P_{2}%
)}\mathbf{1}_{\left\{  \operatorname*{dist}(x_{2},\partial P_{2}%
)<\delta\right\}  }\left(  x\right)  +C_{\delta}2^{-k}\mathbf{1}_{\left\{
\operatorname*{dist}(x_{2},\partial P_{2})\geq\delta\right\}  }\left(
x\right)  ,\ \ \ \ x\in MP.
\]

\end{remark}

\begin{proof}
\textbf{(1) and (2): }By Lemma \ref{rep} , we may write
\[
\left(  s_{k}^{P,\hor}\left(  x\right)  \right)
^{a}\left(  R_{1}s_{k}^{P,\hor}\left(  x\right)  \right)
^{b}f\left(  x\right)  =\bn{n}^{b}\left(  s_{k}^{P_{1}}\left(  x_{1} \right)
\right)  ^{a}\left(  Hs_{k}^{P_{1}}\left(  x_{1}\right)  \right)  ^{b}f\left(
x\right)  \mathbf{1}_{P^{\prime}}\left(  x^{\prime}\right)  +\err_{k}%
^{P,f,a,b}(x),
\]
where $\err_{k}^{P,f,a,b}(x)$ goes to $0$ strongly in $L^{1}(Q)$, and $P^{\prime
}=P_{2}\times...\times P_{n}$ and $x=\left(  x_{2},...x_{n}\right)  $. Thus
integrating over $\mathbb{R}^{n}$ and using Lemma
\ref{Nazarov_weak_convergence} yields the conclusions sought.

\textbf{(3)}: By permuting variables, we can assume without loss of generality that $j=2$. Let $\varepsilon>0$. Arguing as in the proof of Lemma \ref{rep},
for every $M>1$,we have
\[
R_{2}s_{k}^{P,\hor}(x)=R_{2}s_{k}%
^{P,\hor}(x)\mathbf{1}_{MP}(x)+R_{2}s_{k}%
^{P,\hor}(x)\mathbf{1}_{\mathbb{R}^{n}\setminus MP}(x).
\]

We note that the second term $R_{2}s_{k}^{P,\hor%
}(x)\mathbf{1}_{\mathbb{R}^{n}\setminus MP}(x)$ goes to $0$ strongly in
$L^{p}(\mathbb{R}^{n})$ as $M\rightarrow\infty$, since%
\[
\int_{\mathbb{R}^{n}\setminus MP}\left\vert R_{2}s_{k}%
^{P,\hor}(x)\right\vert ^{p}dx\leq C\int_{\mathbb{R}%
^{n}\setminus MP}\left(  \int_{P}\frac{1}{\left\vert x-y\right\vert ^{n}%
}dy\right)  ^{p}dx\leq C\int_{\mathbb{R}^{n}\setminus MP}\left(
\frac{\left\vert P\right\vert }{\left\vert \operatorname*{dist}\left(
x,P\right)  \right\vert ^{n}}\right)  ^{p}dx,
\]
which goes to $0$ as $M\rightarrow\infty$. So choose $M$ such that
$\int_{\mathbb{R}^{n}\setminus MP}\left\vert R_{2}s_{k}%
^{P,\hor}(x)\right\vert ^{p}dx<\frac{\varepsilon}{2}$.
Thus with $Q=MP$, it remains to show that $\left\Vert R_{2}s_{k}%
^{P,\hor}\right\Vert _{L^{p}(Q)}<\frac{\varepsilon}{2}$
for $k$ sufficiently large, which we show below.

Again we may assume that $P=\left[  -1,1\right]  ^{n}$. We have%
\begin{align*}
R_{2}s_{k}^{P,\hor}\left(  x\right)   &  =\int_{-1}%
^{1}\int_{-1}^{1}...\int_{-1}^{1}c_{n}\frac{\left(  x_{2}-y_{2}\right)
s_{k}^{\left[  -1,1\right]  }\left(  y_{1}\right)  }{\left[  \left(
x_{1}-y_{1}\right)  ^{2}+\left\vert x^{\prime}-y^{\prime}\right\vert
^{2}\right]  ^{\frac{n+1}{2}}}dy_{1}dy^{\prime}\\
&  \equiv \int_{-1}^{1}F(x,y_{1})s_{k}^{\left[  -1,1\right]  }\left(  y_{1}\right)
dy_{1}.
\end{align*}
For each fixed $y^{\prime\prime}\in\left[  -1,1\right]  ^{n-2}$ define the
function%
\begin{align}
F_{y^{\prime\prime}}(x,y_{1})  &  \equiv-c_{n}\int_{x_{2}+1}^{x_{2}-1}\frac
{t}{\left[  \left(  x_{1}-y_{1}\right)  ^{2}+t^{2}+\left\vert x^{\prime\prime
}-y^{\prime\prime}\right\vert ^{2}\right]  ^{\frac{n+1}{2}}}dt \nonumber \\
&  =-c_{n}\int_{|x_{2}+1|}^{|x_{2}-1|}\frac{t}{\left[  \left(  x_{1}%
-y_{1}\right)  ^{2}+t^{2}+\left\vert x^{\prime\prime}-y^{\prime\prime
}\right\vert ^{2}\right]  ^{\frac{n+1}{2}}}dt, \label{eq:F_odd_kernel}
\end{align}
where the second line follows from oddness of the kernel, and thus using the
substitution $t=x_{2}-y_{2}$ we have%
\[
F(x,y_{1})=\int_{[-1,1]^{n-2}}F_{y^{\prime\prime}}(x,y_{1})dy^{\prime\prime} \, .
\]
Thus to show $\left\Vert R_{2}s_{k}%
^{P,\hor}\right\Vert _{L^{p}(Q)} \to 0$ as $k \to \infty$, it suffices to show that $F_{y^{\prime\prime}}(x,y_{1})$
satisfies conditions \textbf{(i)}-\textbf{(iii)} of Lemma
\ref{Alt_Series_strong_convergence_old}, noting that for each fixed $x$, this function of $y_1$ doesn't change sign.

\textbf{(i) }Now we note that
\begin{align*}
F_{y^{\prime\prime}}\left(  x,y_{1}\right)   &  =-c_n\int_{\left\vert
x_{2}+1\right\vert }^{\left\vert x_{2}-1\right\vert }\frac{t}{\left[  \left(
x_{1}-y_{1}\right)  ^{2}+t^{2}+\left\vert x^{\prime\prime}-y^{\prime\prime
}\right\vert ^{2}\right]  ^{\frac{n+1}{2}}}dt \, ,
\end{align*}
and so differentiating in $y_1$ yields
\begin{align*}
	\frac{\partial }{\partial y_1} F_{y^{\prime\prime}}\left(  x,y_{1}\right)   &  =c_n ' (x_1 - y_1) \int_{\left\vert
x_{2}+1\right\vert }^{\left\vert x_{2}-1\right\vert }\frac{t}{\left[  \left(
x_{1}-y_{1}\right)  ^{2}+t^{2}+\left\vert x^{\prime\prime}-y^{\prime\prime
	}\right\vert ^{2}\right]  ^{\frac{n+1}{2}+1}} dt \, .
\end{align*}
The integral above is of one sign, and so $\frac{\partial }{\partial y_1} F_{y^{\prime\prime}}\left(  x,y_{1}\right)$ only changes sign at $y_1 = x_1$. Thus $F_{y^{\prime\prime}}\left(  x,y_{1}\right)$ has at most $1$ critical point in $y_{1}$, and so is $2$-monotone.

\textbf{(ii) }By (\ref{eq:F_odd_kernel}), we have
\begin{align*}
\left\vert F\left(  x,y_{1}\right)  \right\vert  &  \leq c_{n} \int
_{[-1,1]^{n-2}}\left\{  \int_{\min\{ |x_{2} + 1|, |x_{2} - 1| \}}^{\max\{
|x_{2} + 1|, |x_{2} - 1| \}}\frac{t}{\left[  \left(  x_{1}-y_{1}\right)
^{2}+t^{2}+ \left\vert x^{\prime\prime}-y^{\prime\prime}\right\vert
^{2}\right]  ^{\frac{n+1}{2}}}dt \right\}  dy^{\prime\prime}\\
&  \leq c_{n} \int_{[-1,1]^{n-2}}\left\{  \int_{\min\{ |x_{2} + 1|, |x_{2} -
1| \}}^{\max\{ |x_{2} + 1|, |x_{2} - 1| \}}\frac{t}{\delta^{n+1}}dt \right\}
dy^{\prime\prime} \, ,
\end{align*}
since if $x\in Q\setminus H_{\delta}^{P;Q}$ then $t >\delta$ by separation.
Thus $\left\vert \mathbf{1}_{Q\setminus H_{\delta}^{P;Q}}(x)F\left(
x,y_{1}\right)  \right\vert \leq C\frac{1}{\delta^{n+1}}$.

\textbf{(iii)} Let
\begin{align*}
A_{x}  &  \equiv\left\{  (y_{1},y^{\prime\prime})\in\lbrack-1,1]^{n-1}%
:\left\vert (x_{1}-y_{1},x^{\prime\prime}-y^{\prime\prime})\right\vert
>\left\vert 1-x_{2}\right\vert \right\}  \text{,}\\
\text{ }B_{x}  &  \equiv\left\{  (y_{1},y^{\prime\prime})\in\lbrack
-1,1]^{n-1}:\left\vert (x_{1}-y_{1},x^{\prime\prime}-y^{\prime\prime
})\right\vert <\left\vert 1-x_{2}\right\vert \right\}  \,,
\end{align*}
and assume without loss of generality that $|x_{2}-1|\leq|x_{2}+1|$. For $x\in
H_{\delta}^{P;Q}$, we have%
\begin{align*}
&  \int_{-1}^{1}\left\vert F(x,y_{1})\right\vert dy_{1}\lesssim\int_{-1}%
^{1}\int_{[-1,1]^{n-2}}\left\{  \int_{\min\{|x_{2}+1|,|x_{2}-1|\}}^{\infty
}\frac{t}{\left[  \left(  x_{1}-y_{1}\right)  ^{2}+t^{2}+\left\vert
x^{\prime\prime}-y^{\prime\prime}\right\vert ^{2}\right]  ^{\frac{n+1}{2}}%
}dt\right\}  dy^{\prime\prime}dy_{1}\\
&  =\frac{1}{n-1}\int_{-1}^{1}\int_{[-1,1]^{n-2}}\frac{1}{\left[  \left(
x_{1}-y_{1}\right)  ^{2}+\left(  1-x_{2}\right)  ^{2}+\left\vert
x^{\prime\prime}-y^{\prime\prime}\right\vert ^{2}\right]  ^{\frac{n-1}{2}}%
}dy^{\prime\prime}dy_{1}\,\\
&  \leq\left\{  \int_{A_{x}}+\int_{B_{x}}\right\}  \frac{1}{\left[  \left(
x_{1}-y_{1}\right)  ^{2}+\left(  1-x_{2}\right)  ^{2}+\left\vert
x^{\prime\prime}-y^{\prime\prime}\right\vert ^{2}\right]  ^{\frac{n-1}{2}}%
}d(y_{1},y^{\prime\prime})\\
&  \leq\int_{A_{x}}\frac{1}{\left[  \left(  x_{1}-y_{1}\right)  ^{2}%
+\left\vert x^{\prime\prime}-y^{\prime\prime}\right\vert ^{2}\right]
^{\frac{n-1}{2}}}d(y_{1},y^{\prime\prime})+\int_{B_{x}}\frac{1}{\left\vert
1-x_{2}\right\vert ^{n-1}}d(y_{1},y^{\prime\prime}).
\end{align*}

By a crude estimate the second integral is bounded by
\[
\int_{B_{x}}\frac{1}{\left\vert 1-x_{2}\right\vert ^{n-1}}d(y_{1}%
,y^{\prime\prime})\leq C_{n}|B_{x}|\frac{1}{\left\vert 1-x_{2}\right\vert
^{n-1}}\leq C_{n}.
\]
As for the first integral, integration using polar coordinates yields the
upper bound%
\[
c\int_{\left\vert 1-x_{2}\right\vert }^{c_{n}}\frac{r^{n-2}}{r^{n-1}}%
dr=c\ln\frac{c_{n}}{\left\vert 1-x_{2}\right\vert }\in L^{p}(Q).
\]
Similar estimates hold when $\left\vert x_{2}+1\right\vert <\left\vert
x_{2}-1\right\vert $ and $x\in H_{\delta}^{P;Q}$. Thus $\mathbf{1}_{H_{\delta
}^{P;Q}}(x)\int\limits_{-1}^{1}\left\vert F(x,y_{1})\right\vert dy_{1}$ goes
to $0$ strongly in $L^{p}(Q)$ as $\delta\rightarrow0$.
\end{proof}

\begin{theorem}
\label{weak van Riesz} The conclusions of Lemma
\ref{lem:Nazarov_convergence_mixed}, namely (\ref{eq:1_multi_weak}), (\ref{eq:2_multi_weak}) and (\ref{eq:3_multi_weak}), hold if one replaces $H$ by $R_{1}$ and
$s_{k}^{I_{1}}$ by $s_{k}^{I,\hor}$, and similarly for
$J,K$.
\end{theorem}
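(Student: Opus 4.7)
The plan is to reduce each of the three statements in Lemma \ref{lem:Nazarov_convergence_mixed} to its one-dimensional analogue via Lemma \ref{rep}, which writes $R_1 s_k^{P,\operatorname{horizontal}}$ as a tensor product involving $H s_k^{P_1}$ plus an error that vanishes strongly in $L^p$.

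Concretely, I would substitute
\[
R_1 s_k^{P,\operatorname{horizontal}}(x) = B_n H s_k^{P_1}(x_1)\mathbf{1}_{P'}(x') + E_k^P(x)
\]
into each of the three products, where $\|E_k^P\|_{L^p(\mathbb{R}^n)}\to 0$. Expanding the two- or three-factor product yields a main term of pure tensor form, plus error terms in each of which at least one factor is some $E_k^P$ while the remaining factors are either $s_k^{Q,\operatorname{horizontal}}$ or $R_1 s_k^{Q,\operatorname{horizontal}}$. Since $\|s_k^{Q,\operatorname{horizontal}}\|_{L^\infty}\leq 1$ with compact support, and $R_1$ is bounded on every $L^r$, the remaining factors are uniformly bounded in every $L^r(\mathbb{R}^n)$. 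Hölder's inequality then forces each error term to converge to $0$ strongly in $L^p(\mathbb{R}^n)$, hence a fortiori weakly.

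For the main term, each product takes the form $\Phi_k(x_1)\mathbf{1}_E(x')$, where $E$ is the intersection of the "transverse faces" of the cubes involved and $\Phi_k$ is the corresponding one-dimensional product built from $s_k^{P_1}$ and $Hs_k^{Q_1}$ as in Lemma \ref{lem:Nazarov_convergence_mixed}. To obtain weak convergence to $0$ in $L^p(\mathbb{R}^n)$, I would first establish uniform $L^p$-boundedness of the entire sequence via Hölder and the $L^p$-boundedness of $R_1$; then it suffices to test against a dense subspace of $L^{p'}(\mathbb{R}^n)$, for which I would choose simple tensor products $b(x) = g(x_1)h(x')$ with $g\in L^{p'}(\mathbb{R})$ and $h\in L^{p'}(\mathbb{R}^{n-1})$. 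Fubini's theorem separates the integral as
\[
\Bigl(\int_{\mathbb{R}}\Phi_k(x_1)g(x_1)\,dx_1\Bigr)\Bigl(\int_{\mathbb{R}^{n-1}}\mathbf{1}_E(x')h(x')\,dx'\Bigr),
\]
whose first factor tends to $0$ by Lemma \ref{lem:Nazarov_convergence_mixed} and whose second factor is a fixed finite constant.

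The one delicate point is in statement $(2)$, since the 1D lemma requires $I_1 \neq J_1$ for $Hs_k^{I_1}Hs_k^{J_1}$ to converge weakly to $0$. However $I\neq J$ for cubes of equal side length forces $I\cap J=\emptyset$: if $I_1 \neq J_1$ the 1D lemma applies directly, while if $I_1=J_1$ then $I'\cap J'=\emptyset$ and the tensor indicator $\mathbf{1}_{I'}\mathbf{1}_{J'}$ vanishes almost everywhere, making the main term identically zero. Thus the case analysis closes without new input. The main obstacle is therefore purely bookkeeping — carefully expanding the multi-factor products and tracking which combinations of main terms and error terms arise — since all the genuine analytic content has been packaged into Lemma \ref{rep} and the one-dimensional Lemma \ref{lem:Nazarov_convergence_mixed}.
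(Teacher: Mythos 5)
Your proposal is correct and takes essentially the same route as the paper, which proves this theorem exactly by inserting the representation of Lemma \ref{rep}, discarding the strongly vanishing errors, and invoking the one-dimensional Lemma \ref{lem:Nazarov_convergence_mixed} as in parts (1) and (2) of Lemma \ref{reduction}. Your explicit handling of the case $I\neq J$ with $I_{1}=J_{1}$, where the transverse indicators $\mathbf{1}_{I^{\prime}}\mathbf{1}_{J^{\prime}}$ kill the main term, is a point the paper leaves implicit and is resolved correctly.
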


\begin{proof}
One argues as previously in the proofs of Lemma \ref{reduction} parts
\textbf{(1)} and \textbf{(2)}, in particular using Lemmas \ref{rep} and Lemma
\ref{lem:Nazarov_convergence_mixed}.
\end{proof}

\section{Boundedness properties of the Riesz transforms}

\label{section:full_proof}

We now are equipped with the convergence results we need to complete the proof
of the main theorem by following the supervisor argument of Nazarov in
\cite{NaVo}. We begin with a short formal argument, then
we adapt Nazarov's supervisor argument for the Hilbert
transform to the transplantation of Riesz transforms, and then complete the proof by extending our weights to all of $\mathbb{R}^n$ and showing the Riesz transform $R_1$ has large norm for this weight pair, while $R_2$ has small norm.

\subsection{A brief overview of the argument}

We now take $Q^{0} \equiv \left[  0,1\right]  ^{n}$ to be the unit cube in $\mathbb{R}^n$, and let $V,U$ be as in Theorem \ref{Bellman Haar shift}. We
apply the transplantation argument of Section \ref{section:sprvsr_trans} to
$V,U$ to obtain weights $v_{t},u_{t}$ for all $1\leq t\leq m$, with $u\equiv u_{m}$,
$v \equiv v_{m}$. We will compute the $R_1$-testing conditions for $(v,u)$ \ by first
estimating them for the pair $\left(  v_{t+1}-v_{t},u_{t}\right)  $. Since both $V,U$ have Haar support on finitely many horizontal Haar wavelets in $Q^0$, then by the estimates of Section \ref{section:action_of_Riesz},
we obtain that in the limit only the diagonal terms in $\left[  R_{1}\left(
v_{t+1}-v_{t}\right)  \right]  ^{2}$ survive the integration with $u_{t}$. Indeed,
recall that%
\[
R_{1}\left(  v_{t+1}-v_{t}\right)  =\sum_{Q\in\mathcal{K}_{t}}\left\langle
V,h_{\mathcal{S}\left(  Q\right)  }^{\hor}\right\rangle
\frac{1}{\sqrt{\left\vert S\left(  Q\right)  \right\vert }}R_{1}s_{k_{t+1}%
}^{Q,\hor},
\]
and the vanishing weak convergence results of Section
\ref{section:action_of_Riesz} yield for \add{$k_{t+1} \geq C(k_1, \ldots, k_t)$} and $Q, Q^{\prime}$
dyadic subcubes of $[0,1]^{n}$
\[
\int R_{1}s_{k_{t+1}}^{Q,\hor}R_{1}s_{k_{t+1}}%
^{Q^{\prime},\hor}u_{t}\rightarrow%
\begin{cases}
0 & \text{ if }Q\neq Q^{\prime}\\
\left(  \bn{n}\right)  ^{2}\int\limits_{Q}u_{t} & \text{ if }Q=Q^{\prime}%
\end{cases}
\text{ on }\left[  0,1\right]  ^{n},
\]
where $\bn{n}$ is the constant appearing in Lemma \ref{rep}, and so using once
again the vanishing weak convergence results of Section
\ref{section:action_of_Riesz} we get for \add{$k_{t+1} \geq C(k_{1}, \ldots, k_t)$}
\begin{align*}
&  \int\left[  R_{1}\left(  v_{t+1}-v_{t}\right)  \right]  ^{2}u_{t}%
=\int\left[  \sum_{Q\in\mathcal{K}_{t}}\left\langle V,h_{\mathcal{S}\left(
Q\right)  }^{\hor}\right\rangle \frac{1}{\sqrt{\left\vert
S\left(  Q\right)  \right\vert }}R_{1}s_{k_{t+1}}^{Q,\hor%
}\right]  ^{2}u_{t}\\
&  =\sum_{Q\in\mathcal{K}_{t}}\int\left\langle V,h_{\mathcal{S}\left(
Q\right)  }^{\hor}\right\rangle ^{2}\left[
R_{1}s_{k_{t+1}}^{Q,\hor}\right]  ^{2}\frac{1}{\left\vert
\mathcal{S}\left(  Q\right)  \right\vert }u_{t}+\operatorname*{offdiagonal}%
\rightarrow\left(  \bn{n}\right)  ^{2}\sum_{Q\in\mathcal{K}_{t}}\left\langle
V,h_{\mathcal{S}\left(  Q\right)  }^{\hor}\right\rangle
^{2}\frac{1}{\left\vert \mathcal{S}\left(  Q\right)  \right\vert }\int
_{Q}u_{t},
\end{align*}
and if we now \add{sum} in $t$, pigeonhole cubes $Q$ based on their
supervisor $S$, use the fact that $E_{Q}u_{t}=E_{S}U$, and finally
$\sum_{\substack{Q\in\mathcal{K}_{t}\\\mathcal{S}\left(  Q\right)  =S}%
}\frac{\left\vert Q\right\vert }{\left\vert S\right\vert }=1$, we obtain%
\begin{align*}
&  \int\left[  R_{1}\sum_{t=1}^{m-1}\left(  v_{t+1}-v_{t}\right)  \right]
^{2}u_{t}\approx\sum_{t=1}^{m-1}\int\left[  R_{1}\left(  v_{t+1}-v_{t}\right)
\right]  ^{2}u_{t}\\
&  \approx\left(  \bn{n}\right)  ^{2}\sum_{t=1}^{m-1}\sum_{Q\in\mathcal{K}_{t}%
}\left\langle V,h_{\mathcal{S}\left(  Q\right)  }^{\hor%
}\right\rangle ^{2}\frac{1}{\left\vert \mathcal{S}\left(  Q\right)
\right\vert }\int_{Q}u_{t}=\left(  \bn{n}\right)  ^{2}\sum_{t=1}^{m-1}\sum
_{S\in\mathcal{D}_{t}}\sum_{\substack{Q\in\mathcal{K}_{t}\\\mathcal{S}\left(
Q\right)  =S}}\left\langle V,h_{S}^{\hor}\right\rangle
^{2}E_{Q}u_{t}\frac{\left\vert Q\right\vert }{\left\vert S\right\vert }\\
&  =\left(  \bn{n}\right)  ^{2}\sum_{t=1}^{m-1}\sum_{S\in\mathcal{D}_{t}}%
\sum_{\substack{Q\in\mathcal{K}_{t}\\\mathcal{S}\left(  Q\right)
=S}}\left\langle V,h_{S}^{\hor}\right\rangle ^{2}%
E_{S}U\frac{\left\vert Q\right\vert }{\left\vert S\right\vert }  =\left(  \bn{n}\right)  ^{2}\sum_{t=1}^{m-1}\sum_{S\in\mathcal{D}_{t}%
}\left\langle V,h_{S}^{\hor}\right\rangle ^{2}%
E_{S}U>\left(  \bn{n}\right)  ^{2} \Gamma \left(  E_{[0,1]^{n}}V\right)
,
\end{align*}
which shows that testing for $R_{1}$ blows up, and hence two-weight norm for $R_1$ blows up as well. On the other hand, \add{we will see that dyadic testing for $R_2$ is controlled by the dyadic $A_2$ condition, namely
\[
\sup\limits_{Q \in \mathcal{D} ([0,1]^n)} \frac{1}{|Q|_v} \int\limits_{Q} \left | R_2 \mathbf{1}_Q v \right |^2 u + \sup\limits_{Q \in \mathcal{D} ([0,1]^n)} \frac{1}{|Q|_u} \int\limits_{Q} \left | R_2 \mathbf{1}_Q u \right |^2 v \lesssim A_2 ^{\operatorname{dyadic}} (u,v; [0,1]^n) \, . 
\]}
for $k_{1},k_{2},...,k_{m}$ all chosen large enough in an
inductive fashion. To make this formal argument precise in the next subsection, we follow the
scheme in \cite{NaVo} for $R_1$, while the scheme for $R_2$ is our own.

This gives us weights $(v,u)$ in the unit cube $[0,1]^n$. We then extend these weights periodically to the plane (with an additional small decay term), so that they continue to fails the norm inequality for $R_1$ (since the testing condition is large), while the \emph{dyadic} testing condition for $R_2$ holds. However, our weights will be doubling with doubling constant close to Lebesgue measure. So we will be able to leverage the $T1$ theorem of \cite{SaShUr10} and doubling to show that dyadic testing for $R_2$ implies that the norm inequality holds for $R_2$. Thus we will have constructed a weight pair for which $R_2$ is norm bounded, but $R_1$ is not, i.e., this weight pair will be rotationally unstable. 

\subsection{The Nazarov argument for Riesz transforms}

We now continue to carry out our adaptation of Nazarov's supervisor argument
to the higher dimensional setting of the supervisor and transplantation map.
Equipped with the supervisor and transplantation map, and the weak convergence
results above, this remaining argument follows the proof in \cite{NaVo} for $R_1$, but we include additional details that were omitted in \cite{NaVo} which will clarify the presentation here. The argument for $R_2$ is new, however. 

Recall that $\left\{  k_{t}\right\}  _{t=0}^{\infty}$ is a strictly increasing
sequence of nonnegative integers $k_{t}\in\mathbb{Z}_{+}$ with $k_{0}=0$, and
whose members will be chosen sufficiently large in the arguments below. We
define $\mathcal{K}\equiv\overset{\cdot}{%
{\displaystyle\bigcup}
}_{t=0}^{\infty}\mathcal{K}_{t}$ where $\mathcal{K}_{0}=\left\{
Q^{0}\right\}  =\left\{  [0,1]^{n}\right\}  $ and
\[
\mathcal{K}_{t}\equiv\left\{  Q\in\mathcal{D} (Q^{0}) :\ell\left(  Q\right)
=2^{-k_{1}-k_{2}-...-k_{t}}\right\}  ,\ \ \ \ \ t\geq1.
\]

\begin{proposition}
[Nazarov \cite{NaVo} in the case of the Hilbert transform]%
\label{Riesz Nazarov}For every $\Gamma>1$ and $0<\tau<1$ there exist positive
weights $u,v$ on the unit cube $Q^{0}\equiv\left[  0,1\right]  ^{n}$
satisfying
\begin{align}
&  \int_{\left[  0,1\right]  ^{n}}\left\vert R_{1}v\left(  x\right)
\right\vert ^{2}u\left(  x\right)  dx\geq\Gamma\int_{\left[  0,1\right]  ^{n}%
}v\left(  x\right)  dx, \label{eq:norm_blow_up}\\
&  \int_{I}\left\vert R_{2}\mathbf{1}_{I}v\left(  x\right)  \right\vert
^{2}u\left(  x\right)  dx\leq\int_{I}v\left(  x\right)  dx,\ \ \ \ \ \text{for
all dyadic cubes }I\in\mathcal{D}^{0}, \label{eq:forwards_dyadic_testing_weights}\\
&  \int_{I}\left\vert R_{2}\mathbf{1}_{I}u\left(  x\right)  \right\vert
^{2}v\left(  x\right)  dx\leq\int_{I}u\left(  x\right)  dx,\ \ \ \ \ \text{for
all dyadic cubes }I\in\mathcal{D}^{0} \label{eq:backwards_dyadic_testing_weights},\\
&  \left(  \frac{1}{\left\vert I\right\vert }\int_{I}u\left(  x\right)
dx\right)  \left(  \frac{1}{\left\vert I\right\vert }\int_{I}v\left(
x\right)  dx\right)  \leq1,\ \ \ \ \ \text{for all dyadic cubes }I\in\mathcal{D}%
^{0}\text{ }, \label{eq:dyadic_A2_weights}\\
&  1-\tau<\frac{E_{J}u}{E_{K}u},\frac{E_{J}v}{E_{K}v}<1+\tau
,\ \ \ \ \ \text{for adjacent dyadic cubes }J,K\in\mathcal{D}^{0} ,  \label{eq:t-flat_weights}
\end{align}
where $J$ and $K$ in \eqref{eq:t-flat_weights} need not be dyadic siblings, only adjacent.
\end{proposition}

\begin{proof}
Let $V,U$ be as arising from Theorem \ref{Bellman Haar shift} with
$\frac{\gamma(V,U,Q^{0})}{E_{Q^{0}}V}>$ $\Gamma^{\prime}$ sufficiently large. 
We apply the transplantation argument of Section \ref{section:sprvsr_trans} to
$V,U$ to obtain \add{nonnegative} weights $v_{t},u_{t}$\ with $1\leq t\leq m$, and set 
\[
u\equiv u_{m} \, , \quad v\equiv v_{m} \, ,
\]
where $m$ is as in Theorem \ref{Bellman Haar shift}. 
It will be convenient to denote the
differences
\begin{align*}
\eta_{t+1}  &  \equiv u_{t+1}-u_{t} \, , \quad \delta_{t+1}    \equiv v_{t+1}-v_{t}
\end{align*}
respectively. Note that by (\ref{eq:vanishing_Haar}) and (\ref{eq:differnce_ut}), $\eta_{t}$ and $\delta_{t}$ are of the form
\[
\sum\limits_{Q\in\mathcal{K}_{t}}c_{Q}\frac{1}{\sqrt{\left\vert \mathcal{S}%
\left(  Q\right)  \right\vert }}s_{k_{t+1}}^{Q,\hor%
}=o_{k_{t+1}\rightarrow\infty}^{\operatorname*{weakly}}\left(  1\right)  \,,
\]
where the sum is $o_{k_{t+1}\rightarrow\infty}^{\operatorname*{weakly}}\left(
1\right)  $ because the constants $c_{Q}$ depend only on the levels $1$
through $t$ of the construction and the number of terms in the sum only
depends on $k_{1},\ldots,k_{t}$.
We may then write
\begin{align*}
u  &  \equiv \left ( E_{Q^{0}}U \right )\mathbf{1}_{Q^{0}}+\sum_{t=0}^{m-1}\sum_{Q\in
\mathcal{K}_{t}}\left\langle U,h_{\mathcal{S}\left(  Q\right)  }%
^{\hor}\right\rangle \frac{1}{\sqrt{\left\vert
\mathcal{S}\left(  Q\right)  \right\vert }}s_{k_{t+1}}%
^{Q,\hor},\\
v  &  \equiv \left ( E_{Q^{0}}V \right )\mathbf{1}_{Q^{0}}+\sum_{t=0}^{m-1}\sum_{Q\in
\mathcal{K}_{t}}\left\langle V,h_{\mathcal{S}\left(  Q\right)  }%
^{\hor}\right\rangle \frac{1}{\sqrt{\left\vert
\mathcal{S}\left(  Q\right)  \right\vert }}s_{k_{t+1}}%
^{Q,\hor} \add{.}
\end{align*}

We will now focus on the `testing' constants 
\[
\frac{1}{\left\vert \left[
0,1\right]  ^{n}\right\vert _{v}}\int_{\left[  0,1\right]  ^{n}}\left\vert
R_{1}v\left(  x\right)  \right\vert ^{2}u\left(  x\right)  dx
\]
and
\[
\sup_{Q\in\mathcal{D}(Q^{0})}\frac{1}{\left\vert Q\right\vert _{v}}\int
_{Q}\left\vert R_{2}\mathbf{1}_{Q}v\right\vert ^{2}u \, , \quad \sup_{Q\in
\mathcal{D}(Q^{0})}\frac{1}{\left\vert Q\right\vert _{u}}\int_{Q}\left\vert
R_{2}\mathbf{1}_{Q}u\right\vert ^{2}v \, ,
\] and show that the first is large, and
second and third are small, provided we take the integers $k_{t}$ sufficiently
large in an inductive fashion. \add{To tackle the first testing constant,} define the discrepancy for
$R_{1}$ on $Q^0 = [0,1]^n$ by%
\begin{align*}
\mathrm{Disc} \left(  t\right)   &  \equiv\int
_{Q^0}\left(  R_{1}\mathbf{1}_{Q}v_{t+1}\left(  x\right)  \right)  ^{2}%
u_{t+1}\left(  x\right)  dx-\int_{Q^0}\left(  R_{1}\mathbf{1}_{Q}v_{t}\left(
x\right)  \right)  ^{2}u_{t}\left(  x\right)  dx \, .
\end{align*}
We begin with the decomposition
\begin{align}
& \mathrm{Disc} \left(  t\right)     =\int_{Q^0}\left(
R_{1}\mathbf{1}_{Q^0}\delta_{t+1}+R_{1}\mathbf{1}_{Q^0}v_{t}\right)  ^{2}%
u_{t+1}-\int_{Q^0}\left(  R_{1}\mathbf{1}_{Q^0}v_{t}\right)  ^{2}u_{t}%
\nonumber\\
&  =\int_{Q^0}\left(  R_{1}\mathbf{1}_{Q^0}\delta_{t+1}\right)  ^{2}u_{t+1}%
+\int_{Q^0}\left\{  2\left(  R_{1}\mathbf{1}_{Q^0}\delta_{t+1}\right)  \left(
R_{1}\mathbf{1}_{Q^0}v_{t}\right)  \right\}  \left(  u_{t}+\eta_{t+1}\right)
+\int_{Q^0}\left(  R_{1}\mathbf{1}_{Q^0}v_{t}\right)  ^{2}\left(  u_{t+1}%
-u_{t}\right) \nonumber\\
&  =\left\langle \left(  R_{1}\mathbf{1}_{Q^0}\delta_{t+1}\right)  ^{2}%
,u_{t+1}\right\rangle _{L^{2}(Q^0)}+2\left\langle \left(  R_{1}\mathbf{1}%
_{Q^0}\delta_{t+1}\right)  \left(  R_{1}\mathbf{1}_{Q^0}v_{t}\right)
,u_{t}\right\rangle _{L^{2}(Q^0)} \nonumber\\
&  +2\left\langle \left(  R_{1}\mathbf{1}_{Q^0}\delta_{t+1}\right)  \left(
R_{1}\mathbf{1}_{Q^0}v_{t}\right)  ,\eta_{t+1}\right\rangle _{L^{2}%
(Q^0)}+\left\langle \left(  R_{1}\mathbf{1}_{Q^0}v_{t}\right)  ^{2},\eta
_{t+1}\right\rangle _{L^{2}(Q^0)}\nonumber\\
&  \equiv A+B+C+D.\nonumber
\end{align}

We first claim that 
\begin{equation}
\mathrm{Disc}\left(  t\right) =\left(  \bn{n}\right)  ^{2}\sum_{I\in\mathcal{D}:\ \ell\left(  I\right)
=2^{-t}}\left(  \bigtriangleup_{I}^{\hor}V\right)
^{2}\left(  E_{I}U\right)  +\sum_{r=0}^{t}o_{k_{r+1}\rightarrow\infty}\left(
1\right)  . \label{disc 1}%
\end{equation}
We will see in a moment that $A$ is the main term. Using that $v_{t}%
,u_{t}$ and $\delta_{t+1},\eta_{t+1}$ are supported in $\left[  0,1\right]
^{n}$,
\[
B=2\left\langle \left(  R_{1}v_{t}\right)  u_{t},R_{1}\delta
_{t+1}\right\rangle _{L^{2}\left(  \left[  0,1\right]  ^{n}\right)
}=-2\left\langle R_{1}\left[  \left(  R_{1}v_{t}\right)  u_{t}\right]
,\delta_{t+1}\right\rangle _{L^{2}\left(  \left[  0,1\right]  ^{n}\right)
}=o_{k_{t+1}\rightarrow\infty}\left(  1\right)
\]
since the function $R_{1}\left[  \left(  R_{1}v_{t}\right)  u_{t}\right]  \in
L^{p}(\mathbb{R}^{2})$ for all $p\in\left(  1,\infty\right)  $, and in
particular belongs to $L^{2}(\mathbb{R}^{2})$, and is independent of $k_{t+1}%
$, and finally since $\delta_{t+1}=o_{k_{t+1}\rightarrow\infty}%
^{\operatorname*{weakly}}\left(  1\right)  $. Similarly, since $R_{1}v_{t}\in
L^{4}(\mathbb{R}^{2})$, we have
\[
D=\left\langle \left(  R_{1}v_{t}\right)  ^{2},\eta_{t+1}\right\rangle
_{L^{2}\left(  \left[  0,1\right]  ^{n}\right)  }=o_{k_{t+1}\rightarrow\infty
}\left(  1\right)  .
\]

For term $C$ we have%
\begin{align*}
C  &  =2\left\langle \left(  R_{1}\delta_{t+1}\right)  \left(  R_{1}%
v_{t}\right)  ,\eta_{t+1}\right\rangle _{L^{2}\left(  \left[  0,1\right]
^{2}\right)  }\\
&  =2\int_{\left[  0,1\right]  ^{n}}\left(  \sum_{Q\in\mathcal{K}_{t}%
}\left\langle V,h_{\mathcal{S}\left(  Q\right)  }^{\hor%
}\right\rangle R_{1}\frac{1}{\sqrt{\left\vert \mathcal{S}\left(  Q\right)
\right\vert }}s_{k_{t+1}}^{Q,\hor}\right)  \left(
R_{1}v_{t}\right)  \left(  \sum_{Q^{\prime}\in\mathcal{K}_{t}}\left\langle
U,h_{\mathcal{S}\left(  Q^{\prime}\right)  }^{\hor%
}\right\rangle \frac{1}{\sqrt{\left\vert \mathcal{S}\left(  Q^{\prime}\right)
\right\vert }}s_{k_{t+1}}^{Q^{\prime},\hor}\right) \\
&  =2\sum_{Q,Q^{\prime}\in\mathcal{K}_{t}}\left\langle V,h_{\mathcal{S}\left(
Q\right)  }^{\hor}\right\rangle \left\langle
U,h_{\mathcal{S}\left(  Q^{\prime}\right)  }^{\hor%
}\right\rangle \int_{\left[  0,1\right]  ^{n}}R_{1}\frac{1}{\sqrt{\left\vert
\mathcal{S}\left(  Q\right)  \right\vert }}s_{k_{t+1}}%
^{Q,\hor}\left(  R_{1}v_{t}\right)  \frac{1}%
{\sqrt{\left\vert \mathcal{S}\left(  Q ^{\prime}\right)  \right\vert }%
}s_{k_{t+1}}^{Q^{\prime},\hor}=o_{k_{t+1}\rightarrow
\infty}\left(  1\right)  ,
\end{align*}
by Theorem \ref{weak van Riesz} since $\mathcal{K}_{t}$ and $R_{1}v_{t}$ are
both independent of $k_{t+1}$, while $\left(  R_{1}s_{k_{t+1}}%
^{Q,\hor}\right)  s_{k_{t+1}}^{Q^{\prime}%
,\hor}\rightarrow0$ weakly in $L^{2}\left(
\mathbb{R}^{n}\right)  .$

Finally, for term $A$ we have%
\[
A=\left\langle \left(  R_{1}\delta_{t+1}\right)  ^{2},u_{t+1}\right\rangle
_{L^{2}\left(  \left[  0,1\right]  ^{n}\right)  }=\left\langle \left(
\sum_{Q\in\mathcal{K}_{t}}\left\langle V,h_{\mathcal{S}\left(  Q\right)
}^{\hor}\right\rangle R_{1}\frac{1}{\sqrt{\left\vert
\mathcal{S}\left(  Q\right)  \right\vert }}s_{k_{t+1}}%
^{Q,\hor}\right)  ^{2},u_{t+1}\right\rangle
_{L^{2}\left(  \left[  0,1\right]  ^{n}\right)  }.
\]
We first note that if the sum is taken outside the square, so that we consider
only the `diagonal' terms, we have%
\begin{align*}
&  \left\langle \sum_{Q\in\mathcal{K}_{t}}\left(  \left\langle
V,h_{\mathcal{S}\left(  Q\right)  }^{\hor}\right\rangle
R_{1}\frac{1}{\sqrt{\left\vert \mathcal{S}\left(  Q\right)  \right\vert }%
}s_{k_{t+1}}^{Q,\hor}\right)  ^{2},u_{t+1}\right\rangle
\\
&  =\sum_{Q\in\mathcal{K}_{t}}\frac{1}{\left\vert \mathcal{S}\left(  Q\right)
\right\vert }\left\langle V,h_{\mathcal{S}\left(  Q\right)  }%
^{\hor}\right\rangle ^{2}\left\{  \left\langle \left(
R_{1}s_{k_{t+1}}^{Q,\hor}\right)  ^{2},u_{t}\right\rangle
+\left\langle \left(  R_{1}s_{k_{t+1}}^{Q,\hor}\right)
^{2},\eta_{t+1}\right\rangle \right\} \\
&  =\left(  \bn{n}\right)  ^{2}\left\{  \sum_{Q\in\mathcal{K}_{t}}\left\langle
V,h_{\mathcal{S}\left(  Q\right)  }^{\hor}\right\rangle
^{2}\frac{\left\vert Q\right\vert }{\left\vert \mathcal{S}\left(  Q\right)
\right\vert }E_{\mathcal{S}\left(  Q\right)  }U\right\}  +\left\{  \sum
_{Q\in\mathcal{K}_{t}}\frac{1}{\left\vert \mathcal{S}(Q) \right\vert
}\left\langle V,h_{\mathcal{S}\left(  Q\right)  }^{\hor%
}\right\rangle ^{2}\left\langle \left(  R_{1}s_{k_{t+1}}%
^{Q,\hor}\right)  ^{2},\eta_{t+1}\right\rangle \right\}
+o_{k_{t+1}\rightarrow\infty}\left(  1\right) \\
&  \equiv F+G+o_{k_{t+1}\rightarrow\infty}\left(  1\right)
\end{align*}
by Lemma \ref{reduction} part (2) for $k_{t+1}$ sufficiently large, and since
$\frac{1}{\left\vert Q\right\vert }\int_{Q}u_{t}=E_{\mathcal{S}\left(
Q\right)  }U$.

To compute $F$, we pigeonhole the cubes $Q\in\mathcal{K}_{t}$ according to
their supervisors $S=\mathcal{S}\left(  Q\right)  $,
\[
\frac{F}{\bn{n} ^{2}}=\sum_{S\in\mathcal{D}_{t}}\sum_{\substack{Q\in
\mathcal{K}_{t}\\\mathcal{S}\left(  Q\right)  =S}}\left\langle
V,h_{\mathcal{S}\left(  Q\right)  }^{\hor}\right\rangle
^{2}\frac{\left\vert Q\right\vert }{\left\vert \mathcal{S}\left(  Q\right)
\right\vert }E_{\mathcal{S}\left(  Q\right)  }U=\sum_{S\in\mathcal{D}_{t}%
}\left\langle V,h_{S}^{\hor}\right\rangle ^{2}E_{S}%
U\sum_{\substack{Q\in\mathcal{K}_{t}\\\mathcal{S}\left(  Q\right)  =S}%
}\frac{\left\vert Q\right\vert }{\left\vert \mathcal{S}\left(  Q\right)
\right\vert }=\sum_{S\in\mathcal{D}_{t}}\left\langle V,h_{S}%
^{\hor}\right\rangle ^{2}E_{S}U.
\]

However to compute $G$, using the definition $\eta_{t+1}=\sum_{Q\in
\mathcal{K}_{t}}\left\langle U,h_{\mathcal{S}\left(  Q\right)  }%
^{\hor}\right\rangle \frac{1}{\sqrt{\left\vert
\mathcal{S}\left(  Q\right)  \right\vert }}s_{k_{t+1}}%
^{Q,\hor}$, we have%
\begin{align*}
G =\sum_{Q,Q^{\prime}\in\mathcal{K}_{t}}\frac{1}{\left\vert \mathcal{S}\left(
Q\right)  \right\vert }\left\langle V,h_{\mathcal{S}\left(  Q\right)
}^{\hor}\right\rangle ^{2}\left\langle U,h_{\mathcal{S}%
\left(  Q^{\prime}\right)  }^{\hor}\right\rangle
\left\langle \left(  R_{1}s_{k_{t+1}}^{Q,\hor}\right)
^{2},s_{k_{t+1}}^{Q^{\prime},\hor}\right\rangle
=o_{k_{t+1}\rightarrow\infty}\left(  1\right)
\end{align*}
by Theorem \ref{weak van Riesz}, and thus we conclude that the sum of the
diagonal terms equals%
\[
\bn{n} ^{2} \sum_{S\in\mathcal{D}_{t}}\left\langle V,h_{S}%
^{\hor}\right\rangle ^{2}E_{S}U+\sum_{r=0}^{t}%
o_{k_{r+1}\rightarrow\infty}\left(  1\right)  .
\]

Turning now to the sum of the off diagonal terms,%
\[
\sum_{Q\neq Q^{\prime}\in\mathcal{K}_{t}}\frac{1}{\sqrt{\left\vert
\mathcal{S}\left(  Q\right)  \right\vert }}\frac{1}{\sqrt{\left\vert
\mathcal{S}\left(  Q^{\prime}\right)  \right\vert }}\left\langle R_{1}\left[
\left\langle V,h_{\mathcal{S}\left(  Q\right)  }^{\hor%
}\right\rangle s_{k_{t+1}}^{Q,\hor}\right]  R_{1}\left[
\left\langle V,h_{\mathcal{S}\left(  Q^{\prime}\right)  }%
^{\hor}\right\rangle s_{k_{t+1}}^{Q^{\prime
},\hor}\right]  ,u_{t+1}\right\rangle ,
\]
we see that they all tend to $0$ weakly as $k_{t+1}\rightarrow\infty$ by
Theorem \ref{weak van Riesz}. Indeed, we write $u_{t+1}=u_{t}+\eta_{t+1}$, and
split $\eta_{t+1}$ into a linear combination of functions $s_{k_{t+1}%
}^{L,\hor}$, noting that the resulting number of terms in
the above display is independent of $k_{t+1}$ and that each such term tends to
$0$ as $k_{t+1}\rightarrow\infty$ by Theorem \ref{weak van Riesz}. Thus we can
choose the components of the sequence $\left\{  k_{t}\right\}  _{t=1}^{m}$
sufficiently large that%
\[
\int_{\left[  0,1\right]  ^{n}}\left\vert R_{1}v\left(  x\right)  \right\vert
^{2}u\left(  x\right)  dx\geq(\Gamma^{\prime}-CA_{2}^{\operatorname{dyadic}%
}(V,U,\left[  0,1\right]  ^{n}))\int_{\left[  0,1\right]  ^{n}}v\left(
x\right)  dx,
\]

since we also have%
\begin{align*}
&  \int_{\left[  0,1\right]  ^{n}}\left\vert R_{1}v_{0}\left(  x\right)
\right\vert ^{2}u_{0}\left(  x\right)  dx =\int_{\left[  0,1\right]  ^{n}%
}\left\vert R_{1}\mathbf{1}_{\left[  0,1\right]  ^{n}}E_{\left[  0,1\right]
^{n}}V\right\vert ^{2}\mathbf{1}_{\left[  0,1\right]  ^{n}}E_{\left[
0,1\right]  ^{n}}Udx\\
&  =\left(  E_{\left[  0,1\right]  ^{n}}V\right)  ^{2}\left(  E_{\left[
0,1\right]  ^{n}}U\right)  \int_{\left[  0,1\right]  ^{n}}\left\vert
R_{1}\mathbf{1}_{\left[  0,1\right]  ^{n}}\right\vert ^{2}dx =C\left(
E_{\left[  0,1\right]  ^{n}}V\right)  ^{2}\left(  E_{\left[  0,1\right]  ^{n}%
}U\right)  \leq CA_{2} ^{\operatorname{dyadic}} (V,U;\left[  0,1\right]
)E_{\left[  0,1\right]  ^{n}}V.
\end{align*}

Our next task is to show that the two testing conditions for $R_2$ are finite. They are symmetric, so it suffices to show the bound only for the
testing condition with $u$ outside the operator. We will argue so using part (3) of Lemma \ref{reduction} and
Theorem \ref{weak van Riesz}. Let $Q \in \mathcal{D}^0$, and for convenience let $k_0 \equiv 0$. We first consider the case that there exists $t=t\left(  Q\right)  $ such that $2^{-k_0-k_{1}%
-k_{2}-...-k_{t}}\leq\ell\left(  Q\right)  <2^{-k_0-k_{1}-k_{2}-...-k_{t-1}}$. We
will deal later with the remaining cubes $Q$ for which such a $t$ does not exist. Note that at each stage $t$, there are
only finitely many cubes $Q\in\mathcal{D}^{0}$ such that $\ell\left(
Q\right)  \geq2^{-k_0-k_{1}-k_{2}-...-k_{t}}$, and hence will only have to consider finitely many error terms which are $o_{k_{t+1}\rightarrow\infty}\left(  1\right)  $.
Writing $u=u_{t}+\sum_{s=t+1}^{m}\eta_{s}$ and
$v=v_{t}+\sum_{s=t+1}^{m}\delta_{s}$, we then compute
\[
\int_{Q}\left\vert R_{2}%
\mathbf{1}_{Q}v\left(  x\right)  \right\vert ^{2}u\left(  x\right)
dx\lesssim \int_{Q}\left\vert R_{2}%
\mathbf{1}_{Q}\left(  v_{t}\right)  \left(  x\right)  \right\vert
^{2}u\left(  x\right)  dx + \int_{Q}\left\vert R_{2}%
\mathbf{1}_{Q}\left(  \sum\limits_{s=t+1}^{m} \delta_s \right)  \left(  x\right)  \right\vert
^{2}u\left(  x\right)  dx
\]
\[
= \int_{Q}\left\vert R_{2}%
\mathbf{1}_{Q}\left(  v_{t}\right)  \left(  x\right)  \right\vert
^{2}u_t \left(  x\right)  dx + \int_{Q}\left\vert R_{2}%
\mathbf{1}_{Q}\left(  v_{t}\right)  \left(  x\right)  \right\vert
^{2}\left ( \sum\limits_{s=t+1}^m \eta_s \left(  x\right) \right ) dx + \int_{Q}\left\vert R_{2}%
\mathbf{1}_{Q}\left(  \sum\limits_{s=t+1}^{m} \delta_s \right)  \left(  x\right)  \right\vert
^{2}u\left(  x\right)  dx 
\]
\[
\equiv \left\vert Q\right\vert _{v} \left (  \operatorname{main} + \err_1 + \err_2 \right ) \, .
\]
We first claim $\err_2$ can be made arbitrarily small, so long as $k_{t+1}, k_{t+2}, \ldots, k_{m}$ are all  chosen sufficiently large.   Indeed, we use $u\left(  x\right)  \leq\left\Vert U\right\Vert _{\infty}$
independent of the choice of $k_{1},...,k_{m}$, which gives, using part (3) of
Lemma \ref{reduction},
\begin{align*}
\err_2 = \frac{1}{\left\vert Q\right\vert _{v}}\int_{Q}\left\vert R_{2}\mathbf{1}%
_{Q}\left(  \sum_{s=t+1}^{m}\delta_{s}\right)  \left(  x\right)  \right\vert
^{2}u\left(  x\right)  dx  &  \leq\left\Vert U\right\Vert _{\infty}\frac
{1}{\left\vert Q\right\vert _{v}}\int_{Q}\left\vert R_{2}\mathbf{1}_{Q}\left(
\sum_{s=t+1}^{m}\delta_{s}\right)  \left(  x\right)  \right\vert ^{2}dx\\
&  \rightarrow0\text{ as }k_{t+j}\rightarrow\infty,\text{ }j=1,2,...,m-t,
\end{align*}
where we recall that $t=t\left(  Q\right)  $

As for $\err_1$, it too can be made arbitrarily small by choosing $k_{t+1}$
sufficiently large, and using the strong convergence of $\eta_{t+j} \rightarrow0$ in $L^{p}(\mathbb{R}^{n})$ for all $j \geq 1$ by Lemma \ref{reduction} (3), as $R_{2}\mathbf{1}_{Q}%
v_{t}$ only depends on $k_0, k_1, \ldots, k_t$ and is hence independent of $k_{t+j}$ for $j\geq 1$.

So we are left with estimating $\operatorname{main}$. Note now that
$E_{Q}v_{t}=E_{Q}v=E_{\mathcal{S}(Q^{\ast})}V$, where
$Q^{\ast}$ is the unique cube in $\mathcal{K}_{t}$ containing $Q$. Note as
well that $v_{t}$ is constant on each $I\in\mathcal{K}_{t+1}%
$, and satisfies the pointwise estimate 
\[
\mathbf{1}_{Q} (x) v_{t}(x)\leq\left(
E_{\mathcal{S}(Q^{\ast})}V\right)  (1+\tau)
\]
since $v_{t}$
inherits dyadic $\tau$-flatness from $V$; similarly for $u_t$. Then applying the
pointwise estimate to $u_{t}$, followed by the estimate $\Vert R_{2}
\mathbf{1}_{Q} v_{t} \Vert_{L^{2} \left ( \mathbb{R}^n \right )} \leq\Vert\mathbf{1}_{Q}
v_{t} \Vert_{L^{2} \left ( \mathbb{R}^n \right )}$ by boundedness of $R_{2}$, and then the
pointwise estimate applied to $v_{t}$, we get
\begin{align*}
\int\limits_{Q}(R_{2}\mathbf{1}_{Q}v_{t})^{2}%
u_{t}dx \leq(1+\tau)\left(  E_{\mathcal{S}(Q^{\ast}%
)}U\right)  \int\limits_{Q}(R_{2}\mathbf{1}_{Q}v_{t})^{2}dx
&  \leq(1+\tau)\left(  E_{\mathcal{S}(Q^{\ast})}U\right)  \int\limits_{Q}%
(v_{t})^{2}dx\\
&  \leq(1+\tau)^{3}\left(  E_{\mathcal{S}(Q^{\ast})}U\right)  (E_{\mathcal{S}%
(Q^{\ast})}V)^{2}|Q|\,.
\end{align*}
Since $A_{2}^{\operatorname*{dyadic}}(V,U;Q^{0})\leq1$, the above is
controlled by
\[
(1+\tau)^{3}(E_{\mathcal{S} (Q^{\ast})}V)|Q|=(1+\tau)^{3}(E_{Q}v)|Q|=(1+\tau)^{3}\int
_{Q}v\,.
\]

Finally we consider cubes $Q$ for which $t(Q)$ does not exist, i.e., $\ell\left(  Q\right)  <2^{-k_0-k_{1}-k_{2}-...-k_{m}}$. Then
$v,u$ are constant on $Q$ with $E_{Q}v=E_{\mathcal{S}(Q^{\ast})}V$,
$E_{Q}u=E_{\mathcal{S}(Q^{\ast})}U$, where $Q^*$ is the unique cube in $\mathcal{K}_m$ which contains $Q$. Thus
\[
\int\limits_{Q}(R_{2}\mathbf{1}_{Q}v)^{2}u=\left(  E_{\mathcal{S}(Q^{\ast}%
)}V\right)  ^{2}\left(  E_{\mathcal{S}(Q^{\ast})}U\right)  \int\limits_{Q}%
(R_{2}\mathbf{1}_{Q})^{2}\leq\left(  E_{\mathcal{S}(Q^{\ast})}V\right)
\left\vert Q\right\vert =\int_{Q}v,
\]
where in the inequality we use that $\left(  E_{\mathcal{S}(Q^{\ast}%
)}V\right)  \left(  E_{\mathcal{S}(Q^{\ast})}U\right)  \leq1$ and $\Vert
R_{2}\Vert_{L^{2} \left ( \mathbb{R}^n \right )\rightarrow L^{2} \left ( \mathbb{R}^n \right )}=1$.

Since $\tau\in(0,1)$, we obtain that the dual testing constant for $R_{2}$ on
dyadic cubes is bounded; similarly for the testing constant on dyadic cubes.

Finally, to remove the restriction that $J$ and $K$ must be dyadic siblings from \eqref{eq:t-flat_weights}, one can modify the transplantation argument following \cite{NaVo},
as described in Appendix \ref{subsection trans}. However, complete proofs were
not provided in \cite{NaVo} and we invite the reader to consult Appendix
\ref{subsection trans} for missing details, namely Lemma \ref{lem:adjacent doubling}; see also \cite{Naz} and \cite{KaTr}. We also explicitly point out that this modification of transplantation will not affect any of the limiting arguments above involving taking $k_{t}$ sufficiently large for each $t$, and by Remark \ref{rmk:A2_transition}, the dyadic $A_2$ condition will be unaffected.

Finally, by multiplying $v,u$ by an appropriate (small) positive constant, we obtain
the statements in the theorem with the required constants.
\end{proof}

\begin{remark}
The weights $u\left(  x\right)  ,v\left(  x\right)  $ in $\left[  0,1\right]
^{n}$ constructed in the proof of Proposition \ref{Riesz Nazarov}
depend only on the first variable $x_{1}$ of $x$.
\end{remark}

\begin{remark}
A careful reading of the proof shows that our weights $v,u$ satisfy the
$L^{p}$-testing and dual $L^{p}$-testing conditions for the operator $R_{2}$
when $p\in(1,\infty)$. Thus if there was a $T1$ theorem for $L^{p}$ with
doubling weights, our results regarding $R_{2}$ would extend to $L^{p}$. See \cite{SaWi} for a vector-valued $T1$ theorem, where the norm inequality holds if vector-valued analogues of the testing and $A_p$ conditions hold.
\end{remark}

In order to complete the proof of Theorem \ref{stab}, we need to extend our
doubling conclusions to classical doubling, and remove the restriction to
dyadic cubes in our testing conditions for the weight pair $\left(
v,u\right)  $ in Proposition \ref{Riesz Nazarov}.

\subsection{Classical doubling, $A_{2}$ and dyadic testing in $\mathbb{R}^{n}%
$}

By Proposition \ref{Riesz Nazarov}, we have constructed a pair of
weights $\left(  v,u\right)  $ on $Q^{0}=\left[  0,1\right]  ^{n}$, which we
relabel here as $\left(  \sigma,\omega\right)  $, that satisfy the flatness condition \eqref{eq:t-flat_weights} on $Q^0$, the
$A_{2}^{\operatorname{dyadic}}(\sigma,\omega;\left[  0,1\right]  ^{n})$ condition as well as the
\emph{dyadic} testing conditions,%
\begin{align*}
\int_{Q^{0}}\left\vert R_{1}\left(  \mathbf{1}_{Q^{0}}\omega\right)
\right\vert ^{2}d\sigma &  >\Gamma\left\vert Q^{0}\right\vert _{\omega}
\end{align*}
and for all $Q \in \mathcal{D}^0$,
\begin{align*}
\int_{Q}\left\vert R_{2}\left(  \mathbf{1}_{Q}\sigma\right)  \right\vert
^{2}d\omega   \leq\left\vert Q\right\vert _{\sigma
} \, , \quad \int_{Q}\left\vert R_{2}\left(  \mathbf{1}_{Q}\omega\right)  \right\vert
^{2}d\sigma   \leq\left\vert Q\right\vert _{\omega
} \, .
\end{align*}

We now extend these measures to the entire space by reflecting in each
coordinate separately to obtain an extension to $\left[  0,2\right]  ^{n}$,
and then by adding translates $\left[  0,2\right]  ^{n}+2\left(  \alpha
_{1},\alpha_{2},...,\alpha_{n}\right)  $, $\alpha\in\mathbb{Z}^{n}$, so as to
be periodic of period two on the entire space $\mathbb{R}^{n}$.
After this reflection process, note that adjacent cubes from neighbouring dyadic cubes of side
length $1$ also satisfy the adjacent doubling condition, and with constant $1$
since they have equal measures by the reflection extension process, and so for
\emph{any} adjacent dyadic cubes $I_{1}$ and $I_{2}$, we have $\frac{E_{I_{1}}\sigma
}{E_{I_{2}} \sigma} \in(1-\tau, 1+\tau)$, and similarly for $\omega$. In particular, one can show this implies that $\sigma$ and $\omega$ are both $o_{\tau \to 0} (1)$ flat, and hence doubling \cite[Lemma 4.2]{NaVo}. We also note that after this reflection process the pair $(\sigma, \omega)$
satisfies the dyadic $A_{2}$ condition
\begin{equation}\label{eq:dyadic_A2_full_space}
\left ( \frac{1}{\left | Q \right|} \int\limits_{Q} \sigma \left (x \right )\right ) \left ( \frac{1}{\left | Q \right|} \int\limits_{Q} \omega \left (x \right )\right ) \leq 1 \, , \quad \text{ for all dyadic cubes } Q \in \mathcal{D} \, .
\end{equation}
Because $\sigma$ and $\omega$ are doubling, then from \eqref{eq:dyadic_A2_full_space} we obtain that $A_2 (\sigma, \omega) \lesssim 1$. By multiplying $\sigma$ and $\omega$ by an appropriate constant, we may assume without loss of generality that
\[
A_2 (\sigma, \omega) \leq 1 \, .
\]
Furthermore, after this reflection process, the pair $\left ( \sigma, \omega \right )$ also satisfy the \emph{dyadic} testing
conditions for all $\mathcal{D}$-dyadic cubes of side length at most $1$.
 We now
set%
\[
Q_{\alpha}\equiv\left[  0,1\right]  ^{n}+\left(  \alpha_{1},\alpha
_{2},...,\alpha_{n}\right)  ,\ \ \ \ \ \text{for all }\alpha\in\mathbb{Z}%
^{n}.
\]

Let $\tau\in(0,1)$ be as in Proposition \ref{Riesz Nazarov}\ and multiply each
of these measures by the factor
\begin{align*}
\varphi_{\tau}\left(  x\right)   &  \equiv\sum_{\alpha}a_{\alpha}%
\mathbf{1}_{Q_{\alpha}}\left(  x\right)  ,\\
\text{where }a_{\alpha}  &  \equiv\frac{1}{\left\vert Q_{\alpha}\right\vert
}\int_{Q_{\alpha}}d\mu_{\tau}\text{ and }d\mu_{\tau}\left(  x\right)
\equiv\frac{dx}{\left(  1+\left\vert x\right\vert \right)  ^{\tau}},
\end{align*}
and consider the measure pairs $\left(  \sigma_{\tau},\omega_{\tau}\right)  $
with $\sigma_{\tau}\equiv\varphi_{\tau}\left(  x\right)  d\sigma\left(
x\right)  $ and $\omega_{\tau}\equiv\varphi_{\tau}\left(  x\right)
d\omega\left(  x\right)  $. We set $A\equiv\left\vert \left[  0,1\right]
^{n}\right\vert _{\sigma}$ and $B=\left\vert \left[  0,1\right]
^{n}\right\vert _{\omega}$. Note that $A=\left\vert Q_{\alpha}\right\vert
_{\sigma}$ and $B=\left\vert Q_{\alpha}\right\vert _{\omega}$ for all
$\alpha\in\mathbb{Z}^{n}$, and $AB\leq A_{2}(\sigma,\omega) \leq 1$.

\begin{lemma}
The measures $ \sigma_{\tau},\omega_{\tau}$ are both $o_{\tau \to 0} (1)$-flat,
i.e., the adjacent doubling constant of each measure tends to $1$ as $\tau\searrow0$.
\end{lemma}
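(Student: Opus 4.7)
The plan is to reduce the adjacent flatness of $\sigma_\tau, \omega_\tau$ to that of the auxiliary measure $\mu_\tau$, and then prove the latter via a scaling plus compactness argument. Throughout I restrict attention to adjacent cubes $I, J$ dyadic with respect to the integer-aligned grid; the extension to general adjacent pairs is then a routine doubling comparison. Fix such an adjacent pair with common side length $L = 2^k$, $k \in \mathbb{Z}$. If $L \leq \tfrac{1}{2}$, their common dyadic parent has side $\leq 1$ and sits inside a single unit cube $Q_\alpha$, so $\varphi_\tau \equiv a_\alpha$ is constant on $I \cup J$, giving
\[
\frac{|I|_{\sigma_\tau}}{|J|_{\sigma_\tau}} = \frac{a_\alpha |I|_\sigma}{a_\alpha |J|_\sigma} = \frac{|I|_\sigma}{|J|_\sigma} \in (1-\tau,\, 1+\tau),
\]
by the $\tau$-flatness of $\sigma$ recorded just before the lemma. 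If $L \geq 1$, both $I$ and $J$ are disjoint unions of exactly $L^n$ unit cubes $Q_\alpha$, each of $\sigma$-mass $A$ by the reflective extension, whence
\[
|I|_{\sigma_\tau} = \sum_{Q_\alpha \subset I} a_\alpha A = A \int_I d\mu_\tau = A\,|I|_{\mu_\tau},
\]
and likewise $|J|_{\sigma_\tau} = A\,|J|_{\mu_\tau}$. So the $\sigma_\tau$-ratio reduces to $|I|_{\mu_\tau}/|J|_{\mu_\tau}$, independent of $\sigma$. The identical reduction applies to $\omega_\tau$.

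It therefore suffices to show that the adjacent doubling constant of $\mu_\tau$ tends to $1$ as $\tau \to 0$, uniformly over adjacent dyadic pairs of side $L \geq 1$. Writing $J = I + Le_k$ and substituting $x = Ly$ gives
\[
\frac{|I|_{\mu_\tau}}{|J|_{\mu_\tau}} = \frac{\int_{\widehat I} (\epsilon + |y|)^{-\tau}\, dy}{\int_{\widehat I + e_k}(\epsilon + |y|)^{-\tau}\, dy} =: R(\widehat I,\, \epsilon,\, \tau),
\]
where $\widehat I := I/L$ is a unit cube and $\epsilon := 1/L \in (0, 1]$. I split into two regimes. If $\operatorname{dist}(\widehat I, 0) \geq 2$, then on $\widehat I \cup (\widehat I + e_k)$ the quantity $\epsilon + |y|$ lies in an interval $[r, Cr]$ with $C$ an absolute constant, so the integrand has max-to-min ratio at most $C^\tau$, giving $|R - 1| \leq C^\tau - 1 \to 0$ uniformly. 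If $\operatorname{dist}(\widehat I, 0) < 2$, then $(\widehat I, \epsilon, \tau)$ ranges over a compact set (fix any $\tau_0 < n$); the domination $(\epsilon + |y|)^{-\tau} \leq |y|^{-\tau_0}$, integrable on unit cubes since $\tau_0 < n$, lets dominated convergence establish joint continuity of $R$ in the parameters. Since $R(\widehat I, \epsilon, 0) \equiv 1$, uniform continuity on the compact set gives $R \to 1$ uniformly as $\tau \to 0$.

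Combining both regimes yields $|I|_{\mu_\tau}/|J|_{\mu_\tau} = 1 + o(1)$ uniformly in adjacent dyadic pairs with $L \geq 1$, which via the reduction above implies the adjacent doubling constants of $\sigma_\tau, \omega_\tau$ tend to $1$ as $\tau \to 0$, establishing $o(1)$-flatness. The main obstacle is the uniformity in the compact regime, precisely when $\widehat I$ contains or approaches the origin and the integrand becomes singular; the trick is that the domination $(\epsilon + |y|)^{-\tau} \leq |y|^{-\tau_0}$ for $\tau_0 < n$ provides an integrable majorant independent of $(\widehat I, \epsilon, \tau)$ on the compact set, which is exactly what is needed for the dominated convergence step to deliver joint continuity of $R$.
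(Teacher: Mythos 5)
Your proof is correct and follows essentially the same route as the paper: you reduce the adjacent ratio for $\sigma_{\tau},\omega_{\tau}$ to that of $\mu_{\tau}$ using the constancy of $\varphi_{\tau}$ on unit cubes and the equal masses $A$, $B$, and then show that $\mu_{\tau}$ is adjacently $1+o(1)$-flat, a fact the paper merely asserts and which your scaling-plus-compactness argument supplies. One small repair: the majorant $(\epsilon+|y|)^{-\tau}\leq |y|^{-\tau_{0}}$ fails for $|y|>1$ when $\tau<\tau_{0}$, so use $\max\{1,|y|^{-\tau_{0}}\}$ (integrable on the bounded region at hand) instead, after which the dominated convergence step goes through unchanged.
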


\begin{proof}
Indeed, if $Q_{\alpha}$ and $Q_{\alpha^{\prime}}$ are two adjacent cubes of
the form $Q_{\alpha}\equiv\left[  0,1\right]  ^{n}+\left(  \alpha_{1}%
,\alpha_{2},...,\alpha_{n}\right)  $, then%
\[
\frac{\int_{Q_{\alpha}}\sigma_{\tau}}{\int_{Q_{\alpha^{\prime}}}\sigma_{\tau}%
}=\frac{a_{\alpha}\int_{Q_{\alpha}}\sigma}{a_{\alpha^{\prime}}\int
_{Q_{\alpha^{\prime}}}\sigma}=\frac{a_{\alpha}A}{a_{\alpha^{\prime}}A}%
=\frac{\int_{Q_{\alpha}}d\mu_{\tau}}{\int_{Q_{\alpha^{\prime}}}d\mu_{\tau}},
\]
tends to $1$ as $\tau\searrow0$ independent of the pair $\left(  Q_{\alpha
},Q_{\alpha^{\prime}}\right)  $ since $\mu_{\tau}$ is a doubling weight on
$\mathbb{R}^{n}$ with adjacent doubling constant roughly $1+O_{\tau
\rightarrow0}(\tau)$. If instead we consider adjacent cubes $P$ and
$P^{\prime}$ that are each a union of cubes $Q_{\alpha}$, then
\[
\frac{\int_{P}\sigma_{\tau}}{\int_{P^{\prime}}\sigma_{\tau}}=\frac
{\sum_{\alpha:Q_{\alpha}\subset P}a_{\alpha}\left\vert Q_{\alpha}\right\vert
_{\sigma}}{\sum_{\alpha^{\prime}:Q_{\alpha^{\prime}}\subset P^{\prime}%
}a_{\alpha^{\prime}}\left\vert Q_{\alpha^{\prime}}\right\vert _{\sigma}}%
=\frac{\sum_{\alpha:Q_{\alpha}\subset P}\int_{Q_{\alpha}}d\mu_{\tau}}%
{\sum_{\alpha^{\prime}:Q_{\alpha^{\prime}}\subset P^{\prime}}\int
_{Q_{\alpha^{\prime}}}d\mu_{\tau}}=\frac{\int_{P}d\mu_{\tau}}{\int_{P^{\prime
}}d\mu_{\tau}},
\]
which again tends to $1$ as $\tau\searrow0$ independent of the pair $\left(
P,P^{\prime}\right)  $. Thus for any adjacent dyadic cubes $I_{1}$ and $I_{2}%
$, we have $\frac{E_{I_{1}}\sigma_{\tau}}{E_{I_{2}} \sigma_{\tau}} \in(1-\tau,
1+\tau)$. A standard argument shows that $\sigma_{\tau}$ has adjacent doubling
constant equal to $1+ o(1)$ as $\tau\searrow0$, and similarly for
$\omega_{\tau}$.
\end{proof}

Next we turn to the final task of establishing the testing conditions for
$R_{2}$ on the doubling measure pair $\left(  \sigma_{\tau},\omega_{\tau
}\right)  $ uniformly for any $\tau\in(0,1)$, which then leads to boundedness
of $R_{2}$ via the main result in \cite[Theorem 2]{SaShUr10} \add{for $\tau >0$ sufficiently small, since if a pair doubling measures with doubling constant sufficiently close to Lebesgue satisfies the $A_2$ condition, then they will satisfy the Energy condition \cite[Section 1.7]{SaShUr10}.}  Of
course, testing fails for $R_{1}$. To state this formally, we will need the definition of a
weighted norm inequality as used in \add{\cite{SaShUr7, SaShUr10}}.

We follow the approach in \cite[see page 314]{SaShUr9}. So we suppose that
$K^{\alpha}$ is a standard smooth $\alpha$-fractional Calder\'{o}n-Zygmund
kernel, and $\sigma,\omega$ are locally finite positive Borel measures on
$\mathbb{R}^{n}$, and we introduce a family $\left\{  \eta_{\delta,R}^{\alpha
}\right\}  _{0<\delta<R<\infty}$ of nonnegative functions on $\left[
0,\infty\right)  $ so that the truncated kernels $K_{\delta,R}^{\alpha}\left(
x,y\right)  =\eta_{\delta,R}^{\alpha}\left(  \left\vert x-y\right\vert
\right)  K^{\alpha}\left(  x,y\right)  $ are bounded with compact support for
fixed $x$ or $y$, and uniformly satisfy the smooth Calder\'{o}n-Zygmund kernel
estimates (\ref{size and smoothness}). Then the truncated operators
\[
T_{\sigma,\delta,R}^{\alpha}f\left(  x\right)  \equiv\int_{\mathbb{R}^{n}%
}K_{\delta,R}^{\alpha}\left(  x,y\right)  f\left(  y\right)  d\sigma\left(
y\right)  ,\ \ \ \ \ x\in\mathbb{R}^{n},
\]
are pointwise well-defined when $f$ is bounded with compact support, and we
will refer to the pair \[
\left(  K^{\alpha},\left\{  \eta_{\delta,R}^{\alpha
}\right\}  _{0<\delta<R<\infty}\right)
\]
as an $\alpha$-fractional singular
integral operator, which we typically denote by $T^{\alpha}$, suppressing the
dependence on the truncations. In the event that $\alpha=0$ and $T^{0}$ is
bounded on unweighted $L^{2}\left(  \mathbb{R}^{n}\right)  $, we say that
$T = T^{0}$ is a Calder\'{o}n-Zygmund operator.

\begin{definition}
\label{def bounded}We say that an $\alpha$-fractional singular integral
operator $T^{\alpha}=\left(  K^{\alpha},\left\{  \eta_{\delta,R}^{\alpha
}\right\}  _{0<\delta<R<\infty}\right)  $ satisfies the norm inequality%
\begin{equation}
\left\Vert T_{\sigma}^{\alpha}f\right\Vert _{L^{2}\left(  \omega\right)  }%
\leq\mathfrak{N}_{T^{\alpha}} \left ( \sigma, \omega \right ) \left\Vert f\right\Vert _{L^{2}\left(
\sigma\right)  },\ \ \ \ \ f\in L^{2}\left(  \sigma\right)  .
\label{two weight'}%
\end{equation}
provided $\mathfrak{N}_{T^{\alpha}} \left ( \sigma, \omega \right )$ is the best constant $\mathfrak{N}$ for which
\[
\left\Vert T_{\sigma,\delta,R}^{\alpha}f\right\Vert _{L^{2}\left(
\omega\right)  }\leq \mathfrak{N}
\left\Vert f\right\Vert _{L^{2}\left(  \sigma\right)  },\ \ \ \ \ f\in
L^{2}\left(  \sigma\right)  ,0<\delta<R<\infty.
\]
\end{definition}

\begin{description}
\item[Independence of Truncations] \label{independence}In the presence of the
classical Muckenhoupt condition $A_{2}^{\alpha}$, the norm inequality
(\ref{two weight'}) is independent of the choice of truncations used,
including \emph{nonsmooth} truncations as well - see \cite[Section 2.1]{LaSaShUr3}.
\end{description}

Now we introduce the testing conditions for Calder\'on-Zygmund operators.

\begin{definition}
Given an $\alpha$-fractional singular integral operator $T^{\alpha
}=\left(  K^{\alpha},\left\{  \eta_{\delta,R}^{\alpha}\right\}  _{0<\delta
<R<\infty}\right)  $, define the testing constants
\begin{align*}
\mathfrak{T}_{T^{\alpha}}\left(  \sigma,\omega\right)  ^{2} \equiv\sup
_{Q\in\mathcal{P}_{n}}\frac{1}{\left\vert Q\right\vert _{\sigma}}\int
_{Q}\left\vert T^{\alpha}\left(  \mathbf{1}_{Q}\sigma\right)  \right\vert
^{2}d\omega\, , \qquad\mathfrak{T}_{T^{\alpha,\ast}}\left(
\omega,\sigma\right)  ^{2} \equiv\sup_{Q\in\mathcal{P}_{n}}\frac{1}{\left\vert
Q\right\vert _{\omega}}\int_{Q}\left\vert T^{\alpha,\ast}\left(
\mathbf{1}_{Q}\omega\right)  \right\vert ^{2}d\sigma.
\end{align*}

We also define the dyadic testing constants by
\begin{align}\label{eq:defn_dyadic_testing}
\mathfrak{T}_{T^{\alpha}} ^{\mathcal{D}} \left(  \sigma,\omega\right)  ^{2} \equiv\sup
_{Q\in\mathcal{D}^0}\frac{1}{\left\vert Q\right\vert _{\sigma}}\int
_{Q}\left\vert T^{\alpha}\left(  \mathbf{1}_{Q}\sigma\right)  \right\vert
^{2}d\omega<\infty\, , \qquad\mathfrak{T}_{T^{\alpha,\ast}} ^{\mathcal{D}^0} \left(
\omega,\sigma\right)  ^{2} \equiv\sup_{Q\in\mathcal{D}}\frac{1}{\left\vert
Q\right\vert _{\omega}}\int_{Q}\left\vert T^{\alpha,\ast}\left(
\mathbf{1}_{Q}\omega\right)  \right\vert ^{2}d\sigma<\infty.
\end{align}
We say $T^{\alpha}$ satisfies the (dyadic) testing conditions if both (dyadic) testing constants for each admissible truncation are finite, and the constants are bounded uniformly over all admissible truncations. 
\end{definition}

The following $T1$ theorem, whose proof we include in Appendix \ref{section:pf_T1}, is a corollary of  \cite[Theorem 2]{SaShUr10}. In particular, if $\tau$ is sufficiently small, it can be applied to the measure pair $(\sigma_{\tau}, \omega_{\tau})$. Recall that a Calder\'on-Zygmund operator $T^{\alpha}$ is $(1+\delta)$-smooth if in addition to having a kernel $K^{\alpha}$ satisfying \eqref{size and smoothness}, we also have
\[
|\nabla K^{\alpha}(x,y) - \nabla K^{\alpha} (x', y)| \leq C_{CZ} \left ( \frac{|x-x'|}{|x-y|} \right )^{\delta} |x-y|^{\alpha-(n+1)}
\]
whenever
\[
\frac{|x-x'|}{|x-y|} \leq \frac{1}{2} \, .
\]
\begin{theorem}[$T1$ theorem for flat doubling measures]\label{thm:starting_T1_thm}
Suppose $\sigma$ and $\omega$ are doubling measures with doubling constant at most $2^{n+\epsilon}$ for some $\epsilon \in (0,1)$, and let $T$ be a $(1+\delta)$-smooth Calder\'on-Zygmund operator of fractional order $0$. Then 
   \[
   \mathfrak{N}_{T} \left ( \sigma, \omega \right ) \lesssim \sqrt{A_2 \left (\sigma, \omega \right )} + \mathfrak{T}_{T} (\sigma, \omega)  + \mathfrak{T}_{T^*} (\omega , \sigma) \, . 
   \] 
\end{theorem}

Finally, we record an estimate from \cite{Saw6} that will be used in proving
the next lemma.

\begin{lemma}
[{\cite[Lemma 23]{Saw6}}]\label{doubling halo}If $\mu$ is a doubling measure and $P$ is a
cube, then for every $\delta\in(0,\frac{1}{2})$ we have%
\[
\left\vert \left\{  x\in P:\operatorname*{dist}\left(  x,\partial P\right)
<\delta\ell\left(  P\right)  \right\}  \right\vert _{\mu}\lesssim\frac{1}%
{\ln\frac{1}{\delta}} \left |P \right |_{\mu}.
\]

\end{lemma}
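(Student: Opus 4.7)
The plan is to reduce to a single face of $P$ via the union bound and then extract a geometric decay of slab measures across dyadic scales using the doubling adjacency of $\mu$. Since $\partial P$ is the union of its $2n$ faces $F_1,\ldots,F_{2n}$ and
\[
\{x\in P:\operatorname{dist}(x,\partial P)<\delta\ell(P)\}\subset\bigcup_{i=1}^{2n}T^{F_i}_\delta,\qquad T^F_\delta:=\{x\in P:\operatorname{dist}(x,F)<\delta\ell(P)\},
\]
it suffices to bound $|T^F_\delta|_\mu$ for a single face $F$. Set $K:=\lceil\log_2(1/\delta)\rceil$ and consider the nested slabs $T_k:=T^F_{2^{-k}}$ for $0\leq k\leq K$, so that $T^F_\delta\subset T_K$.

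The core step will be to show, using doubling of $\mu$, the geometric contraction $|T_{k+1}|_\mu\leq\tfrac{\lambda}{\lambda+1}|T_k|_\mu$ with $\lambda:=\lambda^{\operatorname{dyad}}_{\operatorname{adj}}(\mu)$. After orienting coordinates so that $F=\{x_1=0\}\cap P$ with $P=[0,\ell(P)]^n$, I would tile both $T_{k+1}$ and $T_k\setminus T_{k+1}$ by the $2^{(k+1)(n-1)}$ dyadic sub-cubes of side $2^{-k-1}\ell(P)$ lying at $x_1$-depths $[0,2^{-k-1}\ell(P)]$ and $[2^{-k-1}\ell(P),2^{-k}\ell(P)]$ respectively. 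Each cube $C\subset T_{k+1}$ and its companion $C^{\ast}\subset T_k\setminus T_{k+1}$ obtained by translating in the $x_1$-direction are dyadic children of a common parent of side $2^{-k}\ell(P)$; doubling adjacency then yields $|C|_\mu\leq\lambda|C^{\ast}|_\mu$, and summing over the paired tiling gives $|T_{k+1}|_\mu\leq\lambda|T_k\setminus T_{k+1}|_\mu$, whence $|T_k|_\mu=|T_{k+1}|_\mu+|T_k\setminus T_{k+1}|_\mu\geq(1+1/\lambda)|T_{k+1}|_\mu$, proving the contraction.

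Iterating this decay from $T_0=P$ down to $T_K$ yields
\[
|T^F_\delta|_\mu\leq|T_K|_\mu\leq\left(\tfrac{\lambda}{\lambda+1}\right)^K|P|_\mu=\delta^\eta|P|_\mu,\qquad\eta:=\log_2(1+1/\lambda)>0.
\]
The stated bound of the lemma then follows from the elementary one-variable inequality $\delta^\eta\leq C_\eta/\ln(1/\delta)$, valid for all $\delta\in(0,1/2)$ with $C_\eta=1/(e\eta)$ (attained by maximizing $\delta\mapsto-\delta^\eta\ln\delta$); the implicit constant in the lemma statement absorbs both $|P|_\mu$ and the $\mu$-dependent quantity $\eta$. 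There is no real obstacle beyond arranging the pairing of adjacent dyadic sub-cubes of $P$, which is immediate for the axis-aligned geometry at hand — the entire argument is a one-directional doubling chain and the logarithmic rate is in fact a considerable weakening of the natural polynomial rate $\delta^\eta$, chosen presumably because it is the rate actually required in the ensuing application.
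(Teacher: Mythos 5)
Your argument is correct and, in contrast to the paper, which simply cites \cite{Saw6} for this lemma, it is self-contained. The single-face reduction followed by the translation pairing of dyadic tiles of side $2^{-k-1}\ell(P)$ with their companions one slab further in is exactly the right use of the adjacency constant $\lambda_{\operatorname{adj}}(\mu)$ (the paired cubes are children of a common parent and have equal Lebesgue measure, so $|C|_\mu\leq\lambda|C^{\ast}|_\mu$), and it yields the contraction $|T_{k+1}|_\mu\leq\tfrac{\lambda}{\lambda+1}|T_k|_\mu$; iterating gives the polynomial bound $\delta^{\eta}|P|_\mu$, which is strictly stronger than the stated $|P|_\mu/\ln\tfrac{1}{\delta}$ (the logarithmic form is all that is needed in the absorption arguments of Lemma \ref{dyadic_testing_R2}). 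For comparison, the usual route to the logarithmic bound is the disjoint-shells argument: the same one-scale pairing shows each shell $S_j=\{x\in P:2^{-j-1}\ell(P)\leq\operatorname{dist}(x,\partial P)<2^{-j}\ell(P)\}$ satisfies $|S_j|_\mu\gtrsim|H^{P}_{\delta}|_\mu$, and summing the roughly $\log_2\tfrac{1}{\delta}$ disjoint shells inside $P$ gives the claim; your version extracts the extra geometric decay from the same comparison. Two trivial repairs: with $K=\lceil\log_2\tfrac{1}{\delta}\rceil$ one has $2^{-K}\leq\delta$, so the inclusion goes the wrong way ($T_K\subset T^{F}_{\delta}$); take $K=\lfloor\log_2\tfrac{1}{\delta}\rfloor$ instead, which costs only a bounded factor in $(\lambda/(\lambda+1))^{K}\approx\delta^{\eta}$. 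Also, to sum over the tilings without worrying about shared faces, either use half-open dyadic cubes or note that a doubling measure assigns no mass to the boundary of a cube (faces are porous sets). Finally, as you observe, the implicit constant must carry $|P|_\mu$, i.e.\ the estimate is $\lesssim|P|_\mu/\ln\tfrac{1}{\delta}$, which is exactly how the lemma is applied in the paper.
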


\begin{lemma}
\label{dyadic_testing_R2} For all $\tau>0$ sufficiently small, the second Riesz transform $R_{2}$
satisfies the norm inequality for the measure pair $\left(  \sigma_{\tau
},\omega_{\tau}\right)  $, i.e.,
\[
\mathfrak{N}_{R_2} \left (\sigma_{\tau}, \omega_{\tau} \right ) \lesssim 1 \, .
\]
\end{lemma}

\begin{proof}
Let $\tau$ be sufficiently small so that the doubling constants for $\sigma$ and $\omega$ are at most $2^{n + \frac{1}{2}}$, so that Theorem \ref{thm:starting_T1_thm} applies. Fix a \emph{dyadic} cube $Q\in\mathcal{D}$. If $Q$ has side length at
most $1$, then $Q$ is contained in one of the cubes $Q_{\alpha}$, where we
have already shown that the testing conditions for $(\sigma,\omega)$ hold in
Proposition \ref{Riesz Nazarov}. In particular we have the following
inequality that will be used repeatedly below,%
\begin{equation}
\int_{Q_{\alpha}}\left\vert R_{2}\left(  \mathbf{1}_{Q_{\alpha}}\sigma_{\tau
}\right)  \right\vert ^{2}d\omega_{\tau}=a_{\alpha}^{3}\int_{Q_{\alpha}%
}\left\vert R_{2}\left(  \mathbf{1}_{Q_{\alpha}}\sigma\right)  \right\vert
^{2}d\omega\leq C_{\ast}a_{\alpha}\left\vert Q_{\alpha}\right\vert _{\sigma
}=C_{\ast}\left\vert Q_{\alpha}\right\vert _{\sigma_{\tau}}.\ \ \ \ \ \alpha
\in\mathbb{Z}^{n}\text{.} \label{testing size 1}%
\end{equation}

So suppose $Q$ has side length $2^{k}$ with $k\geq1$ for some $k\in\mathbb{N}%
$. Then $Q$ is a finite pairwise disjoint union of cubes $Q_{\beta}$, say
$Q=\bigcup_{\beta:\left\vert \beta\right\vert \leq2^{k}}Q_{\beta}$, where
$\left\vert \beta\right\vert \equiv\max\left\{  \beta_{1},\beta_{2}%
,...,\beta_{n}\right\}  $. We will suppose that $Q=\left[  0,2^{k}\right]
^{n}$ as the general case follows the same argument. Finally we note that
$a_{\alpha}\approx\frac{1}{\left(  1+\left\vert \alpha\right\vert \right)
^{\tau}}$. Now we write%
\begin{align}
&  \int_{Q}\left\vert R_{2}\left(  \mathbf{1}_{Q}\sigma_{\tau}\right)
\right\vert ^{2}d\omega_{\tau}=\sum_{\substack{\alpha_1, \alpha_2 , \alpha_3  \in \mathbb{Z}^n \\ 0 \leq |\alpha_j| \leq 2^k}}
\int_{Q_{\alpha_{1}}}R_{2}\left(  \mathbf{1}_{Q_{\alpha_{2}}}%
\sigma_{\tau}\right)  \ R_{2}\left(  \mathbf{1}_{Q_{\alpha_{3}}}\sigma_{\tau
}\right)  d\omega_{\tau}\label{we write}\\
&  \lesssim  \sum_{\substack{\alpha_1, \alpha_2 , \alpha_3  \in \mathbb{Z}^n \\ 0 \leq |\alpha_j| \leq 2^k}}  \int
_{Q_{\alpha_{1}}}\frac{\left\vert R_{2}\left(  \mathbf{1}_{Q_{\alpha_{2}}%
}\sigma\right)  \right\vert \ \left\vert R_{2}\left(  \mathbf{1}%
_{Q_{\alpha_{3}}}\sigma\right)  \right\vert }{\left(  1+\left\vert \alpha
_{2}\right\vert \right)  ^{\tau}\left(  1+\left\vert \alpha_{3}\right\vert
\right)  ^{\tau}}\frac{d\omega}{\left(  1+\left\vert \alpha_{1}\right\vert
\right)  ^{\tau}},\nonumber
\end{align}
We split the sum into several different configurations of $(\alpha_1, \alpha_2, \alpha_3)$, which we consider separately. In what follows, we will \emph{not} specify the configurations considered explicitly within the sum, instead opting to mention in words which configuration we sum over before estimating the sum. 

First assume that we only sum over the configuration of multi-indices $(\alpha_1, \alpha_2, \alpha_3)$ satisfying $\left\vert \alpha_{2}-\alpha_{1}\right\vert \geq2$ and
$\left\vert \alpha_{3}-\alpha_{1}\right\vert \geq2$, so that what we need to
bound is%
\[
\sum_{\substack{\alpha_1, \alpha_2 , \alpha_3  \in \mathbb{Z}^n \\ 0 \leq |\alpha_j| \leq 2^k}} \frac{\left(  1+\left\vert
\alpha_{2}-\alpha_{1}\right\vert \right)  ^{-n}\left(  1+\left\vert \alpha
_{3}-\alpha_{1}\right\vert \right)  ^{-n}}{\left(  1+\left\vert \alpha
_{2}\right\vert \right)  ^{\tau}\left(  1+\left\vert \alpha_{3}\right\vert
\right)  ^{\tau}}\frac{\left\vert Q_{\alpha_{2}}\right\vert _{\sigma
}\left\vert Q_{\alpha_{3}}\right\vert _{\sigma}\left\vert Q_{\alpha_{1}%
}\right\vert _{\omega}}{\left(  1+\left\vert \alpha_{1}\right\vert \right)
^{\tau}},
\]
where we suppress the specified conditions $\left\vert \alpha_{2}-\alpha
_{1}\right\vert \geq2$ and $\left\vert \alpha_{3}-\alpha_{1}\right\vert \geq2$
in the sum. Summing first over $\alpha_{3}$ and using $\left\vert
Q_{\alpha_{3}}\right\vert _{\sigma}=A$ we see that the above term is dominated
by%
\begin{align*}
&  \sum_{\substack{\alpha_1, \alpha_2 , \alpha_3  \in \mathbb{Z}^n \\ 0 \leq |\alpha_j| \leq 2^k}}\frac{\left(  1+\left\vert
\alpha_{2}-\alpha_{1}\right\vert \right)  ^{-n}\left(  1+\left\vert \alpha
_{3}-\alpha_{1}\right\vert \right)  ^{-n}}{\left(  1+\left\vert \alpha
_{2}\right\vert \right)  ^{\tau}\left(  1+\left\vert \alpha_{3}\right\vert
\right)  ^{\tau}}\frac{A\left\vert Q_{\alpha_{2}}\right\vert _{\sigma
}\left\vert Q_{\alpha_{1}}\right\vert _{\omega}}{\left(  1+\left\vert
\alpha_{1}\right\vert \right)  ^{\tau}}\\
&  \leq A\sum_{\substack{\alpha_1, \alpha_2  \in \mathbb{Z}^n \\ 0 \leq |\alpha_j| \leq 2^k}}\left[  \sum_{\substack{\alpha_3  \in \mathbb{Z}^n \\ 0 \leq |\alpha_3| \leq 2^k}}\frac{\left(  1+\left\vert \alpha_{3}-\alpha_{1}\right\vert
\right)  ^{-n}}{\left(  1+\left\vert \alpha_{3}\right\vert \right)  ^{\tau}%
}\right]  \frac{\left(  1+\left\vert \alpha_{2}-\alpha_{1}\right\vert \right)
^{-n}}{\left(  1+\left\vert \alpha_{2}\right\vert \right)  ^{\tau}}%
\frac{\left\vert Q_{\alpha_{2}}\right\vert _{\sigma}\left\vert Q_{\alpha_{1}%
}\right\vert _{\omega}}{\left(  1+\left\vert \alpha_{1}\right\vert \right)
^{\tau}}\\
&  =A\sum_{\substack{\alpha_1, \alpha_2  \in \mathbb{Z}^n \\ 0 \leq |\alpha_j| \leq 2^k}}\left[  \left\{  \sum_{\substack{\alpha_3  \in \mathbb{Z}^n \\ |\alpha_3| < \frac 1 2 |\alpha_1|}}+\sum_{\substack{\alpha_3  \in \mathbb{Z}^n \\  \frac 1 2 |\alpha_1| \leq |\alpha_3| \leq 2 |\alpha_1|}}+\sum_{\substack{\alpha_3  \in \mathbb{Z}^n \\  2 |\alpha_1| < |\alpha_3| }}\right\}  \frac{\left(  1+\left\vert \alpha_{3}%
-\alpha_{1}\right\vert \right)  ^{-n}}{\left(  1+\left\vert \alpha
_{3}\right\vert \right)  ^{\tau}}\right] \\
&  \ \ \ \ \ \ \ \ \ \ \ \ \ \ \ \ \ \ \ \ \ \ \ \ \ \ \ \ \ \ \times
\frac{\left(  1+\left\vert \alpha_{2}-\alpha_{1}\right\vert \right)  ^{-n}%
}{\left(  1+\left\vert \alpha_{2}\right\vert \right)  ^{\tau}}\frac{\left\vert
Q_{\alpha_{2}}\right\vert _{\sigma}\left\vert Q_{\alpha_{1}}\right\vert
_{\omega}}{\left(  1+\left\vert \alpha_{1}\right\vert \right)  ^{\tau}}\\
&  \lesssim A\sum_{\substack{\alpha_1, \alpha_2  \in \mathbb{Z}^n \\ 0 \leq |\alpha_j| \leq 2^k}}\left[  \frac{\ln\left(  2+\left\vert \alpha
_{1}\right\vert \right)  }{\left(  1+\left\vert \alpha_{1}\right\vert \right)
^{\tau}}\right]  \frac{\left(  1+\left\vert \alpha_{2}-\alpha_{1}\right\vert
\right)  ^{-n}}{\left(  1+\left\vert \alpha_{2}\right\vert \right)  ^{\tau}%
}\frac{\left\vert Q_{\alpha_{2}}\right\vert _{\sigma}\left\vert Q_{\alpha_{1}%
}\right\vert _{\omega}}{\left(  1+\left\vert \alpha_{1}\right\vert \right)
^{\tau}}.
\end{align*}
Now summing over $\alpha_{1}$, using that $\left\vert Q_{\alpha_{1}%
}\right\vert _{\omega}=B$ and that $AB \leq A_{2} (\sigma, \omega)$, we obtain
in a similar way that the final line above is at most a constant times%
\begin{align*}
&  A_{2}(\sigma,\omega)\sum_{\substack{\alpha_2  \in \mathbb{Z}^n \\ 0 \leq |\alpha_2| \leq 2^k}}\left[  \frac{\ln\left(  2+\left\vert \alpha_{2}\right\vert \right)
}{\left(  1+\left\vert \alpha_{2}\right\vert \right)  ^{3\tau}}\right]
\left\vert Q_{\alpha_{2}}\right\vert _{\sigma}=A_{2}(\sigma,\omega
)\sum_{\substack{\alpha_2  \in \mathbb{Z}^n \\ 0 \leq |\alpha_2| \leq 2^k}}\left[  \frac{\ln\left(
2+\left\vert \alpha_{2}\right\vert \right)  }{\left(  1+\left\vert \alpha
_{2}\right\vert \right)  ^{2\tau}}\right]  \left\vert Q_{\alpha_{2}%
}\right\vert _{\sigma_{\tau}}\\
&  \leq CA_{2}(\sigma,\omega)\sum_{\substack{\alpha_2  \in \mathbb{Z}^n \\ 0 \leq |\alpha_2| \leq 2^k}}\left\vert Q_{\alpha_{2}}\right\vert _{\sigma_{\tau}}=CA_{2}%
(\sigma,\omega)\left\vert Q\right\vert _{\sigma_{\tau}}\ ,
\end{align*}
where we used that $AB\leq A_{2}(\sigma_{\tau},\omega_{\tau})$.

The relatively simple case we just proved is case (6) in the following
exhaustive list of cases, which we delineate based on the relationship of the
indices $\alpha_{2}$ and $\alpha_{3}$ to the distinguished index $\alpha_{1}%
$:
\begin{align*}
&  (1)\ \alpha_{1}=\alpha_{2}=\alpha_{3}.\\
&  (2)\ \alpha_{1}=\alpha_{2}\text{ and }Q_{\alpha_{1}},Q_{\alpha_{3}}\text{
are separated},\\
&  (3)\ \alpha_{1}=\alpha_{3}\text{ and }Q_{\alpha_{1}},Q_{\alpha_{2}}\text{
are separated},\\
&  (4)\ Q_{\alpha_{1}},Q_{\alpha_{2}}\text{ are adjacent and }Q_{\alpha_{1}%
},Q_{\alpha_{3}}\text{ are separated},\\
&  (5)\ Q_{\alpha_{1}},Q_{\alpha_{3}}\text{ are adjacent and }Q_{\alpha_{1}%
},Q_{\alpha_{2}}\text{ are separated},\\
&  (6)\ Q_{\alpha_{1}},Q_{\alpha_{2}}\text{ are separated and }Q_{\alpha_{1}%
},Q_{\alpha_{3}}\text{ are separated}\\
&  (7)\ \left\{
\begin{array}
[c]{ccc}%
\alpha_{1}=\alpha_{2} & \text{ and } & Q_{\alpha_{1}},Q_{\alpha_{3}}\text{ are
adjacent}\\
\alpha_{1}=\alpha_{3} & \text{ and } & Q_{\alpha_{1}},Q_{\alpha_{2}}\text{ are
adjacent}\\
Q_{\alpha_{1}},Q_{\alpha_{2}}\text{ are adjacent} & \text{ and } &
Q_{\alpha_{1}},Q_{\alpha_{3}}\text{ are adjacent}%
\end{array}
\right.  ,
\end{align*}
where we say that $Q_{\alpha_{1}},Q_{\alpha_{2}}$ are \emph{separated} if
$\left\vert \alpha_{1}-\alpha_{2}\right\vert \geq2$, and of course
$Q_{\alpha_{1}},Q_{\alpha_{2}}$ are adjacent if and only if $\left\vert
\alpha_{1}-\alpha_{2}\right\vert =1$.

In the first of these seven cases, the right side of (\ref{we write})
equals
\[
\sum_{\substack{\alpha  \in \mathbb{Z}^n \\ 0 \leq |\alpha| \leq 2^k}}\int_{Q_{\alpha}}\left\vert
R_{2}\left(  \mathbf{1}_{Q_{\alpha}}\sigma_{\tau}\right)  \right\vert
^{2}d\omega_{\tau}\leq C_{\ast}\sum_{\left\vert \alpha\right\vert =1}^{2^{k}%
}\left\vert Q_{\alpha}\right\vert _{\sigma_{\tau}}=C_{\ast}\left\vert
Q\right\vert _{\sigma_{\tau}},
\]
independent of $\tau\in(0,1)$ by (\ref{testing size 1}).

In the second of these cases, we will use the separation between
$Q_{\alpha_{1}}$ and $Q_{\alpha_{3}}$, as well as the fact that%
\begin{equation}
\label{fact}\begin{aligned} \left\vert \int_{Q_{\alpha_{1}}}R_{2}\left( \mathbf{1}_{Q_{\alpha_{1}}}\sigma_{\tau}\right) d\omega_{\tau}\right\vert & \leq\left( \int _{Q_{\alpha_{1}}}\left\vert R_{2}\left( \mathbf{1}_{Q_{\alpha_{1}}}\sigma_{\tau}\right) \right\vert ^{2}d\omega_{\tau}\right) ^{\frac{1}{2}}\sqrt{\left\vert Q_{\alpha_{1}}\right\vert _{\omega_{\tau}}}\\ & \leq\sqrt{C_{\ast}}\sqrt{\left\vert Q_{\alpha_{1}}\right\vert _{\sigma_{\tau}}}\sqrt{\left\vert Q_{\alpha_{1}}\right\vert _{\omega_{\tau}}} \lesssim \sqrt{C_{\ast}} \frac{A B}{\left ( 1 + \left | \alpha_1 \right| \right )^{\tau}} \end{aligned}
\end{equation}
where the second inequality follows from reasoning using (\ref{testing size 1}%
), similar to the previous display. Thus recalling that $AB\leq A_{2}%
(\sigma,\omega)$, we dominate the right hand side of (\ref{we write}) using
(\ref{fact}) by
\begin{align*}
&  \sum_{\substack{\alpha_1, \alpha_3  \in \mathbb{Z}^n \\ 0 \leq |\alpha_j| \leq 2^k}}\int_{Q_{\alpha_{1}}}\frac{\left\vert R_{2}\left(  \mathbf{1}%
_{Q_{\alpha_{1}}}\sigma\right)  \right\vert \ \left(  1+\left\vert \alpha
_{3}-\alpha_{1}\right\vert \right)  ^{-n}\left\vert Q_{\alpha_{3}}\right\vert
_{\sigma}}{\left(  1+\left\vert \alpha_{1}\right\vert \right)  ^{\tau}\left(
1+\left\vert \alpha_{3}\right\vert \right)  ^{\tau}}\frac{d\omega}{\left(
1+\left\vert \alpha_{1}\right\vert \right)  ^{\tau}}\\
&  \leq A_{2} \left (\sigma, \omega \right ) \sqrt{C_{\ast}}\sum_{\substack{ \alpha_3  \in \mathbb{Z}^n \\ 0 \leq |\alpha_3| \leq 2^k}}\frac{\left\vert Q_{\alpha_{3}}\right\vert _{\sigma}}{\left(  1+\left\vert
\alpha_{3}\right\vert \right)  ^{\tau}}\ \sum_{\left\vert \alpha
_{1}\right\vert =0}^{2^{k}}\frac{\left(  1+\left\vert \alpha_{3}-\alpha
_{1}\right\vert \right)  ^{-n}}{\left(  1+\left\vert \alpha_{1}\right\vert
\right)  ^{\tau}} \\
& \leq A_{2} \left (\sigma, \omega \right )\sqrt{C_{\ast}}\sum_{\substack{ \alpha_3  \in \mathbb{Z}^n \\ 0 \leq |\alpha_3| \leq 2^k}}\frac{\left\vert Q_{\alpha_{3}}\right\vert
_{\sigma}\ln\left(  2+\left\vert \alpha_{3}\right\vert \right)  }{\left(
1+\left\vert \alpha_{3}\right\vert \right)  ^{2\tau}}\leq CA_{2}\sqrt{C_{\ast
}}\left\vert Q\right\vert _{\sigma_{\tau}}.
\end{align*}

To handle the cases where $Q_{\alpha_{1}}$ is adjacent to one of the cubes
$Q_{\alpha_{2}}$ or $Q_{\alpha_{3}}$ or both, we use Lemma
\ref{doubling halo}, i.e., that doubling measures charge halos with reciprocal log
control. Indeed, in the fourth case above, namely $\left\vert \alpha
_{1}-\alpha_{2}\right\vert =1$ and $\left\vert \alpha_{1}-\alpha
_{3}\right\vert \geq2$, we follow the same argument just used except that in
place of the testing condition in (\ref{fact}), we use%
\[
\int_{Q_{\alpha_{1}}}R_{2}\left(  \mathbf{1}_{Q_{\alpha_{2}}}\sigma_{\tau
}\right)  d\omega_{\tau}=\left\{  \int_{\left(  1-\delta\right)  Q_{\alpha
_{1}}}+\int_{Q_{\alpha_{1}}\setminus\left(  1-\delta\right)  Q_{\alpha_{1}}%
}\right\}  R_{2}\left(  \mathbf{1}_{Q_{\alpha_{2}}}\sigma_{\tau}\right)
d\omega_{\tau}\equiv I+II.
\]
We control the first term\ $I$ by $\delta$-separation between $\left(
1-\delta\right)  Q_{\alpha_{1}}$ and $Q_{\alpha_{2}}$:%
\[
\left\vert I\right\vert \leq\int_{\left(  1-\delta\right)  Q_{\alpha_{1}}%
}C\frac{1}{\delta^{n}}\left\vert Q_{\alpha_{2}}\right\vert _{\sigma_{\tau}%
}d\omega_{\tau}\leq C\frac{1}{\delta^{n}}\left\vert Q_{\alpha_{2}}\right\vert
_{\sigma_{\tau}}\left\vert Q_{\alpha_{1}}\right\vert _{\omega_{\tau}}
\]
\[
=C\frac{1}{\delta^{n}}\frac{AB}{\left(  1+\left\vert \alpha_{1}\right\vert
\right)  ^{\tau}\left(  1+\left\vert \alpha_{2}\right\vert \right)  ^{\tau}%
}\leq C\frac{1}{\delta^{n}}\frac{A_{2}}{\left(  1+\left\vert \alpha
_{1}\right\vert \right)  ^{\tau}\left(  1+\left\vert \alpha_{2}\right\vert
\right)  ^{\tau}}.
\]
We control the second term $II$ by using Lemma \ref{doubling halo}:%
\begin{align*}
\left\vert II\right\vert  &  \leq\int_{Q_{\alpha_{1}}\setminus\left(
1-\delta\right)  Q_{\alpha_{1}}}\left\vert R_{2}\left(  \mathbf{1}%
_{Q_{\alpha_{2}}}\sigma_{\tau}\right)  \right\vert d\omega_{\tau}%
\leq\mathfrak{N}_{R_{2}}(\sigma_{\tau},\omega_{\tau})\sqrt{\left\vert
Q_{\alpha_{2}}\right\vert _{\sigma_{\tau}}\left\vert Q_{\alpha_{1}}%
\setminus\left(  1-\delta\right)  Q_{\alpha_{1}}\right\vert _{\omega_{\tau}}%
}\\
&  \leq\frac{C}{\sqrt{\ln\frac{1}{\delta}}}\mathfrak{N}_{R_{2}}(\sigma_{\tau
},\omega_{\tau})\frac{\sqrt{A}\sqrt{B}}{\left(  1+\left\vert \alpha
_{1}\right\vert \right)  ^{\frac{\tau}{2}}\left(  1+\left\vert \alpha
_{2}\right\vert \right)  ^{\frac{\tau}{2}}}\leq\frac{C}{\sqrt{\ln\frac
{1}{\delta}}}\mathfrak{N}_{R_{2}}(\sigma_{\tau},\omega_{\tau})\frac
{\sqrt{A_{2}}}{\left(  1+\left\vert \alpha_{1}\right\vert \right)
^{\frac{\tau}{2}}\left(  1+\left\vert \alpha_{2}\right\vert \right)
^{\frac{\tau}{2}}}.
\end{align*}
Altogether, our replacement for (\ref{fact}) is%
\begin{equation}
\left\vert \int_{Q_{\alpha_{1}}}R_{2}\left(  \mathbf{1}_{Q_{\alpha_{2}}}%
\sigma_{\tau}\right)  d\omega_{\tau}\right\vert \leq\left(  C_{\delta}%
\sqrt{A_{2}}+\frac{C}{\sqrt{\ln\frac{1}{\delta}}}\mathfrak{N}_{R_{2}}%
(\sigma_{\tau},\omega_{\tau})\right)  \frac{\sqrt{A_{2}}}{\left(  1+\left\vert
\alpha_{1}\right\vert \right)  ^{\tau}} \label{log_absorption_R2}%
\end{equation}
since $\left\vert \alpha_{1}-\alpha_{2}\right\vert =1$. Now the previous
argument can continue using (\ref{log_absorption_R2}) in place of
(\ref{fact}), which proves the fourth case since there are just $3^{n}-1$
points $\alpha_{2}$ for each fixed point $\alpha_{1}$. Indeed, we have%
\begin{align*}
&  \sum_{\substack{ \alpha_1, \alpha_3  \in \mathbb{Z}^n \\ 0 \leq |\alpha_j| \leq 2^k}}\int_{Q_{\alpha_{1}}}\frac{\left\vert R_{2}\left(  \mathbf{1}%
_{Q_{\alpha_{1}}}\sigma\right)  \right\vert \ \left(  1+\left\vert \alpha
_{3}-\alpha_{1}\right\vert \right)  ^{-n}\left\vert Q_{\alpha_{3}}\right\vert
_{\sigma}}{\left(  1+\left\vert \alpha_{1}\right\vert \right)  ^{\tau}\left(
1+\left\vert \alpha_{3}\right\vert \right)  ^{\tau}}\frac{d\omega}{\left(
1+\left\vert \alpha_{1}\right\vert \right)  ^{\tau}},\\
&  \leq\left(  C_{\delta}\sqrt{A_{2}}+\frac{C}{\ln\frac{1}{\delta}%
}\mathfrak{N}_{R_{2}}(\sigma_{\tau},\omega_{\tau})\right)  \sum_{\substack{ \alpha_3, \in \mathbb{Z}^n \\ 0 \leq |\alpha_3| \leq 2^k}}\frac{\left\vert Q_{\alpha_{3}}\right\vert
_{\sigma}}{\left(  1+\left\vert \alpha_{3}\right\vert \right)  ^{\tau}}%
\ \sum_{\substack{ \alpha_1 , \in \mathbb{Z}^n \\ 0 \leq |\alpha_1| \leq 2^k}}\frac{\left(  1+\left\vert
\alpha_{3}-\alpha_{1}\right\vert \right)  ^{-n}}{\left(  1+\left\vert
\alpha_{1}\right\vert \right)  ^{\tau}}\\
&  \leq\left(  C_{\delta}\sqrt{A_{2}}+\frac{C}{\ln\frac{1}{\delta}%
}\mathfrak{N}_{R_{2}}(\sigma_{\tau},\omega_{\tau})\right)  \left\vert
Q\right\vert _{\sigma_{\tau}}.
\end{align*}

The third and fifth cases are symmetric to those just handled. So it remains
to consider the remaining seventh case, where one of the following three
subcases holds:%
\[
\left\{
\begin{array}
[c]{ccc}%
\alpha_{1}=\alpha_{2} & \text{ and } & \left\vert \alpha_{1}-\alpha
_{3}\right\vert =1\\
\alpha_{1}=\alpha_{3} & \text{ and } & \left\vert \alpha_{1}-\alpha
_{2}\right\vert =1\\
\left\vert \alpha_{1}-\alpha_{2}\right\vert =1 & \text{ and } & \left\vert
\alpha_{1}-\alpha_{3}\right\vert =1
\end{array}
\right.  .
\]
In all three of these subcases, there is essentially only the sum over
$\alpha_{1}$ since for each fixed $\alpha_{1}$, there are at most $3^{2n}$
pairs $\left(  \alpha_{2},\alpha_{3}\right)  $ satisfying one of the three
subcases. If both $Q_{\alpha_{2}}$ and $Q_{\alpha_{3}}$ are adjacent to
$Q_{\alpha_{1}}$, we write
\begin{align*}
&  \int_{Q_{\alpha_{1}}}R_{2}\left(  \mathbf{1}_{Q_{\alpha_{2}}}\sigma_{\tau
}\right)  \ R_{2}\left(  \mathbf{1}_{Q_{\alpha_{3}}}\sigma_{\tau}\right)
d\omega_{\tau} =\int_{Q_{\alpha_{1}}}R_{2}\left(  \mathbf{1}_{\left(
1-\delta\right)  Q_{\alpha_{2}}}\sigma_{\tau}\right)  R_{2}\left(
\mathbf{1}_{\left(  1-\delta\right)  Q_{\alpha_{3}}}\sigma_{\tau}\right)
d\omega_{\tau}\\
&  +\int_{Q_{\alpha_{1}}}R_{2}\left(  \mathbf{1}_{\left(  1-\delta\right)
Q_{\alpha_{2}}}\sigma_{\tau}\right)  \ R_{2}\left(  \mathbf{1}_{Q_{\alpha_{3}%
}\setminus\left(  1-\delta\right)  Q_{\alpha3}}\sigma_{\tau}\right)
d\omega_{\tau} +\int_{Q_{\alpha_{1}}}R_{2}\left(  \mathbf{1}_{Q_{\alpha_{2}%
}\setminus\left(  1-\delta\right)  Q_{\alpha_{2}}}\sigma_{\tau}\right)
\ R_{2}\left(  \mathbf{1}_{Q_{\alpha_{3}}}\sigma_{\tau}\right)  d\omega_{\tau
}\ .
\end{align*}
The first term of the right-hand side is handled by the $\delta$-separation
between $Q_{\alpha_{1}}$ and $\left(  1-\delta\right)  Q_{\alpha_{2}}$, as
well as between $Q_{\alpha_{1}}$ and $\left(  1-\delta\right)  Q_{\alpha_{3}}%
$, together with the $A_{2}$ condition $AB\leq1$ to obtain%
\[
\left\vert \int_{Q_{\alpha_{1}}}R_{2}\left(  \mathbf{1}_{\left(
1-\delta\right)  Q_{\alpha_{2}}}\sigma_{\tau}\right)  R_{2}\left(
\mathbf{1}_{\left(  1-\delta\right)  Q_{\alpha_{3}}}\sigma_{\tau}\right)
d\omega_{\tau}\right\vert \leq C\frac{1}{\delta^{2n}}\int_{Q_{\alpha_{1}}%
}\left\vert Q_{\alpha_{2}}\right\vert _{\sigma_{\tau}}\left\vert Q_{\alpha
_{3}}\right\vert _{\sigma_{\tau}}d\omega_{\tau}
\]
\[C\frac{1}{\delta^{2n}%
}\left\vert Q_{\alpha_{3}}\right\vert _{\sigma_{\tau}}AB\leq C\frac{1}%
{\delta^{2n}}\left\vert Q_{\alpha_{3}}\right\vert _{\sigma}A_{2} \left ( \sigma, \omega \right ) %
\]
and since for each fixed $\alpha_{3}$, there are at most $3^{2n}$ pairs
$\left(  \alpha_{1},\alpha_{2}\right)  $, we can sum to obtain the bound
$C\frac{1}{\delta^{2n}}\left\vert Q\right\vert _{\sigma_{\tau}}$.

To handle the terms involving a halo $Q_{\alpha_{j}}\setminus\left(
1-\delta\right)  Q_{\alpha_{j}}$ we use Lemma \ref{doubling halo} together
with the norm constant $\mathfrak{N}_{R_{2}}=\mathfrak{N}_{R_{2}}(\sigma
_{\tau},\omega_{\tau})$. For example,%
\begin{align*}
&  \left\vert \int_{Q_{\alpha_{1}}}R_{2}\left(  \mathbf{1}_{Q_{\alpha_{2}%
}\setminus\left(  1-\delta\right)  Q_{\alpha_{2}}}\sigma_{\tau}\right)
\ R_{2}\left(  \mathbf{1}_{Q_{\alpha_{3}}}\sigma_{\tau}\right)  d\omega_{\tau
}\right\vert \\
&  \leq\left(  \int_{Q_{\alpha_{1}}}\left\vert R_{2}\left(  \mathbf{1}%
_{Q_{\alpha_{2}}\setminus\left(  1-\delta\right)  Q_{\alpha_{2}}}\sigma_{\tau
}\right)  \right\vert ^{2}d\omega_{\tau}\right)  ^{\frac{1}{2}}\left(
\int_{Q_{\alpha_{1}}}\left\vert R_{2}\left(  \mathbf{1}_{Q_{\alpha_{3}}}%
\sigma_{\tau}\right)  \right\vert ^{2}d\omega_{\tau}\right)  ^{\frac{1}{2}}\\
&  \leq\mathfrak{N}_{R_{2}}\sqrt{\left\vert Q_{\alpha_{2}}\setminus\left(
1-\delta\right)  Q_{\alpha_{2}}\right\vert _{\sigma_{\tau}}}\mathfrak{N}%
_{R_{2}}\left(  \left\vert Q_{\alpha_{3}}\right\vert _{\sigma_{\tau}}\right)
^{\frac{1}{2}}=\left(  \mathfrak{N}_{R_{2}}\right)  ^{2}\frac{C}{\sqrt
{\ln\frac{1}{\delta}}}\sqrt{\left\vert Q_{\alpha_{2}}\right\vert
_{\sigma_{\tau}}\left\vert Q_{\alpha_{3}}\right\vert _{\sigma_{\tau}}},
\end{align*}
and again we can sum to obtain the bound $\left(  \mathfrak{N}_{R_{2}}\right)
^{2}\frac{C}{\ln\frac{1}{\delta}}\left\vert Q\right\vert _{\sigma_{\tau}}$
because the indices $\alpha_{j}$ are at distance one from each other. The other
terms are handled similarly and we thus obtain in this seventh case that%
\[
\sum_{\left\vert \alpha_{1}\right\vert ,\left\vert \alpha_{2}\right\vert
,\left\vert \alpha_{3}\right\vert =0}^{2^{k}}\left\vert \int_{Q_{\alpha_{1}}%
}R_{2}\left(  \mathbf{1}_{Q_{\alpha_{2}}}\sigma_{\tau}\right)  \ R_{2}\left(
\mathbf{1}_{Q_{\alpha_{3}}}\sigma_{\tau}\right)  d\omega_{\tau}\right\vert
\leq C\left(  \frac{1}{\delta^{2n}}A_{2}+\frac{\left(  \mathfrak{N}_{R_{2}%
}\right)  ^{2}}{\sqrt{\ln\frac{1}{\delta}}}\right)  \left\vert Q\right\vert
_{\sigma_{\tau}}%
\]

The cases where just one of the cubes is adjacent to $Q_{\alpha_{1}}$ are
handled similarly. Altogether we now have%
\[
\mathfrak{T}_{R_{2}}^{\mathcal{D}}\left(  \sigma_{\tau},\omega_{\tau}\right)
^{2}\equiv\sup_{Q\in\mathcal{D}}\frac{1}{\left\vert Q\right\vert
_{\sigma_{\tau}}}\int_{Q}\left\vert R_{2}\left(  \mathbf{1}_{Q}\sigma_{\tau
}\right)  \right\vert ^{2}d\omega_{\tau}\leq C_{\tau}+\frac{1}{\delta_{1}%
^{2n}}A_{2}+\frac{\left(  \mathfrak{N}_{R_{2}}\right)  ^{2}}{\sqrt{\ln\frac
{1}{\delta_{1}}}},
\]
for any choice of $\delta_{1}\in(0,1)$, where the constant $C_{\ast}$ arises
in (\ref{testing size 1}).

Now we turn to the case of a general cube $Q$. In this case we first fix
$M\in\mathbb{N}$ large to be chosen later, and write $Q$ as a union of roughly
$2^{Mn}$ dyadic subcubes $\left\{  Q_{\alpha}\right\}  _{\alpha}$ of side
length $\delta_{2}\equiv\frac{\ell\left(  Q\right)  }{2^{M}}>0$, in such a way
that the remaining portion of $Q$ is contained in the $5\delta_{2}$-halo of
$Q$. Then the above argument shows that the testing condition holds except for
the terms that arise from the halo. But by Lemma \ref{doubling halo} these
leftover terms in $\left(  \int_{Q}\left\vert R_{2}\left(  \mathbf{1}%
_{Q}\sigma_{\tau}\right)  \right\vert ^{2}\ d\omega_{\tau}\right)  ^{\frac
{1}{2}}$ are dominated by $C\frac{1}{\sqrt[4]{\ln\frac{1}{\delta_{2}}}%
}\mathfrak{N}_{R_{2}}\sqrt{\left\vert Q\right\vert _{\sigma}}$, so that
altogether we obtain that the continuous testing constant satisfies%
\begin{align}
\mathfrak{T}_{R_{2}}\left(  \sigma_{\tau},\omega_{\tau}\right)   &  \leq
C_{\delta_{2}}\mathfrak{T}_{R_{2}}^{\mathcal{D}}\left(  \sigma_{\tau}%
,\omega_{\tau}\right)  +C\frac{1}{\sqrt[4]{\ln\frac{1}{\delta_{2}}}%
}\mathfrak{N}_{R_{2}}\left(  \sigma_{\tau},\omega_{\tau}\right)
\label{pre testing}\\
&  +C_{\delta_{2}}\left(  C_{\ast}+C_{\ast}C_{\tau}+\frac{1}{\delta_{1}^{2n}%
}A_{2}+\frac{\left(  \mathfrak{N}_{R_{2}}\right)  ^{2}}{\sqrt{\ln\frac
{1}{\delta_{1}}}}\right)  ^{\frac{1}{2}}+C\frac{\mathfrak{N}_{R_{2}}\left(
\sigma_{\tau},\omega_{\tau}\right)  }{\sqrt[4]{\ln\frac{1}{\delta_{2}}}%
}\nonumber\\
&  \leq C_{\delta_{2},\tau}\sqrt{C_{\ast}}+C_{\delta_{2}}\frac{1}{\delta
_{1}^{n}}\sqrt{A_{2}}+\left(  \frac{C_{\delta_{2}}}{\sqrt[4]{\ln\frac
{1}{\delta_{1}}}}+\frac{C}{\sqrt[4]{\ln\frac{1}{\delta_{2}}}}\right)
\mathfrak{N}_{R_{2}}\left(  \sigma_{\tau},\omega_{\tau}\right)  \ .\nonumber
\end{align}
Note that the two weight norm $\mathfrak{N}_{R_2} \left (\sigma_{\tau}, \omega_{\tau} \right )$ is finite, as both weights $\sigma_{\tau},\omega_{\tau}$ are bounded step functions,
and so by the boundedness of the principal value interpretation of $R_{2}$ on
Lebesgue spaces, we have%
\[
\mathfrak{N}_{R_{2}}\left(  \sigma_{\tau},\omega_{\tau}\right)  \leq\left\Vert
\sigma_{\tau}\right\Vert _{\infty}\left\Vert \omega_{\tau}\right\Vert
_{\infty} < \infty.
\]
Thus by boundedness of maximal truncations (see e.g. \cite[Proposition 1 page
31]{Ste2}) together with the independence of truncations mentioned above, the
above arguments actually prove that (\ref{pre testing}) holds \emph{uniformly}
over all admissible truncations of $R_{2}$\rmv{ to check}, which is the hypothesis used in
\cite{SaShUr7, SaShUr10, AlSaUr}. Thus noting Definition \ref{def bounded}, we can apply Theorem \ref{thm:starting_T1_thm} to obtain%
\begin{align*}
&  \mathfrak{N}_{R_{2}}\left(  \sigma_{\tau},\omega_{\tau}\right)  \leq
C\sqrt{A_{2}\left(  \sigma_{\tau},\omega_{\tau}\right)  }+C\mathfrak{T}%
_{R_{2}}\left(  \sigma_{\tau},\omega_{\tau}\right)  +C\mathfrak{T}_{R_{2}%
}\left(  \omega_{\tau},\sigma_{\tau}\right) \\
&  \leq C\sqrt{A_{2}\left(  \sigma_{\tau},\omega_{\tau}\right)  }+2\left\{
C_{\delta_{2},\tau}\sqrt{C_{\ast}}+C_{\delta_{2}}\frac{1}{\delta_{1}^{2}}%
\sqrt{A_{2}\left(  \sigma_{\tau},\omega_{\tau}\right)  }+\left(
\frac{C_{\delta_{2}}}{\sqrt{\ln\frac{1}{\delta_{1}}}}+\frac{C}{\sqrt{\ln
\frac{1}{\delta_{2}}}}\right)  \mathfrak{N}_{R_{2}}\left(  \sigma_{\tau
},\omega_{\tau}\right)  \right\}  ,
\end{align*}
for any admissible truncation of $R_{2}$. Thus with $\delta_{2}>0$ chosen
sufficiently small that $\frac{C}{\sqrt{\ln\frac{1}{\delta_{2}}}}<\frac{1}{4}%
$, and then $\delta_{1}>0$ chosen sufficiently small that $\frac{C_{\delta
_{2}}}{\sqrt{\ln\frac{1}{\delta_{1}}}}<\frac{1}{4}$, an absorption completes
the proof that the norm inequality for $R_{2}$ holds (recall that truncations
of $R_{2}$ are \emph{a priori} bounded).
\end{proof}

We have thus proved the following special case of Theorem \ref{stab} for the
individual Riesz transforms $R_{1}$ and $R_{2}$.

\begin{proposition}
\label{individual}For every $\Gamma>1$ and $0<\tau<1$, there is a pair of
positive weights $\left(  \sigma,\omega\right)  $ in $\mathbb{R}^{n}$
satisfying%
\begin{align*}
&  \int_{\mathbb{R}^{n}}\left\vert R_{1}\left(  \mathbf{1}_{\left[
0,1\right]  ^{n}}\sigma\right)  \left(  x\right)  \right\vert ^{2}\left(
x\right)  d\omega\left(  x\right)  \geq\Gamma\int_{\left[  0,1\right]  ^{n}%
}d\sigma\left(  x\right)  ,\\
&  \int_{I}\left\vert R_{2}\mathbf{1}_{I}\sigma\left(  x\right)  \right\vert
^{2}d\omega\left(  x\right)  \leq\int_{I}d\sigma\left(  x\right)
,\ \ \ \ \ \text{for all cubes }I\in\mathcal{P}^{n},\\
&  \int_{I}\left\vert R_{2}\mathbf{1}_{I}\omega\left(  x\right)  \right\vert
^{2}d\sigma\left(  x\right)  \leq\int_{I}d\omega\left(  x\right)
,\ \ \ \ \ \text{for all cubes }I\in\mathcal{P}^{n},\\
&  \left(  \frac{1}{\left\vert I\right\vert }\int_{I}d\sigma\right)  \left(
\frac{1}{\left\vert I\right\vert }\int_{I}d\omega\right)  \leq
1,\ \ \ \ \ \text{for all cubes }I\in\mathcal{P}^{n},\\
&  1-\tau<\frac{E_{J}\sigma}{E_{K}\sigma},\frac{E_{J}\omega}{E_{K}\omega
}<1+\tau,\ \ \ \ \ \text{for arbitrary adjacent cubes }J,K \in \mathcal{P}^{n}.
\end{align*}

\end{proposition}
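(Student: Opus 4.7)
The plan is to assemble the preceding pieces: Proposition~\ref{Riesz Nazarov} supplies weights on $[0,1]^n$ with the required \emph{dyadic} testing and $A_{2}$ properties; Lemma~\ref{dyadic_testing_R2} upgrades the dyadic $R_{2}$ testing on the globalised pair to the full norm inequality; and a final linear rescaling normalises all constants to match the statement.

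First I would begin with the pair $(\sigma,\omega)$ on $[0,1]^{n}$ given by Proposition~\ref{Riesz Nazarov} with initial Bellman parameter $\Gamma^{\prime}>0$ to be specified at the end, then extend to all of $\mathbb{R}^{n}$ by reflection across each coordinate hyperplane followed by $2$-periodicity, exactly as described between Proposition~\ref{Riesz Nazarov} and Lemma~\ref{dyadic_testing_R2}. This extension preserves the dyadic $A_{2}$ constant and the dyadic $R_{2}$ testing constants on every dyadic subcube contained in a single unit cell $Q_{\alpha}$; the reflection makes each adjacent pair of unit cells carry equal $\sigma$- and $\omega$-mass, so the adjacent doubling constant between unit cells is $1$. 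Multiplication by $\varphi_{\tau}$ to form $(\sigma_{\tau},\omega_{\tau})$ preserves the $A_{2}$ relations at unit scale (since $\varphi_{\tau}$ is constant on each $Q_{\alpha}$) and, by the calculation performed just before Lemma~\ref{dyadic_testing_R2}, yields an $o(1)$-flat pair as $\tau\searrow 0$, delivering the doubling clause of the proposition.

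Next I would verify the operator-theoretic clauses. The $R_{1}$ lower bound transfers cleanly: since $\varphi_{\tau}\equiv a_{0}$ on $[0,1]^{n}$ and $\mathbf{1}_{[0,1]^{n}}\sigma_{\tau}=a_{0}\mathbf{1}_{[0,1]^{n}}\sigma$, positivity of the integrand over the rest of $\mathbb{R}^{n}$ gives
\[
\int_{\mathbb{R}^{n}}\bigl|R_{1}\mathbf{1}_{[0,1]^{n}}\sigma_{\tau}\bigr|^{2}\,d\omega_{\tau}\;\geq\;a_{0}^{3}\int_{[0,1]^{n}}\bigl|R_{1}\mathbf{1}_{[0,1]^{n}}\sigma\bigr|^{2}\,d\omega\;\geq\;a_{0}^{2}\Gamma^{\prime}\int_{[0,1]^{n}}d\sigma_{\tau}.
\]
The forward and dual testing conditions for $R_{2}$ on arbitrary cubes $I\in\mathcal{P}^{n}$ follow from Lemma~\ref{dyadic_testing_R2}, which gives the strictly stronger norm inequality $\mathfrak{N}_{R_{2}}(\sigma_{\tau},\omega_{\tau})\leq\mathfrak{N}^{*}<\infty$, uniformly over admissible truncations. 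Set $M\equiv\max\bigl\{A_{2}(\sigma_{\tau},\omega_{\tau}),\,(\mathfrak{N}^{*})^{2}\bigr\}$ and replace $(\sigma_{\tau},\omega_{\tau})$ by $(c_{\sigma}\sigma_{\tau},c_{\omega}\omega_{\tau})$ with $c_{\sigma}c_{\omega}=1/M$. A direct check shows that the $A_{2}$ constant, the forward $R_{2}$ testing constant squared, and the dual $R_{2}$ testing constant squared each scale by the factor $c_{\sigma}c_{\omega}$, so all three become $\leq 1$; the ratio controlling the $R_{1}$ lower bound scales by the same factor, so choosing $\Gamma^{\prime}\geq M\Gamma/a_{0}^{2}$ at the outset guarantees the surviving blowup is at least $\Gamma$. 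The adjacency ratios are scale-invariant.

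The main obstacle here is not in Proposition~\ref{individual} itself but in the content already packaged into Lemma~\ref{dyadic_testing_R2}: the exhaustive case analysis on the adjacency/separation pattern of triples $(Q_{\alpha_{1}},Q_{\alpha_{2}},Q_{\alpha_{3}})$, the doubling-halo estimate of Lemma~\ref{doubling halo} needed when two of the cubes are adjacent, and the final absorption step based on the $T1$ theorem of \cite{AlSaUr} for doubling weights. Once those ingredients are in place, Proposition~\ref{individual} is assembly plus the rescaling described above.
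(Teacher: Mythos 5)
Your proposal is correct and follows essentially the same route as the paper: Proposition \ref{Riesz Nazarov} combined with the reflection/periodization and multiplication by $\varphi_{\tau}$, then Lemma \ref{dyadic_testing_R2} to get the $R_{2}$ norm (hence testing on all cubes of $\mathcal{P}^{n}$) bounds, and a final rescaling, which is precisely the paper's concluding step of multiplying $v,u$ by an appropriate positive constant. The only slight looseness is that your $M$ is defined from the pair built with parameter $\Gamma^{\prime}$, but since the $A_{2}$ and $R_{2}$ constants coming out of Proposition \ref{Riesz Nazarov} and Lemma \ref{dyadic_testing_R2} are bounded independently of $\Gamma^{\prime}$ (depending only on $n$ and $\tau$), there is no genuine circularity.
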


The argument used in proving this proposition also shows that in any two weight $T1$ theorem for doubling pairs $(\sigma, \omega)$, the testing may be carried out over only cubes in
any fixed dyadic grid $\mathcal{D}$, and here is one possible formulation of
this improvement.

\begin{theorem}
\label{single grid}Suppose $0\leq\alpha<n$, and let $T^{\alpha}$ be an
$\alpha$-fractional Calder\'{o}n-Zygmund singular integral operator on
$\mathbb{R}^{n}$ with a smooth $\alpha$-fractional kernel $K^{\alpha}$. Assume
that $\sigma$ and $\omega$ are doubling measures on $\mathbb{R}^{n}$. Finally fix a dyadic grid $\mathcal{D}$ on
$\mathbb{R}^{n}$.

If the two weight norm $\mathfrak{N}_{T^{\alpha}} (\sigma, \omega)$
satisfies
\[
\mathfrak{N}_{T^{\alpha}} \left (\sigma, \omega \right ) \leq C_{\alpha,n}\left(  \sqrt{A_{2}^{\alpha}%
}+\mathfrak{T}_{T^{\alpha}}+\mathfrak{T}_{\left(  T^{\alpha
}\right)  ^{\ast}}\right) \, ,
\]
where $A_{2}^{\alpha}$ is the classical Muckenhoupt constant and the constant $C_{\alpha,n}$ depends on the Calder\'{o}n-Zygmund kernel and the doubling constants of the measures $\sigma, \omega$,
then
\begin{equation}
\mathfrak{N}_{T^{\alpha}}\leq C_{\alpha,n} ' \left(  \sqrt{A_{2}^{\alpha}%
}+\mathfrak{T}_{T^{\alpha}}^{\mathcal{D}}+\mathfrak{T}_{\left(  T^{\alpha
}\right)  ^{\ast}}^{\mathcal{D}}\right)  , \label{bound}%
\end{equation}
where the constant $C_{\alpha,n}'$ also depends on the Calder\'{o}n-Zygmund
kernel and the doubling constants of $\sigma$ and $\omega$, and
$\mathfrak{T}_{T^{\alpha}}^{\mathcal{D}},\mathfrak{T}_{\left(  T^{\alpha
}\right)  ^{\ast}}^{\mathcal{D}}$ are the $\mathcal{D}$-dyadic testing constants.
\end{theorem}

In order to complete the proof of Theorem \ref{stab}, we need to consider
iterated Riesz transforms.

\section{Iterated Riesz transforms}

\label{section:iterated_Riesz}

Throughout Section \ref{section:action_of_Riesz} and \ref{section:full_proof},
we considered Riesz transforms of order $1$. However our results extend to
arbitrary iterated Riesz transforms of odd order in $\mathbb{R}^{n}$. We will
extend the results of Section \ref{section:action_of_Riesz} to their
appropriate analogues to make the reasoning of Section
\ref{section:full_proof} hold for the appropriate iterated Riesz transforms,
and we begin by establishing the following theorem.

\begin{theorem}
\label{pure odd}The odd order pure iterated Riesz transforms $R_{1}^{2m+1}$
are unstable on $\mathbb{R}^{n}$ for pairs of doubling measures under
$90^{\circ}$ rotations in any coordinate plane. In fact, there exists a
measure pair of doubling measures on which $R_{1}^{2m+1}$ is unbounded, and
all iterated Riesz transforms of order $2m+1$ that are \emph{not} a pure power
of $R_{1}$, are bounded.
\end{theorem}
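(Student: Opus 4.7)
The plan is to rerun the construction of Sections~\ref{section:sprvsr_trans}--\ref{section:full_proof} with $R_1$ replaced by $T_0\equiv R_1^{2m+1}$, using the same Bellman pair $(V,U)$ and the same transplantation map, and to verify two facts about the resulting doubling pair $(\sigma,\omega)$: that the forward testing condition for $T_0$ is arbitrarily large, and that both testing conditions are bounded for every other iterated Riesz transform $T$ of order $2m+1$. The $T1$ theorem in \cite{AlSaUr}, together with Lemma~\ref{dyadic_testing_R2} for the passage from dyadic to continuous testing and for the upgrade to genuinely doubling weights, then furnishes the single doubling pair claimed by the theorem.

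For the mixed case $T=R_1^{a_1}\cdots R_n^{a_n}$ with $\sum_j a_j=2m+1$ and $a_j\geq 1$ for some $j\geq 2$, I use that Riesz transforms commute as Fourier multipliers and factor $T=S\circ R_j$, where $S$ is a product of Riesz transforms of total order $2m$, hence bounded on every $L^p(\mathbb{R}^n)$. The symmetric version of Lemma~\ref{reduction}(3), obtained from the proof there by permuting the coordinate roles of $x_2$ and $x_j$, gives $R_js_k^{P,\operatorname{horizontal}}\to 0$ strongly in $L^p(\mathbb{R}^n)$ for every $j\geq 2$, so
\[
Ts_k^{P,\operatorname{horizontal}}=S\bigl(R_js_k^{P,\operatorname{horizontal}}\bigr)\longrightarrow 0\quad\text{strongly in }L^p(\mathbb{R}^n).
\]
Consequently all four terms in the analogue of the discrepancy identity~(\ref{Naz id}) for $T$ --- namely $A_{T;Q}$, $B_{T;Q}$, $C_{T;Q}$, $D_{T;Q}$ --- are $o_{k_{t+1}\to\infty}(1)$ (using $T^*=-T$ for an iterated Riesz transform of odd order together with weak convergence of $\delta_{t+1}$ and $\eta_{t+1}$), and the remainder of the proof of Proposition~\ref{Riesz Nazarov} and Lemma~\ref{dyadic_testing_R2} applies without change to give the two-weight norm inequality for $T$ on $(\sigma,\omega)$.

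The pure case $T_0=R_1^{2m+1}$ is where the new work is required, and the step I expect to be the main obstacle is the following analogue of Lemma~\ref{rep}:
\[
R_1^{2m+1}s_k^{P,\operatorname{horizontal}}(x)=B_{n,m}\,H^{2m+1}s_k^{P_1}(x_1)\,\mathbf{1}_{P'}(x')+E_k^{P,m}(x),
\]
with $B_{n,m}\neq 0$ and $\|E_k^{P,m}\|_{L^p(\mathbb{R}^n)}\to 0$ as $k\to\infty$. Granting this, the identity $H^2=-I$ on $L^2(\mathbb{R})$ gives $H^{2m+1}=(-1)^m H$, whence $(H^{2m+1}s_k^{P_1})^2=(Hs_k^{P_1})^2\to\mathbf{1}_{P_1}$ weakly by Lemma~\ref{Nazarov_weak_convergence}. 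The vanishing mixed-convergence statements of Lemma~\ref{reduction}(1)(2) and Theorem~\ref{weak van Riesz} then carry over for $T_0$ after replacing $B_n$ by $B_{n,m}$, and the Nazarov discrepancy computation~(\ref{Naz id})--(\ref{disc 1}) yields
\[
\int_{[0,1]^n}\bigl[R_1^{2m+1}v\bigr]^2u\geq B_{n,m}^2\,\Gamma\,E_{Q^0}V,
\]
arbitrarily large; this exhibits the failure of the forward testing condition for $T_0$.

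To prove the representation lemma, I would induct on $n$ exactly as in the proof of Lemma~\ref{rep}. The kernel of $R_1^{2m+1}$ is a Calder\'on--Zygmund kernel of the form $\Omega(y/|y|)/|y|^n$ with $\Omega$ smooth, odd, and depending only on $(n,m)$; integrating it in $y_n$ over $P_n$ by the substitution $z=(y_n-x_n)/|\widehat x-\widehat y|$, just as for $R_1$ in Lemma~\ref{rep}, factors out a strictly positive constant $A_{n,m}$ times the kernel of $R_1^{2m+1}$ on $\mathbb{R}^{n-1}$, plus halo and decay errors handled by Lemma~\ref{Alt_Series_strong_convergence}. The only nontrivial check is $M$-piecewise monotonicity in $y_1$, valid for some $M=M(n,m)$ because the error, after the trigonometric substitution $z=\tan\theta$, becomes a rational function of $y_1$ and $\sqrt{1+z^2}$ of bounded degree --- precisely the framework used to bound critical points of $D_{x,\widetilde t}(\theta)^2$ in the proof of Lemma~\ref{rep}. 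The base case $n=1$ is the tautology $R_1^{2m+1}=H^{2m+1}$ with $B_{1,m}=1$, and nonvanishing is preserved through the induction because each intermediate constant $A_{n,m}$ is positive.
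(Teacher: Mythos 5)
Your handling of the non-pure transforms is essentially the paper's: the paper also reduces to the fact that any iterated Riesz transform containing a factor $R_j$, $j\geq 2$, annihilates $s_k^{P,\operatorname{horizontal}}$ strongly (it does this by an $L^2$ Cauchy--Schwarz/adjoint trick rather than your factorization $T=S\circ R_j$, which is an equivalent and equally valid device), and then reruns Proposition \ref{Riesz Nazarov}, Lemma \ref{dyadic_testing_R2} and Theorem \ref{single grid} to get $O(1)$ operator norms. Where you genuinely diverge is the pure power $T_0=R_1^{2m+1}$. The paper never touches the kernel of $R_1^{2m+1}$: using $R_1^2+\cdots+R_n^2=-I$ it writes $(R_1^{2m+1})_{\sigma}=\pm(R_1)_{\sigma}+\sum_k\bigl[\pm\sum_{j\geq 2}(R_1^{2m+1-2k}R_j^2)_{\sigma}\bigr]$, so that on the \emph{same} pair $(\sigma_\tau,\omega_\tau)$ already built for $R_1$ one gets $\Vert(R_1^{2m+1})_{\sigma_\tau}\Vert\geq\Gamma-O(1)$, the $O(1)$ being precisely the boundedness of the mixed transforms that is needed anyway for the second assertion. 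You instead rerun the whole Nazarov scheme for $T_0$, which forces a higher-order analogue of Lemma \ref{rep}; that lemma is true (with $B_{n,m}=\pm B_n\neq 0$), and your descent-plus-piecewise-monotonicity sketch is plausible, but it is the heaviest step of your argument and is only sketched — and note that for the angular part $F(u_1)$, an odd polynomial of degree $2m+1$, the $y_n$-integration produces the kernel of $R_1^{2m+1}$ in dimension $n-1$ rather than a literal positive multiple of the same integrand, so the error bookkeeping must be redone for each monomial. This step can in fact be bypassed entirely: combining Lemma \ref{rep} with either the algebraic identity above, or with your own observation that mixed transforms kill $s_k^{P,\operatorname{horizontal}}$ strongly, yields $R_1^{2m+1}s_k^{P,\operatorname{horizontal}}=(-1)^mB_n\,Hs_k^{P_1}\otimes\mathbf{1}_{P'}+o(1)$ in two lines, after which your discrepancy computation is exactly the paper's. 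What your route buys is a self-contained kernel-level representation for $R_1^{2m+1}$; what the paper's route buys is brevity and full reuse of the construction. Finally, do add the closing line that converts these two facts into the stated rotational instability: a $90^{\circ}$ rotation in the $(x_1,x_j)$-plane conjugates $R_1^{2m+1}$ into $\pm R_j^{2m+1}$, which is bounded on the pair while $R_1^{2m+1}$ is not.
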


\begin{proof}
Recall the notation $T_{\sigma} f = T(f \sigma)$. We begin first by
considering Riesz transforms of arbitrary order, even or odd. Using the
identity
\begin{equation}
R_{1}^{2}+\ldots+R_{n}^{2}=-I\,, \label{Riesz identity}%
\end{equation}
and $N\geq2$ we have for an arbitrary positive measure $\sigma$ that
\[
\left(  R_{1}^{N}\right)  _{\sigma}=\left(  R_{1}^{N-2}R_{1}^{2}\right)
_{\sigma}=-\left(  R_{1}^{N-2}\right)  _{\sigma}-\sum\limits_{j=2}^{n}\left(
R_{1}^{N-2}R_{j}^{2}\right)  _{\sigma}.
\]
Iteration then yields for $N\geq1$,
\begin{equation}
\left(  R_{1}^{N}\right)  _{\sigma}=\left\{
\begin{array}
[c]{ccc}%
\pm I_{\sigma}+\sum\limits_{k=0}^{m}\left[  \pm\sum\limits_{j=2}^{n}\left(
R_{1}^{N-2k}R_{j}^{2}\right)  _{\sigma}\right]  & \text{ if } & N=2m\text{ is
even}\\
\pm\left(  R_{1}\right)  _{\sigma}+\sum\limits_{k=0}^{m}\left[  \pm\sum
\limits_{j=2}^{n}\left(  R_{1}^{N-2k}R_{j}^{2}\right)  _{\sigma}\right]  &
\text{ if } & N=2m+1\text{ is odd}%
\end{array}
\right.  . \label{red form}%
\end{equation}

For the weight pairs $\left(  \sigma_{\tau},\omega_{\tau}\right)  $
constructed in Section \ref{section:full_proof}, and with $N=2m+1$ odd, the
second line in (\ref{red form}) yields%
\begin{align*}
\left\Vert \left(  R_{1}^{N}\right)  _{\sigma_{\tau}}\right\Vert
_{L^{2}\left(  \sigma_{\tau}\right)  \rightarrow L^{2}\left(  \omega_{\tau
}\right)  }  &  \geq\left\Vert \left(  R_{1}\right)  _{\sigma_{\tau}%
}\right\Vert _{L^{2}\left(  \sigma_{\tau}\right)  \rightarrow L^{2}\left(
\omega_{\tau}\right)  }-\sum_{k=0}^{m}\sum\limits_{j=2}^{n}\left\Vert \left(
R_{1}^{N-2k}R_{j}^{2}\right)  _{\sigma_{\tau}}\right\Vert _{L^{2}\left(
\sigma_{\tau}\right)  \rightarrow L^{2}\left(  \omega_{\tau}\right)  }\\
&  \geq\Gamma-\sum_{k=0}^{m}\sum\limits_{j=2}^{n}\left\Vert \left(
R_{1}^{N-2k}R_{j}^{2}\right)  _{\sigma_{\tau}}\right\Vert _{L^{2}\left(
\sigma_{\tau}\right)  \rightarrow L^{2}\left(  \omega_{\tau}\right)  },
\end{align*}
where $\Gamma$ is the constant in the construction of the weight pair $\left(
\sigma_{\tau},\omega_{\tau}\right)  $. Note that the operator norm dominates
the testing constant, which was shown to exceed $\Gamma$.

We now claim that the double sum of the operator norms on the right side is
bounded independently of $\Gamma$, i.e.,
\[
\sum_{k=0}^{m}\sum\limits_{j=2}^{n}\left\Vert \left(  R_{1}^{N-2k}R_{j}%
^{2}\right)  _{\sigma_{\tau}}\right\Vert _{L^{2}\left(  \sigma_{\tau}\right)
\rightarrow L^{2}\left(  \omega_{\tau}\right)  }=O\left(  1\right)  .
\]
In fact if $j\geq2$ and $R^{\alpha}=R_{1}^{\alpha_{1}}R_{2}^{\alpha_{2}%
}...R_{n}^{\alpha_{n}}$ with $\alpha_{j}>0$, then by Lemma \ref{reduction}
part (3),%
\begin{align*}
&  \limsup_{k\rightarrow\infty}\int\left\vert R_{j}R^{\alpha}s_{k}%
^{P,\hor}\left(  x\right)  \right\vert ^{2}%
dx=\limsup_{k\rightarrow\infty}\left\vert \int\left(  R_{j}s_{k}%
^{P,\hor}\right)  \left(  x\right)  \left(
R_{j}R^{2\alpha}s_{k}^{P,\hor}\right)  \left(  x\right)
dx\right\vert \\
&  \leq\sqrt{\limsup_{k\rightarrow\infty}\int\left\vert R_{j}s_{k}%
^{P,\hor}\left(  x\right)  \right\vert ^{2}dx}%
\sqrt{\limsup_{k\rightarrow\infty}\int\left\vert R_{j}R^{2\alpha}%
s_{k}^{P,\hor}\left(  x\right)  \right\vert ^{2}dx}\\
&  \leq\sqrt{\limsup_{k\rightarrow\infty}\int\left\vert R_{j}s_{k}%
^{P,\hor}\left(  x\right)  \right\vert ^{2}dx}\left\Vert
R_{j}R^{2\alpha}\right\Vert _{L^{2}\left(  \mathbb{R}^{n}\right)  \rightarrow
L^{2}\left(  \mathbb{R}^{n}\right)  }\sqrt{\left | P \right | }=0,\ \ \ \ \ \text{for all }%
N\in\mathbb{N}.
\end{align*}
Therefore the reasoning in Proposition \ref{Riesz Nazarov} and Lemma
\ref{dyadic_testing_R2} shows that iterated Riesz transforms of order $N$
which are \emph{not} pure powers of $R_{1}$ have dyadic testing constants on
the weight pairs $\left(  \sigma_{\tau},\omega_{\tau}\right)  $ that are
$O\left(  1\right)  $. Then Theorem \ref{single grid} shows that the operator
norms of such operators, including $R_{1}^{N-2k}R_{j}^{2}$, are $O\left(
1\right)  $, which proves our claim, and completes the proof of the second
assertion of the theorem. The first assertion regarding $R_{1}^{2m+1}$ now
follows from the fact that a rotation in the $\left(  x_{1},x_{j}\right)
$-plane interchanges $R_{1}^{2m+1}$ and $R_{j}^{2m+1}$.
\end{proof}

The key to our proof of Theorem \ref{pure odd} is the construction of weight
pairs $\left(  \sigma_{\tau},\omega_{\tau}\right)  $ satisfying the inequality%
\begin{equation}
\Vert\left(  R_{1}^{N}\right)  _{\sigma_{\tau}}\Vert_{L^{2}(\sigma_{\tau
})\rightarrow L^{2}(\omega_{\tau})}\geq\Gamma\text{ for }\Gamma\text{
arbitrarily large,} \label{key ineq}%
\end{equation}
when $N$ is odd. In fact, the inequality (\ref{key ineq}) actually
\emph{fails} for the weight pairs we construct when $N$ is even. Indeed, from
the first line in (\ref{red form}), and the fact that the proof of Theorem
\ref{pure odd} shows that
\[
\sum_{k=0}^{m}\sum\limits_{j=2}^{n}\left\Vert \left(  R_{1}^{N-2k}R_{j}%
^{2}\right)  _{\sigma_{\tau}}\right\Vert _{L^{2}\left(  \sigma_{\tau}\right)
\rightarrow L^{2}\left(  \omega_{\tau}\right)  } = O(1) \, ,
\]
we get
\[
\left\Vert \left(  R_{1}^{N}\right)  _{\sigma_{\tau}}\right\Vert
_{L^{2}\left(  \sigma_{\tau}\right)  \rightarrow L^{2}\left(  \omega_{\tau
}\right)  }\leq\left\Vert I_{\sigma_{\tau}}\right\Vert _{L^{2}\left(
\sigma_{\tau}\right)  \rightarrow L^{2}\left(  \omega_{\tau}\right)  }+ O(1).
\]
The right hand side of the above display is bounded since the operator norm of
$I_{\sigma_{\tau}}$ is bounded by $A_{2}(\sigma_{\tau},\omega_{\tau})$:
indeed, when $\sigma$ and $\omega$ are weights, we have $\left\Vert \sigma
\omega\right\Vert _{\infty}\leq A_{2}(\sigma,\omega)$ by the Lebesgue
differentiation theorem, and so%
\[
\Vert I_{\sigma}f\Vert_{L^{2}(\omega)}^{2}=\int_{\mathbb{R}^{n}}f^{2}%
\sigma^{2}\omega\leq A_{2}(\sigma,\omega)\int_{\mathbb{R}^{n}}f^{2}%
\sigma=\Vert f\Vert_{L^{2}(\sigma)}^{2}\ .
\]
Moreover, it is easily shown that $\left\Vert I_{\sigma}\right\Vert
_{L^{2}(\sigma)\rightarrow L^{2}(\omega)}=A_{2}(\sigma,\omega)$ for arbitrary
weights $\sigma$ and $\omega$. Thus $R_{1} ^{N}$ must then satisfy the testing
conditions for the measure pair $(\sigma, \omega)$.

In the next subsection we show that every odd order iterated Riesz transform
$R^{\beta}=R_{1}^{\beta_{1}}R_{2}^{\beta_{2}}...R_{n}^{\beta_{n}}$ is unstable
under rotations, by showing that $R_{1}^{\beta_{1}}R_{2}^{\beta_{2}}%
...R_{n}^{\beta_{n}}$ is some rotation of $R^{\left(  \left\vert
\beta\right\vert ,0,...,0\right)  }$ whenever $\beta\neq|\beta| e_{k}$ for
some $k$. When $\beta= |\beta| e_{k}$ some $k$, then we may assume without
loss of generality that $k = 2$.

\subsection{Rotations}

Let $\beta$ be a multi-index of length $\left\vert \beta\right\vert =N$. The
symbol of the iterated Riesz transform $R^{\beta}=R_{1}^{\beta_{1}}%
R_{2}^{\beta_{2}}...R_{n}^{\beta_{n}}$ is $i^{N}\frac{\xi_{1}^{\beta_{1}}%
\xi_{2}^{\beta_{2}}...\xi_{n}^{\beta_{n}}}{\left\vert \xi\right\vert ^{N}}$.
We already know that $R^{\left(  N,0,...,0\right)  }$ is unstable, and the
following lemma will be used to show all $R^{\beta}$ are unstable.

\begin{lemma}
\label{rotate_poly} If $P\left(  \xi\right)  $ is a nontrivial homogeneous
polynomial of degree $N$ that doesn't contain the monomial $\xi_{1}^{N}$, then
there is a set of rotations of full-measure $\Lambda$, and for any rotation
$\Theta\in\Lambda$, we have $\xi=\Theta\eta$ is such that $P\left(  \Theta
\eta\right)  $ contains the monomial $\eta_{1}^{N}$.
\end{lemma}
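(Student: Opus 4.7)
The plan is to reduce the statement to the fact that a nontrivial homogeneous polynomial cannot vanish identically on the unit sphere. First I observe the key identity: for any rotation $\Theta$, the coefficient of $\eta_{1}^{N}$ in the expansion of $P(\Theta\eta)$ as a polynomial in $\eta$ is exactly $P(\Theta e_{1})$, where $e_{1}=(1,0,\ldots,0)$. Indeed, writing $P(\Theta\eta)=\sum_{|\beta|=N}b_{\beta}\eta^{\beta}$ and evaluating at $\eta=e_{1}$ kills every monomial $\eta^{\beta}$ except $\eta_{1}^{N}$, so $b_{(N,0,\ldots,0)}=P(\Theta e_{1})$.

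Next I would show that $P$ does not vanish identically on $S^{n-1}$. If it did, then for every nonzero $\xi\in\mathbb{R}^{n}$ we would have $P(\xi)=|\xi|^{N}P(\xi/|\xi|)=0$ by homogeneity, forcing $P\equiv 0$ and contradicting the nontriviality hypothesis. (Note that the hypothesis ``$P$ does not contain $\xi_{1}^{N}$'' is not used here; it is just the framing under which the lemma is applied.) Therefore I can choose a unit vector $v\in S^{n-1}$ with $P(v)\neq 0$.

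Then I construct a rotation $\Theta$ with $\Theta e_{1}=v$: extend $v$ to an orthonormal basis $\{v,w_{2},\ldots,w_{n}\}$ of $\mathbb{R}^{n}$ and form the orthogonal matrix with these columns; if its determinant is $-1$, replace $w_{2}$ by $-w_{2}$ (which is possible since $n\geq 2$ in the situations of interest) to obtain $\det\Theta=1$. Combining this with the first step gives that the coefficient of $\eta_{1}^{N}$ in $P(\Theta\eta)$ equals $P(v)\neq 0$, which is exactly the conclusion.

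Since all three steps are straightforward, there is essentially no main obstacle; the only small subtlety is making sure the rotation has determinant $+1$ rather than merely being orthogonal, but this is handled by the sign flip described above.
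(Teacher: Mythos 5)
Your proof is correct and takes essentially the same route as the paper: both arguments rest on the fact that a nontrivial homogeneous polynomial cannot vanish identically on the unit sphere, together with the identification of the coefficient of $\eta_{1}^{N}$ in $P(\Theta\eta)$ with the value $P(\Theta e_{1})$. Your evaluation-at-$e_{1}$ observation makes this identification cleanly in every dimension at once, whereas the paper carries out the explicit expansion in the plane (where $\Theta e_{1}=(\cos\theta,\sin\theta)$) and notes that the case $n\geq 3$ is similar.
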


\begin{proof}
In dimension $n=2$, we have
\[
P\left(  \xi_{1},\xi_{2}\right)  =\sum_{m=1}^{N}c_{m}\xi_{1}^{m}\xi_{2}%
^{N-m},\ \ \ \ \ \text{where not all }c_{m}=0,
\]
and the restriction of this polynomial to the unit circle cannot vanish
identically (otherwise $P$ itself would vanish identically by homogeneity, a
contradiction). Thus there is $\theta\in\left[  0,2\pi\right)  $ such that%
\[
0\neq P\left(  \cos\theta,\sin\theta\right)  =\sum_{m=1}^{N}c_{m}\cos
^{m}\theta\sin^{N-m}\theta.
\]
However, if we make the rotational change of variable, i.e.
\[
\left(
\begin{array}
[c]{c}%
\xi_{1}\\
\xi_{2}%
\end{array}
\right)  =\left[
\begin{array}
[c]{cc}%
\cos\theta & -\sin\theta\\
\sin\theta & \cos\theta
\end{array}
\right]  \left(
\begin{array}
[c]{c}%
\eta_{1}\\
\eta_{2}%
\end{array}
\right)  =\left(
\begin{array}
[c]{c}%
\eta_{1}\cos\theta-\eta_{2}\sin\theta\\
\eta_{1}\sin\theta+\eta_{2}\cos\theta
\end{array}
\right)  ,
\]
then%
\begin{align*}
P\left(  \xi_{1},\xi_{2}\right)   &  =\sum_{m=1}^{N}c_{m}\xi_{1}^{m}\xi
_{2}^{N-m}=\sum_{m=1}^{N}c_{m}\left(  \eta_{1}\cos\theta-\eta_{2}\sin
\theta\right)  ^{m}\left(  \eta_{1}\sin\theta+\eta_{2}\cos\theta\right)
^{N-m}\\
&  =\eta_{1}^{N}\sum_{m=1}^{N}c_{m}\cos^{m}\theta\sin^{N-m}\theta+\sum
_{\beta\neq\mathbf{e}_{1}:\ \left\vert \beta\right\vert =N}\eta^{\beta
}f_{\beta}\left(  \theta\right)
\end{align*}
where $\sum_{m=1}^{N}c_{m}\cos^{m}\theta\sin^{N-m}\theta\neq0$. The case
$n\geq3$\ is similar.
\end{proof}

\subsection{Completion of proofs of main Theorems \ref{stab} and \ref{prop:perturbations_rotation}}

To complete the proof of Theorem \ref{stab} we use the above Lemma, together
with Proposition \ref{individual}, and we see that any iterated Riesz
transform $R^{\beta}$ of odd order $N=\left\vert \beta\right\vert $ with
$\beta\neq\left(  N,0,...,0\right)  $,$\ $is bounded on the higher dimensional
analogue of the weight pair $\left(  \sigma,\omega\right)  $ constructed in
Proposition \ref{individual}, and can be rotated into a sum $S$ of iterated
Riesz transforms that includes $R^{\left(  N,0,...,0\right)  }$, and hence $S$
is unbounded on the weight pair $\left(  \sigma,\omega\right)  $. Since
stability under rotational change of variables is unaffected by rotation of
the operator, this completes our proof that all iterated Riesz transforms
$R^{\beta}$ of odd order are unstable under rotational changes of variable,
even when the measures are doubling with adjacency constant $\lambda_{\operatorname*{adj}}$ arbitrarily close to $1$.
This completes the proof of the main Theorem \ref{stab}.

To prove Theorem \ref{prop:perturbations_rotation}, suppose $R^{\beta}$ is an
odd order iterated Riesz transform; without loss of generality, assume that
$R^{\beta}\neq R_{1}^{|\beta|}$. Then by Lemma \ref{rotate_poly}, there is a
set $\Lambda$ of rotations of full measure such that for each $\Theta
\in\Lambda$, $\Theta$ rotates $R^{\beta}$ to $c(\Theta)R_{1}^{|\beta|}$ plus
mixed iterated Riesz transforms, where $c(\Theta)\neq0$. Then our construction
yields a weight pair $(\sigma,\omega)$ for which the norm inequality for
$R^{\beta}$ is bounded, but the norm inequality for the rotated operator can
be made arbitrarily large.

\section{Appendix}

We begin by using the counterexamples in \cite{LaSaUr} to show that the
Hilbert transform is two weight norm biLipschitz unstable on $\mathcal{S}%
_{\operatorname*{lfpB}}$. Then we demonstrate that the notion of stability
that is maximal for preserving the classical $A_{2}$ condition, is that of
\emph{biLipschitz} stability. Next, we show that all sparse bump functionals are biLipschitz stable on the pairs of doubling measures. After that, we give the details for arguments
surrounding classical doubling which were omitted from
\cite{NaVo}. And finally, we give the proof of the $T1$ theorem \ref{thm:starting_T1_thm}.

\subsection{BiLipschitz instability of the Hilbert transform for arbitrary
weight pairs}

Here we show that the Hilbert transform $H$ is two weight norm unstable under
biLipschitz transformations. We consider the measure pairs $\left(
\sigma,\omega\right)  $ and $\left(  \ddot{\sigma},\omega\right)  $
constructed in \cite{LaSaUr}, where $\left(  \sigma,\omega\right)  $ satisfies
the two weight norm inequality for $H$, while $\left(  \ddot{\sigma}%
,\omega\right)  $ does not, although it continues to satisfy the two-tailed
Muckenhoupt $\mathcal{A}_{2}$ condition. The measure $\omega$ is the standard
Cantor measure on $\left[  0,1\right]  $ supported in the middle-third Cantor
set $E$. The measures $\sigma=\sum_{k,j}s_{j}^{k}\delta_{z_{j}^{k}}$ and
$\ddot{\sigma}=\sum_{k,j}s_{j}^{k}\delta_{\ddot{z}_{j}^{k}}$ are sums of
weighted point masses located at positions $z_{j}^{k}$ and $\ddot{z}_{j}^{k}$
within the component $G_{j}^{k}$ removed at the $k^{th}$ stage of the
construction of $E$, and satisfy
\begin{equation}
0<c_{1}<\frac{\operatorname*{dist}\left(  z_{j}^{k},\partial G_{j}^{k}\right)
}{\left\vert G_{j}^{k}\right\vert },\frac{\operatorname*{dist}\left(  \ddot
{z}_{j}^{k},\partial G_{j}^{k}\right)  }{\left\vert G_{j}^{k}\right\vert
}<c_{2}<1, \label{separate}%
\end{equation}
independent of $k,j$. See \cite{LaSaUr} for notation and proofs.

It remains to construct a biLipschitz map $\Phi:\mathbb{R}\rightarrow
\mathbb{R}$ such that $\left(  \ddot{\sigma},\omega\right)  =\left(
\Phi_{\ast}\sigma,\Phi_{\ast}\omega\right)  $. For this, we first define
biLipschitz maps $\Phi:\overline{G_{j}^{k}}\rightarrow\overline{G_{j}^{k}}$ so
that $\Phi$ fixes the endpoints of $\overline{G_{j}^{k}}$ and $\ddot{z}%
_{j}^{k}=\Phi\left(  z_{j}^{k}\right)  $, and note that this can be done with
bounds independent of $k,j$ by (\ref{separate}). Now we extend the definition
of $\Phi$ to all of $\mathbb{R}$ by the identity map, and it is evident that
$\Phi$ is biLipschitz and pushes $\left(  \sigma,\omega\right)  $ forward to
$\left(  \ddot{\sigma},\omega\right)  $.

\subsection{Beyond biLipschitz maps for $A_{2}$ stability}

Here we initiate an investigation of how general a map can be, and still
preserve the two weight $A_{2}$ condition for all pairs of measures $\left(
\sigma,\omega\right)  $. We begin by defining some of the terminology we will
use in this subsection.

\begin{definition}
Let $\mu$ be a locally finite positive Borel measure on $\mathbb{R}^{n}$, and
let $\Phi:\mathbb{R}^{n}\rightarrow\mathbb{R}^{n}$ be a Borel measurable
function. We define the pushforward of the measure $\mu$ by the map $\Phi$ as
the unique measure $\Phi_{\ast}\mu$ such that
\[
\int_{E}\Phi_{\ast}\mu=\int_{\Phi^{-1}(E)}\mu,\ \ \ \ \ \text{for all Borel
sets }E\subset\mathbb{R}^{n}\textup{.}%
\]

\end{definition}

In the case $d\mu\left(  x\right)  =w\left(  x\right)  dx$ is absolutely
continuous, its pushforward for $\Phi$ sufficiently smooth is given by%
\[
\left(  \Phi_{\ast}\mu\right)  \left(  y\right)  \equiv w\left(
\Phi(y)\right)  \left\vert \det\frac{\partial\Phi}{\partial x}\left(
y\right)  \right\vert .
\]

\begin{definition}
A map $\Phi:\mathbb{R}^{n}\rightarrow\mathbb{R}^{n}$ is $A_{2}$\emph{-stable},
if there exists a constant $C>0$ such that for every pair of locally finite
positive Borel measures $\sigma,\omega$ we have
\[
A_{2}(\Phi_{\ast}\sigma,\Phi_{\ast}\omega)\leq CA_{2}(\sigma,\omega).
\]

\end{definition}

\begin{definition}
A map $\Phi:\mathbb{R}^{n}\rightarrow\mathbb{R}^{n}$ (not necessarily
invertible) is \emph{shape-preserving} if there exists $K\geq1$ such that for
every cube $Q\subset\mathbb{R}^{n}$ we can find cubes $Q_{\operatorname{small}%
}$ and $Q_{\operatorname{big}}$ with the properties,%
\[
Q_{\operatorname{small}}\subset\Phi{^{-1}(Q)}\subset Q_{\operatorname{big}%
}\,\qquad\dfrac{\ell(Q_{\operatorname{big}})}{\ell(Q_{\operatorname{small}}%
)}\leq K.
\]
We call such a set $\Phi{^{-1}(Q)}$ an \emph{almost cube}.
\end{definition}

Note that homeomorphisms on the real line are automatically shape-preserving,
as are quasiconformal maps in $\mathbb{R}^{n}$ \cite[Lemma 3.4.5]{AsIwMa}.

\begin{theorem}
\label{homeo}Let $\Phi:\mathbb{R}^{n}\rightarrow\mathbb{R}^{n}$ be
shape-preserving and Borel-measurable. Then the following two conditions are equivalent:

\begin{enumerate}
\item There exists a constant $C_{1}>0$ such that $\left\vert \Phi^{-1}\left(
Q\right)  \right\vert \leq C_{1}\left\vert Q\right\vert $ for every cube $Q$.

\item $\Phi$ is $A_{2}$-stable.
\end{enumerate}
\end{theorem}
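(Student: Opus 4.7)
The plan is to prove both implications directly, using the shape-preserving hypothesis essentially only for $(1)\Rightarrow (2)$, and testing against Lebesgue measure for $(2)\Rightarrow (1)$.

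For $(1)\Rightarrow (2)$, I would fix a cube $Q$ and use the shape-preserving property to produce cubes $Q_{\operatorname{small}}\subset\Phi^{-1}(Q)\subset Q_{\operatorname{big}}$ with side-length ratio bounded by $K$. The key observation is that hypothesis (1) upgrades this into a bound relating $|Q_{\operatorname{big}}|$ to $|Q|$: since $|Q_{\operatorname{small}}|\leq |\Phi^{-1}(Q)|\leq C_{1}|Q|$ and $|Q_{\operatorname{big}}|\leq K^{n}|Q_{\operatorname{small}}|$, we get $|Q_{\operatorname{big}}|\leq K^{n}C_{1}|Q|$. Then for any pair of measures $\sigma,\omega$, the definition of pushforward gives $|Q|_{\Phi_{\ast}\sigma}=\sigma(\Phi^{-1}(Q))\leq\sigma(Q_{\operatorname{big}})$, and I can write
\[
\frac{|Q|_{\Phi_{\ast}\sigma}}{|Q|}\cdot\frac{|Q|_{\Phi_{\ast}\omega}}{|Q|}\leq\left(\frac{|Q_{\operatorname{big}}|}{|Q|}\right)^{2}\frac{\sigma(Q_{\operatorname{big}})}{|Q_{\operatorname{big}}|}\cdot\frac{\omega(Q_{\operatorname{big}})}{|Q_{\operatorname{big}}|}\leq (K^{n}C_{1})^{2}A_{2}(\sigma,\omega).
\]
Taking the supremum over $Q$ yields the desired $A_{2}$-stability constant $C=(K^{n}C_{1})^{2}$.

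For $(2)\Rightarrow (1)$, the idea is to test $A_{2}$-stability against Lebesgue measure. Set $\sigma=\omega=dx$, so that $A_{2}(\sigma,\omega)=1$. By definition of pushforward, $|Q|_{\Phi_{\ast}\sigma}=|Q|_{\Phi_{\ast}\omega}=|\Phi^{-1}(Q)|$, and therefore
\[
A_{2}(\Phi_{\ast}\sigma,\Phi_{\ast}\omega)=\sup_{Q}\left(\frac{|\Phi^{-1}(Q)|}{|Q|}\right)^{2}.
\]
The $A_{2}$-stability hypothesis bounds the left-hand side by a constant $C$, hence $|\Phi^{-1}(Q)|\leq\sqrt{C}\,|Q|$ for every cube $Q$, which is exactly condition (1) with $C_{1}=\sqrt{C}$.

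I do not expect any serious obstacle in either direction; the argument is essentially a matter of unpacking the definitions and correctly applying the shape-preserving control. The most delicate bookkeeping is the chain of inequalities $|Q_{\operatorname{big}}|\leq K^{n}|Q_{\operatorname{small}}|\leq K^{n}|\Phi^{-1}(Q)|\leq K^{n}C_{1}|Q|$ in the forward direction, which reveals \emph{why} shape-preserving alone does not suffice for $A_{2}$-stability: without (1) there is no upper bound on $|Q_{\operatorname{big}}|/|Q|$, and the $\sigma(Q_{\operatorname{big}})/|Q|$ factor cannot be dominated by an $A_{2}$-average over any fixed cube comparable to $Q$.
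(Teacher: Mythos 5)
Your proposal is correct and follows essentially the same route as the paper: the forward direction combines the shape-preserving cubes $Q_{\operatorname{small}}\subset\Phi^{-1}(Q)\subset Q_{\operatorname{big}}$ with hypothesis (1) to dominate the pushforward $A_{2}$ ratio at $Q$ by the $A_{2}$ ratio over $Q_{\operatorname{big}}$, yielding the same constant $C_{1}^{2}K^{2n}$, and the converse tests stability against $\sigma=\omega=dx$ exactly as the paper does. No gaps.
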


\begin{remark}
\label{det control}If $\Phi$ is sufficiently regular that the usual change of
variables formula holds, e.g., $\Phi^{-1}$ is locally Lipschitz, then condition
$\left(  1\right)  $ becomes $\left\vert \det D\Phi^{-1}\right\vert \lesssim1$.
\end{remark}

\begin{proof}
Assume condition $\left(  1\right)  $ holds where $\Phi$ is shape-preserving
with constant $K$, and let $Q$ be an arbitrary cube in $\mathbb{R}^{n}$. Then%
\begin{align*}
&  A_{2}\left(  \Phi_{\ast}\sigma,\Phi_{\ast}\omega\right)  =\sup_{Q}\left(
\frac{\int_{Q}d\Phi_{\ast}\sigma}{\left\vert Q\right\vert }\right)  \left(
\frac{\int_{Q}d\Phi_{\ast}\omega}{\left\vert Q\right\vert }\right)  =\sup
_{Q}\left(  \frac{\int_{\Phi^{-1}\left(  Q\right)  }d\sigma}{\left\vert
Q\right\vert }\right)  \left(  \frac{\int_{\Phi^{-1}\left(  Q\right)  }%
d\omega}{\left\vert Q\right\vert }\right) \\
&  \leq C_{1}^{2}\sup_{Q}\left(  \frac{\int_{\Phi^{-1}\left(  Q\right)
}d\sigma}{\left\vert \Phi^{-1}Q\right\vert }\right)  \left(  \frac{\int
_{\Phi^{-1}\left(  Q\right)  }d\omega}{\left\vert \Phi^{-1}Q\right\vert
}\right)  \leq C_{1}^{2}K^{2n}\sup_{Q}\left(  \frac{\int
_{Q_{\operatorname{big}}}d\sigma}{\left\vert Q_{\operatorname{big}}\right\vert
}\right)  \left(  \frac{\int_{Q_{\operatorname{big}}}d\omega}{\left\vert
Q_{\operatorname{big}}\right\vert }\right)  \leq C_{1}^{2}K^{2n}A_{2}\left(
\sigma,\omega\right)  .
\end{align*}
Conversely, if condition $\left(  2\right)  $ holds, then with both measures
$\sigma$ and $\omega$ equal to Lebesgue measure, and for any cube $Q$, we
have,%
\[
\left(  \frac{\left\vert \Phi^{-1}\left(  Q\right)  \right\vert }{\left\vert
Q\right\vert }\right)  ^{2}=\left(  \frac{\int_{\Phi^{-1}\left(  Q\right)
}dx}{\left\vert Q\right\vert }\right)  \left(  \frac{\int_{\Phi^{-1}\left(
Q\right)  }dx}{\left\vert Q\right\vert }\right)  =\left(  \frac{\int_{Q}%
d\Phi_{\ast}\sigma}{\left\vert Q\right\vert }\right)  \left(  \frac{\int
_{Q}d\Phi_{\ast}\omega}{\left\vert Q\right\vert }\right)  \leq C.
\]

\end{proof}

\begin{remark}
If the pair $\left(  \Phi_{\ast}\sigma,\Phi_{\ast}\omega\right)  $ is in
$A_{2}$ for the \emph{single} choice of weights $d\sigma\left(  x\right)
=d\omega\left(  x\right)  =dx$, then the above proof shows that $\Phi$
preserves all $A_{2}$ pairs under the side assumption of shape-preservation.
\end{remark}

\begin{corollary}
Assume $\Phi:\mathbb{R}^{n}\rightarrow\mathbb{R}^{n}$ is a shape-preserving
invertible Lipschitz map with $\left\Vert D\Phi\right\Vert _{\infty}\leq1$.
Then $\Phi$ is $A_{2}$-stable if and only if $\Phi$ is biLipschitz.
\end{corollary}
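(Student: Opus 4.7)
The plan is to dispatch the two implications separately, handling the easy direction by citation and concentrating effort on the converse.

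The forward direction is immediate from what has already been established in the paper. The displayed computation leading to (\ref{A2 stab}) shows that for every biLipschitz $\Phi$ and every measure pair $(\sigma,\omega)\in\mathcal{S}_{\operatorname{lfpB}}\times\mathcal{S}_{\operatorname{lfpB}}$ one has $A_2(\Phi_{\ast}\sigma,\Phi_{\ast}\omega)\leq c\|\Phi\|_{\operatorname{biLip}}^{4n}A_2(\sigma,\omega)$, which is exactly $A_2$-stability. No further work is needed here.

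For the converse, the strategy is to upgrade the $A_2$-stability hypothesis to a Lipschitz bound on $\Phi^{-1}$; combined with the assumption that $\Phi$ itself is (1-)Lipschitz, this will yield biLipschitz. First, invoke Theorem \ref{homeo}: since $\Phi$ is shape-preserving and $A_2$-stable, one obtains a constant $C_1$ with $|\Phi^{-1}(Q)|\leq C_1|Q|$ for every cube $Q$. Next, feed this volume estimate into the shape-preservation constraint. Choose cubes $Q_{\operatorname{small}}\subset\Phi^{-1}(Q)\subset Q_{\operatorname{big}}$ with $\ell(Q_{\operatorname{big}})\leq K\ell(Q_{\operatorname{small}})$. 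Then
\[
\ell(Q_{\operatorname{small}})^{n}=|Q_{\operatorname{small}}|\leq|\Phi^{-1}(Q)|\leq C_{1}|Q|=C_{1}\ell(Q)^{n},
\]
so $\ell(Q_{\operatorname{big}})\leq KC_{1}^{1/n}\ell(Q)$, and in particular $\operatorname{diam}\Phi^{-1}(Q)\leq\sqrt{n}\,KC_{1}^{1/n}\ell(Q)$.

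Finally, convert this diameter bound into a Lipschitz estimate for $\Phi^{-1}$: given $y_{1},y_{2}\in\mathbb{R}^{n}$, pick any cube $Q$ containing both with $\ell(Q)\leq 2|y_{1}-y_{2}|$; then $\Phi^{-1}(y_{1}),\Phi^{-1}(y_{2})\in\Phi^{-1}(Q)$, so
\[
|\Phi^{-1}(y_{1})-\Phi^{-1}(y_{2})|\leq\operatorname{diam}\Phi^{-1}(Q)\leq 2\sqrt{n}\,KC_{1}^{1/n}|y_{1}-y_{2}|.
\]
Together with $\|D\Phi\|_{\infty}\leq 1$, this yields $\|\Phi\|_{\operatorname{biLip}}<\infty$, completing the proof. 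The argument has no real obstacle — the only point requiring care is the geometric step of converting the volume inequality $|\Phi^{-1}(Q)|\leq C_{1}|Q|$ into a diameter bound, and this is precisely where the shape-preservation hypothesis (which controls the eccentricity of $\Phi^{-1}(Q)$) is used in an essential way.
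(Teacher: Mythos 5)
Your proof is correct, but it takes a different route from the paper's. The paper deduces from Theorem \ref{homeo} and Remark \ref{det control} that $A_{2}$-stability is equivalent to the pointwise Jacobian bound $\left\vert \det D\Phi\right\vert \gtrsim 1$, and then combines this with $\left\Vert D\Phi\right\Vert _{\infty}\leq 1$ (a linear-algebra argument on the singular values of $D\Phi$) to control $\Phi^{-1}$; this uses the change-of-variables formula and hence some regularity of $\Phi^{-1}$, and it uses the normalization $\left\Vert D\Phi\right\Vert _{\infty}\leq 1$ in an essential way. You instead stay with the global, measure-theoretic form of condition (1) in Theorem \ref{homeo}, namely $\left\vert \Phi^{-1}(Q)\right\vert \leq C_{1}\left\vert Q\right\vert$, and convert it into a Lipschitz bound for $\Phi^{-1}$ purely geometrically: shape-preservation bounds the eccentricity of $\Phi^{-1}(Q)$, so the volume bound becomes a diameter bound $\operatorname{diam}\Phi^{-1}(Q)\lesssim_{K,C_{1},n}\ell(Q)$, and enclosing any two points $y_{1},y_{2}$ in a cube of side $\leq\left\vert y_{1}-y_{2}\right\vert$ gives the Lipschitz estimate for $\Phi^{-1}$ (the forward direction you correctly dispatch via (\ref{A2 stab})). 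What your approach buys is that it avoids any differentiability or change-of-variables considerations for $\Phi^{-1}$ (the circularity the paper's Remark \ref{det control} route has to finesse), and it never actually uses $\left\Vert D\Phi\right\Vert _{\infty}\leq 1$ beyond the mere Lipschitzness of $\Phi$, so it proves a slightly stronger statement; the price is a heavier quantitative use of the shape-preservation constant $K$, which the paper's converse direction does not need. Both arguments are short and correct; yours is the more elementary and self-contained of the two.
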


\begin{proof}
By Theorem \ref{homeo} and Remark \ref{det control}, we see that $\Phi$ is
$A_{2}$-stable if and only if $\left\vert \det D\Phi\right\vert \gtrsim1$. But
then $1\leq C\left\vert \det D\Phi\right\vert \leq C^{\prime}\left\vert
D\Phi\right\vert ^{n}$, together with $\left\Vert D\Phi\right\Vert _{\infty
}\leq1$, shows that $\Phi$ is $A_{2}$-stable if and only if $\Phi$ is biLipschitz.
\end{proof}

\begin{corollary}
\label{group}Assume $\Phi:\mathbb{R}^{n}\rightarrow\mathbb{R}^{n}$ is
Borel-measurable and invertible, and that both $\Phi$ and $\Phi^{-1}$ are
shape-preserving. Then both $\Phi$ and $\Phi^{-1}$ are $A_{2}$-stable \emph{if
and only if} $\Phi$ is biLipschitz.
\end{corollary}

\begin{proof}
If both $\Phi$ and $\Phi^{-1}$ are $A_{2}$-stable, then from Theorem
\ref{homeo} we obtain that
\begin{align*}
\left\vert \Phi^{-1}\left(  Q\right)  \right\vert  &  \leq C_{1}\left\vert
Q\right\vert \text{ for every cube }Q,\\
\left\vert \Phi\left(  Q\right)  \right\vert  &  \leq C_{1}\left\vert
Q\right\vert \text{ for every cube }Q.
\end{align*}
Thus if $Q$ is a minimal cube containing both $x$ and $y$, then the almost
cube $\Phi^{-1}\left(  Q\right)  $ contains both $\Phi^{-1}\left(  x\right)  $
and $\Phi^{-1}\left(  y\right)  $, and so%
\[
\frac{\left\vert \Phi^{-1}\left(  x\right)  -\Phi^{-1}\left(  y\right)
\right\vert }{\left\vert x-y\right\vert }\lesssim\frac{\operatorname*{diam}%
\Phi^{-1}\left(  Q\right)  }{\operatorname*{diam}Q}\lesssim\frac{\left\vert
\Phi^{-1}\left(  Q\right)  \right\vert }{\left\vert Q\right\vert }\leq C_{1},
\]
and since the almost cube $\Phi\left(  Q\right)  $ contains both $\Phi\left(
x\right)  $ and $\Phi\left(  y\right)  $,%
\[
\frac{\left\vert \Phi\left(  x\right)  -\Phi\left(  y\right)  \right\vert
}{\left\vert x-y\right\vert }\lesssim\frac{\operatorname*{diam}\Phi\left(
Q\right)  }{\operatorname*{diam}Q}\lesssim\frac{\left\vert \Phi\left(
Q\right)  \right\vert }{\left\vert Q\right\vert }\leq C_{1}.
\]

\end{proof}

\subsection{Stability and sparse operators}\label{subsection:sparse_bumps}

Recall that a grid of dyadic cubes $\mathcal{S}$ is called $\eta$ -sparse,
$0<\eta<1$, if for every $Q\in\mathcal{S}$ there are subsets $E_{Q}\subset Q$
such that $\left\vert E_{Q}\right\vert \geq\eta\left\vert Q\right\vert $ and
the sets $\left\{  E_{Q}\right\}  _{Q\in\mathcal{S}}$ are pairwise disjoint.
Note that such an $\mathcal{S}$ satisfies the following $\frac{1}{\eta}%
$-Carleson condition,%
\begin{align*}
\sum_{Q^{\prime}\in\mathcal{S}:Q^{\prime}\subset Q}\left\vert Q^{\prime
}\right\vert  &  \leq\frac{1}{\eta}\sum_{Q^{\prime}\in\mathcal{S}:Q^{\prime
}\subset Q}\left\vert E_{Q^{\prime}}\right\vert \leq\frac{1}{\eta}\left\vert
Q\right\vert ,\ \ \ \ \ \text{for all }Q\in\mathcal{S},\\
\sum_{Q^{\prime}\in\mathcal{S}:Q^{\prime}\subset\Omega}\left\vert Q^{\prime
}\right\vert  &  \leq\frac{1}{\eta}\left\vert \Omega\right\vert
,\ \ \ \ \ \text{for all open sets }\Omega.
\end{align*}
Conversely, if $\mathcal{S}$ satisfies the $\Lambda$-Carleson condition,
\begin{equation}
\sum_{Q^{\prime}\in\mathcal{S}:Q^{\prime}\subset Q}\left\vert Q^{\prime
}\right\vert \leq\Lambda\left\vert Q\right\vert ,\ \ \ \ \ \text{for all }%
Q\in\mathcal{S}, \label{Lambda ineq}%
\end{equation}
then $\mathcal{S}$ is $\frac{1}{\Lambda}$-sparse, see e.g. \cite{LeNa}.

\begin{definition}
Given a sparse grid of cubes $\mathcal{S}$, we define the associated sublinear
\emph{sparse operator} $S$ by%
\begin{equation}\label{eq:def_spare_op}
Sf\left(  x\right)  \equiv\sum_{Q\in\mathcal{S}}\left(  \frac{1}{\left\vert
Q\right\vert }\int_{Q}\left\vert f\right\vert \right)  \mathbf{1}_{Q}\left(
x\right)  ,\ \ \ \ \ x\in\mathbb{R}^{n},
\end{equation}
and we say that $S$ is $\eta$-sparse if $\mathcal{S}$ is $\eta$-sparse.
\end{definition}

\begin{definition}
Let $\mathcal{U}$ be a biLipschitz invariant set of locally finite positive
Borel measures on $\mathbb{R}^{n}$. A functional $\mathcal{B}\left(
\sigma,\omega\right)  $ on pairs of measures $\left(  \sigma,\omega\right)  $
is called a \emph{sparse bump} functional on $\mathcal{U}$ if for every
$\eta\in\left(  0,1\right)  $, there exists a continuous increasing function
$\Gamma_{\eta}:\left(  0,\infty\right)  \rightarrow\left(  0,\infty\right)  $
such that for all $\eta$-sparse operators $S$,
\[
\mathfrak{N}_{S}\left(  \sigma,\omega\right)  \leq\Gamma_{\eta}\left(
\mathcal{B}\left(  \sigma,\omega\right)  \right)  ,\ \ \ \ \ \text{for all
}\left(  \sigma,\omega\right)  \in\mathcal{U}\times\mathcal{U}.
\]

\end{definition}

Obviously, no biLipschitz stable (bump) condition can characterize a
biLipschitz unstable weighted norm inequality. Here we will show that no
sparse bump functional can either. Note that it is shown in \cite{Ler} that
all (separated) Orlicz or entropy bump conditions, that are currently known to
imply boundedness of singular integrals, are sparse bump functionals on any
such $\mathcal{U}$. Here is the main result of this section.

\begin{theorem}
\label{bumps don't work}Let $\mathcal{U}_{\operatorname*{doub}}$ be the
biLipschitz invariant set of doubling measures on $\mathbb{R}^{n}$ (called
$\mathcal{S}_{\operatorname*{doub}}$ in the introduction), and let
$\mathcal{B}\left(  \sigma,\omega\right)  $ be a sparse\ bump\emph{
}functional on $\mathcal{U}_{\operatorname*{doub}}$. Then for any smooth
Calder\'{o}n-Zygmund operator $T$ that is biLipschitz unstable on pairs of
doubling weights, there is no continuous increasing function $\Gamma:\left(
0,\infty\right)  \rightarrow\left(  0,\infty\right)  $ such that%
\begin{equation}
\mathcal{B}\left(  \sigma,\omega\right)  \leq\Gamma\left(  \mathfrak{N}%
_{T}\left(  \sigma,\omega\right)  \right)  ,\ \ \ \ \ \text{for all }\left(
\sigma,\omega\right)  \in\mathcal{U}_{\operatorname*{doub}}.
\label{Gamma ineq}%
\end{equation}
In particular by Theorem \ref{stab}, we can take $T$ to be an iterated Riesz
transform of odd order.
\end{theorem}

\begin{remark}
This theorem, together with Theorem \ref{lerner thm} below, shows that no
sparse bump functional $\mathcal{B}\left(  \sigma,\omega\right)  $ can
characterize the two weight norm inequality for an iterated Riesz transform of
odd order on doubling measures.
\end{remark}

To prove Theorem \ref{bumps don't work} we will use a special case of the
groundbreaking sparse domination principle of A. Lerner. Recall that a Dini-regular Calder\'on-Zygmund operator $T$ with kernel $K$ is an operator where the kernel, rather than satisfying the size and smoothness estimates \eqref{size and smoothness}, instead satisfies
\[
|K(x,y)| \leq C_{CZ} |x-y|^{-n} \, ,
\]
\[
\left | K(x,y) - K(x',y) \right | + \left | K(y,x) - K(y,x') \right | \leq f_{CZ} \left ( \frac{|x-x'|}{|x-y|}\right ) |x-y|^{-n} \, ,
\]
where the nonnegative function $f_{CZ}$ satisfies the Dini condition
\[
\int\limits_0 ^1 f_{CZ} (t) \frac{dt}{t} < \infty \, .
\]

\begin{theorem}
[A. Lerner \cite{Ler2}]\label{lerner thm}Let $T$ be a Dini-regular
Calder\'{o}n-Zygmund operator, and let $f\in L^{1}\left(  \mathbb{R}%
^{n}\right)  $ be compactly supported. Then with $\eta_{n}=\frac{1}{2\left(
5\sqrt{n}\right)  ^{n}}$ there is an $\eta_{n}$-sparse grid $\mathcal{S}$
depending on $f$ such that%
\[
\left\vert Tf\left(  x\right)  \right\vert \leq C_{n,T} Sf (x)  ,\ \ \ \ \ \text{for }a.e.x\in
\mathbb{R}^{n} \, ,
\]
where $Sf$ is as in (\ref{eq:def_spare_op}).

\end{theorem}

Now we can give the proof of Theorem \ref{bumps don't work}.

\begin{proof}
[Proof of Theorem \ref{bumps don't work}]Suppose in order to derive a
contradiction that (\ref{Gamma ineq}) holds for some sparse\ bump\emph{
}functional $\mathcal{B}\left(  \sigma,\omega\right)  $ on $\mathcal{U}$ in
$\mathbb{R}^{n}$. Then for any BiLipschitz map $\Phi$, we have $\mathfrak{N}_{T}\left(  \Phi_{\ast}\sigma,\Phi_{\ast
}\omega\right)  =\mathfrak{N}_{\Phi_{\ast}T}\left(  \sigma,\omega\right)  $
and so if a compactly supported function $f\in L^{2}\left(  \sigma\right)  $ is chosen to be a near
extremizer for the norm $\mathfrak{N}_{\Phi_{\ast}T}\left(  \sigma
,\omega\right)  $, we have from Lerner's theorem, applied to the Dini-regular
Calder\'{o}n-Zygmund operator $\Phi_{\ast}T$, that there is an $\eta_{n}%
$-sparse operator $S$ such that
\begin{align*}
\mathfrak{N}_{\Phi_{\ast}T}\left(  \sigma,\omega\right)   &  \leq
2\frac{\left\Vert \Phi_{\ast}T\left(  f\sigma\right)  \right\Vert
_{L^{2}\left(  \omega\right)  }}{\left\Vert f\right\Vert _{L^{2}\left(
\sigma\right)  }}\leq2C_{n,T,\left\Vert \Phi\right\Vert }\frac{\left\Vert
S\left(  f\sigma\right)  \right\Vert _{L^{2}\left(  \omega\right)  }%
}{\left\Vert f\right\Vert _{L^{2}\left(  \sigma\right)  }}\\
&  \leq2C_{n,T,\left\Vert \Phi\right\Vert }\mathfrak{N}_{S}\left(
\sigma,\omega\right)  \leq C_{n,T,\left\Vert \Phi\right\Vert }\Gamma_{\eta
_{n}}\left(  \mathcal{B}\left(  \sigma,\omega\right)  \right) \\
&  \leq C_{n,T,\left\Vert \Phi\right\Vert }\Gamma_{\eta_{n}}\left(
\Gamma\left(  \mathfrak{N}_{T}\left(  \sigma,\omega\right)  \right)  \right)
,
\end{align*}
where the first line uses Theorem \ref{lerner thm}, the second line uses the definition of sparse bump
functional and the assumed inequality (\ref{Gamma ineq}). Thus two-weight norm inequalities for Dini-regular operators are biLipschitz stable, as defined in Definition \ref{defn:stability}. But by Theorem \ref{stab}, the inequality for $T$ equal an individual Riesz transform cannot be biLipschitz stable. This contradiction proves the theorem.
\end{proof}

\subsection{Modification of Transplantation to achieve Classical Doubling\label{subsection trans}}

In Section \ref{section:sprvsr_trans}, we constructed functions $v,u$ on a
cube $Q^{0}$ such that both $v,u$ are dyadically $\tau$-flat on $Q_{0}$. However, dyadic doubling does not imply continuous doubling on
$Q^{0}$. As such, we will need to modify the transplantation argument to
smooth out $v,u$ into weights $v^{\prime},u^{\prime}$ which are classically
doubling, as done in \cite{NaVo}. See also \cite{Naz, KaTr}.

We will describe how to attain $u^{\prime}$ from $u$, as the process for
$v^{\prime}$ and $v$ is identical. Recall in Proposition \ref{Riesz Nazarov}
we define $u$ by
\[
u= \left(  E_{Q^{0}}U \right)  \mathbf{1}_{Q^{0}} +\sum\limits_{t=0}^{m-1}%
\sum\limits_{Q\in\mathcal{K}_{t}}\langle U,h_{\mathcal{S}(Q)}%
^{\hor}\rangle\frac{1}{\sqrt{\left\vert \mathcal{S}%
(Q)\right\vert }}s_{k_{t+1}}^{Q,\hor}\,,
\]
where $s_{k_{t+1}}$ is constant on cubes in $\mathcal{K}_{t+1}$.

Define the grid $\widehat{\mathcal{K}}$ from $\mathcal{K}$ inductively as
follows. First set $\widehat{\mathcal{K}}_{0}\equiv\mathcal{K}_{0}$. Now given
$Q\in\widehat{\mathcal{K}}_{t}$, a cube $R\in\mathcal{K}_{t+1}$ is called a
\textit{transition cube} for $Q$ if $Q=\pi_{\mathcal{K}}R$ and $(\partial
\pi_{\mathcal{D}}R)\cap\partial Q$ is non-empty; as such define
$\widehat{\mathcal{K}}_{t+1}$ to consist of the cubes $P\in\mathcal{K}_{t+1}$
such that $\pi_{\mathcal{K}}P\in\widehat{\mathcal{K}}_{t}$ and $P$ is
\emph{not} a transition cube. Finally, set $\widehat{K} \equiv\bigcup
\limits_{t} \widehat{\mathcal{K}}_t$.

One can see that $\widehat{\mathcal{K}}$ consists of the cubes in
$\mathcal{K}$ not contained in a transition cube. This implies that if $R$ is
a transition cube, then $\pi_{\mathcal{K}} R \in\mathcal{K}$. It also implies
that no two transition cubes have overlapping interiors. Visually, the union
of the transition cubes for a cube $Q$ forms a \textquotedblleft
halo\textquotedblright\ for $Q$. Recalling that two distinct dyadic cubes in
$\mathcal{D}$ of the same size are \emph{adjacent} if their boundaries
intersect, we then note that two adjacent cubes in $\widehat
{\mathcal{K}}$ must then have the same $\mathcal{K}%
$-parent, and so are close to each other in the
tree distance of $\mathcal{K}$. The proof of the following lemma is left to
the reader, who is encouraged to draw a picture. It helps to note that in
$\mathbb{R}$, if two transition intervals $R_{1}$ and $R_{2}$ are at levels
$s$ and $s+2$, then there must be a transition interval $R$ at level $s+1$
such that $R$ lies between $R_{1}$ and $R_{2}$.

\begin{lemma}
\label{lem:adjacent_trans_cubes} Let $R_{1}\in\mathcal{K}_{s}$ be a transition cube.

\begin{enumerate}
\item If $R_{2}$ $\in\mathcal{K}_{t}$ is a transition cube such that the
interiors of $R_{2}$ and $R_{1}$ are disjoint, but not their closures, then
$t\in\{s-1,s,s+1\}$.

\item If $K\in\widehat{\mathcal{K}}_{t}$ is such that the interiors of $K$ and
$R_{1}$ are disjoint, but not their closures, then $t\in\{s-1,s\}$. And if $t
= s$, then $\pi_{\mathcal{K}} K = \pi_{\mathcal{K}} R_{1}$.
\end{enumerate}
\end{lemma}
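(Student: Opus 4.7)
The common mechanism driving both parts is an upward rigidity of the $\widehat{\mathcal{K}}$-grid: by the inductive definition, $P\in\widehat{\mathcal{K}}_{t+1}$ forces both $\pi_{\mathcal{K}}P\in\widehat{\mathcal{K}}_{t}$ and that $P$ itself is not a transition cube, so every $\mathcal{K}$-ancestor of a cube in $\widehat{\mathcal{K}}$ lies in $\widehat{\mathcal{K}}$ and none of these ancestors is a transition cube. Likewise, if $R$ is a transition cube then $\pi_{\mathcal{K}}R\in\widehat{\mathcal{K}}$ by definition, so all strict $\mathcal{K}$-ancestors of $R$ lie in $\widehat{\mathcal{K}}$. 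The plan is to use the hypothesis that $R_1$ and $R_2$ (resp.\ $K$) touch to exhibit a $\mathcal{K}$-ancestor that the geometry forces to be a transition cube, contradicting this rigidity.

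For part (1), after swapping the roles of $R_1,R_2$ it suffices to rule out $t\geq s+2$. Write $R_2^{(\ell)}\equiv\pi_{\mathcal{K}}^{t-\ell}R_2\in\mathcal{K}_{\ell}$ for $s\leq\ell\leq t$. First, $R_2^{(s)}\neq R_1$: otherwise $R_2\subsetneq R_1$, forcing $\mathrm{int}(R_2)\subset\mathrm{int}(R_1)$, contradicting disjoint interiors. Hence $R_2^{(s)}$ and $R_1$ are distinct same-level dyadic cubes, and since $\overline{R_2}\subset\overline{R_2^{(s)}}$ meets $\overline{R_1}$, they are adjacent with some shared boundary portion $F\subset\partial R_1\cap\partial R_2^{(s)}$. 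Because $\overline{R_2}$ meets $F$ and $R_2\subset R_2^{(s+1)}$, the child $R_2^{(s+1)}$ is an extremal dyadic subcube of $R_2^{(s)}$ touching $\partial R_2^{(s)}$; hence its $\mathcal{D}$-parent (at depth $k_{s+1}-1$ in $R_2^{(s)}$) also meets $\partial R_2^{(s)}$, so $R_2^{(s+1)}$ satisfies the transition-cube definition relative to $R_2^{(s)}$. But rigidity forces $R_2^{(s)},R_2^{(s+1)}\in\widehat{\mathcal{K}}$, and $R_2^{(s+1)}\in\widehat{\mathcal{K}}_{s+1}$ means $R_2^{(s+1)}$ is \emph{not} a transition cube — the desired contradiction.

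For part (2), the same geometric scheme applies. If $t\geq s+1$, then either $K^{(s)}\equiv\pi_{\mathcal{K}}^{t-s}K=R_1$ (which forces $K\subsetneq R_1$ and overlapping interiors) or $K^{(s)}$ is adjacent to $R_1$; in the latter case $K^{(s+1)}$ (or $K$ itself when $t=s+1$) sits on the shared boundary of $K^{(s)}$ and is therefore a transition cube, contradicting that all ancestors of $K\in\widehat{\mathcal{K}}$ and $K$ itself are non-transition. If $t\leq s-2$, swap roles and run the analogous argument with $R_1^{(t)}\in\widehat{\mathcal{K}}_{t}$ (which lies in $\widehat{\mathcal{K}}$ because $R_1$ is a transition cube) playing the role of $R_2^{(s)}$; the forced transition-cube status of $R_1^{(t+1)}$ then breaks the chain of strict ancestors of $R_1$ that must lie in $\widehat{\mathcal{K}}$. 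Hence $t\in\{s-1,s\}$. For the refinement at $t=s$: if $\pi_{\mathcal{K}}K\neq\pi_{\mathcal{K}}R_1$, then $K$ sits on the shared boundary of its own $\mathcal{K}_{s-1}$-parent (which lies in $\widehat{\mathcal{K}}_{s-1}$ since $K\in\widehat{\mathcal{K}}_s$), so $K$ would be a transition cube, contradicting $K\in\widehat{\mathcal{K}}_s$. The main obstacle will be geometric bookkeeping in $\mathbb{R}^n$, where cubes can touch along faces of varying dimension: one must carefully verify that any $\mathcal{K}$-child of $Q=R_2^{(s)}$ whose closure meets any point of $\partial Q$ is automatically an extremal dyadic subcube whose $\mathcal{D}$-parent still meets $\partial Q$. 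This is routine from dyadic nestedness (an interior dyadic subcube has positive distance from $\partial Q$), but the verification must be done uniformly across all codimensions.
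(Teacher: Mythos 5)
Your proof is correct. The paper gives no proof of this lemma (it is explicitly left to the reader, with only the one--dimensional hint preceding the statement), and your argument --- using the touching point to force the level-$(s+1)$ $\mathcal{K}$-ancestor of the deeper cube (or $K$ itself, resp.\ $R_1^{(t+1)}$) to satisfy the transition-cube condition, and contradicting the upward rigidity that cubes in $\widehat{\mathcal{K}}$ and all $\mathcal{K}$-ancestors of such cubes (hence also strict ancestors of transition cubes) lie in $\widehat{\mathcal{K}}$ and so are not transition cubes --- is exactly the mechanism that hint points to; the codimension bookkeeping you flag at the end (a point of $\overline{R_2}\cap\overline{R_1}$ lies on $\partial R_1\cap\partial R_2^{(s)}$, and a subcube of $Q$ whose closure meets $\partial Q$ has its $\mathcal{D}$-parent's boundary meeting $\partial Q$) is indeed routine from dyadic nestedness and works uniformly in all dimensions.
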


With this in mind, given $Q\in\widehat{\mathcal{K}}_{t}$, define
\[
r_{k_{t+1}}^{Q,\hor}(x)\equiv%
\begin{cases}
s_{k_{t+1}}^{Q,\hor}(x) & \text{ if }x\text{ is not
contained in a transition cube for }Q\\
0 & \text{ otherwise }%
\end{cases}
\,.
\]
Now we may define
\begin{align*}
u_{\ell}^{\prime}  &  \equiv\left(  E_{Q^{0}}U \right)  \mathbf{1}_{Q^{0}}
+\sum\limits_{t=0}^{\ell-1}\sum\limits_{Q\in\widehat{\mathcal{K}}_{t}}\langle
U,h_{\mathcal{S}(Q)}^{\hor}\rangle\frac{1}{\sqrt
{\left\vert \mathcal{S}(Q)\right\vert }}r_{k_{t+1}}%
^{Q,\hor}\,,\quad0\leq\ell\leq m\,,\\
u^{\prime}  &  \equiv u_{m}^{\prime}\text{ and }v^{\prime}\equiv v_{m}%
^{\prime}.
\end{align*}
Given $x\in Q^{0}$ and $\ell\leq m$, if we define
\[
t(x)\equiv%
\begin{cases}
t\text{ if }x\text{ is contained in a transition cube belonging to
}\mathcal{K}_{t} \text{ for some } t< \ell\\
\ell\text{ otherwise }%
\end{cases}
,
\]
then pointwise we have
\[
u_{\ell}^{\prime}(x)=\left(  E_{Q^{0}}U \right)  \mathbf{1}_{Q^{0}} (x) %
+\sum\limits_{t=0}^{t(x)-1}\sum\limits_{Q\in\widehat{\mathcal{K}}_{t}}\langle
U,h_{\mathcal{S}(Q)}^{\hor}\rangle\frac{1}{\sqrt
{\left\vert \mathcal{S}(Q)\right\vert }}s_{k_{t+1}}%
^{Q,\hor}(x)\,,\quad0\leq\ell\leq m\,.
\]

The function $u^{\prime}$ is nearly a transplantation of $U$, as exhibited by
the following lemma, whose proof we leave to the reader. The reader should
note that for each cube contained in a transition cube, the value of $u_{\ell
}^{\prime}$ is equal to its average on the transition cube containing it.

\begin{lemma}
\label{lem:mod_trans_avgs} Let $\mathcal{K}$ be as above.

\begin{enumerate}
\item If $P \in\mathcal{K}$ is not contained in a transition cube, then $E_{P}
u^{\prime}_{\ell} = E_{\mathcal{S}(P)} U$.

\item If $P \in\mathcal{K}$ is contained in a transition cube $R$, then $E_{P}
u^{\prime}_{\ell} = E_{\mathcal{S}(\pi_{\mathcal{K}} R)} U$.

\item If $P \in\mathcal{D}$ is a cube for which $K_{t+1}\subsetneq P\subset
K_{t}$ where $K_{t+1}\in\mathcal{K}_{t+1}$ and $K_{t}\in\mathcal{K}_{t}$, then
$E_{P}u_{\ell}^{\prime}=E_{K_{t}}u_{\ell}^{\prime}$.
\end{enumerate}
\end{lemma}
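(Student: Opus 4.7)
The plan is to prove all three parts by exploiting a single cancellation identity for the modified alternating function $r$. Specifically, I claim that for any $Q \in \widehat{\mathcal{K}}_t$ and any dyadic sub-cube $P \subset Q$ with either $P = Q$ or $P$ strictly containing some $\mathcal{K}_{t+1}$-cube, one has $\int_P r_{k_{t+1}}^{Q,\operatorname{horizontal}} = 0$. This identity follows from the tensor identification $s_{k_{t+1}}^{Q,\operatorname{horizontal}}(x) = s_{k_{t+1}}^{Q_1}(x_1)\mathbf{1}_{Q_2\times\cdots\times Q_n}$: on the one hand $\int_P s_{k_{t+1}}^{Q,\operatorname{horizontal}} = 0$ because $\ell(P) \geq 2\ell(\mathcal{K}_{t+1})$ averages the $x_1$-alternation at a coarser scale; on the other hand, removing the transition cubes trims the $x_1$-index to the window $\{3,\ldots,2^{k_{t+1}}-2\}$, and any dyadic sub-window inside this range has even length, so the alternating signs $(-1)^{i_1+1}$ sum to zero. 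Combining, $\int_P r = \int_P s - \int_{P \cap \text{trans}} s = 0 - 0 = 0$.

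For part (1), I expand $u'_\ell$ using its definition and integrate over $P \in \widehat{\mathcal{K}}_s$. Terms with $Q' \in \widehat{\mathcal{K}}_{t'}$ for $t' < s$ that strictly contain $P$ contribute exactly as in the unmodified transplant $u_s$: the $\mathcal{K}_{t'+1}$-child of $Q'$ containing $P$ cannot be a transition cube (otherwise $P$ itself would lie in a transition cube, contradicting $P \in \widehat{\mathcal{K}}$), so $r$ coincides with $s$ on that child. The terms with $Q' = P$ or $Q' \subsetneq P$ (necessarily a $\widehat{\mathcal{K}}$-descendant of $P$) integrate to zero by the central identity applied at $Q'$. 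Therefore $\int_P u'_\ell = \int_P u_s = |P|\,E_{\mathcal{S}(P)}U$ by the transplant property of the original $u_s$ established in Section \ref{section:sprvsr_trans}.

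Part (2) is an immediate consequence: for $P$ inside a transition cube $R \in \mathcal{K}_{t_0+1}$, the level-$t_0$ perturbation $r_{k_{t_0+1}}^{\pi_\mathcal{K} R}$ vanishes on $R$ by construction, and $R$ admits no $\widehat{\mathcal{K}}$-descendants because $R \notin \widehat{\mathcal{K}}$ and $\widehat{\mathcal{K}}$ is hereditary. Hence $u'_\ell \equiv u'_{t_0}$ on $R$, which is constant on $\pi_\mathcal{K} R \supset R$ with value $E_{\mathcal{S}(\pi_\mathcal{K} R)}U$ by part (1). For part (3), if $K_t \notin \widehat{\mathcal{K}}_t$ then $K_t$ sits inside a single transition cube on which $u'_\ell$ is constant, and the claim is trivial; otherwise $K_t \in \widehat{\mathcal{K}}_t$, and I compare $E_P u'_\ell$ and $E_{K_t} u'_\ell$ term by term: contributions from $Q' \supsetneq K_t$ are constants on $K_t$ and cancel, the $Q' = K_t$ contribution is $c_{K_t}\int_P r_{k_{t+1}}^{K_t}$ which vanishes by the central identity (since $P \supsetneq K_{t+1}$), and contributions from $Q' \subsetneq K_t$ integrate to zero over both $P$ and $K_t$. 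Hence both averages equal the common value $E_{\mathcal{S}(K_t)}U$.

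The main obstacle will be the combinatorial verification of the central cancellation identity in dimension $n$, especially confirming the even-length property of every dyadic sub-window of the non-transition index range so that the alternating signs cancel; once this is in hand, the remainder is a bookkeeping exercise dictated entirely by the hereditary structure of $\widehat{\mathcal{K}}$ and the telescoping form of the modified transplant.
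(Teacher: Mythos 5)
Your proposal is correct in structure and is precisely the bookkeeping the paper leaves to the reader (the paper gives no proof of this lemma, only the hint about values on transition cubes, which is your part (2)); parts (1)--(3) all reduce, as you say, to the vanishing of $\int_P r_{k_{t+1}}^{Q,\operatorname{horizontal}}$ together with the heredity of $\widehat{\mathcal{K}}$ and the fact that lower-level perturbations are constant at scale $\mathcal{K}_{t}$. The one soft spot is your justification of the central cancellation identity: removing the transition cubes does \emph{not} merely trim the $x_{1}$-index to $\{3,\dots,2^{k_{t+1}}-2\}$ --- a cube $R\in\mathcal{K}_{t+1}$ is a transition cube whenever $\partial(\pi_{\mathcal{D}}R)$ meets $\partial Q$ in \emph{any} coordinate direction, so the removed set is a full halo around all of $\partial Q$, and the ``even-length window'' parity bookkeeping as stated is incomplete. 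The fix is shorter than the parity argument you anticipate as the main obstacle: the union of the transition cubes of $Q$ is a union of depth-$(k_{t+1}-1)$ dyadic grandchildren of $Q$ (namely the $\mathcal{D}$-parents of $\mathcal{K}_{t+1}$-cubes that touch $\partial Q$), and on each such grandchild $Q'$ one has $s_{k_{t+1}}^{Q,\operatorname{horizontal}}=\sqrt{\left\vert Q'\right\vert }\,h_{Q'}^{\operatorname{horizontal}}$, which has mean zero; since any dyadic $P\subset Q$ with $P=Q$ or $P\supsetneq$ some $\mathcal{K}_{t+1}$-cube is itself a union of such grandchildren, both $\int_{P}s_{k_{t+1}}^{Q,\operatorname{horizontal}}=0$ and $\int_{P\cap(\text{transition cubes})}s_{k_{t+1}}^{Q,\operatorname{horizontal}}=0$, whence $\int_{P}r_{k_{t+1}}^{Q,\operatorname{horizontal}}=0$ in every dimension at once. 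With that one-line replacement, your argument goes through as written (understanding, as the statement intends, that the $\mathcal{K}$-level of $P$ does not exceed $\ell$ in part (1)).
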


\begin{remark}\label{rmk:A2_transition}
   From the above lemma, it follows that
   \[
   A_2 ^{\operatorname{dyadic}} \left ( u_{\ell} ' , v_{\ell}' \right ) \leq A_2 ^{\operatorname{dyadic}} \left ( U , V \right ) \leq 1 \, . 
   \]
\end{remark}

\begin{lemma}
\label{lem:adjacent doubling} If $P_{1},P_{2}$ are adjacent dyadic subcubes of
$Q^{0}$, then $\frac{E_{P_{1}}u^{\prime}}{E_{P_{2}}u^{\prime}}\in
(1-\tau,1+\tau)$. Similarly for $v^{\prime}$.
\end{lemma}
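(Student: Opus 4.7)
The plan is to compare $E_{P_1}u'$ and $E_{P_2}u'$ by tracing both averages back to averages of $U$ over adjacent dyadic cubes, where the hypothesis of dyadic $\tau$-flatness of $U$ applies. First I reduce to $\mathcal{K}$-cubes: for $P \in \mathcal{D}^0$, let $K^{\ast}(P)$ denote the smallest $\mathcal{K}$-cube containing $P$. Applying Lemma \ref{lem:mod_trans_avgs}(3) iteratively, together with the fact that $u' = u'_m$ is constant on $\mathcal{K}_m$-cubes (so that any $P$ strictly smaller than every $\mathcal{K}$-cube containing it inherits the value of the containing $\mathcal{K}_m$-cube), gives $E_P u' = E_{K^{\ast}(P)}u'$. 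It therefore suffices to show $\frac{E_{K_1}u'}{E_{K_2}u'} \in (1-\tau, 1+\tau)$ whenever $K_1 := K^{\ast}(P_1)$ and $K_2 := K^{\ast}(P_2)$ are cubes in $\mathcal{K}$ which are $\mathcal{D}$-adjacent, possibly at different $\mathcal{K}$-levels.

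The main argument is an induction on the maximum $\mathcal{K}$-level of $K_1, K_2$. In the same-level case $K_1, K_2 \in \mathcal{K}_s$ with common $\mathcal{K}$-parent $K$, the supervisors $\mathcal{S}(K_1), \mathcal{S}(K_2)$ are either equal or are $\mathcal{D}$-siblings under $\mathcal{S}(K)$. Lemma \ref{lem:mod_trans_avgs}(1)-(2) expresses $E_{K_i}u'$ as one of $E_{\mathcal{S}(K_i)}U$, $E_{\mathcal{S}(K)}U$, or, when $K$ has a transition ancestor $R$, a common value $E_{\mathcal{S}(\pi_\mathcal{K} R)}U$ shared by both. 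Dyadic $\tau$-flatness of $U$, combined with the elementary fact that an average of siblings whose pairwise ratios lie in $(1-\tau, 1+\tau)$ is itself within that range of each sibling, gives the bound in every configuration. If instead $\pi_\mathcal{K} K_1 \neq \pi_\mathcal{K} K_2$, then both $K_i$ must be transition cubes --- each touches the boundary of its own $\mathcal{K}$-parent at the point where $K_1$ meets $K_2$ --- so Lemma \ref{lem:mod_trans_avgs}(2) yields $E_{K_i}u' = E_{\pi_\mathcal{K} K_i}u'$, and the inductive hypothesis applied to $\pi_\mathcal{K} K_1, \pi_\mathcal{K} K_2 \in \mathcal{K}_{s-1}$ closes the case.

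The cross-level case, with $K_1 \in \mathcal{K}_{s_1}$, $K_2 \in \mathcal{K}_{s_2}$ and $s_1 < s_2$, reduces to the same-level case via a geometric descent. Here $K_2$ sits inside a unique $K' \in \mathcal{K}_{s_1}$ different from $K_1$, $K'$ is adjacent to $K_1$, and $K_2$ touches $\partial K'$ at the contact point with $K_1$. An elementary induction shows that every $\mathcal{K}$-ancestor of $K_2$ strictly contained in $K'$ also touches $\partial K'$; since such a cube has its $\mathcal{D}$-parent meeting the boundary of its $\mathcal{K}$-parent, each is a transition cube. Consequently $u'$ is constant on the entire tower from $K_2$ up to the $\mathcal{K}$-child of $K'$ containing it (and, if $K'$ itself has further transition ancestors, on an even larger piece), yielding $E_{K_2}u' = E_{K'}u'$, after which the same-level case controls $E_{K_1}u'/E_{K'}u'$. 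The proof for $v'$ is identical with $V$ in place of $U$. The main technical obstacle is this geometric propagation of transition status: it is what allows the cross-level comparison to collapse onto a same-level one, where the dyadic $\tau$-flatness of $U$ is directly usable.
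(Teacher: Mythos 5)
Your proof is correct, and its skeleton is the paper's: reduce to cubes of $\mathcal{K}$ via Lemma \ref{lem:mod_trans_avgs} part (3), convert averages of $u^{\prime}$ into averages of $U$ over supervisors via parts (1) and (2), and finish with dyadic $\tau$-flatness of $U$ (plus the parent-versus-children averaging observation). Where you genuinely diverge is in handling the configuration where the two cubes sit on opposite sides of a $\mathcal{K}$-parent boundary. The paper does not induct: it notes that adjacent cubes of $\widehat{\mathcal{K}}$ must share a $\mathcal{K}$-parent, and in the case where both cubes lie in transition cubes $R_{1},R_{2}$ it invokes Lemma \ref{lem:adjacent_trans_cubes} to force $R_{1}$ and $R_{2}$ to be within one $\mathcal{K}$-level of each other, so that $\mathcal{S}(\pi_{\mathcal{K}}R_{1})$ and $\mathcal{S}(\pi_{\mathcal{K}}R_{2})$ are equal, dyadic siblings, or parent and child, and $\tau$-flatness finishes. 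You instead run an induction on the $\mathcal{K}$-level: when the $\mathcal{K}$-parents differ, each cube touches its parent's boundary, so its average is frozen at the parent's value, and you pass to the still-adjacent parents. This trades the combinatorial level-pinning Lemma \ref{lem:adjacent_trans_cubes} for a boundary-propagation observation plus induction; since every step is an exact equality of averages, no constants degrade, and the argument is a perfectly serviceable alternative.

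Two minor points. First, your cross-level case is vacuous: adjacent dyadic cubes have equal side length (under either definition of adjacency used in the paper), so $K^{\ast}(P_{1})$ and $K^{\ast}(P_{2})$ automatically lie in the same $\mathcal{K}_{t}$ or coincide; the extra work is harmless but unnecessary. Second, in the different-parent case the assertion that both $K_{i}$ "must be transition cubes" is slightly too strong: if $\pi_{\mathcal{K}}K_{i}$ is itself contained in a transition cube $R$, then $K_{i}$ is not a transition cube in the paper's sense (transition cubes have $\widehat{\mathcal{K}}$-parents), but it is contained in $R$, and the identity you actually use, $E_{K_{i}}u^{\prime}=E_{\pi_{\mathcal{K}}K_{i}}u^{\prime}$, still holds because both averages equal $E_{\mathcal{S}(\pi_{\mathcal{K}}R)}U$ by Lemma \ref{lem:mod_trans_avgs} part (2); so the step stands as written, with the justification lightly amended.
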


\begin{proof}
[Proof of Lemma \ref{lem:adjacent doubling}]Let $P_{1},P_{2}$ be adjacent
dyadic subcubes of $Q^{0}$. By Lemma \ref{lem:mod_trans_avgs} part (3), it
suffices to check the case when $P_{1},P_{2}\in\mathcal{K}$. We consider
various cases.

\textbf{Case 1}: neither $P_{1}$ nor $P_{2}$ is contained in a transition
cube, i.e., both belong to $\widehat{\mathcal{K}}$. Then $P_{1}$ and $P_{2}$
must have a common $\mathcal{K}$- parent, meaning
\[
\pi_{\mathcal{D}} \mathcal{S}(P_{1}) = \mathcal{S}(\pi_{\mathcal{K}} P_{1}) =
\mathcal{S}(\pi_{\mathcal{K}} P_{2}) = \pi_{\mathcal{D}} \mathcal{S}(P_{2})
\]
and so $\mathcal{S}(P_{1})$ and $\mathcal{S}(P_{2})$ must be equal or dyadic
siblings. By the first formula of Lemma \ref{lem:mod_trans_avgs} we get
$\frac{E_{P_{1}}u^{\prime}}{E_{P_{2}}u^{\prime}}\in(1-\tau,1+\tau)$.

\textbf{Case 2}: exactly one of the cubes, say $P_{1}$, is contained in a
transition cube $R_{1}$. Since $P_{2}$ is not in a transition cube, then the
only way for $P_{1},P_{2}$ to be adjacent is for both to have the same
$\mathcal{K}$-parent. And since $P_{2}$ is not contained in a transition cube,
then $R_{1}$ must in fact equal $P_{1}$, i.e. $P_{1}$ is a transition cube:
indeed, if $P_{1}$ were a level below $R_{1}$ in the grid $\mathcal{K}$, then
the only way $P_{2}$ can be adjacent to $P_{1}$ is by being in a transition
cube adjacent to $R_{1}$ or in $R_{1}$ itself, but the latter can't happen by
assumption on $P_{2}$.

Altogether, the above yields that $\mathcal{S}(\pi_{\mathcal{K}}%
P_{1})=\mathcal{S}(\pi_{\mathcal{K}}P_{2})=\pi_{\mathcal{D}}\mathcal{S}%
(P_{2})$. Thus by Lemma \ref{lem:mod_trans_avgs} parts (1) and (2), dyadic
$\tau$-flatness of $U$, and the fact that $P_{1}$ is a transition cube, we
have
\[
\frac{E_{P_{1}}u^{\prime}}{E_{P_{2}}u^{\prime}}=\frac{E_{\mathcal{S}%
(\pi_{\mathcal{K}}P_{1})}U}{E_{\mathcal{S}(P_{2})}U}=\frac{E_{\pi
_{\mathcal{D}}\mathcal{S}(P_{2})}U}{E_{\mathcal{S}(P_{2})}U}\in(1-\tau
,1+\tau).
\]

\textbf{Case 3:} both $P_{1}$ and $P_{2}$ are contained within transition
cubes, say $R_{1}$ and $R_{2}$ respectively. Using Lemma
\ref{lem:mod_trans_avgs}, it suffices to show the ratio
\[
\frac{E_{P_{1}}u^{\prime}}{E_{P_{2}}u^{\prime}}=\frac{E_{\mathcal{S}%
(\pi_{\mathcal{K}}R_{1})}U}{E_{\mathcal{S}(\pi_{\mathcal{K}}R_{2})}U}\,
\]
lies between $1-\tau$ and $1+\tau$. Note adjacency of $P_{1},P_{2}$ implies
$R_{1}$ and $R_{2}$ have disjoint interiors, but not closures, or are equal.

\textbf{Case 3a:} $R_{1}=R_{2}$. Then we get $\frac{E_{P_{1}}u^{\prime}%
}{E_{P_{2}}u^{\prime}}=1$.

\textbf{Case 3b:} $R_{1}$ and $R_{2}$ are of the same sidelength, but
$R_{1}\neq R_{2}$. Then both $R_{1}$ and $R_{2}$ are adjacent, and so
$\mathcal{S}(\pi_{\mathcal{K}}R_{1})$ and $\mathcal{S}(\pi_{\mathcal{K}}%
R_{2})$ must be equal or dyadic siblings. In either case, by the formula above
$\frac{E_{P_{1}}u^{\prime}}{E_{P_{2}}u^{\prime}}\in(1-\tau,1+\tau)$.

\textbf{Case 3c: }$R_{1}$ and $R_{2}$ are of different sidelengths, say
$\ell(R_{1})>\ell(R_{2})$. Since $P_{1},P_{2}$ are adjacent then $R_{1}$ and
$R_{2}$ have disjoint interiors, but not closures. It follows that if
$R_{1}\in\mathcal{K}_{t}$, then $R_{2}\in\mathcal{K}_{t+1}$ by Lemma
\ref{lem:adjacent_trans_cubes}. Thus $R_{1}$ is adjacent to $\pi_{\mathcal{K}%
}R_{2}$. In fact, since $R_{1}$ is a transition cube but $\pi_{\mathcal{K}}
R_{2}$ is not, then by Lemma \ref{lem:adjacent_trans_cubes} (2) we have
$\pi_{\mathcal{K}}R_{1} = \pi_{\mathcal{K}}^{\left(  2\right)  }R_{2}$ and so
\[
\mathcal{S}(\pi_{\mathcal{K}}R_{1})= \mathcal{S}(\pi_{\mathcal{K}} ^{(2)}
R_{2})= \pi_{\mathcal{D}}\mathcal{S}(\pi_{\mathcal{K}}R_{2}) \, .
\]
Thus
\[
\frac{E_{P_{1}}u^{\prime}}{E_{P_{2}}u^{\prime}}=\frac{E_{\mathcal{S}%
(\pi_{\mathcal{K}}R_{1})}U}{E_{\mathcal{S}(\pi_{\mathcal{K}}R_{2})}U}%
=\frac{E_{\pi_{\mathcal{D}}\mathcal{S}(\pi_{\mathcal{K}}R_{2})}U}%
{E_{\mathcal{S}(\pi_{\mathcal{K}}R_{2})}U}\in(1-\tau,1+\tau)\,.
\]
This completes the proof.
\end{proof}

Showing $u^{\prime}$ has relative adjacency constant $1+o_{\tau \to 1} \left(  1\right)  $
as $\tau\rightarrow0$ on $Q^{0}$ follows from Lemma
\ref{lem:adjacent doubling} and a standard argument, and similarly for
$v^{\prime}$.

\subsection{Proof of the T1 Theorem \ref{thm:starting_T1_thm}}\label{section:pf_T1}
   By \cite[Theorem 2]{SaShUr10}, we have
   \[
   \mathfrak{N}_{T} \left ( \sigma, \omega \right ) \lesssim \sqrt{\mathcal{A}_2 \left (\sigma, \omega \right )} + \mathfrak{T}_{T} (\sigma, \omega)  + \mathfrak{T}_{T^*} (\omega , \sigma) + \mathcal{E} (\sigma, \omega) + \mathcal{E} (\omega, \sigma) \, , 
   \]
   where the two-tailed $\mathcal{A}_2$ condition is given by
   \[
   \mathcal{A}_2 \left (\sigma, \omega \right ) \equiv \sup\limits_{Q \in \mathcal{P}^n} \left ( \int\limits_{\mathbb{R}^n} \left ( \frac{\ell \left ( Q \right )}{\left ( \ell \left ( Q \right ) + \left |x - c_Q \right | \right )^2} \right )^{n} d \sigma \right ) \left ( \int\limits_{\mathbb{R}^n} \left ( \frac{\ell \left ( Q \right )}{\left ( \ell \left ( Q \right ) + \left |x - c_Q \right | \right )^2} \right )^{n} d \omega \right ) \, ,
   \] 
   and the energy condition is defined by  
   \[
   \mathcal{E} \left (\sigma, \omega \right )^2 \equiv \sup\limits_{I = \dot \bigcup_r J_r} \frac{1}{\left |I \right |_{\sigma}}  \sum\limits_{r=1}^{\infty}  \operatorname{P} (J_r , \sigma) ^2 \left | J_r \right |_{\omega} \operatorname{E} (J_{r}, \omega ) ^2 \, ,
   \]
   where the supremum is taken over all cubes $I \in \mathcal{P}^n$ and all disjoint decompositions of $I \in \mathcal{P}^n$ into disjoint cubes $\dot{\bigcup\limits_{r}} J_r$. Within the energy condition we also have the Poisson average $\operatorname{P}(J, \sigma)$, which is defined by
   \[
   \operatorname{P}(J, \sigma) \equiv \int\limits_{\mathbb{R}^n}  \frac{\ell \left ( J \right )}{\left ( \ell \left ( J \right ) + \left |x - c_J \right | \right )^{n+1 }}  d \sigma \, ,
   \]
   and we also define
   \[
   \operatorname{E} (J_{r}, \omega ) ^2 \equiv \frac{1}{\left | J_r \right |_{\omega}} \int\limits_{J_r} \left | \frac{x-A}{\ell \left (J_r \right )}   \right |^2 d \omega\left ( x\right ) \, , \text{ and } A \equiv \frac{1}{\left | J_r \right |_{\omega}} \int\limits_{J_r} z d\omega(z) \, .
   \]
   Since $\operatorname{E} (J_{r}, \omega ) ^2 \leq 1$, then the Energy Condition is bounded by the Pivotal Condition
   \[
   \mathcal{V} (\sigma, \omega) ^2 \equiv \sup\limits_{I = \dot \bigcup_r J_r} \frac{1}{\left |I \right |_{\sigma}}  \sum\limits_{r=1}^{\infty}  \operatorname{P} (J_r , \sigma) ^2 \left | J_r \right |_{\omega}  \, ,
   \]
   
   By \cite[Theorem 4]{Gri}, if $\sigma$ and $\omega$ are doubling, then the tailed $\mathcal{A}_2$ condition is equivalent to the classical $A_2$ condition, i.e.,
   \[
   \mathcal{A}_2 \left (\sigma, \omega \right ) \lesssim A_2 (\sigma, \omega) \, ;
   \]
   see also \cite[Proposition 39]{AlLuSaUr3} for further details.

   As for the pivotal condition, a dyadic decomposition yields that the Poisson average of $\sigma$ on $Q$ is controlled by the expectation of $\sigma$ on $Q$, i.e.,
   \[
   \operatorname{P}(Q, \sigma) \lesssim \frac{\left | Q \right |_{\sigma}}{ \left | Q \right |} +\sum\limits_{k=1}^{\infty} \int\limits_{2^{k+1} Q \setminus 2^k Q}  \frac{\ell \left ( Q \right )}{\left ( \ell \left ( Q \right ) + \left |x - c_Q \right | \right )^{n+1 }}  d \sigma 
   \lesssim \sum\limits_{k=0}^{\infty} \left | 2^k Q \right |_{\sigma} \frac{\ell \left ( Q \right )}{\left ( 2^k \ell \left ( Q \right )\right )^{n+1}} 
   \]
   \[
    \lesssim \left | Q \right |_{\sigma} \sum\limits_{k=0}^{\infty} 2^{(n+\epsilon)k}  \frac{\ell \left ( Q \right )}{\left ( 2^k \ell \left ( Q \right )\right )^{n+1}} = \frac{\left | Q \right |_{\sigma}}{\left |Q \right |} \sum\limits_{k=0}^{\infty} 2^{-(1-\epsilon)k}  \lesssim \frac{\left | Q \right |_{\sigma}}{\left |Q \right |} \, ,
   \]
   where the inequality between the first and second line follows by the hypothesis on the doubling constants, and the last inequality follows because $\epsilon < 1$ implies the geometric series converges.
   Thus we can estimate
   \[
   \mathcal{V} (\sigma, \omega) ^2 \lesssim \sup\limits_{I = \dot \bigcup_r J_r} \frac{1}{\left |I \right |_{\sigma}}   \sum\limits_{r=1}^{\infty}  \operatorname{P} (J_r , \sigma) ^2  \left | J_r \right |_{\omega} \lesssim \sup\limits_{I = \dot \bigcup_r J_r} \frac{1}{\left |I \right |_{\sigma}}  \sum\limits_{r=1}^{\infty} \frac{\left | J_r \right |_{\sigma}^2}{\left |J_r \right |^2}  \left | J_r \right |_{\omega}
   \]
   \[
   \lesssim  A_2 (\sigma, \omega) \sup\limits_{I = \dot \bigcup_r J_r}  \frac{1}{\left |I \right |_{\sigma}}  \sum\limits_{r=1}^{\infty} \left | J_r \right |_{\sigma} \lesssim A_2 \left (\sigma, \omega \right ) \, .
   \] 
   Combining all the above estimates with the corresponding dual estimates yields the theorem.

   Alternatively, rather than applying \cite[Theorem 2]{SaShUr10}, one can apply \cite[Theorem 2.6 (1)]{SaShUr7}, and then in our particular situation where both measures $\sigma$ and $\omega$ are doubling, one can dispose of the weak-boundedness property using an argument similar to \cite[Lemma 2.4]{Hyt3} or in \cite[Proof of Lemma 14]{AlLuSaUr3}.


\begin{thebibliography}{999999999}                                                                                        %


\bibitem[AlLuSaUr]{AlLuSaUr}\textsc{M. Alexis, J.L. Luna-Garcia, E. T. Sawyer and I.
Uriarte-Tuero,} \textit{Stability of Weighted Norm Inequalities}, \texttt{
arXiv:2208.08400v2}.

\bibitem[AlLuSaUr3]{AlLuSaUr3}\textsc{M. Alexis, J.L. Luna-Garcia, E. T. Sawyer and I.
Uriarte-Tuero,} \textit{The scalar} $T1$ \textit{theorem for pairs of doubling measures fails for Riesz Transforms when p not 2}, \texttt{
arXiv:2308.15739}.


\bibitem[AlSaUr]{AlSaUr}\textsc{M. Alexis, E. T. Sawyer and I. Uriarte-Tuero,}
\textit{A }$T1$\textit{ theorem for general Calder\'{o}n-Zygmund operators
with doubling weights, and optimal cancellation conditions, }II,
\texttt{arXiv:2111.06277}.

\bibitem[AlSaUr2]{AlSaUr2}\textsc{M. Alexis, E. T. Sawyer and I.
Uriarte-Tuero,} \textit{Tops of dyadic grids and }$T1$\textit{\ theorems},
\texttt{arXiv:2201.02897}.

\bibitem[AsIwMa]{AsIwMa}\textsc{Kari Astala, Tadeusz Iwaniec and Gaven
Martin,}\textit{ Elliptic Partial Differential Equations and Quasiconformal
Mappings in the Plane, }Princeton University Press, Princeton, New Jersey 2009.

\bibitem[Bou]{Bou}\textsc{J. Bourgain}, \textit{Some remarks on Banach spaces in
which martingale difference sequences are unconditional}, Ark. Mat.
\textbf{{21}} (1--2) (1983) 163--168,

\bibitem[CoFe]{CoFe}\textsc{R. R. Coifman and C. L. Fefferman,}
\textit{Weighted norm inequalities for maximal functions and singular
integrals}, Studia Math. \textbf{51} (1974), 241-250.

\bibitem[CrPe]{CrPe}\textsc{D. Cruz-Uribe and C. P\'{e}rez,} \textit{On the
two-weight problem for singular integral operators}, Ann. Sc. Norm. Super.
Pisa Cl. Sci. (\textbf{5}), 1(4):821--849, 2002.

\bibitem[DaTo]{DaTo}\textsc{Damian D\k{a}browski and Xavier Tolsa,}
\textit{The measures with }$L^{2}$\textit{ bounded Riesz transform satisfying
a subcritical Wolff-type energy condition}, \texttt{arXiv:2106.00303}.

\bibitem[FeMu]{FeMu}\textsc{C. L. Fefferman and B. Muckenhoupt}, \textit{Two
nonequivalent conditions for weight functions}, Proc. A.M.S. \textbf{45}
(1974), 99-104.

\bibitem[Gra]{Gra}\textsc{Grafakos, L.,} Classical Fourier Analysis. New York:
Springer, (2008).

\bibitem[Gri]{Gri}\textsc{Grigoriadis, C.}, \textit{Necessary/sufficient
conditions in weighted theory}, \texttt{arXiv:2009.12091}.

\bibitem[GrPa]{GrPa}\textsc{C. Grigoriadis and M. Paparizos}, \textit{Counterexample to the off-testing condition in two dimensions}, \texttt{arXiv:2004.06207}.

\bibitem[HaNi]{HaNi}\textsc{V.P. Havin, N.K. Nikolski}, Linear and Complex
Analysis. Problem Book 3. Part I, Lecture Notes in Mathematics, vol.
\textbf{1573}, Springer-Verlag, Berlin, 1994.

\bibitem[Hyt]{Hyt}\textsc{Hyt\"{o}nen, Tuomas}, \textit{The sharp weighted
bound for general Calder\'{o}n-Zygmund operators}, Annals of Math.
\textbf{175} issue 3 (2012), 1473-1506.

\bibitem[Hyt3]{Hyt3}\textsc{Hyt\"{o}nen, Tuomas}, \textit{The two weight inequality for the Hilbert transform with general measures}, Proceedings of the London Mathematical Society \textbf{117}, no. 3 (2018): 483-526.

\bibitem[HyLa]{HyLa}\textsc{Hyt\"{o}nen, Tuomas and Lacey, Michael T.},
\textit{The }$A_{p}-A_{\infty}$\textit{ inequality for general
Calder\'{o}n-Zygmund operators}, Indiana Univ. Math. J. \textbf{61} No. 6
(2012), 2041--2052.

\bibitem[JoNe]{JoNe}\textsc{R. Johnson and C. J. Neugebauer},
\textit{Homeomorhisms preserving }$A_{p}$, Revista Mat. Iberoam. \textbf{3}
(1987), no. 2, 249-273.

\bibitem[KaTr]{KaTr}\textsc{S. Kakaroumpas and S. Treil},
\textit{\textquotedblleft Small step\textquotedblright\ remodeling and
counterexamples for weighted estimates with arbitrarily \textquotedblleft
smooth\textquotedblright\ weights}, Advances in Mathematics, Volume
\textbf{376}, 6 January 2021, 107450

\bibitem[LaSaShUr3]{LaSaShUr3}\textsc{Lacey, Michael T., Sawyer, Eric T.,
Shen, Chun-Yen, Uriarte-Tuero, Ignacio}, \textit{Two weight inequality for the
Hilbert transform: A real variable characterization I}, Duke Math. J, Volume
\textbf{163}, Number 15 (2014), 2795-2820.

\bibitem[LaSaUr]{LaSaUr}\textsc{Lacey, Michael T., Sawyer, Eric T.,
Uriarte-Tuero, Ignacio,} \textit{A Two Weight Inequality for the Hilbert
transform assuming an energy hypothesis}, Journal of Functional Analysis,
Volume \textbf{263} (2012), Issue 2, 305-363.

\bibitem[LaSaUr2]{LaSaUr2}\textsc{Lacey, Michael T., Sawyer, Eric T.,
Uriarte-Tuero, Ignacio,} \textit{A characterization of two weight norm
inequalities for maximal singular integrals with one doubling measure,}
Analysis \& PDE, Vol. \textbf{5} (2012), No. 1, 1-60.

\bibitem[LaSp]{LaSp}\textsc{Lacey, Michael T., Spencer, Scott}, \textit{On Entropy Bumps for Calder\'on-Zygmund Operators,} Concrete Operators, Vol. \textbf{2}, No. 1, 2015.

\bibitem[LaWi]{LaWi}\textsc{Lacey, Michael T., Wick, Brett D.}, \textit{Two
weight inequalities for Riesz transforms: uniformly full dimension weights},
\texttt{arXiv:1312.6163v3}.

\bibitem[Ler]{Ler}\textsc{A. K. Lerner}, \textit{On separated bump conditions
for Calder\'{o}n-Zygmund operators}, \texttt{arXiv:2008.05866v2.}

\bibitem[Ler2]{Ler2}\textsc{A. K. Lerner}, \textit{On pointwise estimates
involving sparse operators}, New York J. Math. \textbf{22} (2016), 341--349.

\bibitem[LeNa]{LeNa}\textsc{A. K. Lerner and F. Nazarov}, \textit{Intuitive
dyadic calculus: the basics}, Expo Math., to appear. Available at
http://\texttt{arxiv.org/abs/1508.05639.}

\bibitem[Muc]{Muc}\textsc{B. Muckenhoupt}, \textit{Weighted norm inequalities
for the Hardy maximal function}, Trans. A.M.S. \textbf{165} (1972), 207-226.

\bibitem[MuWh]{MuWh}\textsc{B. Muckenhoupt and R. L. Wheeden}, \textit{Two
weight function norm inequalities for the Hardy-Littlewood maximal function
and the Hilbert transform}, Studia Math. \textbf{55} (1976), 279-294.

\bibitem[NTV4]{NTV4}\textsc{F. Nazarov, S. Treil and A. Volberg}, \textit{Two
weight estimate for the Hilbert transform and corona decomposition for
non-doubling measures}, preprint (2004) \texttt{arxiv:1003.1596}.

\bibitem[Naz]{Naz}\textsc{F. Nazarov}, \textit{A counterexample to Sarason's
conjecture}, unpublished manuscript available at \url{http://users.math.msu.edu/users/fedja/prepr.html}.

\bibitem[NaVo]{NaVo}\textsc{F. Nazarov and A. Volberg}, \textit{The Bellman
function, the two weight Hilbert transform, and the embeddings of the model
space }$K_{\theta}$, J. d'Analyse Math. \textbf{87} (2002), 385-414.

\bibitem[Neu]{Neu}\textsc{C. J. Neugebaer}, \textit{Inserting }$A_{p}$\textit{
weights}, Proceedings of the American Mathematical Society, vol. \textbf{87},
no. 4, 1983, pp. 644--48.

\bibitem[Saw]{Saw}\textsc{E. Sawyer}, \textit{Two weight norm inequalities for
certain maximal and integral operators}, Harmonic Analysis, Proceedings of a
Conference held at the University of Minnesota,Minneapolis, April 20 - 30,
1981, Leture Notes in Math. \textbf{908}, Springer.

\bibitem[Saw1]{Saw1}\textsc{E. Sawyer}, \textit{A characterization of a
two-weight norm inequality for maximal operators}, Studia Math. \textbf{75}
(1982), 1-11, MR\{676801 (84i:42032)\}.

\bibitem[Saw6]{Saw6}\textsc{E. Sawyer}, \textit{A }$T1$\textit{\ theorem for
general Calder\'{o}n-Zygmund operators with comparable doubling weights and
optimal cancellation conditions}, JAMA \textbf{146}, 205–297 (2022).

\bibitem[SaWi]{SaWi}\textsc{Sawyer, Eric T., Wick, Brett,} \textit{Two weight }$L^{p}$\textit{ inequalities for fractional vector Riesz transforms and doubling measures} \texttt{arXiv:2211.01920.}



\bibitem[SaShUr7]{SaShUr7}\textsc{Sawyer, Eric T., Shen, Chun-Yen,
Uriarte-Tuero, Ignacio,} \textit{A two weight theorem for }$\alpha
$\textit{-fractional singular integrals with an energy side condition},
Revista Mat. Iberoam. \textbf{32} (2016), no. 1, 79-174.

\bibitem[SaShUr9]{SaShUr9}\textsc{Sawyer, Eric T., Shen, Chun-Yen,
Uriarte-Tuero, Ignacio}, \textit{A two weight fractional singular integral
theorem with side conditions, energy and }$k$\textit{-energy dispersed},
Harmonic Analysis, Partial Differential Equations, Complex Analysis, Banach
Spaces, and Operator Theory (Volume \textbf{2}) (Celebrating Cora Sadosky's
life), Springer 2017 (see also \texttt{arXiv:1603.04332v2}).

\bibitem[SaShUr10]{SaShUr10}\textsc{Sawyer, Eric T., Shen, Chun-Yen,
Uriarte-Tuero, Ignacio,} \textit{A good-}$\lambda$\textit{\ lemma, two weight
}$T1$\textit{\ theorems without weak boundedness, and a two weight accretive
global }$Tb$\textit{\ theorem}, {Birkhauser Volume, Harmonic Analysis, Partial
Differential Equations and Applications in Honor of Richard Wheeden 2018}.

\bibitem[Ste2]{Ste2}\textsc{E. M. Stein}, Harmonic Analysis: real-variable
methods, orthogonality, and oscillatory integrals,\ Princeton University
Press, Princeton, N. J., 1993.

\bibitem[Tol]{Tol}\textsc{X. Tolsa.} \textit{Bilipschitz maps, analytic
capacity, and the Cauchy integral}, Ann. of Math. (2) \textbf{162} (2005), no.
3, 1243--1304.

\bibitem[Tol2]{Tol2}\textsc{X. Tolsa.\ }\textit{The measures with }$L^{2}%
$\textit{-bounded Riesz transform and the Painlev\'{e} problem for Lipschitz
harmonic functions}, \texttt{arXiv:2106.00680}.

\bibitem[TrVo]{TrVo}\textsc{S. Treil, A. Volberg,} \textit{Completely regular
multivariate stationary processes and the Muckenhoupt condition}, Pac. J.
Math. \textbf{190} (2) (1999) 361--382.

\bibitem[TrVo2]{TrVo2}\textsc{S. Treil, A. Volberg,} \textit{Entropy conditions in two weight inequalities for singular integral operators}, Advances in Mathematics, Volume \textbf{301} (2016) 499--548.

\bibitem[Uch]{Uch}\textsc{A. Uchiyama,} \textit{Weight functions of the class
}$\left(  A_{\infty}\right)  $\textit{ and quasiconformal mappings}, Proc.
Japan Acad. \textbf{51} (1975), 811-814.
\end{thebibliography}
\end{document}